\documentclass[10pt,a4paper]{article}
\linespread{1.2}
\setlength{\parindent}{24pt}
\setlength{\parskip}{1ex plus 0.5ex minus 0.2ex}

\usepackage[numbers,sort&compress]{natbib}
\usepackage[T1]{fontenc}
\usepackage[utf8]{inputenc}
\usepackage{authblk}
\usepackage{amsmath,amssymb,amsthm,esint,bm}%,mathrsfs
\usepackage{mathrsfs}
\usepackage{bookmark}
\usepackage{amsmath}
\allowdisplaybreaks[3]

\newtheorem{definition}{Definition}[section]
\newtheorem{theorem}[definition]{Theorem}
\newtheorem{lemma}[definition]{Lemma}

\newtheorem{corollary}[definition]{Corollary}
\theoremstyle{remark}
\newtheorem{remark}[definition]{Remark}
\numberwithin{equation}{section}

\newcommand{\R}{\mathbb{R}}

\setlength{\textwidth}{15.5cm}
\setlength{\textheight}{22.5cm}
\setlength{\topmargin}{-.5cm}
\setlength{\oddsidemargin}{4mm}
\setlength{\evensidemargin}{4mm}
\setlength{\abovedisplayskip}{3mm}
\setlength{\belowdisplayskip}{3mm}
\setlength{\abovedisplayshortskip}{0mm}
\setlength{\belowdisplayshortskip}{2mm}
\setlength{\baselineskip}{12pt}
\setlength{\normalbaselineskip}{12pt}
\allowdisplaybreaks[4]

% ----------------------------------------------------

\title{Universal Potential Estimates for Mixed Local and Nonlocal Nonlinear Measure Data Problems}

\author[a]{Lingwei Ma}
\author[b]{Qi Xiong}
\author[a]{Zhenqiu Zhang\thanks{Corresponding author.}}

\affil[a]{School of Mathematical Sciences and LPMC, Nankai University, Tianjin 300071, P.R. China}
\affil[b]{School of Mathematics, Southwest Jiaotong University, Chengdu 610031, Sichuan, P.R. China}
% 使用 \thanks 定义通讯作者

 % 修改机构名称的字体与大小
 % 去掉 and 前的逗号
\date{\today}
%\CTEXoptions[today=old]

\usepackage{hyperref}
\begin{document}
\maketitle
\footnotetext[1]{E-mail: malingwei@nankai.edu.cn (L. Ma), xq@swjtu.edu.cn (Q. Xiong), zqzhang@nankai.edu.cn (Z. Zhang).}

\begin{abstract}
This paper presents the nonlinear potential theory for mixed local and nonlocal $p$-Laplace type equations with coefficients and measure data, involving both superquadratic and subquadratic cases. We prove a class of universal pointwise estimates for the solution and its gradient via Riesz and Wolff potentials. These are achieved by imposing various low regularity conditions on the coefficient of the local term, while the kernel coefficient for the nonlocal term is merely assumed to be measurable. The key to these proofs lies in introducing a novel fractional maximum function that can capture both local and nonlocal features simultaneously, and in establishing pointwise estimates for such maximum operators of the solution and its gradient.
Notably, our universal potential estimates not only precisely characterize the oscillations of solutions, but also identify the borderline case that bounds their size, thereby refining the pointwise potential estimates available in earlier work.

Mathematics Subject classification (2020):  35R11; 35R06; 42B25; 31C45; 35B65.

Keywords: Mixed local-nonlocal $p$-Laplace equation; measure data; modified fractional maximal function; Riesz and Wolff potentials; oscillation estimates. \\

\end{abstract}
%\renewcommand{\thepage}{\roman{page}}
%\setcounter{page}{1}

%%\tableofcontents

\section{Introduction}\label{section1}
Over the past decade, the mixed diffusion problems characterized by the coexistence of local and nonlocal operators have attracted significant theoretical interest owing to their widespread applications.
The simplest prototype of mixed operators is $-\Delta+(-\Delta)^s$,
which lacks scale invariance and arises from the simultaneous presence of a classical random walk and a L\'{e}vy flight in stochastic processes.
This type of operator can be used to model a variety of important practical phenomena, including heat transport in magnetized plasmas \cite{BdC13}, population dynamics \cite{DPV23}, optimal foraging strategies \cite{DV21} and the spread of pandemics \cite{EG07}.
To acquire a clearer understanding of these phenomena,
considerable attention has been devoted to investigating the qualitative properties and regularity theory of solutions to mixed local-nonlocal elliptic and parabolic equations with regular data, as detailed in \cite{BDVV22, BKL24, BVDV21, DM24, FSZ22, GK22, GK24, N23} and the references therein.
The present paper focuses on the following more general mixed local and nonlocal nonlinear elliptic equation with measure data
\begin{equation}\label{eq1}
-\operatorname{div}\mathcal{A}(x, Du) -\mathcal{L}_\Phi u = \mu ~~\text{in}~~\Omega,
\end{equation}
where $\Omega$ is a bounded domain in $\mathbb{R}^n$ with $n \geq 2$, and $\mu$ is a signed Borel measure with finite total mass on
$\mathbb R^{n}$, denoted by $\mathcal M (\mathbb R^{n})$.
The Carath\'{e}odory vector field $\mathcal{A}(x,z): \Omega\times\mathbb{R}^{ n}\rightarrow\mathbb{R}^{n}$ is assumed to be $C^1$-regular on $\R^n$ for $p\geq 2$ and on $\R^n\setminus \{0\}$ for $1<p<2$ with respect to the gradient variable $z$ and satisfying the following growth and ellipticity conditions
\begin{equation}\label{growth1}
  \left|\mathcal{A}(x,z)\right||z|+\left|\partial_z\mathcal{A}(x,z)\right||z|^2\leq\Lambda|z|^p,
\end{equation}
\begin{equation}\label{ellipticity}
\left\langle \partial_z\mathcal{A}(x,z)\xi,\xi\right\rangle\geq\Lambda^{-1}|z|^{p-2}|\xi|^{2},
\end{equation}
for almost all $x\in\Omega$ and every $z,\,\xi\in\mathbb{R}^{n}$ with the structure constant $\Lambda\geq 1$. The nonlocal operator $\mathcal{L}_{\Phi}$ is defined by
\begin{equation*}
-\mathcal{L}_\Phi  u(x) = \text{P.V.}\int_{\mathbb{R}^n}\Phi(u(x)-u(y)){K}(x,y) \operatorname{d}\!y,
\end{equation*}
where $\text{P.V.}$ stands for the Cauchy principle value of the integral, the continuous function $\Phi: \R\rightarrow\R$ satisfies $\Phi(0)=0$ together with the following monotonicity and growth assumptions
\begin{equation}\label{mono}
  \Lambda^{-1}|t|^{p} \leq  \Phi(t)t \leq \Lambda|t|^{p}\,\,\mbox{for all}\,\, t\in\R,
\end{equation}
and the measurable kernel $ K: \mathbb{R}^n \times \mathbb{R}^n \to [0, \infty)$ fulfills the following ellipticity and growth properties
\begin{equation}\label{ellip}
  \Lambda^{-1}|x - y|^{-n-sp} \leq  K(x, y) \leq \Lambda|x - y|^{-n-sp}
\end{equation}
for almost everywhere $(x, y) \in  \mathbb{R}^n \times \mathbb{R}^n$ with the fractional order $s \in (0,1)$. As  noted in
\cite[section 1.5]{KuMinSi}, the function $\Phi$ and the kernel $K$ in the nonlocal operator $\mathcal{L}_{\Phi}$ can be reduced to
$\Phi(t)=|t|^{p-2}t$ and $K$ is symmetric in the sense that
${K}(x, y) = {K}(y, x)$ for almost everywhere $( x, y) \in  \mathbb{R}^n \times \mathbb{R}^n$ .

An effective approach to studying the regularity theory of the classical Poisson equation $-\Delta u=\mu$ is based on a representation formula for the solution. This formula enables the solution and its gradient to be controlled pointwise via the Riesz potentials of the measure data and encompasses various regularity properties of the solution within a unified framework.
It is natural to ask whether these pointwise potential estimates remain valid for general partial differential equations that lack explicit formulae representing their solutions. The first affirmative answer can be attributed to the pioneering work \cite{KiMa, KiMa2, TrWa}, which obtained optimal pointwise estimates for solutions to the $p$-Laplace type elliptic equation $-\operatorname{div}\mathcal{A}(x, Du)  = \mu$ with measurable coefficients via the Wolff potential ${\bf{W}}^{\mu}_{1,p}(x,R)$ of measure data $\mu$.
In general, for $0<\beta<\frac{n}{p}$, the truncated Wolff potential of measure $\mu\in \mathcal M (\mathbb R^{n})$ is defined as
\begin{equation*}
  {\bf{W}}^{\mu}_{\beta,p}(x,R) := \int_{0}^{R}\left[\frac{|\mu|(B_{\rho}(x))}{\rho^{n-\beta p}}\right]^{1/(p-1)}\frac{\mathrm{d}\rho}{\rho},
\end{equation*}
where $B_{\rho}(x)$ denotes an open ball in $\mathbb{R}^{n}$ with center $x$ and radius $\rho$. Particularly, when $p=2$, the nonlinear Wolff potential ${\bf{W}}^{\mu}_{\frac{\beta}{2},2}(x,R)$ reduces to the linear Riesz potential
\begin{equation*}
  {\bf{I}}^{\mu}_{\beta}(x,R) := \int_{0}^{R}\frac{|\mu|(B_{\rho}(x))}{\rho^{n-\beta}}\frac{\mathrm{d}\rho}{\rho}.
\end{equation*}
Nearly two decades afterwards, Duzaar and Mingione \cite{DuMi2, Min} generalized the previous pointwise estimates to the gradient level using the Wolff potential ${\bf{W}}^{\mu}_{\frac{1}{p},p}(x,R)$, provided that the coefficient was Dini-continuous. Kuusi and Mingione \cite{KM13} later unexpectedly demonstrated that these pointwise gradient estimates could be improved by employing the linear Riesz potential ${\bf{I}}^{\mu}_{1}(x,R)$.
It is worth highlighting that they developed universal potential estimates for the $p$-Laplace type elliptic equation under prescribed regularity conditions on the partial map $x\mapsto\mathcal{A}(x,\cdot)$, refer to \cite{KuMi}. These pointwise estimates allow us to control the oscillations of solutions and their gradients,
thereby facilitating a unified approach to deriving both Calder\'{o}n-Zygmund estimates and continuity criteria for solutions in virtually any reasonable function space. Subsequently, both pointwise potential estimates and universal potential estimates have been extended to various types of local elliptic and parabolic problems, we
refer to \cite{BM, CiSc, CV17, DKM14, DZ24, KM14-1, KuMi3, MZ, MZ24, MZZ, PV08, PV09, XZM} and the references therein.

With respect to the potential theory for the purely nonlocal problems, innovative work \cite{KuMinSi, NOS24} showed that pointwise estimates for solutions to the linear fractional Laplace equation $(-\Delta)^s u=\mu$ have analogues in the nonlinear setting $-\mathcal{L}_\Phi u= \mu$ via the Wolff potential ${\bf{W}}^{\mu}_{s,p}(x,R)$. Furthermore, the universal potential estimates for solutions to fractional $p$-Laplace type equations were also obtained in \cite{NOS24}. In the specific instance of $p=2$ and $s\in(\frac{1}{2},1)$, the pointwise  estimates for the gradient in terms of the Riesz potential ${\bf{I}}^{\mu}_{2s-1}(x,R)$ were established in \cite{KNS22, DKLN24}. However, such pointwise gradient estimates in the general case where $p\neq2$, as well as
the universal potential estimates for the gradient of solutions to the purely nonlocal equations remain unsolved.

Regarding mixed local and nonlocal problems, Byun and Song \cite{BS23}
considered the mixed local and nonlocal elliptic equation \eqref{eq1} under the assumptions \eqref{growth1}-\eqref{ellip}, and proved the pointwise potential estimate for the solution as follows
\begin{equation}\label{mixPoten-sol}
|u(x)|
   \leq C \left[\left(\fint_{B_{R}(x)} |u|^{q_0} \operatorname{d}\! \xi\right)^{\frac{1}{q_0}}+\operatorname{Tail}(u;x,R)\right]
   +C{\bf{W}}^{\mu}_{1,p}(x,R)
\end{equation}
for the ball $ B_{R}(x) \subset \Omega$, $q_0=\max\{1,p-1\}$ and $p>2-\frac{1}{n}$, where the nonlocal tail term $\operatorname{Tail}(u;x,R)$ is defined in \eqref{Tail}.
In the specific scenario where the vector field $\mathcal{A}$ is coefficient free, Chlebicka et al. \cite{CSYZ24} extended the Riesz potential estimate to the gradient
\begin{equation}\label{mixPoten-grad}
|Du(x)|
   \leq C \left[\left(\fint_{B_{R}(x)} |Du|^{q_0} \operatorname{d}\! \xi\right)^{\frac{1}{q_0}}+R^{\frac{\tau-p}{p-1}}\operatorname{Tail}(u-(u)_{B_R(x)};x,R)\right]
   +C\left[{\bf{I}}^{\mu}_{1}(x,R)\right]^{\frac{1}{p-1}}
\end{equation}
for any $\tau\in(0,1)$.
Such pointwise gradient estimates are ascribed to the fact that the fractional $W^{s,p}$-capacity generated by the nonlocal term can be controlled by the $W^{1,p}$-capacity induced by the local term when $p > sp$.
Consequently, local diffusion is the dominant feature in such mixed model, leading to gradient regularity that resembles those of purely local equations, as detailed in \cite{DM24}.
Very recently, Ma et al. \cite{MXZ25} established both pointwise and universal potential estimates with optimal tail terms for solutions to the mixed local and nonlocal parabolic equation in the case of $p=2$.

Our contribution to this paper is to establish universal potential estimates for solutions and their gradients to the mixed local and nonlocal nonlinear elliptic equation \eqref{eq1}, provided that the partial map $x\mapsto\mathcal{A}(x,\cdot)$ satisfies low regularity conditions and the kernel coefficient $K$ is merely assumed to be measurable.
These estimates encompass both superquadratic and subquadratic scenarios, and also reflect both local and nonlocal characters of the mixed operators, undoubtedly constituting a valuable addition to the potential theory of mixed problems.
To illustrate the main results of this paper, we begin with the definition of local weak solutions to the mixed local and nonlocal elliptic  equation \eqref{eq1}.

\begin{definition} Let $\mu \in \mathcal M (\mathbb R^{n})$, $p>1$ and $s\in(0,1)$. A function
$u\in W^{1,p}_{\rm loc}(\Omega)\cap {\mathcal L}^{p-1}_{sp}(\mathbb{R}^n)$
is a weak solution to the equation \eqref{eq1}, if
\begin{eqnarray*}
\int_{\Omega}\mathcal{A}(x,Du)\cdot D\varphi\operatorname{d}\! x + \int_{\mathbb{R}^n} \int_{\mathbb{R}^n} \Phi(u(x)-u(y))(\varphi(x)-\varphi(y)) K(x,y) \operatorname{d}\!x \operatorname{d}\!y= \int_{\Omega} \varphi \operatorname{d}\!\mu
\end{eqnarray*}
holds for any test function $\varphi\in  C_0^{\infty}(\Omega)$
with compact support contained in $\Omega$, where ${\mathcal L}^{p-1}_{sp}(\mathbb{R}^n)$ represents the slowly increasing function space defined as
	$${\mathcal L}^{p-1}_{sp}(\mathbb{R}^n):= \left \{u \in L^{p-1}_{\operatorname{loc}}(\mathbb{R}^n) \mathrel{\Big|} \int_{\mathbb{R}^n} \frac{|u(y)|^{p-1}}{1+|y|^{n+sp}} \operatorname{d}\!y < \infty \right \}.$$
\end{definition}
The nonlocal tail term is indispensable when deriving regularity estimates for solutions to equations involving fractional order differential operators.
Here we define the nonlocal tail of a measurable function $f:\R^n\rightarrow\R$ as follows
\begin{equation}\label{Tail}
    {\rm Tail}(f;x_0,r) := \left(r^p\int_{\R^n\setminus B_r(x_0)}\frac{|f(x)|^{p-1}}{|x-x_0|^{n+sp}} \operatorname{d}\!x\right)^{\frac{1}{p-1}}.
\end{equation}
It is evident that if $f$ belongs to ${\mathcal L}^{p-1}_{sp}(\R^n)$, then the tail term $ {\rm Tail}(f;x_0,r)<\infty$ for any $x_0$ and $r>0$.

We are now in a position to state the first result of this paper, namely, a universal potential estimate of solutions to the mixed local and nonlocal elliptic equation \eqref{eq1} with merely measurable coefficients.
\begin{theorem}\label{OscPotenSolMea}
Let $u\in W^{1,p}_{\rm loc}(\Omega)\cap {\mathcal L}^{p-1}_{sp}(\mathbb{R}^n)$ be a weak solution to \eqref{eq1} under assumptions \eqref{growth1}--\eqref{ellip} with $s \in (0,1)$, $p>2-\frac{1}{n}$ and $q_0=\max\{1,p-1\}$.
Given $B_R(x_0)\subset\Omega$ with $R\leq R_0$ for some $R_0=R_0(n,p,s,\Lambda,\operatorname{diam}(\Omega))>0$, then for almost all $x,\,y \in B_{\frac{R}{8}}(x_0)$ and any $\tilde{\alpha}\in(0,\alpha_m)$, there exists a positive constant $C$ depending only on $n$, $p$, $s$, $\Lambda$, $\tilde{\alpha}$ and $\operatorname{diam}(\Omega)$ such that
\begin{eqnarray}\label{OscPotenSolMea-est}
    |u(x)-u(y)|
   &\leq&C \left[\left(\fint_{B_{R}(x_0)} |u|^{q_0} \operatorname{d}\! x\right)^{\frac{1}{q_0}}+\operatorname{Tail}(u-(u)_{B_R(x_0)};x_0,R)\right]\left(\frac{|x-y|}{R}\right)^{\alpha}\nonumber\\
   &&+C\left[{\bf{W}}^{\mu}_{1-\frac{\alpha(p-1)}{p},p}(x,R)+{\bf{W}}^{\mu}_{1-\frac{\alpha(p-1)}{p},p}(y,R)\right]|x-y|^\alpha
\end{eqnarray}
holds uniformly in $\alpha\in[0,\tilde{\alpha}]$, where
  $\alpha_m$ is given by Lemma \ref{v-holder}.
\end{theorem}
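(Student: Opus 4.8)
The plan is to derive the estimate \eqref{OscPotenSolMea-est} by combining a comparison scheme with a suitably designed \emph{mixed fractional maximal operator} adapted to the mixed local--nonlocal structure, exactly as announced in the abstract. The starting point is a comparison argument on dyadic balls $B_r(x_0)$, $r\leq R$: freeze the measure and the nonlocal tail by solving, on each such ball, the homogeneous mixed equation $-\operatorname{div}\mathcal{A}(x,Dv)-\cL_\Phi v=0$ with $v=u$ outside $B_r(x_0)$. For $v$ one has the De~Giorgi--Nash--Moser type H\"older continuity with exponent $\alpha_m$ coming from Lemma~\ref{v-holder} (that is where the constraint $\tilde\alpha<\alpha_m$ enters), so that $v$ satisfies an excess-decay estimate
\begin{equation*}
  \Bigl(\fint_{B_\rho(x_0)}|v-(v)_{B_\rho(x_0)}|^{q_0}\Bigr)^{1/q_0}
  \leq C\Bigl(\tfrac{\rho}{r}\Bigr)^{\alpha}\Bigl[\Bigl(\fint_{B_r(x_0)}|v-(v)_{B_r(x_0)}|^{q_0}\Bigr)^{1/q_0}+\operatorname{Tail}(v-(v)_{B_r(x_0)};x_0,r)\Bigr]
\end{equation*}
uniformly for $\alpha\in[0,\tilde\alpha]$, together with the standard energy/tail bound controlling the right-hand side by the corresponding quantities for $u$. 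The comparison estimate between $u$ and $v$ must be quantified in terms of $\mu$: by testing the difference equation with $u-v$ (taking care of the superquadratic versus subquadratic split via the usual Young-inequality bookkeeping and the monotonicity \eqref{mono}, \eqref{ellip} of the nonlocal part, whose contribution has a favourable sign), one obtains
\[
  \fint_{B_r(x_0)}|u-v|^{q_0}\,\operatorname{d}\!x \;\lesssim\; \Bigl[\tfrac{|\mu|(B_r(x_0))}{r^{n-p}}\Bigr]^{\gamma}
\]
with $\gamma$ the natural exponent ($\gamma=\frac{1}{p-1}$ or $\frac{q_0}{p-1}$ according to the regime), which is precisely the quantity summed by the Wolff potential ${\bf W}^\mu_{1,p}$; the shifted index $1-\frac{\alpha(p-1)}{p}$ will appear after multiplying the $r$-th dyadic contribution by the factor $(\rho/r)^{\alpha}$ and resumming.

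Next I would run the standard iteration: write the excess of $u$ on $B_\rho(x_0)$ in terms of the excess on $B_r(x_0)$ via the triangle inequality $u = v + (u-v)$, insert the decay for $v$ and the comparison bound for $u-v$, and iterate over a dyadic chain of radii $\rho=2^{-k}r$. Summing the geometric series in the homogeneous part (using $\alpha\leq\tilde\alpha<\alpha_m$) and the measure-dependent part produces, for the ``oscillation along scales'' of $u$ at a point $x$, the bound
\[
  \sup_{0<\rho\leq R}\Bigl(\tfrac{R}{\rho}\Bigr)^{\alpha}\Bigl(\fint_{B_\rho(x)}|u-(u)_{B_\rho(x)}|^{q_0}\Bigr)^{1/q_0}
  \;\lesssim\; \text{(data on }B_R)\;+\;{\bf W}^{\mu}_{1-\frac{\alpha(p-1)}{p},p}(x,R).
\]
This is the point where the ``novel fractional maximum function that can capture both local and nonlocal features'' from the abstract does the work: one shows that the left-hand side, which is exactly a (mixed) fractional sharp maximal function $M^{\sharp,\alpha}_{q_0}u(x)$ evaluated at $x$, is finite and controlled; and then one uses the elementary pointwise identity that the oscillation $|u(x)-u(y)|$ between two Lebesgue points is dominated by $|x-y|^\alpha$ times the fractional sharp maximal function at $x$ plus at $y$ (Morrey--Campanato type embedding, valid since $q_0\geq 1$ and $\alpha\in(0,1)$). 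Concretely, telescoping along the chain of balls $B_{2^{-k}|x-y|}(x)$, $B_{2^{-k}|x-y|}(y)$ and one intermediate ball containing both, $|u(x)-u(y)|\lesssim |x-y|^{\alpha}\bigl(M^{\sharp,\alpha}_{q_0}u(x)+M^{\sharp,\alpha}_{q_0}u(y)\bigr)$, and substituting the bound just obtained for each sharp maximal function yields \eqref{OscPotenSolMea-est}, with the data term on $B_R$ and the two shifted Wolff potentials at $x$ and $y$, uniformly in $\alpha\in[0,\tilde\alpha]$.

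Two technical points deserve care, and the second is the main obstacle. The first is the tail bookkeeping: at each comparison step the nonlocal tail of $v-(v)_{B_r(x_0)}$ has to be re-expressed in terms of the tail of $u-(u)_{B_R(x_0)}$ on the fixed outer ball plus the already-estimated oscillations on the intermediate annuli; this requires splitting $\R^n\setminus B_r$ into dyadic annuli, using that $u$ and $v$ coincide outside $B_r$, and summing the series $\sum_k (2^{-k})^{\alpha p /(p-1)}$ times the annular excess — convergent precisely because the excesses decay at rate $\alpha<\alpha_m$ — so that all tail contributions collapse into $\operatorname{Tail}(u-(u)_{B_R(x_0)};x_0,R)\,(|x-y|/R)^\alpha$ as in the statement. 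The second, genuinely delicate, point is that \emph{all constants must be uniform in $\alpha\in[0,\tilde\alpha]$}: the geometric ratio in the iteration is essentially $2^{\alpha-\alpha_m}$, which degenerates as $\alpha\uparrow\alpha_m$, so one cannot simply absorb constants depending on $\alpha$; instead one freezes $\tilde\alpha<\alpha_m$ once and for all, runs the whole iteration with the ratio bounded by $2^{\tilde\alpha-\alpha_m}<1$, and checks that every comparison constant, every De~Giorgi constant from Lemma~\ref{v-holder}, and the radius threshold $R_0$ depend only on $n,p,s,\Lambda,\tilde\alpha,\operatorname{diam}(\Omega)$ and not on the particular $\alpha$. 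Handling the subquadratic range $2-\frac1n<p<2$ adds the usual complication that $\mathcal{A}$ is only $C^1$ away from the origin and the natural energy is measured with $q_0=p-1<1$; there one argues with the vector field $|z|^{p-2}z$-type monotonicity on the set where the gradient is nondegenerate and uses the by-now-standard truncation/Young inequality tricks, but this does not change the structure of the iteration.
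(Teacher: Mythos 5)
Your overall architecture matches the paper's: comparison on each ball with the homogeneous mixed problem \eqref{Commodel-vu}, excess decay at rate $\alpha_m$ coming from Lemma \ref{v-holder} (in the paper packaged as the Morrey-type Lemma \ref{Du-morrey}), the measure contribution $\bigl[|\mu|(B_r)/r^{n-p}\bigr]^{1/(p-1)}$ per scale resummed into the shifted Wolff potential, the Caccioppoli inequality \eqref{CCPu} and the tail bookkeeping via dyadic annuli, all of which is how Theorem \ref{Maxmeasure} and the iteration in Section 5 are actually run. The subquadratic correction term $\chi_{\{p<2\}}\bigl[|\mu|(B_r)/r^{n-1}\bigr]\bigl(\fint|Du|\bigr)^{2-p}$ that you wave at with ``Young-inequality bookkeeping'' is indeed absorbed exactly that way in \eqref{OscSol-1-1}, so that part is fine.

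The genuine gap is your final step. You pass from the bound on the fractional sharp maximal function to the two-point estimate through the Morrey--Campanato/DeVore--Sharpley embedding $|u(x)-u(y)|\lesssim |x-y|^{\alpha}\bigl(\mathfrak{M}^{\sharp}_{\alpha,q_0}(u)(x,\cdot)+\mathfrak{M}^{\sharp}_{\alpha,q_0}(u)(y,\cdot)\bigr)$, but the constant in that telescoping is of order $(1-2^{-\alpha})^{-1}\sim\alpha^{-1}$ and blows up as $\alpha\downarrow 0$; at $\alpha=0$ the claimed inequality \eqref{OscPotenSolMea-est} is a size bound of $L^{\infty}$ type (with ${\bf W}^{\mu}_{1,p}$), which cannot follow from any sharp-maximal-function embedding since $\operatorname{BMO}\not\subset L^{\infty}$. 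The theorem requires uniformity on the closed interval $[0,\tilde{\alpha}]$, and the $\alpha=0$ endpoint is precisely the refinement of \eqref{mixPoten-sol} the paper emphasizes; you dealt with the degeneration as $\alpha\uparrow\alpha_m$ but not with this one, and you cannot recover small $\alpha$ from a fixed $\alpha_1>0$ because the weighted potentials ${\bf W}^{\mu}_{1-\frac{\alpha(p-1)}{p},p}$ are not comparable on scales $\rho>|x-y|$. The paper circumvents this by deriving the pointwise bound \eqref{OscSol-1} for $|u(x)-g|$, $g$ an arbitrary constant, directly from the telescoping of the averages $\mathcal{K}_i$ along the shrinking balls (constants uniform in $\alpha\in[0,\tilde{\alpha}]$, including $\alpha=0$), then choosing $g=(u)_{B_{3r}(x)}$ with $r=|x-y|/2$ and using $\mathfrak{M}^{\sharp}_{\alpha,q_0}(u)$ from Theorem \ref{Maxmeasure} only to control the single-scale excess $\bigl(\fint_{B_{3r}(x)}|u-g|^{q_0}\,\operatorname{d}\!\xi\bigr)^{1/q_0}\leq (3r)^{\alpha}\mathfrak{M}^{\sharp}_{\alpha,q_0}(u)(x,\tfrac{3R}{8})$, which costs no $\alpha$-dependent constant; consistently, the DeVore--Sharpley embedding is invoked in the paper only for $\alpha\in[\alpha_m/2,\tilde{\alpha}]$ in the proof of Theorem \ref{OscPotenSolBMO}, with the range $[0,\alpha_m/2]$ handled by the direct argument. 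To repair your proof, replace the embedding step by this telescoping-of-averages argument (or restrict the embedding to $\alpha$ bounded away from $0$ and give a separate argument, in the spirit of Corollary \ref{PotenmaxMea}, for small $\alpha$).
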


An interesting corollary of the proof of Theorem \ref{OscPotenSolMea} is the following pointwise estimate for the truncated Hardy–Littlewood maximal function of solutions.
\begin{corollary}\label{PotenmaxMea}
Let $u\in W^{1,p}_{\rm loc}(\Omega)\cap {\mathcal L}^{p-1}_{sp}(\mathbb{R}^n)$ be a weak solution to \eqref{eq1} under assumptions \eqref{growth1}--\eqref{ellip} with $s \in (0,1)$, $p>2-\frac{1}{n}$ and $q_0=\max\{1,p-1\}$.
Given $B_R(x)\subset\Omega$ with $R\leq R_0$ for some $R_0=R_0(n,p,s,\Lambda,\operatorname{diam}(\Omega))>0$, then there exists a positive constant $C$ depending only on $n$, $p$, $s$, $\Lambda$ and $\operatorname{diam}(\Omega)$ such that
\begin{equation*}
  {\bf M}(u)(x,R)
   \leq C \left[\left(\fint_{B_{R}(x)} |u|^{q_0} \operatorname{d}\! \xi\right)^{\frac{1}{q_0}}+\operatorname{Tail}(u-(u)_{B_R(x)};x,R)\right]
   +C{\bf{W}}^{\mu}_{1,p}(x,R),
\end{equation*}
where ${\bf M}(u)$ is the truncated Hardy-Littlewood maximal operator of $u$ defined by
$${\bf M}(u)(x,R):=\displaystyle\sup_{0<r\leq R} \fint_{B_r(x)}|u(y)|\operatorname{d}\!y.$$
\end{corollary}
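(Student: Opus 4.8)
The plan is to read the estimate off the scale-by-scale information already produced in the proof of Theorem~\ref{OscPotenSolMea}. Since ${\bf M}(u)(x,R)$ is merely a supremum of averages, it suffices to bound $\fint_{B_r(x)}|u|\,\mathrm{d}\xi$ uniformly for $0<r\leq R$ by the right-hand side and then pass to the supremum. Fix $\sigma\in(0,1)$ (to be chosen) and set $r_j:=\sigma^{j}R$. Given $r\in(0,R]$, choose $j$ with $r_{j+1}<r\leq r_j$ and split
\begin{equation*}
\fint_{B_r(x)}|u|\,\mathrm{d}\xi\leq\bigl|(u)_{B_r(x)}\bigr|+\fint_{B_r(x)}\bigl|u-(u)_{B_r(x)}\bigr|\,\mathrm{d}\xi\leq\bigl|(u)_{B_R(x)}\bigr|+C(n,\sigma)\sum_{i=0}^{j}E_i,
\end{equation*}
where $E_i:=\fint_{B_{r_i}(x)}\bigl|u-(u)_{B_{r_i}(x)}\bigr|\,\mathrm{d}\xi$, I have telescoped $(u)_{B_{r_{i+1}}(x)}-(u)_{B_{r_i}(x)}$ over concentric balls, and I have used $\bigl|(u)_{B_R(x)}\bigr|\leq\bigl(\fint_{B_R(x)}|u|^{q_0}\,\mathrm{d}x\bigr)^{1/q_0}$, which is valid since $q_0\geq1$. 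Thus everything reduces to a uniform bound on $E_i$ together with summability of $\sum_i E_i$.

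For the excess decay I would follow verbatim the comparison scheme of the proof of Theorem~\ref{OscPotenSolMea}: on $B_{r_i}(x)$ let $v$ solve the homogeneous mixed problem $-\operatorname{div}\mathcal{A}(x,Dv)-\mathcal{L}_\Phi v=0$ in $B_{r_i}(x)$ with $v=u$ on $\mathbb{R}^n\setminus B_{r_i}(x)$. Lemma~\ref{v-holder} supplies the Hölder-type excess decay for $v$, namely $\fint_{B_{\sigma r_i}(x)}|v-(v)_{B_{\sigma r_i}(x)}|\leq C\sigma^{\alpha_m}\bigl[\fint_{B_{r_i}(x)}|v-(v)_{B_{r_i}(x)}|+\operatorname{Tail}(v-(v)_{B_{r_i}(x)};x,r_i)\bigr]$, while the comparison estimate of the measure-data theory gives $\fint_{B_{r_i}(x)}|u-v|\,\mathrm{d}\xi\lesssim\bigl[|\mu|(B_{r_i}(x))\,r_i^{\,p-n}\bigr]^{1/(p-1)}=:W_i$ in the superquadratic regime, with the usual nonlinear correction involving $E_i$ itself when $1<p<2$. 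Since $p>sp$, the nonlocal tails reorganise into $T_i:=\operatorname{Tail}(u-(u)_{B_{r_i}(x)};x,r_i)$ up to controllable excess terms, and combining the three ingredients yields a coupled recursion of the shape
\begin{equation*}
E_{i+1}\leq C\sigma^{\alpha_m}\bigl(E_i+T_i\bigr)+C\,W_i,\qquad T_{i+1}\leq C\,T_i+C\,E_i.
\end{equation*}

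To finish, fix $\sigma$ small enough (equivalently fix some $\tilde\alpha<\alpha_m$, so that $C$ ends up depending only on $n,p,s,\Lambda,\operatorname{diam}(\Omega)$) that $C\sigma^{\alpha_m}\leq\tfrac12$, and iterate: summing the recursion gives $\sum_i E_i\leq CE_0+C\sum_i T_i+C\sum_i W_i$, and the companion inequality lets one absorb $\sum_i T_i\leq CT_0+C\sum_i E_i$. Here $E_0+T_0\leq C\bigl[\bigl(\fint_{B_R(x)}|u|^{q_0}\,\mathrm{d}x\bigr)^{1/q_0}+\operatorname{Tail}(u-(u)_{B_R(x)};x,R)\bigr]$, while the elementary discretisation bound $\sum_{i\geq0}W_i\leq C\int_0^R\bigl[|\mu|(B_\rho(x))\,\rho^{\,p-n}\bigr]^{1/(p-1)}\tfrac{\mathrm{d}\rho}{\rho}=C\,{\bf{W}}^{\mu}_{1,p}(x,R)$ converts the potential sum into the truncated Wolff potential. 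Feeding this back into the splitting above and taking the supremum over $0<r\leq R$ yields the claimed bound for ${\bf M}(u)(x,R)$.

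The main obstacle is the subquadratic case $1<p<2$. There the $u$--$v$ comparison cannot be closed by a clean power of the Wolff functional; after the natural normalisation the deficit $\fint_{B_{r_i}}|u-v|$ couples back to $E_i$ through factors such as $E_i^{2-p}$, so the recursion for $E_i$ is genuinely nonlinear and must be run through the Kuusi--Mingione type iteration lemma already used for Theorem~\ref{OscPotenSolMea} in order to absorb these terms while preserving the summability of $\sum_i E_i$; the hypothesis $p>2-\tfrac1n$ is precisely what makes this absorption admissible. A lesser, bookkeeping-type difficulty is the control of the nonlocal tails $T_i$ along the dyadic chain: re-centring is harmless since all balls are concentric, but one still has to verify that replacing the reference mean $(u)_{B_R(x)}$ by $(u)_{B_{r_i}(x)}$ in the tail costs only a summable amount of excess, which again rests on $p>sp$, i.e.\ on the local term dominating the mixed operator at small scales.
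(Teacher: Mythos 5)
Your overall architecture is essentially the paper's: the paper also deduces the Corollary by reducing ${\bf M}(u)(x,R)$ to a scale-by-scale bound $\fint_{B_\rho(x)}|u|\,\operatorname{d}\!\xi\le (4L)^n(E_i+\mathcal{K}_i)$ along the shrinking balls already used for Theorem \ref{OscPotenSolMea}, controlling $\mathcal{K}_i$ by telescoping and $E_i$ by the decay recursion \eqref{OscSol-1-1}, and converting the summed measure terms into ${\bf W}^{\mu}_{1,p}(x,R)$ via \eqref{OscSol-1-2} with $\alpha=0$. So the plan of "reading the estimate off the iteration of Theorem \ref{OscPotenSolMea}" is exactly right.

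The genuine gap is in how you close the iteration for the nonlocal tails. Your coupled recursion $T_{i+1}\le C\,T_i+C\,E_i$ with an order-one constant cannot be summed: it does not yield $\sum_i T_i\le C\,T_0+C\sum_i E_i$, and even granting that inequality, substituting it into $\sum_i E_i\le CE_0+C\sum_i T_i+C\sum_i W_i$ leaves a term $C^2\sum_i E_i$ on the right which cannot be absorbed, since you only arranged smallness of the factor $C\sigma^{\alpha_m}$ in the $E$-recursion, not of the cross-coupling constants. What actually closes the loop in the paper is structural: the tail at scale $r_{i+1}$ carries a small factor (a positive power of the scale ratio, via \cite[Lemma 4.12]{CSYZ24} and the H\"older decay) in front of the tail at scale $r_i$, and a factor of the form $(4L)^{\frac{n}{p-1}}r_i^{(1-s)p'}$ in front of the local terms; the tail is therefore folded into the single excess $E_i$ (oscillation plus tail), the gradient averages are returned to $E_i$ through the Caccioppoli inequality \eqref{CCPu}, the subquadratic coupling is absorbed by Young's inequality (not by a nonlinear iteration lemma), and the resulting coefficient $C_0\big[(\tfrac{1}{4L})^{\alpha_m}+\epsilon+(4L)^{\frac{n}{p-1}}r^{(1-s)p'}\big]$ in \eqref{OscSol-1-1} is made $\le\frac12$ by taking $L$ large, $\epsilon$ small and, crucially, $R\le R_0$ small to control $r^{(1-s)p'}$. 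Your argument never invokes the hypothesis $R\le R_0$ that appears in the statement, which is the telltale sign that the tail bookkeeping has not been closed; "summable up to controllable excess terms" is precisely the step that needs the smallness of $R$ and the folding of $T_i$ into $E_i$. A lesser mismatch: for $p>2$ the decay inputs (Lemma \ref{v-holder}, \eqref{CCPu}) require the $L^{q_0}$-excess with $q_0=p-1>1$, so running the scheme with the $L^1$-excesses you define does not directly feed these lemmas; the paper avoids this by working with $q_0$-averages throughout, which is harmless since ${\bf M}(u)$ only involves $L^1$-averages.
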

\begin{remark}
We observe that the aforementioned pointwise estimate for the truncated Hardy–Littlewood maximal function of solutions is an improvement on the pointwise estimate \eqref{mixPoten-sol} for the solution itself, as established in \cite[Theorem 1.4]{BS23}.
\end{remark}
In order to increase the exponent $\tilde{\alpha}$ in Theorem \ref{OscPotenSolMea}, we require additional regularity assumptions on the averaged modulus of continuity of the coefficients, as defined below.
\begin{definition}\label{coef}
The averaged modulus of continuity of $x\mapsto\mathcal{A}(x,\cdot)$ is defined by
\begin{equation*}
\omega(r):=\left[\sup_{{\substack{z\in \R^n\backslash\{\bf 0\}\\x\in B_r(x)\subset \Omega}}} \fint_{B_{r}(x)}\left[A(z,B_r(x))(y)\right]^2
\operatorname{d}\!y\right]^{\frac{1}{2}}\,,
\end{equation*}
where the function
\begin{equation}\label{coefA}
  A(z,B_r(x))(y):=\frac{\left|\mathcal{A}(y,z)-\left(\mathcal{A}
(\cdot,z)\right)_{B_{r}(x)}\right|}{|z|^{p-1}},
\end{equation}
and
$\left(\mathcal{A}(\cdot,z)\right)_{B_{r}(x)}$ denotes the integral average of $\mathcal A$ over the ball $B_{r}(x)$, given by
\begin{equation*}
\left(\mathcal{A}(\cdot,z)\right)_{B_{r}(x)}:=\fint_{B_{r}(x)}\mathcal{A}(y,z)\operatorname{d}\!y
=\frac{1}{\left|B_{r}(x)\right|}\int_{B_{r}(x)}\mathcal{A}(y,z)\operatorname{d}\!y\,.
\end{equation*}
\end{definition}
Assuming that $\omega(r)$ is small $\operatorname{BMO}$ regular, the exponent $\tilde{\alpha}$ can be chosen arbitrarily close to $1$.
\begin{theorem}\label{OscPotenSolBMO}
Let $u\in W^{1,p}_{\rm loc}(\Omega)\cap {\mathcal L}^{p-1}_{sp}(\mathbb{R}^n)$ be a weak solution to \eqref{eq1} under assumptions \eqref{growth1}--\eqref{ellip} with $s \in (0,1)$, $p>2-\frac{1}{n}$ and $q_0=\max\{1,p-1\}$. Given $B_R(x_0)\subset\Omega$, $m\in(0,1-s)$ and $\max\{0, \frac{m(2-p)}{p-1}\}<\sigma<\min\{\frac{1}{p-1},(1-s)p'\}$, where $p'$ is the conjugate exponent of $p$. For almost all $x,\,y \in B_{\frac{R}{8}}(x_0)$ and any $\tilde{\alpha}\in(0,1)$, if
\begin{equation}\label{smallBMO}
  \displaystyle\limsup_{r\rightarrow 0}\omega(r)\leq\delta
\end{equation}
for some  $\delta\equiv\delta(n,p,s,\Lambda,\tilde{\alpha},\sigma,\operatorname{diam}(\Omega))>0$, then there exists a positive constant $C$ depending only on
$n,\,p,\,s,\,\Lambda,\,\tilde{\alpha},\,m,\,\sigma,\,\omega(\cdot)$ and $\operatorname{diam}(\Omega)$ such that
\begin{eqnarray}\label{OscPotenSolBMO-est}
    |u(x)-u(y)|
   &\leq&C \left[\left(\fint_{B_{R}(x_0)} |u|^{q_0} \operatorname{d}\! x\right)^{\frac{1}{q_0}}+R^{-\sigma}\operatorname{Tail}(u-(u)_{B_R(x_0)};x_0,R)\right]\left(\frac{|x-y|}{R}\right)^{\alpha}\nonumber\\
   &&+C\left[{\bf{W}}^{\mu}_{1-\frac{\alpha(p-1)}{p},p}(x,R)+{\bf{W}}^{\mu}_{1-\frac{\alpha(p-1)}{p},p}(y,R)\right]|x-y|^\alpha
\end{eqnarray}
holds uniformly in $\alpha\in[0,\tilde{\alpha}]$.
\end{theorem}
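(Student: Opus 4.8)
The plan is to follow the architecture of the proof of Theorem~\ref{OscPotenSolMea}, replacing only the decay mechanism for the comparison maps: where that proof invokes the interior H\"older estimate of Lemma~\ref{v-holder} with the structural exponent $\alpha_m$, here one uses a higher-order excess-decay estimate valid under the smallness \eqref{smallBMO} and reaching any prescribed $\tilde\alpha<1$. Fix $B_R(x_0)\subset\Omega$ and a Lebesgue point $z\in B_{R/8}(x_0)$, and work on the concentric balls $B_{\rho_j}(z)$, $\rho_j=\sigma^{j}R$, for a small dyadic ratio $\sigma=\sigma(\tilde\alpha)$ fixed below. On a generic ball $B_{\rho}(z)$ I run the usual two-step comparison. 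First I freeze the measure: let $v$ solve $-\operatorname{div}\mathcal{A}(x,Dv)-\mathcal{L}_\Phi v=0$ in $B_{\rho}(z)$ with $v=u$ on $\mathbb{R}^n\setminus B_{\rho}(z)$; testing with $u-v$ and using the standard measure-data comparison estimate (valid in both the super- and subquadratic regimes once $p>2-\tfrac1n$) yields
\[
\fint_{B_{\rho}(z)}|u-v|\,\mathrm{d}x\;\lesssim\;\Big[\tfrac{|\mu|(B_{\rho}(z))}{\rho^{\,n-p}}\Big]^{\frac1{p-1}}.
\]
Second, I freeze the local coefficient: let $w$ solve $-\operatorname{div}\,(\mathcal{A}(\cdot,Dw))_{B_{\rho}(z)}-\mathcal{L}_\Phi w=0$ with exterior datum $v$; the kernel is only measurable and is left untouched, and the freezing error is controlled by the averaged modulus, $\fint_{B_{\rho}(z)}|v-w|\,\mathrm{d}x\lesssim\omega(\rho)^{\theta}E(\rho)$ for a structural $\theta>0$, where $E(\rho)$ is the excess functional built from the mean oscillation of $u$ on $B_{\rho}(z)$ together with the $\rho^{-\sigma}$-weighted nonlocal tail of $u-(u)_{B_{\rho}(z)}$.

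Since the local operator in the $w$-equation is autonomous, $w$ inherits interior Lipschitz regularity (with a tail remainder) from the known gradient theory of homogeneous mixed equations with merely measurable kernel; combined with the Caccioppoli inequality and the triangle inequality for tails, this gives, for every $\sigma\in(0,\tfrac12]$, the excess decay $\fint_{B_{\sigma\rho}(z)}|w-(w)_{B_{\sigma\rho}(z)}|\,\mathrm{d}x\lesssim\sigma\,E(\rho)$ (any exponent $<1$ would suffice, and Lipschitz supplies this). Feeding the two comparison estimates into this and restoring the tail of $u-w$ produces the iteration inequality
\[
E(\sigma\rho)\;\le\;C_{\ast}\,\sigma\,E(\rho)\;+\;C\,\omega(\rho)^{\theta}E(\rho)\;+\;C\Big[\tfrac{|\mu|(B_{\rho}(z))}{\rho^{\,n-p}}\Big]^{\frac1{p-1}}\;+\;C\,(\text{weighted nonlocal remainder}).
\]
I then fix $\sigma=\sigma(\tilde\alpha)\in(0,\tfrac12]$ so small that $C_{\ast}\sigma\le\tfrac13\sigma^{\tilde\alpha}$, and choose $\delta=\delta(n,p,s,\Lambda,\tilde\alpha,\sigma,\operatorname{diam}(\Omega))$ in \eqref{smallBMO} small enough that $C\,\omega(\rho)^{\theta}\le\tfrac13\sigma^{\tilde\alpha}$ for all $\rho\le R_0$; the ranges $m\in(0,1-s)$ and $\max\{0,\tfrac{m(2-p)}{p-1}\}<\sigma<\min\{\tfrac1{p-1},(1-s)p'\}$ are precisely what makes the weighted nonlocal remainder at every scale dominated by a $\rho^{-\sigma}$-weighted version of $\mathrm{Tail}(u-(u)_{B_R(x_0)};x_0,R)$ that stays summable against the resulting $\sigma^{\tilde\alpha}$-decay, which is the origin of the factor $R^{-\sigma}$ in \eqref{OscPotenSolBMO-est}.

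Iterating the inequality downward from $R$ to an arbitrary $r\le R/8$ gives, for every Lebesgue point $z$,
\[
\fint_{B_{r}(z)}|u-(u)_{B_{r}(z)}|\,\mathrm{d}x\;\lesssim\;\Big(\tfrac rR\Big)^{\tilde\alpha}\Big[\Big(\textstyle\fint_{B_R(x_0)}|u|^{q_0}\,\mathrm{d}x\Big)^{\!\frac1{q_0}}\!\!+R^{-\sigma}\mathrm{Tail}(u-(u)_{B_R(x_0)};x_0,R)\Big]+r^{\tilde\alpha}\!\!\int_{r}^{R}\!t^{-\tilde\alpha}\Big[\tfrac{|\mu|(B_{t}(z))}{t^{\,n-p}}\Big]^{\frac1{p-1}}\tfrac{\mathrm{d}t}{t}.
\]
For any $\alpha\in[0,\tilde\alpha]$ I multiply by $r^{-\alpha}$ and use $r\le t$ together with $\tilde\alpha\ge\alpha$ to bound $r^{\tilde\alpha-\alpha}\int_{r}^{R}t^{-\tilde\alpha}[\,\cdot\,]\tfrac{\mathrm{d}t}{t}\le\int_{0}^{R}t^{-\alpha}[\,\cdot\,]\tfrac{\mathrm{d}t}{t}$; since $n-p+\alpha(p-1)=n-\big(1-\tfrac{\alpha(p-1)}{p}\big)p$, this last integral is exactly ${\bf W}^{\mu}_{1-\frac{\alpha(p-1)}{p},p}(z,R)$. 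Hence, uniformly in $\alpha\in[0,\tilde\alpha]$, the modified fractional sharp maximal function of $u$ --- the one carrying both the mean-oscillation term $r^{-\alpha}\fint_{B_{r}(z)}|u-(u)_{B_{r}(z)}|$ and a comparable nonlocal tail at scale $r$ --- is controlled by $R^{-\alpha}\big[(\textstyle\fint_{B_R(x_0)}|u|^{q_0}\,\mathrm{d}x)^{1/q_0}+R^{-\sigma}\mathrm{Tail}(u-(u)_{B_R(x_0)};x_0,R)\big]+{\bf W}^{\mu}_{1-\frac{\alpha(p-1)}{p},p}(z,R)$. Finally, the Campanato-type characterization of H\"older continuity --- telescoping $u(x)-(u)_{B_{2r}(x)}$ along the dyadic balls $B_{\sigma^{j}r}(x)$ (a convergent geometric series since $\alpha>0$) and comparing $(u)_{B_{2r}(x)}$ with $(u)_{B_{2r}(y)}$ for $r=|x-y|$ --- turns these two pointwise maximal bounds at $x$ and at $y$ into \eqref{OscPotenSolBMO-est}, the endpoint $\alpha=0$ being already contained in Theorem~\ref{OscPotenSolMea} and Corollary~\ref{PotenmaxMea}.

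The main obstacle is the bookkeeping of the nonlocal tail through the entire iteration: one must verify that the tail contributions generated by both comparison steps are, at every scale $\rho\le R$, dominated by the $\rho^{-\sigma}$-weighted tail of $u-(u)_{B_R(x_0)}$ and remain summable when paired with the $\sigma^{\tilde\alpha}$-decay of the excess, and making this work \emph{simultaneously} with the super- and subquadratic comparison estimates is exactly where $p>2-\tfrac1n$, the range of $m$, and the two-sided restriction on $\sigma$ are consumed. A secondary difficulty is rendering the absorption of the $\operatorname{BMO}$ error quantitative: $\delta$ must be allowed to degenerate as $\tilde\alpha\uparrow1$, while the final constant $C$ is kept independent of $r$ and of the particular $\alpha\in[0,\tilde\alpha]$.
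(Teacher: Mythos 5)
Your overall mechanism is the same as the paper's: a two–step comparison (first freezing the measure, then the coefficient), absorption of the coefficient error by the smallness of $\delta$ in \eqref{smallBMO}, an iteration giving excess decay at any rate $\tilde\alpha<1$, and summation of the measure contributions into ${\bf W}^{\mu}_{1-\frac{\alpha(p-1)}{p},p}$. The organization differs: the paper freezes to the \emph{purely local} problem \eqref{Commodel-wv} and only needs the soft estimate \eqref{decay-Dw} of Lemma \ref{exdecay-Dw-Lem}, proving first the gradient-level maximal bound of Theorem \ref{MaxsmallBMO} and then passing to the oscillation estimate via the DeVore--Sharpley inequality of \cite{DS84}; you instead keep $\mathcal{L}_\Phi$ in the frozen problem and iterate the zeroth-order excess of $u$ directly, which forces you to invoke a Lipschitz (gradient-boundedness) estimate for the mixed autonomous equation with merely measurable kernel. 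Inside this paper that estimate is only available through the much heavier Lemma \ref{Dv-L^infty} (applied with $\omega\equiv 0$), so you should justify or cite it precisely rather than appeal loosely to ``known gradient theory''. Also, your displayed $u$-versus-$v$ comparison is not correct as stated in the subquadratic range: for $2-\frac1n<p<2$ the comparison of Lemma \ref{comparison:Du-Dv} carries the extra term $\chi_{\{p<2\}}\bigl[|\mu|(B_\rho)/\rho^{\,n-1}\bigr]\bigl(\fint_{B_\rho}|Du|\,\mathrm{d}\xi\bigr)^{2-p}$, which must be reinstated and absorbed into the excess by Young's inequality, exactly as in \eqref{OscSol-1-1}; your sketch acknowledges the sub/superquadratic issue but does not actually handle it.

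The genuine gap is the uniformity in $\alpha$. Your final step converts the sharp-maximal bound into \eqref{OscPotenSolBMO-est} by telescoping $u(x)-(u)_{B_{2r}(x)}$ over the dyadic balls $B_{\sigma^j r}(x)$, and the resulting geometric series contributes a factor $(1-\sigma^{\alpha})^{-1}\sim \bigl(\alpha\log(1/\sigma)\bigr)^{-1}$, which blows up as $\alpha\downarrow 0$. The theorem, however, asserts a \emph{single} constant $C$ valid for all $\alpha\in[0,\tilde\alpha]$, and your fallback to Theorem \ref{OscPotenSolMea} and Corollary \ref{PotenmaxMea} covers only the endpoint $\alpha=0$, not the range of small positive $\alpha$ where your constant degenerates. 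The paper avoids this by splitting the range: for $\alpha\in[0,\frac{\alpha_m}{2}]$ it reruns the measurable-coefficient iteration of Theorem \ref{OscPotenSolMea} (whose constant is uniform there, with the tail reweighted by $r^{-\sigma}$), and for $\alpha\in[\frac{\alpha_m}{2},\tilde\alpha]$ it uses the DeVore--Sharpley pointwise inequality together with Theorem \ref{MaxsmallBMO}, where the constant is at worst $C/\alpha_m$. Without such a splitting (or an equivalent device that extracts the pointwise bound directly from the iteration at the fixed rate $\tilde\alpha$ rather than termwise through $\mathfrak{M}^{\sharp}_{\alpha,q_0}$), the claimed uniform estimate is not established. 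The remaining deferred item — the scale-by-scale bookkeeping of the $\rho^{-\sigma}$-weighted tails, where the constraints on $m$ and $\sigma$ are consumed — is indeed the crux and parallels the paper's use of \cite[Lemma 4.12]{CSYZ24}; it is plausible but would need to be carried out to make the argument complete.
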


Furthermore, it is possible for the exponent $\tilde{\alpha}$ to be equal to $1$, provided that we impose the Dini-$\operatorname{VMO}$ regularity condition on $\omega(r)$. Under this scenario, the oscillation estimate is derived by splitting it into subquadratic and superquadratic cases.

\begin{theorem}\label{OscPotenSolDiniVMO}
Let $u\in W^{1,p}_{\rm loc}(\Omega)\cap {\mathcal L}^{p-1}_{sp}(\mathbb{R}^n)$ be a weak solution to \eqref{eq1} under assumptions \eqref{growth1}--\eqref{ellip} with $s \in (0,1)$.
Given $B_R(x_0)\subset\Omega$, $m\in(0,1-s)$, $\max\{sp'-1,0\}<a<\sigma_3$ and $\frac{1}{p-1}-a<\sigma<\min\{\frac{1}{p-1},(1-s)p'\}$. Assume that  $[\omega(r)]^{\sigma_1'}$ is Dini-$\operatorname{VMO}$ regular, that is,
\begin{equation}\label{Dinivmo}
  \int_{0}^{r}[\omega(\rho)]^{\sigma_1'}\frac{\operatorname{d}\!\rho}{\rho}<\infty\,\,\mbox{for any} \,\, r>0,
\end{equation}
where $\sigma_1'$ is given by \eqref{exponent2}.
\begin{itemize}
\item  If $2-\frac{1}{n}<p\leq2$, then there exists a positive constant $C\equiv C(n,p,s,\Lambda,m,a,\sigma,\omega(\cdot),\operatorname{diam}(\Omega))$ such that %$\varsigma$
    \begin{eqnarray}\label{OscPotenSolDiniVMOsub}
    |u(x)-u(y)|
   &\leq&C \left[\fint_{B_{R}(x_0)} |u| \operatorname{d}\! x+R^{-\sigma}\operatorname{Tail}(u-(u)_{B_R(x_0)};x_0,R)\right]\left(\frac{|x-y|}{R}\right)^{\alpha}\nonumber\\
   &&+C\left[{\bf{I}}^{\mu}_{p-\alpha(p-1)}(x,R)+{\bf{I}}^{\mu}_{p-\alpha(p-1)}(y,R)\right]^{\frac{1}{p-1}}|x-y|^\alpha
\end{eqnarray}
  holds uniformly in $\alpha\in[0,1]$ for almost all $x,\,y \in B_{\frac{R}{16}}(x_0)$.
  \item If $p>2$, then there exists a positive constant $C\equiv C(n,p,s,\Lambda,m,a,\sigma,\omega(\cdot),\operatorname{diam}(\Omega))$ such that \eqref{OscPotenSolBMO-est} holds uniformly in $\alpha\in[0,1]$ for almost all $x,\,y \in B_{\frac{R}{8}}(x_0)$, where $q_0=p-1$.
\end{itemize}
\end{theorem}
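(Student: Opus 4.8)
The plan is to obtain Theorem \ref{OscPotenSolDiniVMO} as the endpoint case $\tilde\alpha = 1$ of the scheme already developed for Theorems \ref{OscPotenSolMea} and \ref{OscPotenSolBMO}, where the Dini-$\operatorname{VMO}$ hypothesis \eqref{Dinivmo} is precisely what is needed to sum a geometric-type series of oscillation decays without losing a power of the scale. The central object is the modified fractional maximal function announced in the introduction: one defines, for the solution $u$ and an exponent matching the Wolff/Riesz scaling, a quantity that simultaneously controls the local excess $\left(\fint_{B_r}|u-(u)_{B_r}|^{q_0}\right)^{1/q_0}$ and the nonlocal tail $\operatorname{Tail}(u-(u)_{B_r};\cdot,r)$ across all radii $r\le R$. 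The proof then runs in three movements: (i) a comparison step, freezing the coefficient $\mathcal A(\cdot,z)$ to its average over $B_r(x_0)$ and replacing $\mu$ by zero, to produce a comparison function $v$ solving a homogeneous mixed equation with $\operatorname{BMO}$-small (in fact, after the $[\omega]^{\sigma_1'}$-Dini reduction, summably close to constant) coefficients; (ii) invoking the De Giorgi–Nash–Moser / Schauder-type regularity for $v$ — this is where Lemma \ref{v-holder} and its $\alpha_m$ enter — to get $C^{1,\tilde\alpha}$-type (or at least $C^{\tilde\alpha}$ with $\tilde\alpha$ up to $1$) decay of the excess of $v$; and (iii) an iteration over dyadic balls $B_{r_j}(x)$ with $r_j = \lambda^j R$, in which the error terms from the comparison are the Wolff potential increments $\left[\mu(B_{r_j})/r_j^{n-p}\right]^{1/(p-1)}$ in the superquadratic case and the corresponding Riesz-potential increments (via the linearization available when $p\le 2$, following the KM13-type improvement) in the subquadratic case, while the coefficient errors contribute $\omega(r_j)^{\sigma_1'}$-weighted terms whose summability is \eqref{Dinivmo}.

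The bifurcation into $2-\frac1n<p\le2$ and $p>2$ reflects which potential is natural: for $p>2$ the Wolff potential ${\bf W}^{\mu}_{1-\alpha(p-1)/p,p}$ is the right one and the excess is measured in $L^{q_0}$ with $q_0=p-1>1$, so the statement is literally \eqref{OscPotenSolBMO-est} with $\tilde\alpha=1$; for $p\le2$ one exploits the fact that the subquadratic gradient/solution estimates can be upgraded to the linear Riesz potential ${\bf I}^{\mu}_{p-\alpha(p-1)}$, and the excess is measured in $L^1$ (so $q_0=1$), which is why \eqref{OscPotenSolDiniVMOsub} is written with a bare $\fint_{B_R}|u|$ and the potential raised to the power $1/(p-1)$. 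The constraints $\max\{sp'-1,0\}<a<\sigma_3$ and $\frac{1}{p-1}-a<\sigma<\min\{\frac{1}{p-1},(1-s)p'\}$, together with $m\in(0,1-s)$, are the bookkeeping conditions that make the tail-term exponents and the interpolation exponents in the modified maximal function consistent across both regimes; the $R^{-\sigma}$ weight on the tail is the price of pushing $\tilde\alpha$ to $1$, exactly as the $R^{(\tau-p)/(p-1)}$ weight appears in \eqref{mixPoten-grad}. Concretely, I would first prove the pointwise bound for the modified fractional maximal operator ${\bf M}$-type quantity of $u$ — the analogue of Corollary \ref{PotenmaxMea} but at the sharp exponent — and then derive the two-point oscillation estimate by applying that bound at $x$ and at $y$ and summing over the chain of balls joining them, using that $|u(x)-u(y)|$ is controlled by $\sum_j$ (excess at scale $r_j$ around $x$) $+\sum_j$ (excess at scale $r_j$ around $y$) once $r_j \sim |x-y|$.

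For the iteration itself, the key decay estimate at each step has the schematic form
\begin{equation*}
E(x,\lambda r) \le \varepsilon\, \lambda^{\alpha} E(x,r) + C_\varepsilon\left[\frac{|\mu|(B_r(x))}{r^{\,n-p+\alpha(p-1)}}\right]^{\frac{1}{p-1}} + C\,[\omega(r)]^{\sigma_1'}\, \mathcal T(x,r),
\end{equation*}
where $E$ denotes the combined local-plus-tail excess, $\alpha\le\tilde\alpha=1$, and $\mathcal T$ is a tail-type majorant; choosing $\lambda$ and then $\varepsilon$ appropriately and iterating converts this into the bound \eqref{OscPotenSolDiniVMOsub} / \eqref{OscPotenSolBMO-est}, with the $\mu$-terms telescoping into the Riesz (resp. Wolff) potential and the $[\omega(r)]^{\sigma_1'}$-terms telescoping — thanks to \eqref{Dinivmo} — into a finite multiple of the leading data term. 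The main obstacle, and the place I expect the real work to be, is step (ii) in the subquadratic range $2-\frac1n<p\le2$: establishing the endpoint $C^{1}$-type (Lipschitz-scale) excess decay for the frozen homogeneous mixed equation with merely-measurable nonlocal kernel, and doing so in a form that linearizes cleanly enough to produce the Riesz potential ${\bf I}^{\mu}_{p-\alpha(p-1)}$ rather than only the Wolff potential — the degeneracy of the $p$-Laplacian at $Du=0$ when $p<2$ forces one to work with the vector field $V(Du)=|Du|^{(p-2)/2}Du$ and to control the nonlocal contribution (which lives at a different homogeneity, $sp<p$) as a subordinate perturbation, precisely the mechanism alluded to after \eqref{mixPoten-grad}. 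The superquadratic case $p>2$ is comparatively routine given Lemma \ref{v-holder} and the machinery behind Theorem \ref{OscPotenSolBMO}, so the bulk of the new argument is the careful subquadratic comparison-and-linearization together with verifying that the exponent constraints close the loop.
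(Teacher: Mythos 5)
Your skeleton is the paper's: first prove a pointwise bound for the modified (local-plus-tail) fractional maximal functions under the Dini-$\operatorname{VMO}$ hypothesis, by a two-step comparison and a dyadic iteration in which the measure increments telescope into a potential and the $[\omega(r_i)]^{\sigma_1'}$ terms are summed via \eqref{Dinivmo}; then deduce the two-point estimate by evaluating at $x$ and $y$ with $r\sim|x-y|$ (the paper does this through Theorem \ref{MaxDinivmo} and the argument of Theorem \ref{OscPotenSolBMO}, using the DeVore--Sharpley inequality of \cite{DS84} for $\alpha$ bounded away from $0$ and a direct telescoping for small $\alpha$ --- your chain-of-balls argument is the same mechanism). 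However, two of your structural identifications are off, and one of them would block the endpoint if followed literally.

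First, where you ``expect the real work to be'' --- an endpoint $C^1$-type excess decay for a frozen homogeneous \emph{mixed} equation, linearized through $V(Du)$ so as to produce the Riesz potential in the range $2-\tfrac1n<p\le 2$ --- is not what happens and is not needed for this solution-level theorem. In the paper the Riesz potential for $p\le2$ comes from pure exponent bookkeeping: since $\tfrac1{p-1}\ge1$, the measure increments in the iteration satisfy $\sum_i a_i^{1/(p-1)}\le\big(\sum_i a_i\big)^{1/(p-1)}$, which is exactly \eqref{max-diniVMO-potentialp<=2}, and the frozen maximal function is then absorbed via Lemma \ref{maxf-potential} (the chain ${\bf M}_{p-\alpha(p-1)}\le {\bf W}^{\mu}_{1-\alpha(p-1)/p,p}\lesssim \big[{\bf I}^{\mu}_{p-\alpha(p-1)}\big]^{1/(p-1)}$ in the proof of Theorem \ref{OscPotenSolDiniVMO}). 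No KM13-type gradient linearization enters here. Second, and more seriously, you attribute the attainability of $\tilde\alpha=1$ to Lemma \ref{v-holder} and its exponent $\alpha_m$; but $\alpha_m<1$, so H\"older decay of the homogeneous mixed comparison function $v$ can never yield the endpoint. The paper reaches $\alpha=1$ only by descending to the gradient level: \eqref{Sharpm-M} reduces $\mathfrak{M}^{\sharp}_{\alpha,q_0}(u)$ to $\mathfrak{M}_{1-\alpha,q_0}(Du)$, whose bound (Theorem \ref{MaxDinivmo}) rests on the two-step comparison $u\to v\to w$ where $w$ solves the frozen-coefficient \emph{purely local} equation \eqref{Commodel-wv} and supplies the $C^{1,\alpha_M}$ decay of Lemma \ref{exdecay-Dw-Lem}, while the Dini-$\operatorname{VMO}$ condition is spent on the local Lipschitz bound for $Dv$ (Lemma \ref{Dv-L^infty}) and the Campanato decay for $Du$ (Lemma \ref{Lem:Du-camp}). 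Your description of a single comparison function ``solving a homogeneous mixed equation with frozen coefficients'' conflates these two comparisons and skips precisely the step (dropping the nonlocal term to access the local $C^{1,\alpha_M}$ theory) that makes the exponent $1$ reachable; with that correction, and with the subquadratic case demoted to the elementary summation above, your plan coincides with the paper's proof.
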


Finally, if $\omega(r)$ satisfies the Dini-H\"{o}lder condition, we establish a universal potential estimate for the gradient of solutions, expressing their local  H\"{o}lder continuity.
To a certain extent, our following oscillation estimate generalizes the previous result (see \cite[Theorem 5]{DM24} with $p=\gamma$) to a mixed local and nonlocal elliptic equation involving low regular coefficients and measure data.
\begin{theorem}\label{OscPotenGrad}
Let $u\in W^{1,p}_{\rm loc}(\Omega)\cap {\mathcal L}^{p-1}_{sp}(\mathbb{R}^n)$ be a weak solution to \eqref{eq1} under assumptions \eqref{growth1}--\eqref{ellip} with $s \in (0,1)$.
 Given $B_R(x_0)\subset\Omega$, $m\in(0,1-s)$, $\max\{sp'-1,0\}<a<\sigma_3$ and $\frac{1}{p-1}-a<\sigma<\min\{\frac{1}{p-1},(1-s)p'\}$. Assume that $[\omega(\cdot)]^{\sigma_1'}$ is Dini-H\"{o}lder regular of order $\tilde{\alpha}\in[0,\alpha_0)$, that is,
\begin{equation}\label{Diniholder}
     %\sup_{0<r\leq R}
     \int_{0}^{r}\frac{[\omega(\rho)]^{\sigma_1'}}{\rho^{\tilde{\alpha}}}\frac{\operatorname{d}\!\rho}{\rho}<\infty \,\,\mbox{for any} \,\, r>0,
   \end{equation}
where $\alpha_0:=\min\{\alpha_M,\,\sigma_2,\,\sigma_3+\sigma-\frac{1}{p-1},\,\frac{1}{p-1}-\sigma,\,p'(1-s)-\sigma\}$, $\sigma_2$ and $\sigma_2$ are defined in \eqref{exponent1}, $\sigma_1'$ and $\alpha_M$ are given by \eqref{exponent2} and Lemma \ref{exdecay-Dw-Lem}, respectively.
\begin{itemize}
  \item If $2-\frac{1}{n}<p\leq2$,
   then there exists a positive constant $C\equiv C(n,p,s,\Lambda, %$\varsigma$,
    m, a, \sigma, \tilde{\alpha}, \omega(\cdot),\operatorname{diam}(\Omega))$ such that
    \begin{eqnarray}\label{OscPotenGradsub}
    |Du(x)-Du(y)|
   &\leq&C \left[\fint_{B_{R}(x_0)} |Du| \operatorname{d}\! x+R^{-1-\sigma}\operatorname{Tail}(u-(u)_{B_R(x_0)};x_0,R)\right]\left(\frac{|x-y|}{R}\right)^{\alpha}\nonumber\\
   &&+C\left[{\bf{I}}^{\mu}_{1-\alpha}(x,R)+{\bf{I}}^{\mu}_{1-\alpha}(y,R)\right]^{\frac{1}{p-1}}|x-y|^\alpha
  \end{eqnarray}
  holds uniformly in $\alpha\in[0,\tilde{\alpha}]$ for almost all $x,\,y \in B_{\frac{R}{4}}(x_0)$.
  \item If $p> 2$, then there exists a constant $C\equiv C(n,p,s,\Lambda, m, a, \sigma, \tilde{\alpha}, \omega(\cdot),\operatorname{diam}(\Omega))>0$  such that
    \begin{eqnarray}\label{OscPotenGradsup}
    |Du(x)-Du(y)|
   &\leq&C \left[\left(\fint_{B_{R}(x_0)} |Du|^{p-1} \operatorname{d}\! x\right)^{\frac{1}{p-1}}+R^{-1-\sigma}\operatorname{Tail}(u-(u)_{B_R(x_0)};x_0,R)\right]\left(\frac{|x-y|}{R}\right)^{\alpha}\nonumber\\
   &&+C\left[{\bf{W}}^{\mu}_{1-\frac{(\alpha-1)(p-1)}{p},p}(x,R)+{\bf{W}}^{\mu}_{1-\frac{(\alpha-1)(p-1)}{p},p}(y,R)\right]|x-y|^\alpha
  \end{eqnarray}
  holds uniformly in $\alpha\in[0,\tilde{\alpha}]$ for almost all $x,\,y \in B_{\frac{R}{4}}(x_0)$.
\end{itemize}
\end{theorem}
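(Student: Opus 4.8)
The plan is to derive \eqref{OscPotenGradsub}--\eqref{OscPotenGradsup} from an excess-decay estimate for the gradient on a family of shrinking concentric balls, following the scheme already used for Theorems \ref{OscPotenSolBMO} and \ref{OscPotenSolDiniVMO} but carried out one differentiability order higher, at the level of $Du$. Fix $B_R(x_0)\subset\Omega$ and, for $\bar x$ near $x_0$ and small $r$, track the hybrid gradient excess $\mathcal{E}(Du,B_r(\bar x))$, by which we mean the sum of the local oscillation $\bigl(\fint_{B_r(\bar x)}|Du-(Du)_{B_r(\bar x)}|^{q_0}\,dx\bigr)^{1/q_0}$ and the weighted nonlocal term $r^{-1-\sigma}\operatorname{Tail}(u-(u)_{B_r(\bar x)};\bar x,r)$, with $q_0=1$ for $p\le 2$ and $q_0=p-1$ for $p>2$. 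The modified fractional maximal function introduced earlier in the paper is precisely the device that interpolates these local and nonlocal pieces at the correct scale; the aim is to bound its value on $Du$ pointwise by the right-hand sides of \eqref{OscPotenGradsub}--\eqref{OscPotenGradsup}, after which the oscillation estimate follows from a Lebesgue-point argument applied at $x$ and $y$ together with a final comparison on the ball joining them.

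The core is a two-step comparison on a generic ball $B_r\subset B_R(x_0)$. \emph{Step A (freezing the measure):} let $v$ solve the homogeneous mixed equation $-\operatorname{div}\mathcal{A}(x,Dv)-\mathcal{L}_\Phi v=0$ in $B_r$ with $v=u$ outside $B_r$; the comparison estimates for measure data then control $Du-Dv$ in $L^{q_0}(B_{r/2})$ by a suitable power of the density $|\mu|(B_r)/r^{\,n-1}$, which in the subquadratic range is exactly where the hypothesis $p>2-\tfrac1n$ enters, to keep $Du$ in $L^{q_0}$. \emph{Step B (freezing the coefficient):} comparing $v$ on a slightly smaller ball with the solution $w$ of the equation obtained by replacing $\mathcal{A}(x,\cdot)$ with its $B_r$-average, and testing the two weak formulations against $v-w$ while using \eqref{growth1}--\eqref{ellipticity} and Definition \ref{coef}, one controls $Dv-Dw$ by $[\omega(r)]^{\sigma_1'}$ times the excess of $Du$ on a comparable ball, with $\sigma_1'$ as in \eqref{exponent2}. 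In both steps the nonlocal tails of $v$ and $w$ are handled through the weighted $\operatorname{Tail}$ quantity, and the admissible ranges $m\in(0,1-s)$, $\max\{sp'-1,0\}<a<\sigma_3$, and $\tfrac1{p-1}-a<\sigma<\min\{\tfrac1{p-1},(1-s)p'\}$ are chosen so that $r^{-1-\sigma}\operatorname{Tail}(\cdot)$ scales subcritically and the nonlocal contribution can be reabsorbed into the local energy — this uses the elementary fact $sp<p$, so that the local operator dominates at small scales, in the spirit of \cite{DM24}.

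For the frozen problem one invokes the a priori gradient regularity of $w$: the constant-coefficient mixed equation enjoys interior $C^{1,\alpha_M}$ bounds, with $\alpha_M$ as in Lemma \ref{exdecay-Dw-Lem}, hence the decay $\mathcal{E}(Dw,B_{\lambda r})\le C\lambda^{\alpha_M}\mathcal{E}(Dw,B_r)$ for a fixed small $\lambda$. Chaining Steps A and B with this decay yields, for $\lambda$ suitably small, an inequality of the form $\mathcal{E}(Du,B_{\lambda r})\le C\lambda^{\alpha_0}\mathcal{E}(Du,B_r)+C\,[\omega(r)]^{\sigma_1'}\mathcal{E}(Du,B_{2r})+C\,\mathcal{D}(\mu;B_r)$, where $\alpha_0=\min\{\alpha_M,\sigma_2,\sigma_3+\sigma-\tfrac1{p-1},\tfrac1{p-1}-\sigma,p'(1-s)-\sigma\}$ reconciles all the competing scalings, and $\mathcal{D}(\mu;B_r)$ is the measure density $|\mu|(B_r)/r^{\,n-1}$ raised to the power $\tfrac1{p-1}$ in the subquadratic case and its Wolff-type analogue when $p>2$. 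Iterating along the dyadic radii $r_j=\lambda^j R$, summing the geometric part, recognising the accumulated measure densities as the Riesz potential $\mathbf{I}^{\mu}_{1-\alpha}$ (respectively the Wolff potential $\mathbf{W}^{\mu}_{1-\frac{(\alpha-1)(p-1)}{p},p}$), and absorbing the coefficient-oscillation sum through the Dini--Hölder summability \eqref{Diniholder} of $[\omega(\cdot)]^{\sigma_1'}$ — which converges precisely because $\tilde\alpha<\alpha_0$ — delivers the required pointwise bound for the modified fractional maximal function of $Du$, uniformly in $\alpha\in[0,\tilde\alpha]$; converting this into $|Du(x)-Du(y)|$ with the factors $(|x-y|/R)^\alpha$ and $|x-y|^\alpha$ is then routine. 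The cases $p\le 2$ and $p>2$ are run in parallel with the corresponding choices of $q_0$ and of the potential, exactly as in Theorem \ref{OscPotenSolDiniVMO}.

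The main obstacle is the subquadratic regime $p\le 2$. There the gradient comparison of Step A is delicate, since the equation degenerates on $\{Du=0\}$: one must linearise around nonzero gradient values and make essential use of the threshold $p>2-\tfrac1n$ to retain $L^{q_0}$-integrability of $Du$, and the comparison exponents so produced must then be reconciled with those governing the nonlocal tail so that the local and nonlocal decay rates in the iteration remain compatible. Arranging the full system of parameters ($m$, $a$, $\sigma$, $\sigma_1'$, $\sigma_2$, $\sigma_3$, $\alpha_M$) so that $\alpha_0>0$ and the iteration closes uniformly in $\alpha$ up to any $\tilde\alpha<\alpha_0$ — this bookkeeping, together with the linearisation in the subquadratic case, is the technical heart of the argument.
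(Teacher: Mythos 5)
Your overall scheme (freeze the measure, then freeze the coefficient, run an excess-decay iteration over shrinking balls, sum the measure densities into $\mathbf{I}^{\mu}_{1-\alpha}$ resp. $\mathbf{W}^{\mu}_{1-\frac{(\alpha-1)(p-1)}{p},p}$, absorb the coefficient sum via \eqref{Diniholder}, then a Lebesgue-point and two-point argument with $r=\tfrac{|x-y|}{2}$) is the same as the paper's. But your central one-scale inequality contains a genuine error that breaks the iteration: you claim the coefficient-freezing comparison controls $Dv-Dw$ by $[\omega(r)]^{\sigma_1'}$ times the \emph{excess} of $Du$, so that your decay inequality reads $\mathcal{E}(Du,B_{\lambda r})\le C\lambda^{\alpha_0}\mathcal{E}(Du,B_r)+C[\omega(r)]^{\sigma_1'}\mathcal{E}(Du,B_{2r})+C\mathcal{D}(\mu;B_r)$ and closes in the excess alone. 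This is not what freezing gives: since $|\mathcal{A}(x,z)-(\mathcal{A}(\cdot,z))_{B_r}|\le A(z,B_r)(x)\,|z|^{p-1}$, the comparison (Lemmas \ref{comparison:Dv-Dw} and \ref{comparison:Dv-Dw-Dinivmo}) produces $[\omega(r)]^{\sigma_1'}$ (plus $r^{\sigma_2}$) times the \emph{size} of the gradient — the $L^{q_0}$-average of $|Dv|$, or $\|Dv\|_{L^\infty}$ — together with tail terms, never its oscillation. Consequently the iteration generates sums of the form $\sum_i [\omega(r_i)]^{\sigma_1'}\,r_i^{-\alpha}\bigl(\fint_{\mathcal{B}_i}|Du|^{q_0}\bigr)^{1/q_0}$ (and analogous weighted tails), and these cannot be controlled by the excess; they require an a priori bound, uniform over all scales $r_i$, for the gradient averages and tails. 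That bound is exactly the zero-order pointwise maximal estimate the paper proves first (Theorem \ref{MaxDinivmo} with $\alpha=1$, i.e. a bound of $\mathfrak{M}_{0,q_0}(Du)$ by $\mathbf{I}^{\mu}_{1}$, resp. $\mathbf{W}^{\mu}_{\frac1p,p}$, plus data), and it is then fed into the Campanato-type iteration (Lemma \ref{Lem:Du-camp}, Theorem \ref{MaxDiniholder}) before the two-point step. Your proposal never establishes or invokes such a size bound, so as written the Dini--H\"older absorption step does not close.

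Two further, smaller points. First, in the paper the frozen problem $w$ solves the \emph{purely local} equation \eqref{Commodel-wv}; Lemma \ref{exdecay-Dw-Lem} is a statement about that local equation, and the discarded nonlocal part of $v$ reappears as the $r^{\sigma_2}$, $r^{\sigma_3}$ error terms — calling $w$ a solution of a ``constant-coefficient mixed equation'' conflates the two comparisons. Second, obtaining the refined exponent $\sigma_1'$ of \eqref{exponent2} (rather than $\sigma_1$ of \eqref{exponent1}) is not automatic from testing the weak formulations: it requires the local Lipschitz bound for $v$ under the Dini-$\operatorname{VMO}$ hypothesis (Lemma \ref{Dv-L^infty}), which is a separate iteration and is essential for the Dini--H\"older condition \eqref{Diniholder} on $[\omega(\cdot)]^{\sigma_1'}$ to be the right hypothesis. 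Repairing your argument therefore amounts to inserting these missing ingredients, at which point it coincides with the paper's proof.
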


\begin{remark}
The existence theory and compactness properties of a class of very weak solutions obtained as limits of approximations (SOLA) to the mixed local and nonlocal elliptic equation \eqref{eq1} have been proven in \cite[Theorem 1.3]{BS23}.
It is noteworthy that our main theorems remain valid for SOLA by utilizing compactness results through a standard approximation process, as we establish all the necessary estimates below the energy range.
\end{remark}

The building blocks for the proof of the aforementioned universal potential estimates are establishing pointwise estimates of the appropriate maximal operators of the solution and its gradient to the mixed equation \eqref{eq1} through the truncated fractional maximal functions and potential functions of measure data. For a measure $\mu\in\mathcal M (\mathbb R^{n})$, the truncated fractional maximal function is defined as
$${\bf M}_{\beta}(\mu)(x,R):=\sup_{0<r\leq R} r^{\beta}\frac{|\mu|(B_r(x))}{|B_r(x)|} \,\,\mbox{for}\,\, \beta\in[0,n].$$
In order to characterize the mixed phenomena rigorously, we shall modify the classical maximal operators of solutions in a way that simultaneously accounts for local behavior within small neighbourhoods and long-range interactions.
Specifically, We
define the truncated nonlocal fractional maximal function with order $\beta$ of the gradient $Du$ by
$$\mathfrak{M}_{\beta,q_0}(Du)(x,R):=\sup_{0<r\leq R}r^{\beta}\left[ \left(\fint_{B_r(x)}|Du(y)|^{q_0}\operatorname{d}\!y\right)^{\frac{1}{q_0}}
+r^{-1-\sigma}\operatorname{Tail}(u-(u)_{B_r(x)};x,r)\right],$$
where $\sigma\geq 0$.
Let $\alpha\in[0,1]$, we further define the truncated nonlocal fractional sharp maximal function of order $\alpha$ for the solution $u$ and its gradient $Du$ by
$$\mathfrak{M}^{\sharp}_{\alpha,q_0}(u)(x,R):=\sup_{0<r\leq R}r^{-\alpha}\left[ \left(\fint_{B_r(x)}|u(y)-(u)_{B_r(x)}|^{q_0}\operatorname{d}\!y\right)^{\frac{1}{q_0}}+r^{-\sigma}\operatorname{Tail}(u- (u)_{B_r(x)};x,r)\right]$$
and
$$\mathfrak{M}^{\sharp}_{\alpha,q_0}(Du)(x,R):=\sup_{0<r\leq R}r^{-\alpha}\left[ \left(\fint_{B_r(x)}|Du(y)-(Du)_{B_r(x)}|^{q_0}\operatorname{d}\!y\right)^{\frac{1}{q_0}}+r^{-1-\sigma}\operatorname{ Tail}(u- (u)_{B_r(x)};x,r)\right],$$
respectively.
A direct consequence of the Poincar\'{e}'s inequality is that there exists a positive constant $C$ depending only on $n$ such that
\begin{equation}\label{Sharpm-M}
  \mathfrak{M}^{\sharp}_{\alpha,q_0}(u)(x,R)\leq C\mathfrak{M}_{1-\alpha,q_0}(Du)(x,R).
\end{equation}
We now present the corresponding pointwise estimates of nonlocal maximal operators, which are interesting for their own sake, provided that the coefficients are measurable, small $\operatorname{BMO}$ regular,
Dini-$\operatorname{VMO}$ regular, and weakened Dini-H\"{o}lder regular, respectively.

\begin{theorem}\label{Maxmeasure}
Let $u\in W^{1,p}_{\rm loc}(\Omega)\cap {\mathcal L}^{p-1}_{sp}(\mathbb{R}^n)$ be a weak solution to \eqref{eq1} under assumptions \eqref{growth1}--\eqref{ellip} with $p > 2-\frac{1}{n}$ and $s \in (0,1)$. Given $B_R(x)\Subset\Omega$, $q_0=\max\{1,p-1\}$ and $\sigma=0$. For any $\tilde{\alpha}\in(0,\alpha_m)$, there exists a positive constant $C$ depending only on $n$, $p$, $s$, $\Lambda$, $\tilde{\alpha}$ and $\operatorname{diam}(\Omega)$ such that
  \begin{eqnarray}\label{Maxpoint-Measurable}
   && \mathfrak{M}^{\sharp}_{\alpha,q_0}(u)(x,R)+\mathfrak{M}_{1-\alpha,q_0}(Du)(x,R)
   \leq C\left[ {\bf M}_{p-\alpha(p-1)}(\mu)(x,R)\right]^{\frac{1}{p-1}}\nonumber\\
   &&\qquad\qquad\quad+CR^{1-\alpha} \left[\left(\fint_{B_{R}(x)} |Du|^{q_0} \operatorname{d}\! \xi\right)^{\frac{1}{q_0}}+\frac{1}{R}\operatorname{Tail}(u-(u)_{B_R(x)};x,R)\right]
  \end{eqnarray}
  holds uniformly in $\alpha\in[0,\tilde{\alpha}]$, where
  $\alpha_m\in(0,1)$ is defined as in Lemma \ref{v-holder}.
\end{theorem}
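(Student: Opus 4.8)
The plan is to reduce the pointwise bound for the two modified maximal operators at $x$ to a single self-improving inequality on the concentric balls $B_r(x)$, $0<r\le R$, and then to run a geometric iteration uniform in $\alpha\in[0,\tilde\alpha]$. By the Poincar\'{e}-type estimate \eqref{Sharpm-M} it is enough to bound $\mathfrak{M}_{1-\alpha,q_0}(Du)(x,R)$. Writing, since $\sigma=0$,
\begin{equation*}
\Psi(r):=r\Bigl(\fint_{B_r(x)}|Du|^{q_0}\operatorname{d}\!\xi\Bigr)^{\frac{1}{q_0}}+\operatorname{Tail}\bigl(u-(u)_{B_r(x)};x,r\bigr),\qquad
\lambda(r):=\Bigl[\frac{|\mu|(B_r(x))}{r^{n-1}}\Bigr]^{\frac{1}{p-1}},
\end{equation*}
one has $\mathfrak{M}_{1-\alpha,q_0}(Du)(x,R)=\sup_{0<r\le R}r^{-\alpha}\Psi(r)$ and, up to a dimensional constant, $[{\bf M}_{p-\alpha(p-1)}(\mu)(x,R)]^{1/(p-1)}=\sup_{0<r\le R}r^{-\alpha}\bigl(r\lambda(r)\bigr)$, so that \eqref{Maxpoint-Measurable} is exactly the weighted maximal inequality $\sup_{0<r\le R}r^{-\alpha}\Psi(r)\le C\sup_{0<r\le R}r^{-\alpha}(r\lambda(r))+CR^{-\alpha}\Psi(R)$.

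The first ingredient is comparison. For $0<r\le R$ I would take $w\in u+W^{1,p}_0(B_r(x))$ solving $-\operatorname{div}\mathcal{A}(\cdot,Dw)-\mathcal{L}_\Phi w=0$ in $B_r(x)$ with $w\equiv u$ on $\R^n\setminus B_r(x)$. Testing the difference of the two weak formulations with $u-w$ and using that the monotonicity \eqref{mono} of $\Phi$ makes the nonlocal contribution nonnegative reduces the matter to the purely local comparison, which gives $(\fint_{B_r(x)}|Du-Dw|^{q_0})^{1/q_0}\le C\lambda(r)$ when $p\ge2$ (so $q_0=p-1$); when $2-\tfrac1n<p<2$ (so $q_0=1$) the corresponding estimate carries an additional factor $(\fint_{B_r(x)}(|Du|+|Dw|))^{2-p}$, which Young's inequality reabsorbs into $C\lambda(r)$ plus an arbitrarily small multiple of $r^{-1}\Psi(r)$ --- the only point at which the two ranges need separate treatment. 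Since $u\equiv w$ outside $B_r(x)$ and $u-w\in W^{1,q_0}_0(B_r(x))$, the zero-boundary Poincar\'{e} inequality gives $|(u)_{B_r(x)}-(w)_{B_r(x)}|\le Cr\lambda(r)$ and, after a dyadic decomposition of the annulus $B_r\setminus B_{\tau r}$, also $\operatorname{Tail}\bigl((u-w)-(u-w)_{B_{\tau r}(x)};x,\tau r\bigr)\le C(\tau)\,r\,\lambda(r)$ for any $\tau\in(0,1)$; thus the oscillation and the nonlocal tail of $w$ at scale $\tau r$ differ from those of $u$ only by such data terms.

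The second ingredient is the regularity of $w$: since $w$ solves the homogeneous equation with merely measurable coefficients, Lemma \ref{v-holder} supplies, for $0<\rho\le r$, the De Giorgi--Nash--Moser decay
\begin{equation*}
\Bigl(\fint_{B_\rho(x)}|w-(w)_{B_\rho(x)}|^{q_0}\Bigr)^{\frac{1}{q_0}}+\operatorname{Tail}\bigl(w-(w)_{B_\rho(x)};x,\rho\bigr)\le C\Bigl(\tfrac{\rho}{r}\Bigr)^{\alpha_m}\Bigl[\Bigl(\fint_{B_r(x)}|w-(w)_{B_r(x)}|^{q_0}\Bigr)^{\frac{1}{q_0}}+\operatorname{Tail}\bigl(w-(w)_{B_r(x)};x,r\bigr)\Bigr].
\end{equation*}
No gradient regularity of $w$ is available, and none is needed: because $1-\alpha>0$, the quantity entering $\Psi$ at scale $\rho$ is $\rho\bigl(\fint_{B_{2\rho}(x)}|Dw|^{q_0}\bigr)^{1/q_0}$, which the Caccioppoli inequality for $w$ controls by the left-hand side above. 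Combining this with the comparison estimate, the Poincar\'{e} bound $(\fint_{B_r(x)}|u-(u)_{B_r(x)}|^{q_0})^{1/q_0}\le C\Psi(r)$, and handling $\operatorname{Tail}(u-(u)_{B_{\tau r}(x)};x,\tau r)$ via the splitting $u=w+(u-w)$ --- the $w$-tail decaying by Lemma \ref{v-holder}, the $(u-w)$-tail being a data term as above --- I expect to arrive, for every $\tau\in(0,\tfrac14]$ and $0<r\le R$, at
\begin{equation*}
\Psi(\tau r)\le C_1\,\tau^{\alpha_m}\,\Psi(r)+C(\tau)\,r\,\lambda(r),
\end{equation*}
with $C_1$ depending only on $n,p,s,\Lambda$.

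To conclude I would iterate. Given $\tilde\alpha<\alpha_m$, pick $\alpha^{*}\in(\tilde\alpha,\alpha_m)$ and fix $\tau\in(0,\tfrac14]$ so small that $2C_1\tau^{\alpha_m}\le\tau^{\alpha^{*}}$ (choosing the small comparison parameter of the subquadratic case accordingly). Setting $\Psi_j:=\Psi(\tau^jR)$, the self-improving estimate becomes $\Psi_{j+1}\le\tau^{\alpha^{*}}\Psi_j+C(\tau)(\tau^jR)\lambda(\tau^jR)$; unrolling it, multiplying by $(\tau^jR)^{-\alpha}$, using $(\tau^iR)^{-\alpha}\bigl((\tau^iR)\lambda(\tau^iR)\bigr)\le\sup_{0<t\le R}t^{-\alpha}(t\lambda(t))$, and noting that the remaining weights form a geometric series of ratio $\tau^{\alpha^{*}-\alpha}\le\tau^{\alpha^{*}-\tilde\alpha}<1$ --- uniformly in $\alpha\in[0,\tilde\alpha]$ --- the sum converges and gives
\begin{equation*}
(\tau^jR)^{-\alpha}\Psi(\tau^jR)\le R^{-\alpha}\Psi(R)+C\bigl[{\bf M}_{p-\alpha(p-1)}(\mu)(x,R)\bigr]^{\frac{1}{p-1}}\qquad\text{for every }j\ge0\text{ and }\alpha\in[0,\tilde\alpha].
\end{equation*}
Interpolating between consecutive radii (over each $\tau$-annulus of scales $\Psi$ and $r^{-\alpha}$ vary by at most a factor $C(\tau)$) upgrades the left-hand side to $\sup_{0<r\le R}r^{-\alpha}\Psi(r)=\mathfrak{M}_{1-\alpha,q_0}(Du)(x,R)$, and \eqref{Sharpm-M} then bounds $\mathfrak{M}^{\sharp}_{\alpha,q_0}(u)(x,R)$ as well; since $R^{-\alpha}\Psi(R)=R^{1-\alpha}\bigl[(\fint_{B_R(x)}|Du|^{q_0})^{1/q_0}+R^{-1}\operatorname{Tail}(u-(u)_{B_R(x)};x,R)\bigr]$, this is precisely \eqref{Maxpoint-Measurable}. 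I expect the main obstacle to be the self-improving estimate itself: one has to show that, after subtracting the comparison map, the \emph{nonlocal tail} at the smaller scale $\tau r$ is governed purely by the $\tau^{\alpha_m}$-decay of the homogeneous oscillation and by data terms of size $\lambda$ --- precisely the balance that the modified fractional maximal functions $\mathfrak{M}^{\sharp}_{\alpha,q_0}$ and $\mathfrak{M}_{\beta,q_0}$ are engineered to encode --- the subquadratic comparison with its Young-inequality absorption being the secondary technical point.
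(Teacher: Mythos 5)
Your plan is essentially the paper's proof. The paper proves this theorem by the scheme you describe: reduce to $\mathfrak{M}_{1-\alpha,q_0}(Du)$ via \eqref{Sharpm-M}; compare $u$ on each ball with the solution $v$ of the homogeneous mixed Dirichlet problem (your $w$), using exactly the comparison of Lemma \ref{comparison:Du-Dv} with the same Young-inequality absorption of the $\chi_{\{p<2\}}$ term; exploit Caccioppoli and the De Giorgi--Nash--Moser decay of Lemma \ref{v-holder} (packaged in the paper as the Morrey-type Lemma \ref{Du-morrey}) together with a tail-shrinking estimate in the spirit of \cite[Lemma 4.12]{CSYZ24}; and finally iterate over geometric scales uniformly in $\alpha\in[0,\tilde\alpha]$ using $\tilde\alpha<\alpha_m$. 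The only real differences are in packaging: you unroll a geometric recursion $\Psi_{j+1}\le\tau^{\alpha^*}\Psi_j+C(\tau)(\tau^jR)\lambda(\tau^jR)$, whereas the paper fixes $\rho=r/L$ and absorbs $\tfrac12\mathfrak{M}_{1-\alpha,q_0}(Du)(x,R)$ into the left-hand side (your route has the minor advantage of not needing a priori finiteness of the maximal function), and you re-derive the comparison estimate instead of citing it. One point deserves care, precisely where you flag the main obstacle: when re-centering the tail from scale $r$ to scale $\tau r$, the far-field contribution of the constant shift $(u)_{B_r}-(w)_{B_{\tau r}}$ must not be estimated by the crude average comparison (which costs $\tau^{-n}$ and leaves a term of size $C(\tau)r^{(1-s)p'}\Psi(r)$, small only for small $r$); either control the shift by the local boundedness estimate for the homogeneous solution (Lemma \ref{lem:bdd}, constant independent of $\tau$) so that your clean one-step decay $\Psi(\tau r)\le C_1\tau^{\alpha_m}\Psi(r)+C(\tau)r\lambda(r)$ holds for all $r\le\operatorname{diam}(\Omega)$ (with $C_1$ also depending on $\operatorname{diam}(\Omega)$ through the non-scale-invariant tail), or do as the paper does in the proof of Theorem \ref{MaxsmallBMO}: first run the iteration for $R\le\bar R$ small and then remove the restriction by a direct estimate of the intermediate scales.
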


\begin{theorem}\label{MaxsmallBMO}
Let $u\in W^{1,p}_{\rm loc}(\Omega)\cap {\mathcal L}^{p-1}_{sp}(\mathbb{R}^n)$ be a weak solution to \eqref{eq1} under assumptions \eqref{growth1}--\eqref{ellip} with $p > 2-\frac{1}{n}$ and $s \in (0,1)$. Given $B_R(x)\Subset\Omega$, $q_0=\max\{1,p-1\}$, $m\in(0,1-s)$ and $\max\{0, \frac{m(2-p)}{p-1}\}<\sigma<\min\{\frac{1}{p-1},(1-s)p'\}$. For any $\tilde{\alpha}\in(0,1)$, if $\omega(r)$ satisfies small $\operatorname{BMO}$ regular condition \eqref{smallBMO} with $\delta\equiv\delta(n,p,s,\Lambda,\tilde{\alpha},\sigma,\operatorname{diam}(\Omega))>0$, then there exists a positive constant $C$ depending only on
$n,\,p,\,s,\,\Lambda,\,\tilde{\alpha},\,m,\,\sigma,\,\omega(\cdot)$ and $\operatorname{diam}(\Omega)$ such that
\begin{eqnarray}\label{Maxpoint-SmBMO}
   && \mathfrak{M}^{\sharp}_{\alpha,q_0}(u)(x,R)+\mathfrak{M}_{1-\alpha,q_0}(Du)(x,R)
   \leq C\left[ {\bf M}_{p-\alpha(p-1)}(\mu)(x,R)\right]^{\frac{1}{p-1}}\nonumber\\
   &&\qquad\qquad\quad+CR^{1-\alpha} \left[\left(\fint_{B_{R}(x)} |Du|^{q_0} \operatorname{d}\! \xi\right)^{\frac{1}{q_0}}+R^{-1-\sigma}\operatorname{Tail}(u-(u)_{B_R(x)};x,R)\right]
\end{eqnarray}
holds uniformly in $\alpha\in[0,\tilde{\alpha}]$.
\end{theorem}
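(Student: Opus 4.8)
The plan is to run a single dyadic iteration of comparison and excess--decay estimates, as in the proof of Theorem \ref{Maxmeasure}, the decisive new point being that the small $\operatorname{BMO}$ hypothesis \eqref{smallBMO} upgrades the autonomous reference problem from De Giorgi--Nash regularity to $C^{1,\beta_0}$ for some $\beta_0=\beta_0(n,p,\Lambda)\in(0,1)$, so that the gradient excess decays with an \emph{absolute} constant. Fix $x$, $R$, a ratio $\lambda\in(0,\tfrac14)$ to be chosen (depending on $n,p,s,\Lambda,\tilde\alpha,\sigma,\operatorname{diam}(\Omega)$), and set $r_j:=\lambda^jR$, $B_j:=B_{r_j}(x)$. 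Along this sequence we monitor the average--plus--tail quantity
\[
  E(r):=\left(\fint_{B_r(x)}|Du|^{q_0}\operatorname{d}\!y\right)^{1/q_0}+r^{-1-\sigma}\operatorname{Tail}\!\left(u-(u)_{B_r(x)};x,r\right)
\]
and the companion excess
\[
  \mathcal E(r):=\left(\fint_{B_r(x)}\left|Du-(Du)_{B_r(x)}\right|^{q_0}\operatorname{d}\!y\right)^{1/q_0}+r^{-1-\sigma}\operatorname{Tail}\!\left(u-(u)_{B_r(x)};x,r\right),
\]
recording that $\mathfrak{M}_{1-\alpha,q_0}(Du)(x,R)=\sup_{0<r\le R}r^{1-\alpha}E(r)$ and that the bound for $\mathfrak{M}^{\sharp}_{\alpha,q_0}(u)$ then follows by \eqref{Sharpm-M}. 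The goal is the pair of estimates, valid on each $B_j$,
\[
  \mathcal E(\lambda r)\le c_0\lambda^{\beta_0}\mathcal E(r)+c(\lambda)\Big[\mathrm{meas}(r)+[\omega(r)]^{\gamma}E(r)\Big],\qquad
  \big|(Du)_{B_{\lambda r}}-(Du)_{B_r}\big|\le c(\lambda)\,\mathcal E(r),
\]
where $c_0=c_0(n,p,\Lambda)$ is absolute, $\mathrm{meas}(r):=\big[\,|\mu|(B_r(x))\,r^{1-n}\,\big]^{1/(p-1)}$, and $\gamma=\gamma(n,p,s,\Lambda,m,\sigma)>0$.

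Both estimates stem from a two--step comparison. First, removing the measure: let $w$ solve $-\operatorname{div}\mathcal{A}(y,Dw)-\mathcal{L}_\Phi w=0$ in $B_j$ with $w=u$ on $\R^n\setminus B_j$; testing the difference equation with a bounded truncation of $u-w$ --- the classical measure--data device, valid since $p>2-\tfrac1n$ --- and using \eqref{growth1}, \eqref{ellipticity}, \eqref{mono}, \eqref{ellip}, one obtains $\big(\fint_{B_j}|Du-Dw|^{q_0}\big)^{1/q_0}\le c\,\mathrm{meas}(r_j)$, the nonlocal interaction being absorbed into $r_j^{-1-\sigma}\operatorname{Tail}(u-(u)_{B_j};x,r_j)$. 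Second, freezing the coefficient: on a concentric sub--ball let $v$ solve the autonomous equation $-\operatorname{div}(\mathcal{A})_{B_j}(Dv)=0$, with the nonlocal term of $w$ treated as a lower--order datum dominated by the same tail; the energy comparison of $w$ with $v$ costs $c\,[\omega(r_j)]^{\gamma}\big((\fint_{B_j}|Dw|^{q_0})^{1/q_0}+r_j^{-1-\sigma}\operatorname{Tail}(u-(u)_{B_j};x,r_j)\big)$, where the power $\gamma>0$ is extracted from the Gehring higher integrability of $Dw$ and exists \emph{precisely} because the window $\max\{0,\frac{m(2-p)}{p-1}\}<\sigma<\min\{\frac1{p-1},(1-s)p'\}$ is non-empty. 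Finally, $(\mathcal{A})_{B_j}$ being autonomous, $C^1$ and $p$--elliptic, Lemma \ref{v-holder} and the gradient excess--decay of Lemma \ref{exdecay-Dw-Lem} give the decay of the excess of $v$ with absolute constant $c_0$, together with $\sup_{B_{r_j/2}}|Dv|\le c\big(\fint_{B_j}|Dw|^{q_0}\big)^{1/q_0}$. Combining these --- comparing excesses on $B_j$ (no volume loss), then passing to $B_{j+1}$ (a \emph{fixed} volume factor, harmless once $\lambda$ is frozen), and splitting $\operatorname{Tail}(u-(u)_{B_{j+1}};x,r_{j+1})$ into the far part over $\R^n\setminus B_j$ (which regenerates $r_j^{-1-\sigma}\operatorname{Tail}(u-(u)_{B_j};x,r_j)$ with a surplus $\lambda^{p'(1-s)-\sigma}$, positive since $\sigma<(1-s)p'$) and the near annuli $B_j\setminus B_{j+1}$ (controlled by $c\,r_j(\fint_{B_j}|Du|^{q_0})^{1/q_0}$ via Poincar\'e) --- yields the $\mathcal E$--recursion, while the estimate on $|(Du)_{B_{\lambda r}}-(Du)_{B_r}|$ is an elementary comparison of gradient averages on $B_{\lambda r}\subset B_r$.

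To close the iteration, first fix $\lambda$ so small that $c_0\lambda^{\beta_0}\le\tfrac14$, after which $c(\lambda)$ is a fixed constant. Multiplying the $\mathcal E$--recursion by $r_j^{1-\alpha}$, using $r_j^{1-\alpha}\le r_{j-1}^{1-\alpha}$ and the identity $r_{j-1}^{1-\alpha}\,\mathrm{meas}(r_{j-1})=\big[\,r_{j-1}^{\,p-\alpha(p-1)-n}|\mu|(B_{j-1})\,\big]^{1/(p-1)}\le c\,\big[{\bf M}_{p-\alpha(p-1)}(\mu)(x,R)\big]^{1/(p-1)}$, the twisted quantities $\widetilde{\mathcal E}_j:=r_j^{1-\alpha}\mathcal E(r_j)$ and $\widetilde E_j:=r_j^{1-\alpha}E(r_j)$ satisfy $\widetilde{\mathcal E}_j\le\tfrac14\widetilde{\mathcal E}_{j-1}+c(\lambda)\big(\big[{\bf M}_{p-\alpha(p-1)}(\mu)(x,R)\big]^{1/(p-1)}+[\omega(r_{j-1})]^{\gamma}\widetilde E_{j-1}\big)$; iterating the second recursion yields $|(Du)_{B_j}|\le E(R)+c(\lambda)\sum_{k<j}\mathcal E(r_k)$, hence $\widetilde E_j\le r_j^{1-\alpha}E(R)+c(\lambda)\sum_{k\le j}\lambda^{(j-k)(1-\alpha)}\widetilde{\mathcal E}_k$, where the geometric factor $\sum_{m\ge0}\lambda^{m(1-\alpha)}$ is bounded by $(1-\lambda^{1-\tilde\alpha})^{-1}$ uniformly in $\alpha\le\tilde\alpha$. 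Choosing $\delta=\delta(n,p,s,\Lambda,\tilde\alpha,\sigma,\operatorname{diam}(\Omega))$ so small that $c(\lambda)\delta^{\gamma}$ absorbs the coupling on all scales where $\omega(r_k)\le\delta$ --- by \eqref{smallBMO} all but finitely many, the exceptional count depending on $\omega(\cdot)$ --- a routine coupled induction (geometric on the small scales, an $\omega(\cdot)$--dependent constant on the finitely many large ones) gives $\sup_j\widetilde E_j\le C\big[{\bf M}_{p-\alpha(p-1)}(\mu)(x,R)\big]^{1/(p-1)}+C\,R^{1-\alpha}E(R)$. Filling the intermediate radii turns this into the bound on $\sup_{0<r\le R}r^{1-\alpha}E(r)=\mathfrak{M}_{1-\alpha,q_0}(Du)(x,R)$, namely \eqref{Maxpoint-SmBMO}, and \eqref{Sharpm-M} supplies the $\mathfrak{M}^{\sharp}_{\alpha,q_0}(u)$ term. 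Since $\lambda,\delta$ were fixed before $\alpha$, and every $\alpha$--dependence sits in factors $\lambda^{\tilde\alpha-\alpha}\le1$, in $(1-\lambda^{1-\tilde\alpha})^{-1}$, or in the index $p-\alpha(p-1)$ of the measure maximal function --- matched to the Duhamel weight $\lambda^{(j-k)(1-\alpha)}$ --- the constant $C$ is uniform in $\alpha\in[0,\tilde\alpha]$.

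The principal obstacle is the joint control of the local excess and the nonlocal tail under the \emph{single} constraint $\max\{0,\frac{m(2-p)}{p-1}\}<\sigma<\min\{\frac1{p-1},(1-s)p'\}$: its lower endpoint is forced by the need for a strictly positive power $\gamma$ of $\omega$ to survive the frozen--coefficient comparison in the subquadratic range (where $|Dw|^{p-2}$ degenerates at $Dw=0$ and one must pass through the higher integrability of $Dw$ with a careful choice of the auxiliary parameter $m\in(0,1-s)$), whereas its upper endpoint is forced by the need for the tail self--improvement factor $\lambda^{p'(1-s)-\sigma}$ to be a genuine gain; checking that this interval is non-empty for every $p>2-\tfrac1n$, $s\in(0,1)$ and that $\gamma,m$ can be tuned inside it is the technical heart. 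A secondary difficulty is the uniformity in $\alpha$, which compels running the entire scheme with the twisted quantities $r^{1-\alpha}E(r)$, $r^{1-\alpha}\mathcal E(r)$ and keeping $\lambda,\delta$ independent of $\alpha$ --- feasible precisely because $\tilde\alpha<1$ keeps $1-\tilde\alpha>0$, hence $(1-\lambda^{1-\tilde\alpha})^{-1}$ finite.
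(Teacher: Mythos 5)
Your architecture is sound, but it takes a genuinely different and heavier route than the paper. You run a Campanato-type scheme: a one-step gradient excess-decay estimate (via the $C^{1,\alpha_M}$ theory of the frozen autonomous problem, Lemma \ref{exdecay-Dw-Lem}) coupled with a recursion for the gradient averages and a Duhamel-type summation, closed by letting the small $\operatorname{BMO}$ constant $\delta^{\gamma}$ absorb the coupling. This is essentially the mechanism the paper reserves for the Dini-$\operatorname{VMO}$/Dini-H\"older results (Theorems \ref{MaxDinivmo}, \ref{MaxDiniholder}, via Lemma \ref{Lem:Du-camp}). The paper's proof of Theorem \ref{MaxsmallBMO} is much lighter: it never uses excess decay at all. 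It takes the Morrey-type estimate of Lemma \ref{Du-morrey2-3} (built from Lemmas \ref{comparison:Du-Dv} and \ref{comparison:Dv-Dw}, with $q=2p$ admissible by Lemma \ref{Dv-intergradientlem} under \eqref{smallBMO}), multiplies by $\rho^{1-\alpha}$ with $\rho=r/L$, so that the decay of the weighted quantity comes solely from the factor $(1/L)^{1-\alpha}\le(1/L)^{1-\tilde\alpha}$ (this is exactly where $\tilde\alpha<1$ enters) and from $(1/L)^{p'-\alpha-\sigma}$ for the tail, then absorbs $\tfrac12\,\mathfrak M_{1-\alpha,q_0}(Du)(x,R)$ from the right-hand side into the left (see \eqref{Du-maxBMO-1}--\eqref{Du-maxBMO-3}), and finally removes the smallness restriction on $R$ by a crude large-radius estimate. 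What your route buys is a genuine excess bound at every scale (hence information closer to a sharp maximal function of $Du$, which is more than \eqref{Maxpoint-SmBMO} requires); what the paper's route buys is brevity and the avoidance of the a priori finiteness/finite-induction bookkeeping that your coupled iteration needs.

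Two inaccuracies in your one-step estimates should be repaired, though neither breaks the scheme. First, the measure-removal comparison $\bigl(\fint_{B_j}|Du-Dw|^{q_0}\bigr)^{1/q_0}\le c\,\mathrm{meas}(r_j)$ is only valid for $p\ge 2$; in the range $2-\tfrac1n<p<2$ the correct estimate (Lemma \ref{comparison:Du-Dv}, i.e.\ \eqref{comest-Du-Dv}) carries the additional degenerate term $\chi_{\{p<2\}}\,\frac{|\mu|(B_r)}{r^{n-1}}\bigl(\fint_{B_r}|Du|\bigr)^{2-p}$, which must be split by Young's inequality into a small multiple of $E(r)$ (absorbed by your coupled induction, exactly as the paper absorbs it into the maximal function) plus a constant times $\mathrm{meas}(r)$. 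Second, the far-part tail self-improvement factor is $\lambda^{p'-1-\sigma}$, positive because $\sigma<\tfrac1{p-1}$; the exponent $p'(1-s)-\sigma$ you quote governs the other error terms (those carrying $r^{(1-s)p'}$ from the annulus/sup estimates), so both halves of the hypothesis on $\sigma$ are used, but not where you placed them. With these corrections your argument closes and yields \eqref{Maxpoint-SmBMO} uniformly in $\alpha\in[0,\tilde\alpha]$.
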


\begin{theorem}\label{MaxDinivmo}
Let $u\in W^{1,p}_{\rm loc}(\Omega)\cap {\mathcal L}^{p-1}_{sp}(\mathbb{R}^n)$ be a weak solution to \eqref{eq1} under assumptions \eqref{growth1}--\eqref{ellip} with $s \in (0,1)$, and let $[\omega(r)]^{\sigma_1'}$ satisfy Dini-$\operatorname{VMO}$ regular condition \eqref{Dinivmo}. Given $B_R(x)\Subset\Omega$, $m\in(0,1-s)$, $\max\{sp'-1,0\}<a<\sigma_3$ and $\frac{1}{p-1}-a<\sigma<\min\{\frac{1}{p-1},(1-s)p'\}$.
\begin{itemize}
\item If $2-\frac{1}{n}<p\leq 2$, then there is a constant $C\equiv C(n, p, s, \Lambda,  m, a, \sigma, \omega(\cdot), \operatorname{diam}(\Omega))>0$ such that
    \begin{eqnarray}\label{Maxpoint-Dinivmosub}
   && \mathfrak{M}^{\sharp}_{\alpha,1}(u)(x,R)+\mathfrak{M}_{1-\alpha,1}(Du)(x,R)
   \leq C\left[{\bf{I}}^{\mu}_{p-\alpha(p-1)}(x,R)\right]^{\frac{1}{p-1}}\nonumber\\
   &&\qquad\qquad\quad+CR^{1-\alpha} \left[\fint_{B_{R}(x)} |Du| \operatorname{d}\! \xi+R^{-1-\sigma}\operatorname{Tail}(u-(u)_{B_R(x)};x,R)\right]
  \end{eqnarray}
  holds uniformly in $\alpha\in[0,1]$.
  \item If $p>2$, then there exists a positive constant $C\equiv C(n, p, $s$, \Lambda, m, a, \sigma, \omega(\cdot)\operatorname{diam}(\Omega))$ such that
    \begin{eqnarray}\label{Maxpoint-Dinivmosup}
   && \mathfrak{M}^{\sharp}_{\alpha,p-1}(u)(x,R)+\mathfrak{M}_{1-\alpha,p-1}(Du)(x,R)
   \leq C{\bf{W}}^{\mu}_{1-\frac{\alpha (p-1)}{p},p}(x,R)\nonumber\\
   &&\qquad\qquad+CR^{1-\alpha} \left[\left(\fint_{B_{R}(x)} |Du|^{p-1} \operatorname{d}\! \xi\right)^{\frac{1}{p-1}}+R^{-1-\sigma}\operatorname{Tail}(u-(u)_{B_R(x)};x,R)\right]
  \end{eqnarray}
  holds uniformly in $\alpha\in[0,1]$.
\end{itemize}
\end{theorem}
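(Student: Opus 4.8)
\emph{Setup and reduction.} The plan is to run the comparison-and-iteration scheme that underlies Theorems \ref{Maxmeasure} and \ref{MaxsmallBMO}, the decisive new feature being that the Dini-$\operatorname{VMO}$ hypothesis \eqref{Dinivmo} on $[\omega(\cdot)]^{\sigma_1'}$ turns the total coefficient error over dyadic scales into a convergent series; this is precisely what lets the exponent reach the endpoint $\alpha=1$, because once the gradient oscillations of the comparison maps accumulate to something finite the frozen-problem scale-gain is the only remaining driving term, and that gain stays strictly effective for every $\alpha\in[0,1]$. Fix $\alpha\in[0,1]$, a ball $B_R(x)\Subset\Omega$, and set $r_j=2^{-j}R$, $B_j=B_{r_j}(x)$; all quantities are handled with $q_0$-averages, $q_0=\max\{1,p-1\}$, since in the subquadratic regime $2-\tfrac1n<p\le2$ (where $q_0=1$) the measure-data comparison estimates are available only below the energy exponent, and the dependence on $\alpha$ will be uniform because $\alpha$ enters solely through the scaling weights $r^{-\alpha}$ and $r^{1-\alpha}$. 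By Poincar\'{e}'s inequality, \eqref{Sharpm-M} reduces the whole left-hand side to the operator $\mathfrak{M}_{1-\alpha,q_0}(Du)$, so it suffices to estimate $\sup_{0<r\le R}\big[r^{1-\alpha}(\fint_{B_r}|Du|^{q_0})^{1/q_0}+r^{-\alpha-\sigma}\operatorname{Tail}(u-(u)_{B_r};x,r)\big]$.

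\emph{Comparison chain and decay.} On a generic ball $B_r=B_r(x)$ with $r\le R$ I first remove the measure: let $v$ solve $-\operatorname{div}\mathcal{A}(\cdot,Dv)-\mathcal{L}_\Phi v=0$ in $B_r$ with $v=u$ on $\R^n\setminus B_r$, and use the measure-data comparison estimate — subquadratic for $p\le2$, superquadratic for $p>2$ — to bound $(\fint_{B_r}|Du-Dv|^{q_0})^{1/q_0}$ by a quantity of the form $[r^{1-n}|\mu|(B_r)]^{1/(p-1)}$. Then I freeze the local leading part: let $w$ solve the homogeneous equation on a slightly smaller ball with $\mathcal{A}(\cdot,z)$ replaced by its average $(\mathcal{A}(\cdot,z))_{B_r}$ (the nonlocal kernel staying merely measurable), so that by Definition \ref{coef} the error between $v$ and $w$ is controlled by $[\omega(r)]^{\sigma_1'}$ times a suitable excess, the power $\sigma_1'$ of \eqref{exponent2} being chosen precisely so that it is $[\omega(r)]^{\sigma_1'}$ that \eqref{Dinivmo} makes Dini-summable. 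For $w$ the local term dominates the nonlocal one (here $p>sp$), so the gradient excess-decay estimate — Lemma \ref{exdecay-Dw-Lem}, with the nonlocal contribution shifted into the tail through the parameters $m$, $a$, $\sigma$ — gives $(\fint_{B_{\theta r}}|Dw-(Dw)_{B_{\theta r}}|^{q_0})^{1/q_0}\le C\theta^{\alpha_M}\big[(\fint_{B_r}|Dw-(Dw)_{B_r}|^{q_0})^{1/q_0}+\text{tail correction}\big]$ for $0<\theta\le1$, while Lemma \ref{v-holder} supplies the companion $u$-level statement.

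\emph{Iteration and reassembly of the potentials.} Combining the three steps on the balls $B_j$ yields, for the weighted oscillation $t_j:=r_j^{1-\alpha}(\fint_{B_j}|Du-(Du)_{B_j}|^{q_0})^{1/q_0}+r_j^{-\alpha-\sigma}\operatorname{Tail}(u-(u)_{B_j};x,r_j)$, a recursion of the schematic form $t_{j+1}\le C\theta^{1-\alpha+\alpha_M}t_j+C[\omega(r_j)]^{\sigma_1'}\sup_{i\le j}t_i+C\,r_{j+1}^{1-\alpha}[r_j^{1-n}|\mu|(B_j)]^{1/(p-1)}$, coupled with a companion bound that controls $\operatorname{Tail}(u-(u)_{B_j};x,r_j)$ by its value at the top scale plus the intermediate oscillations — this is where the window $m\in(0,1-s)$, $\max\{sp'-1,0\}<a<\sigma_3$, $\tfrac1{p-1}-a<\sigma<\min\{\tfrac1{p-1},(1-s)p'\}$ is used. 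Since $1-\alpha+\alpha_M>0$ for every $\alpha\le1$, one fixes $\theta$ small so that $C\theta^{1-\alpha+\alpha_M}\le\tfrac12$ uniformly in $\alpha$; summing in $j$, absorbing the coefficient error at small scales via the convergence of the tail of $\sum_j[\omega(r_j)]^{\sigma_1'}$ and paying a finite $\omega(\cdot)$-dependent constant at the finitely many coarse scales, one obtains $\sum_{j\ge0}t_j\le C\,t_0+C\sum_{j\ge0}r_j^{1-\alpha}[r_j^{1-n}|\mu|(B_j)]^{1/(p-1)}$. A telescoping estimate, using $(r_j/r_k)^{1-\alpha}\le1$ for $k\le j$, then bounds $r^{1-\alpha}|(Du)_{B_r}|$ by $R^{1-\alpha}|(Du)_{B_R}|+C\sum_j t_j$, and hence the desired supremum by $R^{1-\alpha}\big[(\fint_{B_R}|Du|^{q_0})^{1/q_0}+R^{-1-\sigma}\operatorname{Tail}(u-(u)_{B_R};x,R)\big]$ plus the measure series. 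Finally $r_j^{1-\alpha}[r_j^{1-n}|\mu|(B_j)]^{1/(p-1)}=c\,[r_j^{p-\alpha(p-1)}|\mu|(B_j)/|B_j|]^{1/(p-1)}$, so when $2-\tfrac1n<p\le2$, where $1/(p-1)\ge1$ and thus $\sum_j a_j^{1/(p-1)}\le\big(\sum_j a_j\big)^{1/(p-1)}$, the series is dominated by $\big[{\bf{I}}^{\mu}_{p-\alpha(p-1)}(x,R)\big]^{1/(p-1)}$, which gives \eqref{Maxpoint-Dinivmosub}; when $p>2$ one keeps the power $1/(p-1)<1$ inside the sum, recovering the dyadic form of ${\bf{W}}^{\mu}_{1-\frac{\alpha(p-1)}{p},p}(x,R)$ and hence \eqref{Maxpoint-Dinivmosup}.

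\emph{Main obstacle.} The crux is the iteration. One must verify that the coefficient-error term $\sum_j[\omega(r_j)]^{\sigma_1'}\sup_{i\le j}t_i$ is genuinely absorbable — which forces the precise value of the power $\sigma_1'$ and exploits the \emph{tail} of the Dini series, not merely its finiteness — and that this works uniformly in $\alpha\in[0,1]$, including the borderline $\alpha=1$, where $\mathfrak{M}_{0,q_0}(Du)$ is essentially the local maximal function of $Du$ and the argument must extract, from the Dini-$\operatorname{VMO}$ hypothesis alone, just enough decay of the gradient oscillations to make $\sum_j|(Du)_{B_{j+1}}-(Du)_{B_j}|$ converge; this is the single point where Theorem \ref{MaxDinivmo} genuinely improves on Theorems \ref{Maxmeasure}--\ref{MaxsmallBMO}. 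The second, genuinely mixed difficulty is the nonlocal tail: $\operatorname{Tail}(u-(u)_{B_j};x,r_j)$ couples the estimate at scale $r_j$ to every coarser scale, so the companion tail recursion has to be run in parallel with the gradient recursion, and closing it is exactly what pins down the admissible ranges of $m$, $a$ and $\sigma$; finally, because in the subquadratic case every comparison inequality lives below the energy exponent, the whole iteration must be carried through $q_0$-averages, with the passage to the clean right-hand side performed only at the end.
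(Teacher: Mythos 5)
Your route is the paper's route: reduce via \eqref{Sharpm-M}, compare $u\to v\to w$, run a dyadic Campanato-type iteration in which the Dini-$\operatorname{VMO}$ hypothesis makes the coefficient errors summable, run a parallel tail recursion, and split the measure series into Riesz ($p\le 2$, using $\sum_j a_j^{1/(p-1)}\le(\sum_j a_j)^{1/(p-1)}$) versus Wolff ($p>2$). The gap is in the central recursion, and it sits exactly at the endpoint you are trying to reach. The $v$-to-$w$ comparison (Lemma \ref{comparison:Dv-Dw-Dinivmo}, estimate \eqref{comest-Dv-Dw-Dinivmo}) multiplies $[\omega(r_j)]^{\sigma_1'}$ by the \emph{full average} $\bigl(\fint_{B_j}|Dv|^{q_0}\bigr)^{1/q_0}$ plus a tail factor, not by an oscillation. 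After reassembly, the error at scale $r_j$ is $[\omega(r_j)]^{\sigma_1'}\,r_j^{1-\alpha}\bigl[(\fint_{B_j}|Du|^{q_0})^{1/q_0}+\cdots\bigr]$, and for $\alpha$ near or equal to $1$ the weighted average $r_j^{1-\alpha}|(Du)_{B_j}|$ is \emph{not} controlled by $\sup_{i\le j}t_i$: the telescoping factors $(r_j/r_i)^{1-\alpha}$ give no decay at $\alpha=1$, so the sum of all coarser oscillations, not their supremum, enters. Hence your schematic recursion $t_{j+1}\le C\theta^{1-\alpha+\alpha_M}t_j+C[\omega(r_j)]^{\sigma_1'}\sup_{i\le j}t_i+\cdots$ is not what the comparison lemmas deliver. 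The paper resolves precisely this by a separate induction on the averages: with $\tilde K_j=|(Du)_{\mathcal B_j}|$ it proves the uniform bound $R_j^{1-\alpha}\tilde K_j\le M$ (display \eqref{max-diniVMO-4}), where $M$ contains, besides the data and the potential, an $\varepsilon$-fraction of $\mathfrak M_{1-\alpha,q_0}(Du)(x,R)$ that is absorbed only at the very end. You either need this device, or you must carry $\sum_{i\le j}t_i$ plus the top-scale average through the absorption and use the smallness of the Dini tail there; as written, neither is done, even though your ``Main obstacle'' paragraph correctly identifies the convergence of $\sum_j|(Du)_{\mathcal B_{j+1}}-(Du)_{\mathcal B_j}|$ as the crux.

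Two further omissions. First, in the subquadratic range the measure-data comparison (Lemma \ref{comparison:Du-Dv}) is not of the pure form $[r^{1-n}|\mu|(B_r)]^{1/(p-1)}$: it carries the extra term $\chi_{\{p<2\}}\,\frac{|\mu|(B_{2r})}{r^{\,n-1}}\bigl(\fint_{B_{2r}}|Du|\bigr)^{2-p}$, and handling it (Young's inequality, producing again an $\varepsilon\,\mathfrak M_{1-\alpha,q_0}(Du)$ to be absorbed) is part of why the absorption step is needed at all; your proposal silently drops it. Second, the availability of the improved exponent $\sigma_1'$ (rather than $\sigma_1$) in the $v$--$w$ comparison is not automatic: it rests on the local Lipschitz bound for $v$ under the Dini-$\operatorname{VMO}$ condition (Lemma \ref{Dv-L^infty}), which should be invoked. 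Apart from these points --- all reparable within your framework, since the Campanato decay packaged in Lemma \ref{Lem:Du-camp} is exactly the paper's recursion input \eqref{max-diniVMO-A_i} --- your tail recursion, the uniform choice of the decay parameter via $\alpha_M$, the removal of the smallness restriction on $R$, and the Riesz/Wolff dichotomy all match the paper's argument.
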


\begin{theorem}\label{MaxDiniholder}
Let $u\in W^{1,p}_{\rm loc}(\Omega)\cap {\mathcal L}^{p-1}_{sp}(\mathbb{R}^n)$ be a weak solution to \eqref{eq1} under assumptions \eqref{growth1}--\eqref{ellip} with $s \in (0,1)$.
 Given $B_R(x)\Subset\Omega$, $m\in(0,1-s)$, $\max\{sp'-1,0\}<a<\sigma_3$ and $\frac{1}{p-1}-a<\sigma<\min\{\frac{1}{p-1},(1-s)p'\}$. Assume that the coefficient $\omega(\cdot)$ satisfies
 \begin{equation}\label{Dinisup}
     \sup_{0<r\leq R}\frac{[\omega(r)]^{\sigma_1'}}{r^{\tilde{\alpha}}}\leq C_\omega
   \end{equation}
for some $\tilde{\alpha}\in[0,\alpha_0)$ and $C_\omega>0$, where $\alpha_0:=\min\{\alpha_M,\,\sigma_2,\,\sigma_3+\sigma-\frac{1}{p-1},\,\frac{1}{p-1}-\sigma,\,p'(1-s)-\sigma\}$.
\begin{itemize}
  \item If $2-\frac{1}{n}<p\leq 2$,
   then there exists a positive constant $C\equiv C(n, p, s, \Lambda,
    \tilde{\alpha},  m, a, \sigma, \omega(\cdot), \operatorname{diam}(\Omega),C_\omega)$ such that
    \begin{eqnarray}\label{MaxDiniholdersub}
   && \mathfrak{M}^{\sharp}_{\alpha,1}(Du)(x,R)
   \leq C\left[ {\bf M}_{1-\alpha}(\mu)(x,R)\right]^{\frac{1}{p-1}}+C\left[{\bf{I}}^{\mu}_{1}(x,R)\right]^{\frac{1}{p-1}}\nonumber\\
   &&\qquad\qquad\quad+CR^{-\alpha} \left[\fint_{B_{R}(x)} |Du| \operatorname{d}\! \xi+R^{-1-\sigma}\operatorname{Tail}(u-(u)_{B_R(x)};x,R)\right]
  \end{eqnarray}
  holds uniformly in $\alpha\in[0,\tilde{\alpha}]$, where
  $\alpha_M\in(0,1)$ is given by Lemma \ref{exdecay-Dw-Lem}.
  \item If $p> 2$,
   then there is a constant $C\equiv C(n, p, s, \Lambda, \tilde{\alpha},  m, a, \sigma,\omega(\cdot), \operatorname{diam}(\Omega),C_\omega)>0$ such that
    \begin{eqnarray}\label{MaxDiniholdersup}
   && \mathfrak{M}^{\sharp}_{\alpha,p-1}(Du)(x,R)
   \leq C\left[ {\bf M}_{1-\alpha(p-1)}(\mu)(x, R)\right]^{\frac{1}{p-1}}+C{\bf{W}}^{\mu}_{\frac{1}{p},p}(x,R)\nonumber\\
   &&\qquad\qquad+CR^{-\alpha} \left[\left(\fint_{B_{R}(x)} |Du|^{p-1} \operatorname{d}\! \xi\right)^{\frac{1}{p-1}}+R^{-1-\sigma}\operatorname{Tail}(u-(u)_{B_R(x)};x,R)\right]
  \end{eqnarray}
  holds uniformly in $\alpha\in[0,\tilde{\alpha}]$.
\end{itemize}
\end{theorem}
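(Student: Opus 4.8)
The plan is to derive Theorem~\ref{MaxDiniholder} from a two--tier comparison scheme combined with an excess--decay iteration, carried out separately in the subquadratic range $2-\tfrac1n<p\le 2$ (where $q_0=1$ and the model datum is the Riesz potential ${\bf I}^\mu_1$) and the superquadratic range $p>2$ (where $q_0=p-1$ and the model datum is the Wolff potential ${\bf W}^\mu_{1/p,p}$). This result sits at the top of the hierarchy of maximal--operator estimates in the paper: the comparison lemmas and the gradient decay estimate of Lemma~\ref{exdecay-Dw-Lem} for the frozen homogeneous problem are those already used for Theorems~\ref{Maxmeasure}--\ref{MaxDinivmo}, and it is the strengthened coefficient hypothesis \eqref{Dinisup} that upgrades control of $\mathfrak M_{1-\alpha,q_0}(Du)$ to control of the sharp maximal function $\mathfrak M^\sharp_{\alpha,q_0}(Du)$ of the gradient itself.

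\emph{Step 1 (comparison).} Fix $x$, a radius $r\le R$, write $B_\rho=B_\rho(x)$, and introduce two comparison maps on dyadically nested balls: $w$, solving the measure--free mixed equation $-\operatorname{div}\mathcal{A}(\cdot,Dw)-\mathcal{L}_\Phi w=0$ in $B_r$ with $w=u$ on $\R^n\setminus B_r$, and $v$, solving the frozen--coefficient equation $-\operatorname{div}\bar{\mathcal A}_{B_r}(Dv)-\mathcal{L}_\Phi v=0$ in $B_{r/2}$ with $v=w$ on $\R^n\setminus B_{r/2}$, where $\bar{\mathcal A}_{B_r}(z):=\fint_{B_r}\mathcal{A}(y,z)\operatorname{d}\!y$. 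Using \eqref{growth1}--\eqref{ellip}, the Sobolev--Poincar\'e and energy inequalities, and the monotonicity of $V(z)=|z|^{(p-2)/2}z$ in the subquadratic regime, one establishes two single--scale comparison bounds: the first controls $\bigl(\fint_{B_r}|Du-Dw|^{q_0}\operatorname{d}\!y\bigr)^{1/q_0}$ by a Wolff/Riesz--type building block $D(r)$ of the datum $\mu$ on $B_r$ (it is here that the subquadratic case gains the linear Riesz scaling, whence the appearance of ${\bf I}^\mu_1$ rather than ${\bf W}^\mu_{1/p,p}$ when $p\le 2$); the second controls $\bigl(\fint_{B_{r/2}}|Dw-Dv|^{q_0}\operatorname{d}\!y\bigr)^{1/q_0}$ by $[\omega(r)]^{\sigma_1'}$ times a gradient excess of $w$, hence of $u$, on $B_r$, with $\sigma_1'$ the exponent of \eqref{exponent2}. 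Throughout, the nonlocal operator contributes only tail terms of the form $r^{-1-\sigma}\operatorname{Tail}(u-(u)_{B_r};x,r)$: the constraint $\sigma<(1-s)p'$ keeps them finite across scales while the lower bounds $\max\{sp'-1,0\}<a<\sigma_3$, $\tfrac1{p-1}-a<\sigma$ and $m\in(0,1-s)$ make the resulting recursion self--improving.

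\emph{Step 2 (decay, recursion, iteration).} Applying the gradient H\"older decay of Lemma~\ref{exdecay-Dw-Lem} to $v$, i.e.\ $\bigl(\fint_{B_{\theta r}}|Dv-(Dv)_{B_{\theta r}}|^{q_0}\bigr)^{1/q_0}\le C\theta^{\alpha_M}\bigl(\fint_{B_{r/2}}|Dv|^{q_0}\bigr)^{1/q_0}$ modulo a tail term, for $\theta\in(0,\tfrac14)$, and combining it with Step~1, one obtains for
\[
E(r):=\Bigl(\fint_{B_r}|Du-(Du)_{B_r}|^{q_0}\operatorname{d}\!y\Bigr)^{1/q_0}+r^{-1-\sigma}\operatorname{Tail}(u-(u)_{B_r};x,r)
\]
a recursive inequality of the shape
\[
E(\theta r)\le C\theta^{\alpha_M}E(r)+C[\omega(r)]^{\sigma_1'}\bigl(E(r)+|(Du)_{B_r}|\bigr)+C\,D(r).
\]
Iterating this from $R$ down to $r$, multiplying by $r^{-\alpha}$ and taking $\sup_{0<r\le R}$ gives the bound on $\mathfrak M^\sharp_{\alpha,q_0}(Du)(x,R)$: the factor $\theta^{\alpha_M}$ absorbs the first term because $\alpha\le\tilde\alpha<\alpha_0\le\alpha_M$; the averages $|(Du)_{B_r}|$ are estimated, uniformly in $r\le R$, by Theorem~\ref{MaxDinivmo} used with $\alpha=1$ — its Dini--$\operatorname{VMO}$ hypothesis being weaker than \eqref{Dinisup} — which feeds in exactly the potential term $[{\bf I}^\mu_1]^{1/(p-1)}$ (resp.\ ${\bf W}^\mu_{1/p,p}$) of \eqref{MaxDiniholdersub} (resp.\ \eqref{MaxDiniholdersup}) together with the boundary excess $(\fint_{B_R}|Du|^{q_0})^{1/q_0}+R^{-1-\sigma}\operatorname{Tail}(u-(u)_{B_R};x,R)$; the coefficient sums $\sum_j[\omega(\theta^jR)]^{\sigma_1'}\theta^{-j\alpha}\le C_\omega\sum_j\theta^{j(\tilde\alpha-\alpha)}$ converge uniformly in $\alpha\in[0,\tilde\alpha]$ by \eqref{Dinisup}, and their Dini summability permits reabsorbing the remaining $E$--contributions at small scales, the finitely many larger scales being swallowed by the constant $C=C(\dots,\omega(\cdot),C_\omega)$; finally $\sum_j D(\theta^jR)$, after the $r^{-\alpha}$ weighting, telescopes into $[{\bf M}_{1-\alpha}(\mu)(x,R)]^{1/(p-1)}+[{\bf I}^\mu_1(x,R)]^{1/(p-1)}$ when $p\le 2$ (using $\sum_j a_j^{1/(p-1)}\le(\sum_j a_j)^{1/(p-1)}$, valid since $\tfrac1{p-1}\ge 1$) and into $[{\bf M}_{1-\alpha(p-1)}(\mu)(x,R)]^{1/(p-1)}+{\bf W}^\mu_{1/p,p}(x,R)$ when $p>2$. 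This establishes \eqref{MaxDiniholdersub} and \eqref{MaxDiniholdersup}.

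\emph{Main obstacle.} The delicate point is the second comparison estimate of Step~1: one must produce $\fint_{B_{r/2}}|Dw-Dv|^{q_0}\lesssim[\omega(r)]^{\sigma_1'}(\text{excess})$ with an exponent $\sigma_1'$ large enough that the weakened Dini--H\"older hypothesis \eqref{Dinisup} — in which $\tilde\alpha$ is only required below the composite threshold $\alpha_0=\min\{\alpha_M,\sigma_2,\sigma_3+\sigma-\tfrac1{p-1},\tfrac1{p-1}-\sigma,p'(1-s)-\sigma\}$ rather than below $1$ — still forces $\sum_j[\omega(\theta^jR)]^{\sigma_1'}\theta^{-j\alpha}<\infty$. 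In the subquadratic range this requires running the freezing argument through the nondegenerate regularization of $\mathcal{A}$ (which is $C^1$ only away from the origin) and the $V$--function, losing a power of $p-1$ and thereby generating both $\sigma_1'$ of \eqref{exponent2} and the threshold $\alpha_0$. A secondary difficulty is to keep every nonlocal tail contribution of the comparison maps controlled by $r^{-1-\sigma}\operatorname{Tail}(u-(u)_{B_r};x,r)$ uniformly along the iteration, for which the precise interplay of $\sigma$, $a$, $m$ and $(1-s)p'$ in the hypotheses is exactly what is needed; once the recursion is set up with the correct exponents, the iteration and summation of Step~2 are routine.
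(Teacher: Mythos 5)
Your overall architecture matches the paper's: a two-tier comparison, the excess decay of Lemma \ref{exdecay-Dw-Lem} (packaged in the paper as the Campanato-type estimate of Lemma \ref{Lem:Du-camp}), Theorem \ref{MaxDinivmo} with $\alpha=1$ to control $|(Du)_{B_r}|$ and the tails at every scale, and \eqref{Dinisup} to tame the coefficient terms. However, the way you close the iteration has a genuine gap. You assert that the coefficient sums obey $\sum_j[\omega(\theta^jR)]^{\sigma_1'}\theta^{-j\alpha}\le C_\omega\sum_j\theta^{j(\tilde\alpha-\alpha)}$ and ``converge uniformly in $\alpha\in[0,\tilde\alpha]$''. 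They do not: by \eqref{Dinisup} the $j$-th term of the majorant is $C_\omega R^{\tilde\alpha}\theta^{j(\tilde\alpha-\alpha)}$, so at the endpoint $\alpha=\tilde\alpha$ every term equals $C_\omega R^{\tilde\alpha}$ and the series diverges, while for $\alpha<\tilde\alpha$ the sum is of order $(1-\theta^{\tilde\alpha-\alpha})^{-1}$, which blows up as $\alpha\uparrow\tilde\alpha$. Since \eqref{Dinisup} is assumed only for the given $\tilde\alpha$, you cannot repair this by running the argument with a slightly larger exponent; as written, your constant is not uniform on $[0,\tilde\alpha]$ and the estimate is not obtained at $\alpha=\tilde\alpha$ at all, contradicting the statement of the theorem.

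The repair is precisely what the paper does: never sum these terms. After multiplying the Campanato-type estimate by $R_{i+1}^{-\alpha}$, first insert \eqref{Maxpoint-Dinivmosub} (resp. \eqref{Maxpoint-Dinivmosup}) with $\alpha=1$ --- available because \eqref{Dinisup} implies the Dini-$\operatorname{VMO}$ condition --- to replace $\bigl(\fint_{\mathcal{B}_i}|Du|^{q_0}\operatorname{d}\!\xi\bigr)^{1/q_0}$ and $R_i^{-1-\sigma}\operatorname{Tail}(u-(u)_{\mathcal{B}_i};x,R_i)$ by scale-independent data quantities; then the coefficient factors need only be bounded at each scale, $[\omega(R_i)]^{\sigma_1'}R_i^{-\alpha}\le C_\omega$ and $R_i^{\sigma_2-\alpha}\le CR^{\sigma_2-\tilde\alpha}$ (using $\tilde\alpha<\alpha_0\le\sigma_2$, and likewise for the exponents involving $\sigma_3$, $\sigma$, $p'(1-s)$), not summable after the $R_i^{-\alpha}$ weighting. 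This yields a one-step recursion $H_{i+1}\le\tfrac18H_i+C(\text{data})$ for $H_i:=R_i^{-\alpha}\tilde{A}_i+R_i^{-\alpha-\sigma-1}\operatorname{Tail}(u-(u)_{\mathcal{B}_i};x,R_i)$, which closes by induction with a constant uniform in $\alpha\in[0,\tilde\alpha]$; the measure terms are handled pointwise by ${\bf M}_{1-\alpha}(\mu)$ (resp. ${\bf M}_{1-\alpha(p-1)}(\mu)$) together with ${\bf I}^{\mu}_1$ (resp. ${\bf W}^{\mu}_{1/p,p}$) exactly as you describe. A secondary inaccuracy: to invoke Lemma \ref{exdecay-Dw-Lem} you must compare with the purely local frozen problem \eqref{Commodel-wv}, as in Lemmas \ref{comparison:Dv-Dw} and \ref{comparison:Dv-Dw-Dinivmo}; your second comparison map retains the nonlocal operator $\mathcal{L}_\Phi$ in the frozen equation, for which no $C^{1,\alpha_M}$ excess-decay estimate is provided in the paper.
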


\begin{remark}
When the partial map $x\mapsto\mathcal{A}(x,\cdot)$ is Dini-$\operatorname{VMO}$ regular, we can immediately derive the following pointwise estimate for the truncated Hardy–Littlewood maximal function of the gradient in terms of Theorem \ref{MaxDinivmo} with $\alpha=1$. If $2-\frac{1}{n}<p\leq2$, then there holds that
$${\bf M}(Du)(x,R)
   \leq C\left[{\bf{I}}^{\mu}_{1}(x,R)\right]^{\frac{1}{p-1}}+C \left[\fint_{B_{R}(x)} |Du| \operatorname{d}\! \xi+R^{-1-\sigma}\operatorname{Tail}(u-(u)_{B_R(x)};x,R)\right],$$
while if $p>2$, we have
$${\bf M}(Du)(x,R)
   \leq C{\bf{W}}^{\mu}_{\frac{1}{p},p}(x,R)
   +C\left[\left(\fint_{B_{R}(x)} |Du|^{p-1} \operatorname{d}\! \xi\right)^{\frac{1}{p-1}}+R^{-1-\sigma}\operatorname{Tail}(u-(u)_{B_R(x)};x,R)\right].$$
We mention that our result extends the mixed equation discussed in \cite{CSYZ24} to a vector field $\mathcal{A}$ with coefficients, and also elevates the pointwise potential estimate \eqref{mixPoten-grad} to a pointwise estimate for the truncated Hardy–Littlewood maximal function of the gradient.
\end{remark}

The remainder of this paper is organized as follows. In Section \ref{section2},
we collect some auxiliary estimates, along with known regularity results for the homogeneous equations. Section \ref{section3} is devoted to establishing the comparison estimates between \eqref{eq1} and the corresponding desired problems, and presents the direct applications required to prove our main results.
In section \ref{section4}, we complete the proof of Theorem \ref{Maxmeasure} - Theorem \ref{MaxDiniholder}.
Finally, we prove Theorem \ref{OscPotenSolMea}, Corollary \ref{PotenmaxMea}, and Theorem \ref{OscPotenSolBMO} - Theorem \ref{OscPotenGrad} in the last section.

\section{Preliminaries}\label{section2}

In this section, we first recall several useful auxiliary estimates,
and subsequently collect a series of established regularity results for the homogeneous equations, which are pivotal for proving our main theorems.
In what follows, $C$ denotes a constant whose value may vary from line to line, and only the relevant dependencies are specified in parentheses.

\subsection{Auxiliary results}

Let us begin with the following embedding inequality, which is proven by the Sobolev embedding $W^{1,q}\hookrightarrow W^{s,q}$ and Poincar\'{e}'s inequality.
\begin{lemma}\label{embed}{\rm(cf. \cite[Lemma 2.2]{DM24})}
Let $q\in[1,+\infty)$, $s\in(0,1)$ and $B_{r}(x_0)$ be a ball in $\R^n$. If $f\in W_{0}^{1,q}(B_{r}(x_0))$, then $f\in W^{s,q}(B_{r}(x_0))$ and satisfies
\begin{equation*}
 \left( \int_{B_{r}(x_0)} \fint_{B_{r}(x_0)} \frac{|f(x)-f(y)|^{q} }{|x-y|^{n+sq}} \operatorname{d}\!x \operatorname{d}\!y\right)^{\frac{1}{q}}\leq Cr^{1-s}\left( \fint_{B_{r}(x_0)}|Df|^q\operatorname{d}\!x\right)^{\frac{1}{q}},
\end{equation*}
where the positive constant $C$ depending only on $n,\, q$ and $s$.
\end{lemma}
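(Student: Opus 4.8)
The plan is to reduce the inequality to the unit ball by scaling, and then, on the unit ball, to split the Gagliardo double integral into a near-diagonal piece controlled by the fundamental theorem of calculus and a far-away piece absorbed through Poincar\'e's inequality; the hypothesis $f\in W^{1,q}_0$ will be used precisely in this last step.

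First I would normalize. Putting $g(\xi):=f(x_0+r\xi)$ for $\xi\in B_1:=B_1(0)$, one has $g\in W^{1,q}_0(B_1)$ and $Dg(\xi)=r\,Df(x_0+r\xi)$. Substituting $x=x_0+r\xi$ and $y=x_0+r\eta$ on the left-hand side, a routine change of variables gives
\begin{equation*}
\int_{B_r(x_0)}\fint_{B_r(x_0)}\frac{|f(x)-f(y)|^{q}}{|x-y|^{n+sq}}\operatorname{d}\!x\operatorname{d}\!y
=r^{-sq}\int_{B_1}\fint_{B_1}\frac{|g(\xi)-g(\eta)|^{q}}{|\xi-\eta|^{n+sq}}\operatorname{d}\!\xi\operatorname{d}\!\eta,
\end{equation*}
while $\fint_{B_r(x_0)}|Df|^{q}\operatorname{d}\!x=r^{-q}\fint_{B_1}|Dg|^{q}\operatorname{d}\!\xi$. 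Cancelling the common power of $r$, the assertion is therefore equivalent to the scale-invariant estimate
\begin{equation*}
\int_{B_1}\fint_{B_1}\frac{|g(\xi)-g(\eta)|^{q}}{|\xi-\eta|^{n+sq}}\operatorname{d}\!\xi\operatorname{d}\!\eta\le C(n,q,s)\fint_{B_1}|Dg|^{q}\operatorname{d}\!\xi,\qquad g\in W^{1,q}_0(B_1).
\end{equation*}

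To prove this, I would extend $g$ and $Dg$ by zero outside $B_1$ (legitimate since $g\in W^{1,q}_0(B_1)$) and split the double integral into the regions $|\xi-\eta|\le1$ and $|\xi-\eta|>1$. On the near-diagonal region, writing $g(\xi)-g(\eta)=\int_0^1 Dg(\eta+t(\xi-\eta))\cdot(\xi-\eta)\operatorname{d}\!t$, applying Jensen's inequality in $t$, substituting $h=\xi-\eta$, and using Fubini, this piece is bounded by $\big(\int_{|h|\le1}|h|^{q(1-s)-n}\operatorname{d}\!h\big)\norm{Dg}_{L^q(B_1)}^q=\tfrac{c_n}{q(1-s)}\norm{Dg}_{L^q(B_1)}^q$, where the radial integral converges precisely because $s\in(0,1)$ (and $q\ge1$). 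On the far-away region, $|\xi-\eta|^{-(n+sq)}\le1$, so that piece is at most $\int_{B_1}\int_{B_1}|g(\xi)-g(\eta)|^q\operatorname{d}\!\xi\operatorname{d}\!\eta\le C(n,q)\norm{g}_{L^q(B_1)}^q$, and Poincar\'e's inequality on $W^{1,q}_0(B_1)$ turns this into $C(n,q)\norm{Dg}_{L^q(B_1)}^q$. Summing the two contributions and dividing by $|B_1|$ yields the scale-invariant estimate, and undoing the scaling restores the factor $r^{1-s}$.

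Since the argument is mostly bookkeeping, the only genuine points of care are: tracking the homogeneity so that the exponent of $r$ comes out exactly as $1-s$; the convergence of $\int_{|h|\le1}|h|^{q(1-s)-n}\operatorname{d}\!h$, which is where $s<1$ enters; and the appeal to Poincar\'e on $W^{1,q}_0(B_1)$ for the far-away region, the only place where the zero-boundary-value hypothesis is used. Alternatively, one could quote directly the embedding $W^{1,q}(B_1)\hookrightarrow W^{s,q}(B_1)$ for this fixed Lipschitz domain and then absorb the resulting $L^q$-term via Poincar\'e, but the explicit splitting above keeps all constants and dependencies transparent.
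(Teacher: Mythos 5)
Your argument is correct, and it is essentially the route the paper itself indicates: the lemma is not proved there but quoted from \cite[Lemma 2.2]{DM24}, with the remark that it follows from the embedding $W^{1,q}\hookrightarrow W^{s,q}$ and Poincar\'e's inequality, which is exactly what your scaling reduction plus the near-diagonal (difference-quotient/Jensen) and far-away (Poincar\'e on $W^{1,q}_0$) splitting establishes in self-contained form. The homogeneity bookkeeping giving $r^{1-s}$, the convergence of $\int_{|h|\le 1}|h|^{q(1-s)-n}\operatorname{d}\!h$ for $s<1$, and the use of the zero-boundary hypothesis via zero extension and Poincar\'e are all handled correctly.
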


Moreover, the following iteration lemma is crucial for  proving the local Lipschitz regularity of solutions to the homogeneous mixed local and nonlocal equation, as well as the Caccioppoli type inequality of solutions to the equation \eqref{eq1}.

\begin{lemma}\label{iterate} {\rm(cf. \cite[Lemma 6.1]{Giu})}
Let $f$ be a bounded nonnegative function defined in the interval $[r,R]$.
Assume that for $ r\leq \rho_1<\rho_2\leq R$ we have
\begin{equation*}
  f(\rho_1)\leq \vartheta f(\rho_2)+\frac{c_1}{(\rho_2-\rho_1)^{\alpha}}+\frac{c_2}{(\rho_2-\rho_1)^{\beta}}+c_3,
\end{equation*}
where $c_1,\,c_2,\,c_3\geq 0$, $\alpha>\beta>0$, and $0\leq\vartheta<1$. Then there exists a positive constant $C\equiv C(\alpha,\vartheta)$
such that
\begin{equation*}
  f(r)\leq C\left[\frac{c_1}{(R-r)^{\alpha}}+\frac{c_2}{(R-r)^{\beta}}+c_3\right].
\end{equation*}
\end{lemma}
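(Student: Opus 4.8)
\emph{Proof idea.} The plan is to run the classical hole-filling iteration on a geometrically distributed sequence of radii. First I would fix a parameter $\tau\in(0,1)$, to be chosen at the very end, and set $\rho_0:=r$ and $\rho_{i+1}:=\rho_i+(1-\tau)\tau^{i}(R-r)$ for $i\ge 0$, so that $\rho_i=r+(1-\tau^{i})(R-r)$ increases strictly to $R$ with $\rho_{i+1}-\rho_i=(1-\tau)\tau^{i}(R-r)$. Since $r\le\rho_i<\rho_{i+1}\le R$, the hypothesis applies with $\rho_1=\rho_i$, $\rho_2=\rho_{i+1}$ and gives
\[
f(\rho_i)\le\vartheta f(\rho_{i+1})+\frac{c_1}{[(1-\tau)(R-r)]^{\alpha}}\,\tau^{-i\alpha}+\frac{c_2}{[(1-\tau)(R-r)]^{\beta}}\,\tau^{-i\beta}+c_3 .
\]

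Next I would iterate this inequality: telescoping over $i=0,1,\dots,k-1$ yields
\[
f(r)\le\vartheta^{k}f(\rho_k)+\frac{c_1}{[(1-\tau)(R-r)]^{\alpha}}\sum_{i=0}^{k-1}(\vartheta\tau^{-\alpha})^{i}+\frac{c_2}{[(1-\tau)(R-r)]^{\beta}}\sum_{i=0}^{k-1}(\vartheta\tau^{-\beta})^{i}+c_3\sum_{i=0}^{k-1}\vartheta^{i}.
\]
The decisive choice is to take $\tau$ with $\vartheta^{1/\alpha}<\tau<1$, so that $\vartheta\tau^{-\alpha}<1$; because $\alpha>\beta>0$ and $\tau<1$ one then automatically has $\vartheta\tau^{-\beta}<\vartheta\tau^{-\alpha}<1$, hence all three geometric series converge. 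Since $f$ is bounded and $0\le\vartheta<1$, the remainder satisfies $\vartheta^{k}f(\rho_k)\to 0$ as $k\to\infty$, so letting $k\to\infty$ and summing the series produces
\[
f(r)\le\frac{1}{1-\vartheta\tau^{-\alpha}}\cdot\frac{c_1}{[(1-\tau)(R-r)]^{\alpha}}+\frac{1}{1-\vartheta\tau^{-\beta}}\cdot\frac{c_2}{[(1-\tau)(R-r)]^{\beta}}+\frac{c_3}{1-\vartheta}.
\]
Because $\tau$ depends only on $\alpha$ and $\vartheta$, and because $0<1-\tau<1$ with $0<\beta<\alpha$ forces $(1-\tau)^{-\beta}\le(1-\tau)^{-\alpha}$ and $1-\vartheta\tau^{-\beta}\ge 1-\vartheta\tau^{-\alpha}$, every prefactor above is controlled by a single constant $C=C(\alpha,\vartheta)$, which is the asserted estimate.

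The argument is essentially routine, so I do not expect a genuine obstacle; the only step needing a moment's care is identifying the admissible range for $\tau$. The point is that one choice $\tau\in(\vartheta^{1/\alpha},1)$ simultaneously makes the dominant $\alpha$-series summable and, thanks to $\alpha>\beta$, the $\beta$-series summable as well, so no separate tuning for the $c_2$-term is required. Boundedness of $f$ is used exactly once, to kill $\vartheta^{k}f(\rho_k)$ in the limit, and cannot be omitted; no continuity of $f$ is needed. (Alternatively one can stop at a finite $k$ with $\vartheta^{k}$ small and bound $f(\rho_k)$ crudely, but the limiting version is cleaner.)
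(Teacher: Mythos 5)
Your proof is correct and is essentially the standard hole-filling iteration from the cited reference (Giusti, Lemma 6.1); the paper itself gives no proof, only that citation. Your choice $\tau\in(\vartheta^{1/\alpha},1)$ and the observation that $\alpha>\beta$ then handles the $c_2$-series and keeps the constant depending only on $\alpha$ and $\vartheta$ is exactly the right bookkeeping.
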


The subsequent result establishes an immediate relationship between fractional maximal functions and potential functions based on their respective definitions.

\begin{lemma}\label{maxf-potential}{\rm(\cite[Lemma 4.1]{KuMi})}
Let $\mu\in\mathcal M (\mathbb R^{n})$, $\gamma\in(0,1)$, $\beta\geq 0$ and $p>1$. Then there holds that
\begin{equation*}\label{m-wp1}
  \left[{\bf M}_{\beta}(\mu)(x,\gamma R)\right]^{\frac{1}{p-1}}\leq \frac{\max\{\gamma^{\frac{\beta-n}{p-1}},1\}}{(-\log\gamma)|B_1|^{\frac{1}{p-1}}}{\bf W}^{\mu}_{\frac{\beta}{p},p}(x, R),
\end{equation*}
and
\begin{equation*}\label{m-wp2}
  {\bf M}_{\beta}(\mu)(x,\gamma R)\leq \frac{\max\{\gamma^{\beta-n},1\}}{(-\log\gamma)|B_1|}{\bf I}^{\mu}_{\beta}(x, R).
\end{equation*}
\end{lemma}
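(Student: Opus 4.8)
The statement is entirely elementary, so the plan is simply to compare, at each admissible scale, the single density that realizes the fractional maximal function with the slice of the potential integral carried by a suitable range of radii. Concretely, fix $x\in\mathbb{R}^n$, $R>0$, $\gamma\in(0,1)$ and $\beta\ge0$, and take any $r\in(0,\gamma R]$. The key observation is that the range of radii $[r,r/\gamma]$ is contained in $(0,R]$ \emph{precisely because} $r\le\gamma R$ (so $r/\gamma\le R$), whence the portion of the Wolff (resp. Riesz) potential integral over $[r,r/\gamma]$ is bounded above by the full truncated potential ${\bf W}^{\mu}_{\beta/p,p}(x,R)$ (resp. ${\bf I}^{\mu}_{\beta}(x,R)$).

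The steps are then as follows. First, for every $\rho\in[r,r/\gamma]$ monotonicity of $|\mu|$ gives $|\mu|(B_r(x))\le|\mu|(B_\rho(x))$, and since $r\le\rho\le r/\gamma$ one has, for any exponent $q\ge1$,
\[
\Big(\frac{|\mu|(B_\rho(x))}{\rho^{\,n-\beta}}\Big)^{1/q}\ \ge\ \big[\max\{\gamma^{(\beta-n)/q},1\}\big]^{-1}\Big(\frac{|\mu|(B_r(x))}{r^{\,n-\beta}}\Big)^{1/q},
\]
the factor arising from comparing $\rho^{-(n-\beta)/q}$ with $r^{-(n-\beta)/q}$, where one distinguishes the cases $\beta\le n$ (the minimum of $\rho^{-(n-\beta)/q}$ over $[r,r/\gamma]$ is attained at $\rho=r/\gamma$, producing $\gamma^{(n-\beta)/q}$) and $\beta>n$ (the minimum is attained at $\rho=r$, producing the constant $1$). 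Second, integrating this inequality against $\mathrm{d}\rho/\rho$ over $[r,r/\gamma]\subset(0,R]$ and using $\int_r^{r/\gamma}\mathrm{d}\rho/\rho=-\log\gamma$ yields
\[
\Big(\frac{|\mu|(B_r(x))}{r^{\,n-\beta}}\Big)^{1/q}\ \le\ \frac{\max\{\gamma^{(\beta-n)/q},1\}}{-\log\gamma}\int_0^R\Big(\frac{|\mu|(B_\rho(x))}{\rho^{\,n-\beta}}\Big)^{1/q}\frac{\mathrm{d}\rho}{\rho}.
\]
Third, take the supremum over $r\in(0,\gamma R]$ and recall $|B_\rho(x)|=|B_1|\rho^n$, so that ${\bf M}_\beta(\mu)(x,\gamma R)=|B_1|^{-1}\sup_{0<r\le\gamma R}|\mu|(B_r(x))/r^{\,n-\beta}$. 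Choosing $q=p-1$ identifies the right-hand integral with ${\bf W}^{\mu}_{\beta/p,p}(x,R)$ and, after dividing by $|B_1|^{1/(p-1)}$, gives the first claimed inequality; choosing $q=1$ identifies the integral with ${\bf I}^{\mu}_{\beta}(x,R)$ and, after dividing by $|B_1|$, gives the second. (Alternatively, the second inequality is literally the $p=2$ instance of the first, since ${\bf W}^{\mu}_{\beta/2,2}={\bf I}^{\mu}_{\beta}$.)

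There is no genuine obstacle here; the only mildly delicate points are the sign-dependent case split $\beta\le n$ versus $\beta>n$ that produces the factor $\max\{\gamma^{(\beta-n)/q},1\}$, and the bookkeeping of the normalization constant $|B_1|$ relating the density $|\mu|(B_r(x))/r^{\,n-\beta}$ both to the maximal function ${\bf M}_\beta(\mu)$ and to the integrands of the Wolff and Riesz potentials.
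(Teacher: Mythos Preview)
The paper does not supply its own proof of this lemma; it is quoted verbatim from \cite[Lemma~4.1]{KuMi}. Your argument is the standard one and is correct. One cosmetic remark: you write ``for any exponent $q\ge1$'', but to cover the full range $p>1$ in the first inequality you must take $q=p-1$, which can lie in $(0,1)$; your argument, however, uses only the monotonicity of $t\mapsto t^{1/q}$ on $[0,\infty)$ and therefore goes through verbatim for every $q>0$, so nothing is actually missing beyond replacing ``$q\ge1$'' by ``$q>0$''.
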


\subsection{Regularity for homogeneous equations}

We first consider $v\in W^{1,p}_{\rm loc}(\Omega)\cap {\mathcal L}^{p-1}_{sp}(\mathbb{R}^n)$ being a weak solution to the homogeneous mixed local and nonlocal elliptic equation
\begin{equation}
    \label{eq:homo-mixed}
   -\operatorname{div}\left(\mathcal{A}(x, Dv)\right) +\mathcal{L}_\Phi v=0\qquad\text{in }\ \Omega\,.
\end{equation}
A combination of \cite[Lemma 3.3]{BS23}
with the definition of the tail term and H\"{o}lder's inequality yields the following local boundedness estimate of $v$.
\begin{lemma}\label{lem:bdd}
Let $v\in W^{1,p}_{\rm loc}(\Omega)\cap {\mathcal L}^{p-1}_{sp}(\mathbb{R}^n)$ be a weak solution to \eqref{eq:homo-mixed} under assumptions \eqref{growth1}--\eqref{ellip} with $p > 1$ and $s \in (0,1)$.
Then for any $B_{r}(x_0) \Subset \Omega$, $\kappa \in \R$ and $\gamma\in(0,1)$, there holds that
\begin{equation*}
  \sup_{B_{\gamma r}(x_0)} |v-\kappa| \leq \frac{C}{(1-\gamma)^{\frac{n}{p-1}}} \left[ \left(\fint_{B_{r}(x_0)} |v-\kappa|^{q_0} \operatorname{d}\!x\right)^{\frac{1}{q_0}} + \operatorname{Tail}(v-\kappa; x_0,r)\right],
\end{equation*}
where $q_0=\max\{1,p-1\}$ and the positive constant $C \equiv C(n,p,s,\Lambda,\operatorname{diam}(\Omega))$.
\end{lemma}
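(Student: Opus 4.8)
The plan is to reduce the stated estimate to the De Giorgi-type bound in \cite[Lemma 3.3]{BS23}, which should give a sup-estimate of the form
\[
\sup_{B_{\gamma r}(x_0)}|v-\kappa|\le \frac{C}{(1-\gamma)^{n/p}}\left[\left(\fint_{B_r(x_0)}|v-\kappa|^p\,\mathrm d x\right)^{1/p}+\left(r^{sp}\int_{\mathbb R^n\setminus B_r(x_0)}\frac{(v(x)-\kappa)_+^{p-1}}{|x-x_0|^{n+sp}}\,\mathrm d x\right)^{1/(p-1)}\right]
\]
(applied to both $v-\kappa$ and $\kappa-v$ to control $|v-\kappa|$), and then to massage the right-hand side into the asserted form involving the $L^{q_0}$-average and the nonlocal tail defined in \eqref{Tail}. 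First I would observe that, since $v$ solves a homogeneous equation, $v-\kappa$ is again a subsolution (and $\kappa-v$ a subsolution), so the one-sided $L^\infty$ bound applies to $(v-\kappa)_\pm$ and hence to $|v-\kappa|$. The nonlocal contribution in \cite{BS23} is exactly $\mathrm{Tail}((v-\kappa)_\pm;x_0,r)$ up to the normalization of \eqref{Tail}, so that term is already in the desired shape after taking $\max$ of the two signs and using $(v-\kappa)_+^{p-1}+(\kappa-v)_+^{p-1}\le |v-\kappa|^{p-1}$.

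The remaining work is to pass from the $L^p$-average on the right-hand side to an $L^{q_0}$-average with $q_0=\max\{1,p-1\}$. When $p\ge 2$ we have $q_0=p-1\ge 1$, and when $1<p<2$ we have $q_0=1$; in both cases $q_0<p$, so a direct application of Hölder's inequality would go the wrong way. The standard remedy is an interpolation/iteration argument: write the $L^p$-norm via the interpolation inequality
\[
\left(\fint_{B_\rho}|v-\kappa|^p\right)^{1/p}\le \varepsilon\,\sup_{B_\rho}|v-\kappa|+C(\varepsilon)\left(\fint_{B_\rho}|v-\kappa|^{q_0}\right)^{1/q_0},
\]
valid since $q_0<p<\infty$, insert it into the sup-estimate written on a pair of concentric balls $B_{\rho_1}\subset B_{\rho_2}\subset B_r$, absorb the $\varepsilon\sup$ term, and then apply the iteration Lemma \ref{iterate} to remove the dependence of the constant on the absorption. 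The tail term must be handled with some care here: since $\mathrm{Tail}(v-\kappa;x,\rho)$ depends on the center and radius, I would use that for $B_\rho(x)\subset B_r(x_0)$ one has, by splitting the domain of integration and comparing $|x-\cdot|$ with $|x_0-\cdot|$,
\[
\mathrm{Tail}(v-\kappa;x,\rho)\le C(n,s)\left[\left(\fint_{B_r(x_0)}|v-\kappa|^{q_0}\right)^{1/q_0}+\mathrm{Tail}(v-\kappa;x_0,r)\right],
\]
which keeps the tail on the final right-hand side centered at $x_0$ with radius $r$, as claimed. The H\"older-inequality remark in the lemma statement refers precisely to this rewriting of the tail and to the $L^{q_0}\hookrightarrow L^{p-1}$ comparison on $B_r(x_0)$.

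The main obstacle is bookkeeping rather than a genuinely new difficulty: one must track how the geometric constant $(1-\gamma)^{-n/p}$ (or its analogue) behaves under the concentric-ball iteration so that the final exponent comes out as $(1-\gamma)^{-n/p}$—or, more conservatively, as the weaker $(1-\gamma)^{-n/(p-1)}$ stated in the lemma, which is what the iteration produces without optimizing. One should also verify that in the subquadratic range $1<p<2$ the quantity $\mathrm{Tail}(v-\kappa;x_0,r)^{p-1}=r^{sp}\int_{\mathbb R^n\setminus B_r}|v-\kappa|^{p-1}|x-x_0|^{-n-sp}\,\mathrm dx$ is finite, which follows from $v\in\mathcal L^{p-1}_{sp}(\mathbb R^n)$ and the defining integrability there, together with local integrability of $|v-\kappa|^{p-1}$ near $\partial B_r$. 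Once these technical points are settled, the estimate follows by combining \cite[Lemma 3.3]{BS23}, the interpolation inequality, and Lemma \ref{iterate}.
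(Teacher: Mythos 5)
Your proposal follows the same basic route as the paper: the paper's proof is exactly ``quote \cite[Lemma 3.3]{BS23} for the homogeneous mixed equation, then adjust the right-hand side using the definition \eqref{Tail} of the tail and H\"older's inequality,'' and your reduction to the De Giorgi-type sup bound of \cite{BS23} applied to $\pm(v-\kappa)$ is that same reduction. The difference is in the post-processing. You assume the cited bound carries an $L^p$-average and therefore invest in the interpolation inequality, absorption on concentric balls, and the iteration Lemma \ref{iterate} to lower the exponent to $q_0$; this is a correct (if heavier) argument, and it also explains the non-optimal exponent $(1-\gamma)^{-n/(p-1)}$. The paper gets away with only H\"older/Jensen because the average in \cite{BS23} is already at an exponent no larger than $q_0=\max\{1,p-1\}$: for $p\ge 2$ it is literally the $L^{q_0}$-average, and for $1<p<2$ one has $p-1<1=q_0$, so
\begin{equation*}
\Bigl(\fint_{B_r(x_0)}|v-\kappa|^{p-1}\,\mathrm{d}x\Bigr)^{\frac{1}{p-1}}\le \fint_{B_r(x_0)}|v-\kappa|\,\mathrm{d}x ,
\end{equation*}
i.e.\ H\"older goes in the favorable direction and no absorption/iteration is needed. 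So your extra machinery is not wrong, merely unnecessary; also note that for concentric balls $B_{\gamma r}(x_0)\subset B_r(x_0)$ no re-centering of the tail is required at all, and the re-centering inequality you state cannot hold with a constant $C(n,s)$ only (for $x\in B_{\gamma r}(x_0)$ and $y\notin B_r(x_0)$ one only has $|y-x|\ge(1-\gamma)|y-x_0|$, so the constant must involve $1-\gamma$ and the ratio $\rho/r$).

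One point you should fix, because as written it is a logical gap rather than bookkeeping: you quote the bound from \cite{BS23} with the fractional normalization $r^{sp}$ in the tail, whereas the tail \eqref{Tail} used in the statement carries the factor $r^{p}$. These are not interchangeable ``up to normalization'': one has
\begin{equation*}
\Bigl(r^{sp}\int_{\R^n\setminus B_r(x_0)}\frac{|v-\kappa|^{p-1}}{|x-x_0|^{n+sp}}\,\mathrm{d}x\Bigr)^{\frac{1}{p-1}}
= r^{-(1-s)p'}\operatorname{Tail}(v-\kappa;x_0,r),
\end{equation*}
and the factor $r^{-(1-s)p'}$ blows up as $r\to 0$, so a sup bound featuring the $r^{sp}$-tail is strictly weaker and does \emph{not} imply the asserted estimate with $\operatorname{Tail}(v-\kappa;x_0,r)$ uniformly in $r$. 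The argument is rescued only because \cite{BS23} (like this paper, and like \cite{DM24}) works with the $r^{p}$-normalized tail, reflecting the dominance of the local scaling in the mixed problem; with the cited lemma quoted in that correct form, the rest of your proof goes through.
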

We need the following Caccioppoli type inequality for later use, by making a completely analogous modification to the proof in \cite[Lemma 3.1 and Lemma 3.4]{BS23} and combining the definition of the tail term with H\"{o}lder's inequality again.
\begin{lemma}\label{CCP-v}
Let $v\in W^{1,p}_{\rm loc}(\Omega)\cap {\mathcal L}^{p-1}_{sp}(\mathbb{R}^n)$ be a weak solution to \eqref{eq:homo-mixed} under assumptions \eqref{growth1}--\eqref{ellip} with $p > 1$, $s \in (0,1)$ and $q_0=\max\{1,p-1\}$.
Then for any $B_{\varrho r}(x_0) \Subset \Omega$, $\kappa \in \R$ and $\gamma\in [\frac{\varrho}{2},\varrho)$, there exist positive constants $\theta\equiv\theta(n,p)$
and $C \equiv C(n,p,s,\Lambda,\operatorname{diam}(\Omega))$ such that
\begin{eqnarray}\label{CCPv}
&&\left(\fint_{B_{\gamma r}(x_0)} |Dv|^{p} \operatorname{d}\!x \right)^{\frac{1}{p}}+\left(\int_{B_{\gamma r}(x_0)}\fint_{B_{\gamma r}(x_0)} |v(x)-v(y)|^{p}K(x,y) \operatorname{d}\!x\operatorname{d}\!y \right)^{\frac{1}{p}}\nonumber\\
&\leq& \frac{C}{(\varrho-\gamma)^\theta r}\left[\left(\fint_{B_{\varrho r}(x_0)} |v-\kappa|^{q_0} \operatorname{d}\!x\right)^{\frac{1}{q_0}} +\operatorname{Tail}(v-\kappa;x_0, \varrho r) \right].
\end{eqnarray}
\end{lemma}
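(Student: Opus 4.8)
The plan is to derive the Caccioppoli-type inequality \eqref{CCPv} by testing the weak formulation of \eqref{eq:homo-mixed} with a cut-off localization of $v-\kappa$, following the scheme of \cite[Lemma 3.1 and Lemma 3.4]{BS23} but keeping track of the nonlocal tail contribution explicitly. First I would fix $B_{\varrho r}(x_0)\Subset\Omega$ and $\gamma\in[\varrho/2,\varrho)$, choose auxiliary radii $\gamma r\le\rho_1<\rho_2\le\varrho r$, and pick a cut-off $\eta\in C_0^\infty(B_{\rho_2}(x_0))$ with $\eta\equiv 1$ on $B_{\rho_1}(x_0)$, $0\le\eta\le1$ and $|D\eta|\lesssim (\rho_2-\rho_1)^{-1}$. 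Testing with $\varphi=(v-\kappa)\eta^p$ (legitimate after the usual approximation since $v\in W^{1,p}_{\rm loc}\cap\mathcal L^{p-1}_{sp}$) splits the identity into a local part and a nonlocal double-integral part.

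For the local part, the ellipticity \eqref{ellipticity} gives a lower bound $\Lambda^{-1}\int |Dv|^p\eta^p$ after integrating by parts in $\partial_z\mathcal A$, while the growth \eqref{growth1} together with Young's inequality absorbs the cross term $\int|\mathcal A(x,Dv)||v-\kappa|\eta^{p-1}|D\eta|$ into the good term plus $C(\rho_2-\rho_1)^{-p}\int_{B_{\rho_2}}|v-\kappa|^p$. For the nonlocal part, I would use the standard algebraic inequality for $\Phi$ (monotonicity \eqref{mono}) to bound the symmetrized integrand from below by $\Lambda^{-1}|v(x)-v(y)|^p\eta(x)^p$ up to an error controlled by $|v(x)-v(y)|^{p-1}|v(y)-\kappa||\eta(x)^p-\eta(y)^p|$; splitting $\mathbb R^n\times\mathbb R^n$ into the diagonal block $B_{\rho_2}\times B_{\rho_2}$ and the tail block (one variable outside $B_{\rho_2}$) and using \eqref{ellip}, the diagonal error is absorbed via Young's inequality while the tail block produces exactly a term of the form $(\rho_2-\rho_1)^{-p}\,\mathrm{dist}$-weighted integral of $|v-\kappa|^{p-1}$ over $\mathbb R^n\setminus B_{\rho_2}$, i.e.\ a constant multiple of $r^{-p}\operatorname{Tail}(v-\kappa;x_0,\varrho r)^{p-1}$ after enlarging the domain of integration to $\mathbb R^n\setminus B_{\varrho r}$.

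At this stage I would have, for all $\gamma r\le\rho_1<\rho_2\le\varrho r$,
\begin{equation*}
\fint_{B_{\rho_1}}|Dv|^p + \int_{B_{\rho_1}}\fint_{B_{\rho_1}}|v(x)-v(y)|^pK(x,y)\,\mathrm dx\,\mathrm dy \le \varepsilon\fint_{B_{\rho_2}}|Dv|^p + \frac{C}{(\rho_2-\rho_1)^p}\left[\fint_{B_{\varrho r}}|v-\kappa|^p + \operatorname{Tail}(v-\kappa;x_0,\varrho r)^{p-1}r^{-(p-1)}\cdot r\right],
\end{equation*}
schematically, with a small parameter $\varepsilon$ in front of the recoverable energy term. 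Applying the iteration Lemma~\ref{iterate} (with $f(\rho)=\fint_{B_\rho}|Dv|^p$, $\vartheta=\varepsilon<1$, $\alpha=p$, suitable $\beta$) removes the $\varepsilon$-term and yields the bound with $(\varrho-\gamma)^{-p}$. Finally I would take $p$-th roots, use $(a+b)^{1/p}\le a^{1/p}+b^{1/p}$ and Jensen/Hölder to pass from the $L^p$ average of $v-\kappa$ to the $L^{q_0}$ average (this is the step that introduces $q_0=\max\{1,p-1\}$ and uses Lemma~\ref{lem:bdd} when $p<2$ to control $\fint|v-\kappa|^p$ by $(\fint|v-\kappa|^{q_0})^{p/q_0}$ plus a tail, exactly as in \cite{BS23}), which produces the stated exponent $\theta\equiv\theta(n,p)$ from the accumulated powers of $(\varrho-\gamma)$ and $(1-\varrho/2\cdot\varrho^{-1})$-type factors.

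The main obstacle I anticipate is the careful bookkeeping of the nonlocal tail term: ensuring that the error generated when $\eta(x)^p-\eta(y)^p$ is nonzero with one point far from $B_{\rho_2}$ is genuinely of tail type with the correct power $r^p$ normalization matching \eqref{Tail}, and that after the iteration the tail radius can be taken to be the fixed $\varrho r$ rather than the shrinking $\rho_2$ (this is handled by monotonicity of $r\mapsto\operatorname{Tail}(\cdot;x_0,r)$-type estimates, i.e.\ comparing tails at different radii, which costs only a dimensional constant since all radii are comparable to $r$ on $[\gamma r,\varrho r]$ with $\gamma\ge\varrho/2$). The subquadratic case $1<p<2$ requires the additional $L^p$-to-$L^{q_0}$ reduction via Lemma~\ref{lem:bdd}, which is routine but must be invoked at the right place so that the constant retains only the claimed dependencies.
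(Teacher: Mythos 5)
Your overall scheme is the same one the paper relies on: the paper gives no independent proof of Lemma \ref{CCP-v}, only a pointer to \cite[Lemmas 3.1 and 3.4]{BS23} ``with a completely analogous modification'', and your plan --- testing with $(v-\kappa)\eta^p$, splitting the nonlocal term into the diagonal block and the tail block, and reducing the right-hand side to the $L^{q_0}$ average plus $\operatorname{Tail}(v-\kappa;x_0,\varrho r)$ --- is exactly that argument. Invoking Lemma \ref{iterate} instead of absorbing the $\eta^p$-weighted energy directly is harmless (the paper uses the same iteration device for the analogous Caccioppoli estimate for $u$), and your bookkeeping of the tail normalization is consistent with \eqref{Tail}.

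One step as written would fail, though it is repaired by a tool you already invoke. Since $q_0=\max\{1,p-1\}<p$ for every $p>1$ (in particular also for $p\geq2$, where $q_0=p-1$), Jensen/H\"older gives $\left(\fint|v-\kappa|^{q_0}\,\operatorname{d}\!x\right)^{1/q_0}\leq\left(\fint|v-\kappa|^{p}\,\operatorname{d}\!x\right)^{1/p}$, i.e.\ the wrong direction, so you cannot pass from the $L^p$ average of $v-\kappa$ to the $L^{q_0}$ average by H\"older alone; your proposal calls on Lemma \ref{lem:bdd} only ``when $p<2$''. The interpolation $\fint_{B}|v-\kappa|^p\,\operatorname{d}\!x\leq\bigl(\sup_B|v-\kappa|\bigr)^{p-q_0}\fint_B|v-\kappa|^{q_0}\,\operatorname{d}\!x$, combined with the local boundedness estimate of Lemma \ref{lem:bdd} (which is what brings in the tail term and additional negative powers of $\varrho-\gamma$), is needed for \emph{all} $p>1$, not just in the subquadratic case; this is precisely the ``definition of the tail term plus H\"older'' modification the paper alludes to. With that correction --- and noting that the factor $(\rho_2/(\rho_2-\rho_1))^{n+sp}$ arising in the tail block can be bounded by $(\rho_2/(\rho_2-\rho_1))^{n+p}$ so that the exponent $\theta$ depends only on $(n,p)$ while $s$ enters only through $C$ --- your proof goes through.
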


The set of functions satisfying the Caccioppoli inequality is typically referred to as the De Giorgi class. Based on the De Giorgi–Nash–Moser theory, \cite[Theorem 5.1]{GK22} and \cite[Lemma 3.5]{BS23} proved the local H\"{o}lder continuity of solutions to the homogeneous mixed local and nonlocal equation \eqref{eq:homo-mixed}. Here we present a slightly modified form through the further application of the definition of the tail term and H\"{o}lder's inequality.
\begin{lemma}\label{v-holder}
Let $v\in W^{1,p}_{\rm loc}(\Omega)\cap {\mathcal L}^{p-1}_{sp}(\mathbb{R}^n)$ be a weak solution to \eqref{eq:homo-mixed} under assumptions \eqref{growth1}--\eqref{ellip} with $p > 1$, $s \in (0,1)$ and $q_0=\max\{1,p-1\}$.
Then $v$ is locally H\"{o}lder continuous in $\Omega$, and for any $k \in \R$ and $B_{\rho}(x_0)\subset B_{\frac{r}{2}}(x_0)$ with $B_{r}(x_0) \Subset \Omega$,
there exists a constant $\alpha_m \in (0,1)$ depending only on $n,\,p,\,s,$ and $\Lambda$ such that
\begin{equation}\label{mixv-holder}
\underset{B_\rho(x_0)}{\operatorname{osc}} v \leq C \left( \frac{\rho}{r} \right)^{\alpha_m} \left[ \left(\fint_{B_{r}(x_0)} |v- \kappa|^{q_0} \operatorname{d}\! x\right)^{\frac{1}{q_0}} + \operatorname{Tail}(v-\kappa;x_0, r) \right]
\end{equation}
where the positive constant $C \equiv C(n,p,s,\Lambda,\operatorname{diam}(\Omega))$.
\end{lemma}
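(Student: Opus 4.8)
The plan is to \emph{derive} the stated estimate from the local H\"older continuity of solutions to \eqref{eq:homo-mixed} already established in \cite[Theorem 5.1]{GK22} and \cite[Lemma 3.5]{BS23}, and to recast its right-hand side into the asserted form by elementary manipulations. The substantive analytic input — that functions in the De Giorgi class determined by the Caccioppoli inequality of Lemma \ref{CCP-v} are locally H\"older continuous with an exponent $\alpha_m=\alpha_m(n,p,s,\Lambda)\in(0,1)$, together with a quantitative oscillation decay — I would simply quote; the genuinely hard work there is the De Giorgi--Nash--Moser iteration for the mixed (and possibly subquadratic) problem, which is carried out in those references and is not redone here.

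First I would note that, since $z\mapsto\mathcal{A}(x,z)$ acts only on $Dv$ and $\mathcal{L}_\Phi$ acts on the differences $v(x)-v(y)$, the function $v-\kappa$ is again a weak solution of \eqref{eq:homo-mixed} for every $\kappa\in\R$, while $\operatorname{osc}_{B_\rho(x_0)}v=\operatorname{osc}_{B_\rho(x_0)}(v-\kappa)$. Applying the cited oscillation decay estimate to $v-\kappa$ on the balls $B_\rho(x_0)\subset B_{r/2}(x_0)$ with $B_r(x_0)\Subset\Omega$ produces
\begin{equation*}
\operatorname{osc}_{B_\rho(x_0)}v\;\le\;C\Big(\frac{\rho}{r}\Big)^{\alpha_m}\Big[\Big(\fint_{B_r(x_0)}|v-\kappa|^{q_0}\operatorname{d}\!x\Big)^{\frac{1}{q_0}}+\operatorname{Tail}\big(v-\kappa;x_0,r\big)\Big]
\end{equation*}
up to cosmetic discrepancies that I would remove one at a time: if the reference carries a supremum norm, replace it by the $q_0$-average plus a tail via Lemma \ref{lem:bdd} with $\gamma=\tfrac12$; if it uses the mean value $(v)_{B_r(x_0)}$ in place of a free constant, absorb $|(v)_{B_r(x_0)}-\kappa|$ into $\big(\fint_{B_r(x_0)}|v-\kappa|^{q_0}\operatorname{d}\!x\big)^{1/q_0}$ by Jensen's (equivalently H\"older's) inequality, which is legitimate since $q_0=\max\{1,p-1\}\ge1$; and if it uses a tail at a smaller scale or a tail of a shifted function, reduce to the tail at scale $r$ using the explicit value of the tail of a constant,
\begin{equation*}
\operatorname{Tail}(c;x_0,\lambda r)=C(n,p,s)\,(\lambda r)^{\frac{p(1-s)}{p-1}}\,|c|\;\le\;C\big(n,p,s,\operatorname{diam}(\Omega)\big)\,|c|\qquad(0<\lambda\le1),
\end{equation*}
together with the splitting of $\R^n\setminus B_{\lambda r}(x_0)$ into the annulus $B_r(x_0)\setminus B_{\lambda r}(x_0)$, on which $\int_{B_r\setminus B_{\lambda r}}|x-x_0|^{-n-sp}\operatorname{d}\!x\le C r^{-sp}$ and the relevant supremum of $|v-\kappa|$ is controlled by Lemma \ref{lem:bdd}, and the exterior $\R^n\setminus B_r(x_0)$, which reproduces $\operatorname{Tail}(v-\kappa;x_0,r)$. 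The bound $r\le\operatorname{diam}(\Omega)$ with $1-s>0$ is precisely what forces the final constant $C$ to depend on $\operatorname{diam}(\Omega)$.

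No step is genuinely hard once the H\"older continuity is invoked: for us the argument is essentially bookkeeping. The one place demanding a little care is the chain of constant-shift and scale-change reductions for the tail term just sketched, where one must verify that every emerging constant depends only on $n,p,s,\Lambda$ and $\operatorname{diam}(\Omega)$; this is exactly the application of the definition of the tail term and of H\"older's inequality alluded to before the statement. Substituting the resulting bound into the displayed oscillation inequality yields \eqref{mixv-holder} and completes the proof.
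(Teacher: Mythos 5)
Your proposal matches the paper's treatment: the paper gives no independent proof of this lemma, but simply quotes the De Giorgi--Nash--Moser results of \cite[Theorem 5.1]{GK22} and \cite[Lemma 3.5]{BS23} and observes that the stated form follows by "the further application of the definition of the tail term and H\"older's inequality," which is exactly the bookkeeping you carry out (sup norm to $q_0$-average via Lemma \ref{lem:bdd}, constant shifts via Jensen, and rescaling of the tail with the $r\le\operatorname{diam}(\Omega)$ bound producing the $\operatorname{diam}(\Omega)$-dependence). One cosmetic caution: on the annulus $B_r\setminus B_{\lambda r}$ it is cleaner to estimate the tail integrand directly by H\"older's inequality against $\big(\fint_{B_r}|v-\kappa|^{q_0}\big)^{(p-1)/q_0}$ rather than through a supremum, since Lemma \ref{lem:bdd} only controls $\sup_{B_{\gamma r}}$ for $\gamma<1$ and $B_{2r}$ need not lie in $\Omega$.
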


Building upon the Caccioppoli inequality \eqref{CCPv} and the H\"{o}lder continuity result \eqref{mixv-holder}, we derive the following Morrey type growth estimate for $Dv$.
\begin{lemma}\label{Dv-morrey}
Let $v\in W^{1,p}_{\rm loc}(\Omega)\cap {\mathcal L}^{p-1}_{sp}(\mathbb{R}^n)$ be a weak solution to \eqref{eq:homo-mixed} under assumptions \eqref{growth1}--\eqref{ellip} with $p > 1$, $s \in (0,1)$ and $q_0=\max\{1,p-1\}$.
Then there exists a positive constant $C$ depending only on $n,\,p,\,s,\,\Lambda$ and $\operatorname{diam}(\Omega)$ such that
\begin{equation}\label{morreyDv}
 \left(\fint_{B_{\rho}(x_0)} |Dv|^{q_0} \operatorname{d}\! x\right)^{\frac{1}{q_0}}\leq C \left( \frac{\rho}{r} \right)^{-1+\alpha_m} \left[ \left(\fint_{B_{r}(x_0)} |Dv|^{q_0} \operatorname{d}\! x\right)^{\frac{1}{q_0}} + \frac{1}{r}\operatorname{Tail}(v-(v)_{B_r(x_0)};x_0,r) \right]
\end{equation}
holds whenever $B_{\rho}(x_0) \subset B_{r}(x_0) \Subset \Omega$.
\end{lemma}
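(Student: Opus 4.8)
The plan is to decompose $Dv$ into its average and oscillation parts on each ball and handle them separately. Fix $B_\rho(x_0)\subset B_r(x_0)\Subset\Omega$; we may assume $\rho\leq r/2$, since for $\rho\in(r/2,r]$ the claimed inequality is immediate from $\left(\fint_{B_\rho}|Dv|^{q_0}\right)^{1/q_0}\leq 2^{n/q_0}\left(\fint_{B_r}|Dv|^{q_0}\right)^{1/q_0}$ up to adjusting $C$. First I would write, for any ball $B_t(x_0)\subset B_{r/2}(x_0)$,
\begin{equation*}
\left(\fint_{B_t(x_0)}|Dv|^{q_0}\,\mathrm{d}x\right)^{1/q_0}\leq \left|(Dv)_{B_t(x_0)}\right|+\left(\fint_{B_t(x_0)}\left|Dv-(Dv)_{B_t(x_0)}\right|^{q_0}\,\mathrm{d}x\right)^{1/q_0},
\end{equation*}
and observe that since $v-(v)_{B_t(x_0)}$ is again a solution to \eqref{eq:homo-mixed}, Lemma \ref{CCP-v} applied with $\kappa=(v)_{B_t(x_0)}$ and, say, $\varrho=2$, $\gamma=1$ (rescaled to the ball $B_t(x_0)$) gives
\begin{equation*}
\left(\fint_{B_{t}(x_0)}|Dv|^{p}\,\mathrm{d}x\right)^{1/p}\leq \frac{C}{t}\left[\left(\fint_{B_{2t}(x_0)}|v-(v)_{B_{2t}(x_0)}|^{q_0}\,\mathrm{d}x\right)^{1/q_0}+\operatorname{Tail}(v-(v)_{B_{2t}(x_0)};x_0,2t)\right].
\end{equation*}

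The second, and main, step is to control the mean value $|(Dv)_{B_\rho(x_0)}|$ in terms of data on $B_r(x_0)$. Since $v$ is $C^1$ on $\Omega$ (local Lipschitz plus interior regularity for the homogeneous mixed equation; this is the content implicitly used in Lemma \ref{v-holder}), the averages $(Dv)_{B_t(x_0)}\to Dv(x_0)$ as $t\to0$, so it suffices to bound $|Dv(x_0)|$. I would use the sup-estimate for $Dv$ (obtained by combining the local Lipschitz bound for solutions of \eqref{eq:homo-mixed} with a scaling/covering argument, in the spirit of Lemma \ref{lem:bdd} applied to the gradient): for $B_{r/2}(x_0)\Subset\Omega$,
\begin{equation*}
\sup_{B_{r/4}(x_0)}|Dv|\leq C\left[\left(\fint_{B_{r/2}(x_0)}|Dv|^{q_0}\,\mathrm{d}x\right)^{1/q_0}+\frac{1}{r}\operatorname{Tail}(v-(v)_{B_{r/2}(x_0)};x_0,r/2)\right].
\end{equation*}
Combining the three displays and absorbing the factor $(\rho/r)^{-1+\alpha_m}\geq 1$ (which is $\geq1$ precisely because $\alpha_m<1$ and $\rho\leq r$), we already get the bound without the decay factor. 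To produce the genuine gain $(\rho/r)^{-1+\alpha_m}$, the idea is instead to iterate: on the ball $B_{2\rho}(x_0)$ write $Dv=(Dv)_{B_{2\rho}}+\big(Dv-(Dv)_{B_{2\rho}}\big)$, bound the oscillation term by the Caccioppoli inequality above in terms of $v-(v)_{B_{4\rho}}$ on $B_{4\rho}(x_0)$, and then apply the Hölder estimate \eqref{mixv-holder} of Lemma \ref{v-holder} to $v$ with $\kappa=(v)_{B_r(x_0)}$, which supplies the decay
\begin{equation*}
\left(\fint_{B_{4\rho}(x_0)}|v-(v)_{B_{4\rho}(x_0)}|^{q_0}\,\mathrm{d}x\right)^{1/q_0}+\operatorname{Tail}(v-(v)_{B_{4\rho}(x_0)};x_0,4\rho)\leq C\left(\frac{\rho}{r}\right)^{\alpha_m}\left[\cdots\right],
\end{equation*}
with the bracket being the corresponding quantity on $B_r(x_0)$; after dividing by the extra factor $\rho$ coming from Caccioppoli this yields the exponent $-1+\alpha_m$.

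The remaining point — and the genuinely delicate one — is the mean-value term $|(Dv)_{B_{2\rho}(x_0)}|$, which does not a priori decay. Here I would exploit the fact that $Dv$ is continuous, so that letting a comparison radius shrink, the difference $|(Dv)_{B_{2\rho}(x_0)}-(Dv)_{B_{r/2}(x_0)}|$ telescopes over dyadic scales and is again controlled, via the oscillation estimate for $Dv$ on homogeneous solutions, by a geometric series in $(\rho/r)^{\alpha_m}$ times the right-hand side; alternatively, and more cleanly, one reduces to $|Dv(x_0)|$ and uses that $Dv(x_0)$ itself is dominated by the Morrey-type sup bound at scale $r$, which is exactly the bracket on the right-hand side of \eqref{morreyDv} without the decay factor, hence trivially absorbed since $(\rho/r)^{-1+\alpha_m}\geq 1$. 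I expect the main obstacle to be bookkeeping the tail terms correctly under the various ball changes: each time one passes from $B_{ct\rho}$ to $B_\rho$ one must re-expand $\operatorname{Tail}(v-(v)_{B_{c\rho}};x_0,c\rho)$ against $\operatorname{Tail}(v-(v)_{B_r};x_0,r)$, and the estimate
\begin{equation*}
\operatorname{Tail}(v-(v)_{B_{c\rho}(x_0)};x_0,c\rho)\leq C\left(\frac{\rho}{r}\right)^{\alpha_m}\left[\left(\fint_{B_r(x_0)}|v-(v)_{B_r(x_0)}|^{q_0}\right)^{1/q_0}+\operatorname{Tail}(v-(v)_{B_r(x_0)};x_0,r)\right]+C\left|(v)_{B_{c\rho}(x_0)}-(v)_{B_r(x_0)}\right|\left(\frac{\rho}{r}\right)^{\cdots}
\end{equation*}
has to be set up so that the constant-shift discrepancies $|(v)_{B_{c\rho}}-(v)_{B_r}|$, controlled by the oscillation of $v$ and hence again by \eqref{mixv-holder}, do not destroy the gain. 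This is routine but requires care with the exponent in the tail scaling $r^p$ versus the radius; I would handle it by splitting $\R^n\setminus B_{c\rho}$ into the annulus $B_{r}\setminus B_{c\rho}$ and the exterior $\R^n\setminus B_r$, estimating the first by the Hölder modulus of $v$ and the second directly against $\operatorname{Tail}(v-(v)_{B_r};x_0,r)$ with the elementary inequality $|x-x_0|^{-n-sp}\leq C(r/\rho)^{\text{(small power)}}|x-x_0|^{-n-sp}$ absorbed into the decay.
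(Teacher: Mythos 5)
Your proposal has a genuine gap at its ``main step''. Under the hypotheses of this lemma the dependence $x\mapsto\mathcal{A}(x,\cdot)$ is merely measurable (and $K$ is merely measurable as well), so $v$ is not $C^1$, $Dv$ is in general not locally bounded, and no sup-estimate of the form $\sup_{B_{r/4}(x_0)}|Dv|\leq C\big[\big(\fint_{B_{r/2}(x_0)}|Dv|^{q_0}\,\mathrm{d}x\big)^{1/q_0}+\tfrac1r\operatorname{Tail}(\cdots)\big]$, nor any oscillation/Campanato decay for $Dv$, is available at this stage. Lemma \ref{v-holder} is a De Giorgi--Nash--Moser statement about $v$ itself and does not ``implicitly'' use $C^1$ regularity; gradient $L^\infty$ bounds appear only later (Lemma \ref{Dv-L^infty}) and require Dini-VMO coefficients, while the excess decay of Lemma \ref{exdecay-Dw-Lem} concerns the frozen-coefficient local equation. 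Consequently both of your routes for the mean-value term $|(Dv)_{B_{2\rho}(x_0)}|$ --- telescoping with an ``oscillation estimate for $Dv$'', or reducing to $|Dv(x_0)|$ via a sup bound --- rest on regularity that the lemma's assumptions do not provide; and your remark that one ``already gets the bound without the decay factor'' would amount to a $\rho$-uniform bound that is false for measurable coefficients (this is precisely why the admissible exponent is capped at $\alpha_m<1$ in this setting).

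The mean/oscillation splitting of $Dv$ is also what manufactures this difficulty: since $|(Dv)_{B_{2\rho}(x_0)}|\leq\big(\fint_{B_{2\rho}(x_0)}|Dv|^{q_0}\,\mathrm{d}x\big)^{1/q_0}$, it suffices to bound the full average, and the Caccioppoli inequality already does that. The paper's proof runs exactly along the second thread you sketch, without ever isolating the mean: for $\rho<r/4$, Lemma \ref{CCP-v} with $\kappa=(v)_{B_{2\rho}(x_0)}$ gives $\big(\fint_{B_{\rho}(x_0)}|Dv|^{q_0}\,\mathrm{d}x\big)^{1/q_0}\leq\frac{C}{\rho}\big[\big(\fint_{B_{2\rho}(x_0)}|v-(v)_{B_{2\rho}(x_0)}|^{q_0}\,\mathrm{d}x\big)^{1/q_0}+\operatorname{Tail}(v-(v)_{B_{2\rho}(x_0)};x_0,2\rho)\big]$; then Lemma \ref{v-holder} with $\kappa=(v)_{B_r(x_0)}$ together with Poincar\'e's inequality produces the factor $(\rho/r)^{\alpha_m}$ for the local oscillation, and the tail at scale $2\rho$ is compared to the data at scale $r$ with the same $(\rho/r)^{\alpha_m}$ gain via \cite[Lemma 3.4]{CSYZ24} (your annulus-plus-exterior splitting is essentially a re-proof of that lemma). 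Multiplying by $1/\rho$ yields the exponent $-1+\alpha_m$. If you drop the decomposition of $Dv$ and apply your Caccioppoli display to the whole gradient average, the ``delicate'' mean-value issue disappears and the argument closes under the stated measurable-coefficient assumptions, requiring only the H\"older continuity of $v$ and no regularity of $Dv$.
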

\begin{proof}
Without loss of generality, we may assume that $0<\rho<\frac{r}{4}$, otherwise \eqref{morreyDv} holds trivially. A combination of Lemma \ref{CCP-v} with $\kappa=(v)_{B_{2\rho}(x_0)}$, Lemma \ref{v-holder} with $\kappa=(v)_{B_{r}(x_0)}$, H\"{o}lder's inequality and Poincar\'{e}'s inequality yields that
\begin{eqnarray}\label{morreyDv1}
% \nonumber to remove numbering (before each equation)
  &&\left(\fint_{B_{\rho}(x_0)} |Dv|^{q_0} \operatorname{d}\! x\right)^{\frac{1}{q_0}} \nonumber \\
   &\leq& \frac{C}{\rho}\left[\left(\fint_{B_{2\rho}(x_0)} |v-(v)_{B_{2\rho}(x_0)}|^{q_0} \operatorname{d}\!x\right)^{\frac{1}{q_0}} +\operatorname{Tail}(v-(v)_{B_{2\rho}(x_0)};x_0, 2\rho) \right]\nonumber\\
   &\leq&\frac{C}{\rho}\left\{\left( \frac{\rho}{r} \right)^{\alpha_m} \left[ \left(\fint_{B_{r}(x_0)} |v- (v)_{B_{r}(x_0)}|^{q_0} \operatorname{d}\! x\right)^{\frac{1}{q_0}} + \operatorname{Tail}(v-(v)_{B_{r}(x_0)};x_0, r) \right] \right.\nonumber\\
   &&\left. +\operatorname{Tail}(v-(v)_{B_{2\rho}(x_0)};x_0, 2\rho) \right\}\nonumber\\
   &\leq&\frac{C}{\rho}\left\{\left( \frac{\rho}{r} \right)^{\alpha_m} \left[ r \left(\fint_{B_{r}(x_0)} |Dv|^{q_0} \operatorname{d}\! x\right)^{\frac{1}{q_0}} + \operatorname{Tail}(v-(v)_{B_{r}(x_0)};x_0, r) \right]  +\operatorname{Tail}(v-(v)_{B_{2\rho}(x_0)};x_0, 2\rho) \right\}.\nonumber\\
\end{eqnarray}
To unify the two tail terms in \eqref{morreyDv1}, we apply \cite[Lemma 3.4]{CSYZ24} to deduce that
\begin{eqnarray}\label{tailesti1}
% \nonumber to remove numbering (before each equation)
 \operatorname{Tail}(v-(v)_{B_{2\rho}(x_0)};x_0, 2\rho)
  \leq C \left( \frac{\rho}{r} \right)^{\alpha_m}\left[ r \left(\fint_{B_{r}(x_0)} |Dv|^{q_0} \operatorname{d}\! x\right)^{\frac{1}{q_0}} + \operatorname{Tail}(v-(v)_{B_{r}(x_0)};x_0, r) \right].
\end{eqnarray}
Hence, inserting \eqref{tailesti1} into \eqref{morreyDv1}, we have
\begin{equation*}
 \left(\fint_{B_{\rho}(x_0)} |Dv|^{q_0} \operatorname{d}\! x\right)^{\frac{1}{q_0}}\leq C \left( \frac{\rho}{r} \right)^{-1+\alpha_m} \left[ \left(\fint_{B_{r}(x_0)} |Dv|^{q_0} \operatorname{d}\! x\right)^{\frac{1}{q_0}} + \frac{1}{r}\operatorname{Tail}(v-(v)_{B_r(x_0)};x_0,r) \right],
\end{equation*}
where the positive constant $C\equiv C(n,p,s,\Lambda,\operatorname{diam}(\Omega))$.
\end{proof}

Based on the improved integrability result for $Dv$ from \cite[Proposition 3.4]{BKL24}, we now present an adapted version in conjunction with Lemma \ref{CCP-v} via Poincar\'{e}'s inequality.
\begin{lemma}\label{Dv-highinteglem}
Let $v\in W^{1,p}_{\rm loc}(\Omega)\cap {\mathcal L}^{p-1}_{sp}(\mathbb{R}^n)$ be a weak solution to \eqref{eq:homo-mixed} under assumptions \eqref{growth1}--\eqref{ellip} with $p > 1$, $s \in (0,1)$ and $q_0=\max\{1,p-1\}$. Then there exists an exponent $q>p$ depending only on $n,\,p,\,s$ and $\Lambda$ such that
\begin{equation}\label{Dv-highintegest}
 \left(\fint_{B_{\frac{r}{2}}(x_0)} |Dv|^{q} \operatorname{d}\! x\right)^{\frac{1}{q}}\leq C  \left(\fint_{B_{r}(x_0)} |Dv|^{q_0} \operatorname{d}\! x\right)^{\frac{1}{q_0}} + \frac{C}{r}\operatorname{Tail}(v-(v)_{B_r(x_0)};x_0,r)
\end{equation}
holds whenever $ B_{r}(x_0) \Subset \Omega$, where the positive constant $C \equiv C(n,p,s,\Lambda,\operatorname{diam}(\Omega))$.
\end{lemma}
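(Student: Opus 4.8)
The plan is to combine the self-improving higher integrability of $Dv$ furnished by \cite[Proposition 3.4]{BKL24} with the Caccioppoli inequality of Lemma \ref{CCP-v} and Poincar\'{e}'s inequality, following the very same scheme already used in the proof of Lemma \ref{Dv-morrey}. Recall that \cite[Proposition 3.4]{BKL24} provides an exponent $q>p$ depending only on $n,p,s,\Lambda$ together with a reverse-H\"{o}lder type inequality controlling $\big(\fint_{B_{r/2}(x_0)}|Dv|^{q}\,dx\big)^{1/q}$ by the $L^{p}$-average of $Dv$ over a slightly larger concentric ball plus a nonlocal tail contribution of $v$; after a routine scaling this is the only genuinely nonlocal analytic input. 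The point of the present lemma is only to lower the exponent $p$ appearing on the right-hand side down to $q_{0}=\max\{1,p-1\}<p$, which is exactly what the Caccioppoli inequality delivers once it is coupled with Poincar\'{e}'s inequality.

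Concretely, I would first invoke \cite[Proposition 3.4]{BKL24} on the pair of balls $B_{r/2}(x_0)\subset B_{3r/4}(x_0)$ (legitimate since $B_{r}(x_0)\Subset\Omega$) to obtain
$$\Big(\fint_{B_{r/2}(x_0)}|Dv|^{q}\,dx\Big)^{1/q}\le C\Big(\fint_{B_{3r/4}(x_0)}|Dv|^{p}\,dx\Big)^{1/p}+\frac{C}{r}\operatorname{Tail}\big(v-(v)_{B_{3r/4}(x_0)};x_0,\tfrac{3r}{4}\big).$$
Next, apply Lemma \ref{CCP-v} with $\varrho=1$, $\gamma=\tfrac34$ and $\kappa=(v)_{B_{r}(x_0)}$ to bound the $L^{p}$-average of $Dv$ over $B_{3r/4}(x_0)$ by $r^{-1}$ times $\big(\fint_{B_{r}(x_0)}|v-(v)_{B_{r}(x_0)}|^{q_{0}}\,dx\big)^{1/q_{0}}+\operatorname{Tail}(v-(v)_{B_{r}(x_0)};x_0,r)$, and then estimate that oscillation integral by Poincar\'{e}'s inequality, $\big(\fint_{B_{r}(x_0)}|v-(v)_{B_{r}(x_0)}|^{q_{0}}\,dx\big)^{1/q_{0}}\le Cr\big(\fint_{B_{r}(x_0)}|Dv|^{q_{0}}\,dx\big)^{1/q_{0}}$, which is admissible since $q_{0}\ge1$. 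Finally, to replace the tail term $\operatorname{Tail}(v-(v)_{B_{3r/4}(x_0)};x_0,\tfrac{3r}{4})$ by the desired one at scale $r$, I would invoke \cite[Lemma 3.4]{CSYZ24} exactly as in \eqref{tailesti1}, absorbing the discrepancy into $r\big(\fint_{B_{r}(x_0)}|Dv|^{q_{0}}\,dx\big)^{1/q_{0}}+\operatorname{Tail}(v-(v)_{B_{r}(x_0)};x_0,r)$. Chaining these inequalities and collecting constants yields \eqref{Dv-highintegest}.

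The only step requiring genuine care is the bookkeeping of the nonlocal tail terms along the chain of radii $r/2<3r/4<r$: one must consistently recenter the tail (from the average over $B_{3r/4}(x_0)$ to the average over $B_{r}(x_0)$) and adjust the truncation radius, which is precisely where \cite[Lemma 3.4]{CSYZ24} enters and which has already been carried out in the proof of Lemma \ref{Dv-morrey}. Everything else is routine; in particular, no separate treatment of the subquadratic and superquadratic ranges is needed here, because $q_{0}=\max\{1,p-1\}$ always satisfies $1\le q_{0}<p$, so that both Poincar\'{e}'s inequality and \cite[Proposition 3.4]{BKL24} apply uniformly in $p>1$.
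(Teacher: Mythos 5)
Your proposal is correct and follows essentially the same route the paper takes: the paper presents this lemma precisely as an adaptation of the higher integrability result \cite[Proposition 3.4]{BKL24}, combined with the Caccioppoli inequality of Lemma \ref{CCP-v} and Poincar\'{e}'s inequality, with the tail recentering handled as in the proof of Lemma \ref{Dv-morrey}. Your intermediate-radius bookkeeping ($r/2<3r/4<r$) and the use of \cite[Lemma 3.4]{CSYZ24} to adjust the tail term are exactly the routine adjustments the paper leaves implicit.
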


In fact, the exponent $q$ in Lemma \ref{Dv-highinteglem}
is only slightly larger than $p$. To obtain interior gradient estimates for the case of arbitrary $q\in(p,+\infty)$, the vector field $\mathcal A$ with respect to $x$ cannot merely be measurable, it requires an additional regularity assumption, refer to \cite[Theorem 1.3]{BKL24}. Here, we present the version that will be used in the subsequent sections.
\begin{lemma}\label{Dv-intergradientlem}
Let $v\in W^{1,p}_{\rm loc}(\Omega)\cap {\mathcal L}^{p-1}_{sp}(\mathbb{R}^n)$ be a weak solution to \eqref{eq:homo-mixed} under assumptions \eqref{growth1}--\eqref{ellip} with $p > 1$, $s \in (0,1)$ and $p<q<+\infty$. If there exists a positive constant $\delta$ depending only on $n,\,p,\,s,\,\Lambda$ and $q$ such that $\omega(r)$ satisfies small $\operatorname{BMO}$ regular condition \eqref{smallBMO}, then there holds that
\begin{equation}\label{Dv-intergradientest}
 \left(\fint_{B_{\frac{r}{2}}(x_0)} |Dv|^{q} \operatorname{d}\! x\right)^{\frac{1}{q}}\leq C  \left(\fint_{B_{r}(x_0)} |Dv|^{q_0} \operatorname{d}\! x\right)^{\frac{1}{q_0}} + \frac{C}{r}\operatorname{Tail}(v-(v)_{B_r(x_0)};x_0,r),
\end{equation}
where the constant $C=C(n,p,s,\Lambda,q)$, the ball $ B_{r}(x_0) \Subset \Omega$ and $q_0=\max\{1,p-1\}$.
\end{lemma}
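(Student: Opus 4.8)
The plan is to \emph{transfer} the deep interior higher-integrability estimate for the homogeneous equation \eqref{eq:homo-mixed} proved in \cite[Theorem 1.3]{BKL24} — the result whose proof genuinely exploits the small $\operatorname{BMO}$ hypothesis \eqref{smallBMO} and which furnishes the threshold $\delta=\delta(n,p,s,\Lambda,q)$ — and then to \emph{post-process} its right-hand side by means of the Caccioppoli inequality of Lemma \ref{CCP-v} together with Poincar\'{e}'s inequality. This post-processing serves two purposes: lowering the exponent on the gradient from the natural energy exponent $p$ down to $q_0=\max\{1,p-1\}$, and re-expressing the nonlocal contribution through the single tail term $\operatorname{Tail}(v-(v)_{B_r(x_0)};x_0,r)$, exactly in the spirit of the proofs of Lemma \ref{Dv-morrey} and Lemma \ref{Dv-highinteglem}.

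Concretely, I would first apply \cite[Theorem 1.3]{BKL24} on the concentric balls $B_{r/2}(x_0)\subset B_{3r/4}(x_0)$, keeping the larger ball strictly inside $B_r(x_0)$ so that no ball bigger than $B_r(x_0)$ is ever needed; after the usual scaling normalization this yields
\begin{equation*}
  \left(\fint_{B_{r/2}(x_0)} |Dv|^{q} \operatorname{d}\!x\right)^{\frac{1}{q}}
  \leq C \left(\fint_{B_{3r/4}(x_0)} |Dv|^{p} \operatorname{d}\!x\right)^{\frac{1}{p}}
  + \frac{C}{r}\operatorname{Tail}(v-(v)_{B_{3r/4}(x_0)};x_0, 3r/4),
\end{equation*}
with $C=C(n,p,s,\Lambda,q)$; it is exactly at this step that the $\operatorname{BMO}$ smallness \eqref{smallBMO} enters, with the $\delta$ of the statement. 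Next I would invoke Lemma \ref{CCP-v} with $\varrho=1$, $\gamma=\tfrac34$ and $\kappa=(v)_{B_{r}(x_0)}$ to control the $L^{p}$ average of $Dv$ over $B_{3r/4}(x_0)$ by $\tfrac{C}{r}$ times $(\fint_{B_r(x_0)}|v-(v)_{B_r(x_0)}|^{q_0}\operatorname{d}\!x)^{1/q_0}+\operatorname{Tail}(v-(v)_{B_r(x_0)};x_0,r)$, and then use Poincar\'{e}'s inequality (legitimate because $q_0\geq1$) to replace the former average by $Cr(\fint_{B_r(x_0)}|Dv|^{q_0}\operatorname{d}\!x)^{1/q_0}$. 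Finally, the tail at scale $3r/4$ and the tail at scale $r$ are merged into $\operatorname{Tail}(v-(v)_{B_r(x_0)};x_0,r)$ plus an interior average of $|Dv|^{q_0}$ through the same device used in the proof of Lemma \ref{Dv-morrey}, namely \cite[Lemma 3.4]{CSYZ24}. Assembling these bounds produces \eqref{Dv-intergradientest} with the asserted dependence of the constant.

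Since the substantive analysis is contained entirely in \cite[Theorem 1.3]{BKL24}, I do not anticipate a real obstacle; the only points requiring genuine attention are (i) the descent from the $L^p$ average to the sub-energy $L^{q_0}$ average on the right-hand side, which is feasible \emph{precisely because} the Caccioppoli inequality of Lemma \ref{CCP-v} is formulated below the energy exponent, and (ii) the tail bookkeeping that anchors every nonlocal contribution to the fixed ball $B_r(x_0)$. One should also verify that the $\operatorname{BMO}$ threshold delivered by \cite[Theorem 1.3]{BKL24} coincides with the one in the statement, which is immediate from its dependence on $n$, $p$, $s$, $\Lambda$ and $q$.
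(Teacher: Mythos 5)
Your proposal is correct and is essentially the paper's own route: the paper offers no independent argument for this lemma, presenting it as the adapted version of \cite[Theorem 1.3]{BKL24} (which is where the small $\operatorname{BMO}$ hypothesis and the threshold $\delta=\delta(n,p,s,\Lambda,q)$ enter), post-processed exactly as in Lemma \ref{Dv-highinteglem} by the Caccioppoli inequality of Lemma \ref{CCP-v}, Poincar\'{e}'s inequality, and the tail comparison of \cite[Lemma 3.4]{CSYZ24} — precisely the steps you carry out.
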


In the sequel, we consider a weak solution $w\in W^{1,p}_{\rm loc}(\Omega)$ to the homogeneous local equation
\begin{equation}\label{eq:homo-local}
   -\operatorname{div}\tilde{\mathcal{A}}(Dw)=0\qquad\text{in }\ \Omega\,,
\end{equation}
where the vector field $\tilde{\mathcal{A}}(Dw):=\left(\mathcal{A}(\cdot,z)\right)_{B_{r}(x_0)}$ evidently satisfies the growth and ellipticity conditions given by \eqref{growth1} and \eqref{ellipticity}.
Under these assumptions, it can be shown that there exists a H\"{o}lder regularity exponent $\alpha_M\in(0,1)$ depending only on $n,\,p$ and $\Lambda$ such that $Dw \in C_{\rm loc}^{0,\alpha_M}(\Omega,\R^n)$ and fulfills the following decay estimates for a suitable excess functional of $Dw$, refer to \cite{L06, CSYZ24} for details.

\begin{lemma}\label{exdecay-Dw-Lem}
    Let $w \in W^{1,p}_{\rm loc}(\Omega)$ be a weak solution to \eqref{eq:homo-local} under assumptions \eqref{growth1} and \eqref{ellipticity} with $p>1$ and $q_0=\max\{1,p-1\}$.
    Then there exist an exponent $\alpha_M\in(0,1)$ and a positive constant $C$, both depending only on $n,\,p$ and $\Lambda$ such that
    \begin{equation}\label{exdecay-Dw}
       \left( \fint_{B_{\rho}(x_0)}|Dw-(Dw)_{B_{\rho}(x_0)}|^{q_0}  \operatorname{d}\!x\right)^{\frac{1}{q_0}} \leq C\left(\frac{\rho}{r}\right)^{\alpha_M}\left(\fint_{B_{r}(x_0)}|Dw-(Dw)_{B_{r}}|^{q_0}  \operatorname{d}\!x\right)^{\frac{1}{q_0}}
    \end{equation}
holds whenever $x_0\in \Omega$ and $B_{\rho}(x_0) \subset B_{r}(x_0) \Subset \Omega$. Moreover, there holds that
   \begin{equation}\label{decay-Dw}
        \left(\fint_{B_{\rho}(x_0)}|Dw|^{q_0}  \operatorname{d}\!x\right)^{\frac{1}{q_0}} \leq C\left(\fint_{B_{r}(x_0)}|Dw|^{q_0}  \operatorname{d}\!x\right)^{\frac{1}{q_0}}\,,
    \end{equation}
where the positive constant $C$ also depends only on $n,\,p$ and $\Lambda$.
\end{lemma}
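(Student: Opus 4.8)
Lemma~\ref{exdecay-Dw-Lem} is the classical interior excess-decay estimate for homogeneous $p$-Laplacian type equations, and the plan is to reconstruct it from the $C^{1,\alpha}$-regularity theory for such equations, exploiting that $\tilde{\mathcal A}$ carries no $x$-dependence and satisfies \eqref{growth1}--\eqref{ellipticity} with the same constant $\Lambda$. First I would normalise by scaling: the equation $-\operatorname{div}\tilde{\mathcal A}(Dw)=0$ is invariant under $w\mapsto r^{-1}w(x_0+r\,\cdot)$, which leaves $Dw$ unchanged up to the dilation and rescales the excess $E(\rho):=\big(\fint_{B_\rho}|Dw-(Dw)_{B_\rho}|^{q_0}\operatorname{d}\!x\big)^{1/q_0}$ accordingly, so it suffices to treat $x_0=0$, $r=1$, $B_\rho\subset B_1$. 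The ingredients I would quote from the literature (\cite{L06}; the Campanato-type iteration below is carried out in the coefficient-free setting in \cite{CSYZ24}, which is precisely our situation here) are the interior gradient $L^\infty$ bound and the interior Hölder estimate for $Dw$: there exist $\alpha_0=\alpha_0(n,p,\Lambda)\in(0,1)$ and $C=C(n,p,\Lambda)$ with $\sup_{B_{1/2}}|Dw|\le C\big(\fint_{B_1}|Dw|^{q_0}\operatorname{d}\!x\big)^{1/q_0}$ and $[Dw]_{C^{0,\alpha_0}(B_{1/2})}\le C\sup_{B_{3/4}}|Dw|$.

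To obtain \eqref{exdecay-Dw} I would run the standard dichotomy at the unit scale, which by scaling transfers to every scale. For $\rho\ge\frac{1}{8}$ the bound $E(\rho)\le C\rho^{\alpha_M}E(1)$ is immediate after enlarging the domain of integration, so let $\rho<\frac{1}{8}$. In the \emph{degenerate regime} $E(1)\ge c_0|(Dw)_{B_1}|$ (with $c_0$ a universal constant) one has $\big(\fint_{B_1}|Dw|^{q_0}\operatorname{d}\!x\big)^{1/q_0}\le E(1)+|(Dw)_{B_1}|\le(1+c_0^{-1})E(1)$, so the quoted bounds give $E(\rho)\le\operatorname{osc}_{B_\rho}Dw\le C\rho^{\alpha_0}[Dw]_{C^{0,\alpha_0}(B_{1/2})}\le C\rho^{\alpha_0}E(1)$. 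In the \emph{nondegenerate regime} $E(1)<c_0|(Dw)_{B_1}|$ I would freeze the coefficient matrix at $\xi_0:=(Dw)_{B_1}$, compare $w$ with the solution $h$ of the constant-coefficient uniformly elliptic linear equation $-\operatorname{div}(\partial_z\tilde{\mathcal A}(\xi_0)Dh)=0$ on $B_{1/2}$ with $h=w$ on $\partial B_{1/2}$, invoke the linear (Campanato / De Giorgi--Nash) excess decay for $Dh$ with a universal exponent $\gamma=\gamma(n,p,\Lambda)$, and control the comparison error $\fint_{B_{1/2}}|Dw-Dh|^{q_0}\operatorname{d}\!x$ by $E(1)$ times a factor that is small precisely because $|Dw|$ stays close to $\xi_0$ in this regime; iterating this over dyadic scales yields $E(\rho)\le C\rho^{\gamma}E(1)$. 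Choosing $\alpha_M:=\min\{\alpha_0,\gamma\}$ then proves \eqref{exdecay-Dw}.

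The estimate \eqref{decay-Dw} is a corollary of \eqref{exdecay-Dw}: for $\rho<r$, telescoping $(Dw)_{B_\rho}-(Dw)_{B_r}$ over dyadic radii between $\rho$ and $r$ and summing the geometric series produced by \eqref{exdecay-Dw} gives $|(Dw)_{B_\rho}-(Dw)_{B_r}|+E(\rho)\le CE(r)\le 2C\big(\fint_{B_r}|Dw|^{q_0}\operatorname{d}\!x\big)^{1/q_0}$, whence $\big(\fint_{B_\rho}|Dw|^{q_0}\operatorname{d}\!x\big)^{1/q_0}\le|(Dw)_{B_r}|+CE(r)\le C\big(\fint_{B_r}|Dw|^{q_0}\operatorname{d}\!x\big)^{1/q_0}$, as required.

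I expect the only genuinely delicate point to be the input quoted in the subquadratic range $1<p<2$: there $\tilde{\mathcal A}$ is merely $C^1$ away from the origin, so both the gradient $L^\infty$ bound with the low exponent $q_0=\max\{1,p-1\}$ on the right and the comparison estimate in the nondegenerate regime must be obtained through the usual nondegenerate regularisation $(\varepsilon^2+|Dw|^2)^{(p-2)/2}$ and a passage to the limit, together with the self-improvement of the a priori gradient bound. This is exactly why the lemma is recorded with a reference to \cite{L06, CSYZ24} rather than proved here in full detail.
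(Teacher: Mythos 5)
Your proposal is correct and follows essentially the same route as the paper, which does not prove Lemma \ref{exdecay-Dw-Lem} at all but simply invokes the classical coefficient-free $C^{1,\alpha}$/excess-decay theory from \cite{L06, CSYZ24} — exactly the gradient $L^\infty$ bound, gradient H\"{o}lder estimate, and degenerate/nondegenerate comparison iteration you reconstruct. The minor radius mismatches and the delicate subquadratic regularization you flag are standard details already contained in those references, so nothing further is needed.
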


We conclude this subsection with a well-known global Calder\'{o}n-Zygmund estimate for the weak solution to the Dirichlet problem
\begin{equation}\label{Commodel-wv}
\left\{\begin{array}{r@{\ \ }c@{\ \ }ll}
\operatorname{div}\tilde{\mathcal{A}}(Dw)&=&0 & \mbox{in}\ \ B_r(x_0)\,, \\[0.05cm]
w&=& v & \mbox{on}\ \  \partial B_r(x_0)\,.
\end{array}\right.
\end{equation}

\begin{lemma}{\rm(\cite[Lemma 4.1]{Min10})}\label{CZest-Dw}
Let $w \in W^{1,p}(B_r(x_0))$ be a weak solution to \eqref{Commodel-wv} under assumptions \eqref{growth1} and \eqref{ellipticity} with $p>1$. Assume that the boundary datum $v\in W^{1,q}(B_r(x_0))$ for $q\geq p$, then $w\in W^{1,q}(B_r(x_0))$ and there holds that
\begin{equation}\label{CZ-Dw}
  \|Dw\|_{L^q(B_r(x_0))}\leq C  \|Dv\|_{L^q(B_r(x_0))}\,,
\end{equation}
where the positive constant $C$ depends only on $n,\,p,\,\Lambda$ and $q$.
\end{lemma}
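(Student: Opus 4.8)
The plan is to reduce the interior Calderón--Zygmund estimate \eqref{CZ-Dw} for the Dirichlet problem \eqref{Commodel-wv} to the known global estimate, which is precisely the content of \cite[Lemma 4.1]{Min10}, so the proof is essentially a citation with a few lines of bookkeeping. First I would observe that the vector field $\tilde{\mathcal{A}}(z)=(\mathcal{A}(\cdot,z))_{B_r(x_0)}$ is, after averaging in $x$, a frozen-coefficient (i.e.\ $x$-independent) Carathéodory map that inherits the growth bound \eqref{growth1} and the ellipticity \eqref{ellipticity} with the same structure constant $\Lambda$; in particular it is $C^1$ in $z$ away from the origin, so the standard $p$-Laplace-type Calderón--Zygmund theory applies verbatim. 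Since $\tilde{\mathcal{A}}$ has no dependence on $x$, the $\operatorname{BMO}$ smallness hypothesis that normally constrains such theorems is trivially satisfied (with modulus zero), so no restriction on the exponent $q$ beyond $q\geq p$ is needed.

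The key steps, in order, are: (i) verify that $\tilde{\mathcal{A}}$ satisfies the hypotheses required by \cite[Lemma 4.1]{Min10} --- monotonicity and growth of the right order, which follow by integrating \eqref{growth1}--\eqref{ellipticity} over $B_r(x_0)$ and using that these inequalities are preserved under convex averaging in $x$; (ii) check that the boundary datum regularity $v\in W^{1,q}(B_r(x_0))$ with $q\geq p$ is exactly the hypothesis under which the cited lemma produces a solution $w\in W^{1,q}(B_r(x_0))$; (iii) invoke \cite[Lemma 4.1]{Min10} to obtain $\|Dw\|_{L^q(B_r(x_0))}\leq C\|Dv\|_{L^q(B_r(x_0))}$ with $C=C(n,p,\Lambda,q)$, noting that the constant is scale- and translation-invariant so it does not depend on $r$ or $x_0$. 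The only genuinely substantive point is making sure the averaged vector field still fits the structural framework of the reference; since both \eqref{growth1} and \eqref{ellipticity} are of the form ``bounded above/below by a fixed power of $|z|$ times a constant,'' they survive averaging over $x$ without any change to the constants, so this is routine.

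I expect the main --- and essentially only --- obstacle to be a purely expository one: confirming that the hypotheses of \cite[Lemma 4.1]{Min10} are met by $\tilde{\mathcal{A}}$ in the precise form stated there (some formulations require the vector field to be defined and $C^1$ on all of $\mathbb{R}^n$ when $p<2$, which may need the standard regularization argument $\tilde{\mathcal{A}}_\varepsilon(z)=\tilde{\mathcal{A}}(\sqrt{\varepsilon^2+|z|^2}\,z/|z|)$ followed by passage to the limit $\varepsilon\to 0$). No new analytic content is required beyond what is already available in the literature, and the estimate \eqref{CZ-Dw} then follows directly.
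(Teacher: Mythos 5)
Your proposal is correct and coincides with what the paper does: the lemma is taken directly from \cite[Lemma 4.1]{Min10}, the only (routine) point being that the averaged, $x$-independent vector field $\tilde{\mathcal{A}}(z)=(\mathcal{A}(\cdot,z))_{B_r(x_0)}$ inherits \eqref{growth1}--\eqref{ellipticity} with the same constant $\Lambda$, exactly as you verify. The paper offers no further argument, so your citation-plus-hypothesis-check is essentially the same proof.
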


\section{Comparison estimates and applications}\label{section3}

This section is devoted to comparing the gradient of solutions to the mixed local and nonlocal elliptic equation \eqref{eq1} with that of the homogeneous mixed equation \eqref{eq:homo-mixed} and the homogeneous local equation \eqref{eq:homo-local}, for which prior regularity results are available.
As a direct application of these comparison estimates, we also provide several key results that are necessary for proving our main theorems.

\subsection{Comparison between $Dv$ and $Dw$}

Let $v\in W^{1,p}_{\rm loc}(\Omega)\cap {\mathcal L}^{p-1}_{sp}(\mathbb{R}^n)$ be a weak solution to the homogeneous mixed local and nonlocal elliptic equation
 \eqref{eq:homo-mixed}. For a fixed ball $B_{r}(x_0)\Subset\Omega$, we define $w\in W^{1,p}(B_{r}(x_0))$ as a weak solution to the homogeneous local Dirichlet problem \eqref{Commodel-wv} and establish a comparison estimate between $Dv$ and $Dw$.
\begin{lemma}\label{comparison:Dv-Dw}
Let $v\in W^{1,p}_{\rm loc}(\Omega)\cap {\mathcal L}^{p-1}_{sp}(\mathbb{R}^n)$ and $w\in W^{1,p}(B_{r}(x_0))$ be weak solutions to \eqref{eq:homo-mixed} and \eqref{Commodel-wv} respectively, under assumptions \eqref{growth1}--\eqref{ellip} with $p > 1$ and $s \in (0,1)$. Let us define
\begin{equation}\label{exponent1}
\sigma_1 := \begin{cases}\frac{2(q-p)}{pq}\quad& \text{if } p\geq 2\,,\\
        \frac{q-p}{q}\quad &\text{if }1<p<2\,,
    \end{cases}\quad
  \sigma_2 := \begin{cases}\frac{1-s}{p-1}\quad& \text{if } p\geq 2\,, \\
        m\quad& \text{if } 1<p<2\,,
    \end{cases}\quad
     \sigma_3 := \begin{cases}\frac{1}{p-1}\quad& \text{if } p\geq 2\, ,\\
        \frac{1-m(2-p)}{p-1}\quad &\text{if }1<p<2\,,
    \end{cases}
\end{equation}
with $m\in(0,1-s)$ being an arbitrary constant and $q>p$.
Then there exists a positive constant $C\equiv C\big(n,p,s,\Lambda,\operatorname{diam}(\Omega)\big)$ such that
\begin{eqnarray}\label{comest-Dv-Dw}
\left(\fint_{B_{r}(x_0)} |Dv-Dw|^{q_0}\operatorname{d}\! x\right)^{\frac{1}{q_0}} &\leq& C\left(\big[\omega(r)\big]^{\sigma_1}+r^{\sigma_2}\right)\left(\fint_{B_{2r}(x_0)} |Dv|^{q_0}\operatorname{d}\! x\right)^{\frac{1}{q_0}}\nonumber\\
&&+\frac{C r^{\sigma_3}}{r^{p'}}\operatorname{Tail}(v-(v)_{B_{2r}(x_0)};x_0,2r),
	\end{eqnarray}
where $q_0=\max\{1,p-1\}$ and the averaged modulus of continuity $\omega(\cdot)$ is defined in Definition \ref{coef}.
\end{lemma}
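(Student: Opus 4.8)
The plan is to test the equation \eqref{eq:homo-mixed} satisfied by $v$ against $\varphi = v - w \in W_0^{1,p}(B_r(x_0))$, and likewise test the local equation \eqref{Commodel-wv} against the same $\varphi$. Subtracting, the local principal parts combine into $\int_{B_r(x_0)} \bigl(\tilde{\mathcal A}(Dv) - \tilde{\mathcal A}(Dw)\bigr)\cdot D(v-w)\,\mathrm dx$, which by the monotonicity following from \eqref{ellipticity} controls from below $\int_{B_r} |Dv-Dw|^p\,\mathrm dx$ when $p\ge 2$, and (via the usual Young-type trick with $(|Dv|+|Dw|)^{p-2}$) $\left(\int |Dv-Dw|^p\right)^{2/p}\left(\int(|Dv|+|Dw|)^p\right)^{(p-2)/p}$ when $1<p<2$. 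The right-hand side after subtraction produces two error terms: first the oscillation-of-coefficients term $\int_{B_r}\bigl(\mathcal A(x,Dv)-\tilde{\mathcal A}(Dv)\bigr)\cdot D(v-w)\,\mathrm dx$, and second the nonlocal term $\int_{\mathbb R^n}\int_{\mathbb R^n}\Phi(v(x)-v(y))\bigl((v-w)(x)-(v-w)(y)\bigr)K(x,y)\,\mathrm dx\,\mathrm dy$ (with $v-w$ extended by $0$ outside $B_r(x_0)$, which is legitimate since it lies in $W_0^{1,p}(B_r)$ and hence is an admissible test function for the mixed equation).

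For the coefficient term I would use \eqref{coefA}: $|\mathcal A(x,Dv)-\tilde{\mathcal A}(Dv)| \le A(Dv,B_r(x_0))(x)\,|Dv|^{p-1}$, then apply Hölder's inequality with the higher integrability of $Dv$ from Lemma \ref{Dv-highinteglem} (exponent $q>p$) to absorb the average of $A(\cdot)^2$ — this is exactly where $\omega(r)$ and the exponent $\sigma_1$ enter, $\sigma_1$ being dictated by how much integrability room $q-p$ we have (the $p\ge2$ versus $1<p<2$ split in \eqref{exponent1} reflecting the different algebraic structure of the monotonicity inequality). For the nonlocal term, split the double integral into the local part over $B_{2r}\times B_{2r}$ and the tail part. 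The local part is estimated using \eqref{mono} to bound $|\Phi(v(x)-v(y))|\le\Lambda|v(x)-v(y)|^{p-1}$, then the embedding Lemma \ref{embed} applied to $v-w\in W_0^{1,p}(B_r)$ to pass from the Gagliardo seminorm of $v-w$ to $r^{1-s}\fint|D(v-w)|$, combined with the Caccioppoli estimate Lemma \ref{CCP-v} for the Gagliardo seminorm of $v$ itself — this yields the $r^{\sigma_2}$ factor. The tail part, where $y$ ranges outside $B_{2r}$, is controlled by $\operatorname{Tail}(v-(v)_{B_{2r}};x_0,2r)^{p-1}$ times a suitable power of $r$ multiplying $\|D(v-w)\|_{L^1}$ or $\|D(v-w)\|_{L^p}$, producing the $r^{\sigma_3}/r^{p'}$ weight after one more use of Caccioppoli to bound $\fint|Dv|$ by the right-hand data; here the subquadratic case needs the extra interpolation parameter $m$, explaining the $m$-dependence of $\sigma_2,\sigma_3$ when $1<p<2$.

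After collecting, in the case $p\ge2$ one divides through by $\left(\int(|Dv|+|Dw|)^p\right)^{(p-2)/p}$-type factors — harmless since $p\ge2$ gives a clean lower bound — and uses \eqref{CZ-Dw} from Lemma \ref{CZest-Dw} (or simply $\|Dw\|_{L^{q_0}}\le C\|Dv\|_{L^{q_0}}$, which follows from testing the Dirichlet problem with $\varphi=v-w$) to replace all occurrences of $\|Dw\|$ on the right by $\|Dv\|$. In the case $1<p<2$ one instead isolates $\left(\int_{B_r}|Dv-Dw|^p\right)^{2/p}$, bounds the $(|Dv|+|Dw|)^p$ factor by $\fint_{B_{2r}}|Dv|^{q_0}$ via Caccioppoli and the comparison $\|Dw\|\lesssim\|Dv\|$, and takes square roots, converting the additive error structure into the $[\omega(r)]^{\sigma_1}+r^{\sigma_2}$ form; the final passage from $L^p$-averages to $L^{q_0}=L^{p-1}$-averages is just Hölder since $q_0=p-1<p$. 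The main obstacle I expect is the bookkeeping in the subquadratic regime: keeping the exponents $\sigma_1,\sigma_2,\sigma_3$ sharp requires carefully balancing the interpolation parameter $m$ against the loss incurred both in the monotonicity inequality (the $(p-2)/p$ power) and in controlling the nonlocal tail, and one must verify that all the resulting powers of $r$ are nonnegative so that the estimate genuinely improves as $r\to 0$.
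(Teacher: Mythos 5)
Your proposal follows essentially the same route as the paper's proof: test both equations with $h=v-w$, use the monotonicity coming from \eqref{ellipticity}, estimate the coefficient-oscillation term via H\"older's inequality together with the higher integrability of $Dv$ (Lemma \ref{Dv-highinteglem}) and Lemma \ref{CZest-Dw}, which produces $[\omega(r)]^{\sigma_1}$, and split the nonlocal term into a near part handled by Lemma \ref{embed} plus the Caccioppoli inequality \eqref{CCPv} and a far part giving the tail weight $r^{\sigma_3}/r^{p'}$, with the subquadratic case treated exactly as you describe via the $(|Dv|^2+|Dw|^2)^{\frac{p-2}{2}}$ H\"older trick and the interpolation parameter $m$. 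The only cosmetic difference is that the paper performs the near/far splitting at the intermediate radius $\tfrac{3r}{2}$ rather than $2r$, so that the Caccioppoli inequality only consumes data on $B_{2r}(x_0)$ and the right-hand side remains as stated.
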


\begin{remark}\label{q-range}
In this context, $q$ is used to represent the high integrability exponent of $Dv$.
Under the assumptions of Lemma \ref{comparison:Dv-Dw}, the exponent $q=q(n,p,s,\Lambda)$ is
defined by Lemma \ref{Dv-highinteglem}. If $w(r)$ satisfies the additional small $\operatorname{BMO}$ regular condition \eqref{smallBMO}, then the exponent $q$ can be chosen arbitrarily within the interval $(p,+\infty)$ via Lemma \ref{Dv-intergradientlem}.
\end{remark}

\begin{proof}
If the boundary condition in the Dirichlet problem \eqref{Commodel-wv} is extended to $\R^n\backslash B_r(x_0)$ by setting $w=v$, then $h:=v-w$ can be chosen as a test function for \eqref{eq:homo-mixed} and \eqref{Commodel-wv}, respectively. By virtue of the ellipticity condition \eqref{ellipticity}, we have
\begin{eqnarray}\label{com-vw1}
% \nonumber to remove numbering (before each equation)
   && \fint_{B_{r}(x_0)}\left(|Dv|^2+|Dw|^2\right)^{\frac{p-2}{2}} |Dv-Dw|^{2}\operatorname{d}\! x\nonumber \\
  &\leq&C\fint_{B_{r}(x_0)} \left(\tilde{\mathcal{A}}(Dv)-\mathcal{A}(x,Dv)\right)\cdot Dh\operatorname{d}\! x -\frac{C}{|B_{r}|} \int_{\mathbb{R}^n} \int_{\mathbb{R}^n} \Phi(v(x)-v(y))(h(x)-h(y)) K(x,y) \operatorname{d}\!x \operatorname{d}\!y\nonumber\\
  &=:&I_1+I_2.
\end{eqnarray}
As indicated in \cite[(2.16)]{KuMi}, we can bound $I_1$ as shown below
\begin{equation*}
  I_1\leq \frac{1}{4}\fint_{B_{r}(x_0)}\left(|Dv|^2+|Dw|^2\right)^\frac{p-2}{2} |Dv-Dw|^2\operatorname{d}\! x+
C\fint_{B_{r}(x_0)}[A(Dv,B_r(x_0))(x)]^2 \left(|Dv|^2+\chi_{\{p<2\}}|Dw|^2\right)^{\frac{p}{2}}\operatorname{d}\! x\,,
\end{equation*}
where the coefficient function $A(Dv,B_r(x_0))$ is defined in \eqref{coefA}. It is easy to verify that
\begin{equation}\label{A-bdd}
  A(Dv,B_r(x_0))(x)\leq2 \Lambda
\end{equation}
by using the growth condition \eqref{growth1}.
To further control the last term in the estimate of $I_1$, we combine this fact with H\"{o}lder's inequality, Lemma \ref{CZest-Dw} and Lemma \ref{Dv-highinteglem} to arrive at
\begin{eqnarray}\label{com-vw1-1}
% \nonumber to remove numbering (before each equation)
   && \fint_{B_{r}(x_0)}[A(Dv,B_r(x_0))]^2 \left(|Dv|^2+\chi_{\{p<2\}}|Dw|^2\right)^{\frac{p}{2}}\operatorname{d}\! x\nonumber \\
   &\leq& \left(\fint_{B_{r}(x_0)}[A(Dv,B_r(x_0))]^\frac{2q }{q-p} \operatorname{d}\! x\right)^{\frac{q-p}{q}}\left(\fint_{B_{r}(x_0)} \left(|Dv|^2+\chi_{\{p<2\}}|Dw|^2\right)^{\frac{q}{2}}\operatorname{d}\! x\right)^{\frac{p}{q}}\nonumber\\
   &\leq& C (2\Lambda)^{\frac{2p}{q}}\left[\omega(r)\right]^{\frac{2(q-p)}{q}}\left(\fint_{B_{r}(x_0)} |Dv|^q\operatorname{d}\! x\right)^{\frac{p}{q}}\nonumber\\
   &\leq&C\left[\omega(r)\right]^{\frac{2(q-p)}{q}}\left\{ \left( \fint_{B_{2r}(x_0)} |Dv|^{q_0} \operatorname{d}\! x\right)^{\frac{p}{q_0}}+ \left[\frac{1}{r}\operatorname{Tail}(v-(v)_{B_{2r}(x_0)};x_0,2r)\right]^p\right\}.
\end{eqnarray}
Hence, it follows that
\begin{eqnarray*}
   I_1&\leq& \frac{1}{4}\fint_{B_{r}(x_0)}\left(|Dv|^2+|Dw|^2\right)^\frac{p-2}{2} |Dv-Dw|^2\operatorname{d}\! x\\
   &&+ C\left[\omega(r)\right]^{\frac{2(q-p)}{q}}\left\{ \left( \fint_{B_{2r}(x_0)} |Dv|^{q_0} \operatorname{d}\! x\right)^{\frac{p}{q_0}}+ \left[\frac{1}{r}\operatorname{Tail}(v-(v)_{B_{2r}(x_0)};x_0,2r)\right]^p\right\}.
\end{eqnarray*}

Now we turn our attention to estimating $I_2$.
Since the support of the test function $h$ contains in $B_r(x_0)$ and the kernel $K$ is symmetric, we divide the integration area of $I_2$ into two subsets as
\begin{eqnarray*}
% \nonumber to remove numbering (before each equation)
  I_2 &=&-C \int_{B_{\frac{3r}{2}}(x_0)} \fint_{B_{\frac{3r}{2}}(x_0)} |v(x)-v(y)|^{p-2}(v(x)-v(y))(h(x)-h(y)) K(x,y) \operatorname{d}\!x \operatorname{d}\!y \nonumber \\
   &&-2C \int_{\mathbb{R}^n\backslash B_{\frac{3r}{2}}(x_0)} \fint_{B_{r}(x_0)} |v(x)-v(y)|^{p-2}(v(x)-v(y))h(x) K(x,y) \operatorname{d}\!x \operatorname{d}\!y\nonumber\\
   &=:&I_{2,1}+I_{2,2}\,.
\end{eqnarray*}
With respect to the estimate of $I_{2,1}$, we apply the assumption \eqref{ellip} on $K$, H\"{o}lder's inequality and the Caccioppoli type inequality \eqref{CCPv} with $\varrho=2$, $\gamma=\frac{3}{2}$ and $\kappa=(v)_{B_{2r}(x_0)}$, together with Poincar\'{e}'s inequality and the embedding Lemma \ref{embed}, to derive that
\begin{eqnarray*}
% \nonumber to remove numbering (before each equation)
  |I_{2,1}| &\leq &C\left( \int_{B_{\frac{3r}{2}}(x_0)} \fint_{B_{\frac{3r}{2}}(x_0)} \frac{|v(x)-v(y)|^{p} }{|x-y|^{n+sp}} \operatorname{d}\!x \operatorname{d}\!y\right)^{\frac{p-1}{p}}\left( \int_{B_{\frac{3r}{2}}(x_0)} \fint_{B_{\frac{3r}{2}}(x_0)} \frac{|h(x)-h(y)|^{p} }{|x-y|^{n+sp}} \operatorname{d}\!x \operatorname{d}\!y\right)^{\frac{1}{p}}  \\
   &\leq& Cr^{1-s}\left[\left(\fint_{B_{2r}(x_0)}|Dv|^{q_0}\operatorname{d}\!x\right)^{\frac{1}{q_0}}
   +\frac{1}{r} \operatorname{Tail}(v-(v)_{B_{2r}(x_0)};x_0, 2r) \right]^{p-1} \left(\fint_{B_{r}(x_0)}|Dv-Dw|^p\operatorname{d}\!x\right)^{\frac{1}{p}}.
\end{eqnarray*}
Combining Lemma \ref{v-holder} with Poincar\'{e}'s inequality, we
argue similarly as in the proof of \cite[Lemma 3.1]{CSYZ24} to deduce that
\begin{equation*}
  |I_{2,2}|\leq C\left[r^{(1-s)p'}\left(\fint_{B_{2r}(x_0)}|Dv|^{q_0}\operatorname{d}\!x\right)^{\frac{1}{q_0}}
   +\frac{1}{r} \operatorname{Tail}(v-(v)_{B_{2r}(x_0)};x_0, 2r) \right]^{p-1} \left(\fint_{B_{r}(x_0)}|Dv-Dw|^p\operatorname{d}\!x\right)^{\frac{1}{p}}.
\end{equation*}
Substituting the estimates of $I_1$ and $I_2$ into \eqref{com-vw1}, we obtain
\begin{eqnarray}\label{com-vw2}
% \nonumber to remove numbering (before each equation)
 && \fint_{B_{r}(x_0)}\left(|Dv|^2+|Dw|^2\right)^{\frac{p-2}{2}} |Dv-Dw|^{2}\operatorname{d}\! x\nonumber  \\
   &\leq& C\left[r^{\frac{1-s}{p-1}}\left(\fint_{B_{2r}(x_0)}|Dv|^{q_0}\operatorname{d}\!x\right)^{\frac{1}{q_0}}
   +\frac{1}{r} \operatorname{Tail}(v-(v)_{B_{2r}(x_0)};x_0, 2r) \right]^{p-1} \left(\fint_{B_{r}(x_0)}|Dv-Dw|^p\operatorname{d}\!x\right)^{\frac{1}{p}}\nonumber  \\
 &&+
   C\left[\omega(r)\right]^{\frac{2(q-p)}{q}}\left\{  \left(\fint_{B_{2r}(x_0)} |Dv|^{q_0} \operatorname{d}\! x\right)^{\frac{p}{q_0}} + \left[\frac{1}{r}\operatorname{Tail}(v-(v)_{B_{2r}(x_0)};x_0,2r)\right]^p\right\}.
\end{eqnarray}

For the case of $p\geq2$, then a direct calculation using Young's inequality for \eqref{com-vw2} reveals that
\begin{eqnarray*}
% \nonumber to remove numbering (before each equation)
   && \fint_{B_{r}(x_0)} |Dv-Dw|^{p}\operatorname{d}\! x\nonumber  \\
   &\leq&   \fint_{B_{r}(x_0)}\left(|Dv|^2+|Dw|^2\right)^{\frac{p-2}{2}} |Dv-Dw|^{2}\operatorname{d}\! x\nonumber  \\
    &\leq&
    C\left(\left[\omega(r)\right]^{\frac{2(q-p)}{q}}+r^{\frac{p(1-s)}{p-1}}\right)\left(\fint_{B_{2r}(x_0)}|Dv|^{q_0}\operatorname{d}\!x\right)^{\frac{p}{q_0}}+\left[\frac{1}{r} \operatorname{Tail}(v-(v)_{B_{2r}(x_0)};x_0, 2r) \right]^{p}.
\end{eqnarray*}
While if $1<p<2$, we utilize H\"{o}lder's inequality, the energy estimate \eqref{CZ-Dw} and Lemma \ref{Dv-highinteglem} with $q=p$, and Young's inequality to conclude that
\begin{eqnarray*}
% \nonumber to remove numbering (before each equation)
  && \fint_{B_{r}(x_0)} |Dv-Dw|^{p}\operatorname{d}\! x\nonumber  \\
   &\leq&  C\left(\fint_{B_{r}(x_0)}\left(|Dv|^2+|Dw|^2\right)^{\frac{p-2}{2}} |Dv-Dw|^{2}\operatorname{d}\! x\right)^{\frac{p}{2}}\left(\fint_{B_{r}(x_0)} |Dv|^{p}\operatorname{d}\! x\right)^{\frac{2-p}{2}}\\
&\leq& C\left(\left[\omega(r)\right]^{\frac{p(q-p)}{q}}+r^{mp}\right)\left(\fint_{B_{2r}(x_0)} |Dv|^{q_0}\operatorname{d}\! x\right)^{\frac{p}{q_0}}+r^{-m(2-p)p'}\left[\frac{1}{r} \operatorname{Tail}(v-(v)_{B_{2r}(x_0)};x_0, 2r) \right]^{p}
\end{eqnarray*}
for any $m\in(0,1-s]$. Therefore, the proof of Lemma \ref{comparison:Dv-Dw} is completed by integrating the aforementioned two cases and applying H\"{o}lder's inequality once more.
\end{proof}

According to the proof of Lemma \ref{comparison:Dv-Dw}, if $Dv\in L^{\infty}_{\rm loc}(\Omega)$, we can re-estimate the previous display \eqref{com-vw1-1} as follows
\begin{equation*}
% \nonumber to remove numbering (before each equation)
 \fint_{B_{r}(x_0)}[A(Dv,B_r(x_0))]^2 \left(|Dv|^2+\chi_{\{p<2\}}|Dw|^2\right)^{\frac{p}{2}}\operatorname{d}\! x
   \leq \left[\omega(r)\right]^{2}\|Dv\|_{L^\infty(B_r(x_0))}^p \,\,\mbox{for} \,\, p\geq 2,
\end{equation*}
while for the case of $p < 2$, we apply H\"{o}lder's inequality, \eqref{A-bdd} and Lemma \ref{CZest-Dw} to derive
\begin{eqnarray*}
% \nonumber to remove numbering (before each equation)
 &&\fint_{B_{r}(x_0)}[A(Dv,B_r(x_0))]^2 \left(|Dv|^2+\chi_{\{p<2\}}|Dw|^2\right)^{\frac{p}{2}}\operatorname{d}\! x\\ &\leq& C\left(\fint_{B_{r}(x_0)}[A(Dv,B_r(x_0))]^2 \operatorname{d}\! x\right)^{\frac{p}{2}}\left(\fint_{B_{r}(x_0)} |Dv|^{\frac{2p}{2-p}}\operatorname{d}\! x\right)^{\frac{2-p}{2}} \\
   &\leq&C\left[\omega(r)\right]^{p}\|Dv\|_{L^\infty(B_r(x_0))}^p.
\end{eqnarray*}
Thus, the exponent $\sigma_1$ can be refined to
\begin{equation}\label{exponent2}
\sigma_1' := \begin{cases}\frac{2}{p}\quad& \text{if } p\geq 2\,,\\
       1 \quad &\text{if }1<p<2\,.
    \end{cases}
\end{equation}
The subsequent lemma strengthens the regularity assumption on the partial map $x\mapsto\mathcal{A}(x,\cdot)$, thereby guaranteeing the local Lipschitz continuity of solutions to the homogeneous mixed local and nonlocal elliptic equation \eqref{eq:homo-mixed}.

\begin{lemma}\label{Dv-L^infty}
Let $v\in W^{1,p}_{\rm loc}(\Omega)\cap {\mathcal L}^{p-1}_{sp}(\mathbb{R}^n)$ be a weak solution to \eqref{eq:homo-mixed} under assumptions \eqref{growth1}--\eqref{ellip} with $p > 1$, $s \in (0,1)$ and $\max\{sp'-1,0\}<a<\sigma_3$. Assume that $[\omega(r)]^{\sigma_1'}$ satisfies Dini-$\operatorname{VMO}$ regular condition \eqref{Dinivmo},
then $Dv\in L^{\infty}_{\rm loc}(\Omega)$, and there exist a positive constant $C$
depending only on $n$, $p$, $s$, $\Lambda$, $a$, $m$, $\operatorname{diam}(\Omega)$,
and a radius $\tilde{R}$ depending only on $n$, $p$, $s$, $\Lambda$, $m$, $\omega(\cdot)$, $\operatorname{diam}(\Omega)$, such that
\begin{equation}\label{v-Lip}
\|Dv\|_{L^\infty(B_r(x_0))}\leq C  \left(\fint_{B_{2r}(x_0)} |Dv|^{q_0} \operatorname{d}\! x\right)^{\frac{1}{q_0}} + \frac{Cr^{a}}{r^{p'}}\operatorname{Tail}(v-(v)_{B_{2r}(x_0)};x_0,2r)
\end{equation}
holds for the ball
$B_{2r}(x_0) \Subset \Omega$ with $r\in(0,\frac{\tilde{R}}{4}]$ and $q_0=\max\{1,p-1\}$.
\end{lemma}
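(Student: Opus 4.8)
The plan is to run a Moser/De Giorgi-type iteration on dyadic balls, using the comparison estimate of Lemma \ref{comparison:Dv-Dw} together with the excess-decay estimate \eqref{exdecay-Dw} for the frozen local equation, and to absorb the Dini-$\operatorname{VMO}$ smallness of $[\omega(\cdot)]^{\sigma_1'}$ into a summable series. Fix $B_{2r}(x_0)\Subset\Omega$ and for $\rho\le r$ let $w=w_\rho$ solve the Dirichlet problem \eqref{Commodel-wv} on $B_\rho(x)$ with boundary datum $v$ and frozen vector field $\tilde{\mathcal A}=(\mathcal A(\cdot,z))_{B_\rho(x)}$. Introduce the excess functional
\begin{equation*}
E(x,\rho):=\left(\fint_{B_\rho(x)}|Dv-(Dv)_{B_\rho(x)}|^{q_0}\operatorname{d}\!\xi\right)^{\frac{1}{q_0}}+\frac{\rho^{a}}{\rho^{p'}}\operatorname{Tail}(v-(v)_{B_\rho(x)};x,\rho).
\end{equation*}
First I would bound $\left(\fint_{B_{\rho/2}(x)}|Dv-(Dv)_{B_{\rho/2}(x)}|^{q_0}\right)^{1/q_0}$ by the corresponding quantity for $Dw$ plus the comparison error $\left(\fint_{B_\rho(x)}|Dv-Dw|^{q_0}\right)^{1/q_0}$; the $Dw$-term is controlled via \eqref{exdecay-Dw} by $C(1/2)^{\alpha_M}$ times the excess of $Dw$ on $B_\rho(x)$, which in turn is dominated by the excess of $Dv$ plus one more comparison error by minimality/Lemma \ref{CZest-Dw}. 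Using the refined exponent $\sigma_1'$ from \eqref{exponent2} (valid since $Dv$ is a priori only known to be in the higher-integrability class of Lemma \ref{Dv-highinteglem}, so one first works with $\sigma_1$ and bootstraps, or directly quotes the $L^\infty$ form of \eqref{com-vw1-1}), Lemma \ref{comparison:Dv-Dw} gives, after also handling the two tail terms by \cite[Lemma 3.4]{CSYZ24} as in \eqref{tailesti1},
\begin{equation*}
E(x,\rho/2)\le C\,2^{-\alpha_M}E(x,\rho)+C\bigl([\omega(\rho)]^{\sigma_1'}+\rho^{\sigma_2}\bigr)\Bigl[E(x,\rho)+A(x,\rho)\Bigr],
\end{equation*}
where $A(x,\rho):=\left(\fint_{B_\rho(x)}|Dv|^{q_0}\right)^{1/q_0}+\rho^{a-p'}\operatorname{Tail}(v-(v)_{B_\rho(x)};x,\rho)$ is the "full" quantity on the right of \eqref{v-Lip}; here the constraint $\max\{sp'-1,0\}<a<\sigma_3$ is exactly what makes the tail contributions in \eqref{comest-Dv-Dw} combine into the $\rho^{a-p'}\operatorname{Tail}$ shape (the extra $\rho^{\sigma_2}$ and $\rho^{\sigma_3-a}$ powers are positive and decaying).

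Next I would iterate this on the sequence of radii $\rho_j=2^{-j}r$. Writing $E_j=E(x,\rho_j)$ and $\omega_j=[\omega(\rho_j)]^{\sigma_1'}$, one shows by induction that $A(x,\rho_j)\le C A(x,r)=:CA$ for all $j$ (standard, since $A$ behaves subadditively under the dyadic decomposition once the tail terms are merged via \cite[Lemma 3.4]{CSYZ24}), whence
\begin{equation*}
E_{j+1}\le \tfrac12 E_j+C\bigl(\omega_j+2^{-j\sigma_2}\bigr)A
\end{equation*}
after choosing $\tilde R$ small enough that $C2^{-\alpha_M}+C\sup_{\rho\le\tilde R}([\omega(\rho)]^{\sigma_1'}+\rho^{\sigma_2})\le\frac12$ — here the Dini-$\operatorname{VMO}$ hypothesis \eqref{Dinivmo} guarantees $\omega(\rho)\to0$, so such a $\tilde R$ exists, depending on the listed quantities. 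Summing the geometric iteration and using $\sum_j\omega_j<\infty$ (this is precisely \eqref{Dinivmo} rewritten as a dyadic sum) and $\sum_j2^{-j\sigma_2}<\infty$, I get $\sup_j E_j\le C A$ and, more importantly, that $\sum_j E_j<\infty$. The latter forces the averages $(Dv)_{B_{\rho_j}(x)}$ to form a Cauchy sequence, so $Dv$ has a Lebesgue representative at $x$ with $|Dv(x)|\le |(Dv)_{B_r(x)}|+C\sum_j E_j\le CA(x,r)$; doing this at every Lebesgue point $x\in B_r(x_0)$ and passing to the supremum yields \eqref{v-Lip}. Standard absolute continuity of the integral upgrades this to $Dv\in L^\infty_{\rm loc}(\Omega)$.

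The main obstacle is the bookkeeping at the borderline exponent $a$: one must verify that the three different tail scalings appearing in \eqref{comest-Dv-Dw} (coming from $\sigma_2$, $\sigma_3$, and the $r^{-p'}$ prefactor) genuinely telescope into the single weight $\rho^{a-p'}\operatorname{Tail}$ without losing a logarithm, and that the constant $C$ in \eqref{v-Lip} stays independent of $\omega(\cdot)$ while the radius $\tilde R$ is allowed to depend on $\omega(\cdot)$ — this asymmetry is what the statement demands and is delicate precisely because the smallness needed to start the iteration is qualitative (from Dini-$\operatorname{VMO}$) rather than a fixed $\delta$. A secondary technical point is the a priori regularity: since $Dv\in L^\infty_{\rm loc}$ is part of the conclusion, the refined exponent $\sigma_1'$ from \eqref{exponent2} cannot be used on the nose at the first step; one circumvents this by first running the iteration with $\sigma_1$ and the higher-integrability Lemma \ref{Dv-highinteglem} to obtain a quantitative $L^\infty$ bound on each fixed ball, and only then re-deriving the sharp estimate with $\sigma_1'$ — or, more cleanly, by noting that the displayed post-\eqref{com-vw1-1} computation in the text already provides the $\|Dv\|_{L^\infty}$-form of the comparison error, which is exactly what the iteration consumes.
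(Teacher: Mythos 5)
Your skeleton (excess decay for the frozen problem \eqref{Commodel-wv} via Lemma \ref{exdecay-Dw-Lem}, the comparison Lemma \ref{comparison:Dv-Dw}, a dyadic iteration whose smallness comes from the Dini-$\operatorname{VMO}$ hypothesis) is the same as the paper's, but two steps you treat as routine are exactly where the work lies, and as written they do not close. First, the assertion that $A(x,\rho_j)\leq C\,A(x,r)$ for all $j$ is ``standard, by subadditivity'' is not a subadditivity statement at all: a uniform bound on $\bigl(\fint_{B_{\rho_j}}|Dv|^{q_0}\bigr)^{1/q_0}$ and on the rescaled tails over \emph{all} dyadic scales is essentially the conclusion of the lemma. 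Controlling $A(x,\rho_j)$ requires controlling the drift of the averages $(Dv)_{B_{\rho_j}}$, i.e.\ $\sum_{i<j}E_i$, while your recursion for $E_{j+1}$ already consumes the bound on $A(x,\rho_j)$ — the argument is circular unless you run a genuinely coupled induction on $(E_j,|(Dv)_{B_{\rho_j}}|,\text{tail}_j)$ with the Dini smallness (this is how the paper closes the analogous loop in the proof of Theorem \ref{MaxDinivmo}, cf.\ \eqref{max-diniVMO-4}--\eqref{max-diniVMO-7}); the present lemma instead sidesteps it by measuring the comparison error against $\|Dv\|_{L^\infty(B_{2r}(x_0))}$.

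Second, that choice is forced by the exponent bookkeeping, and your proposed fixes do not survive it. The refined exponent $\sigma_1'$ of \eqref{exponent2} is only available in the $L^\infty$ form of the comparison error (the computation after \eqref{com-vw1-1} bounds the coefficient term by $[\omega(r)]^{\sigma_1'p}\|Dv\|_{L^\infty}^p$); with only the higher integrability of Lemma \ref{Dv-highinteglem} one gets $\sigma_1<\sigma_1'$, and since $\omega$ is bounded, Dini regularity of $[\omega(\cdot)]^{\sigma_1'}$ does \emph{not} imply $\sum_i[\omega(r_i)]^{\sigma_1}<\infty$, so ``first run the iteration with $\sigma_1$ and bootstrap'' has no summable majorant to start from. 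If instead you take the $L^\infty$ form (the paper's route), two further ingredients are needed that your proposal omits: a qualitative a priori step guaranteeing $\|Dv\|_{L^\infty}<\infty$ locally (the paper assumes $Dv\in C^0_{\rm loc}$ via an approximation argument quoting \cite{BKL24,BS23,KuMinSi,DM24}), and, because the iteration only yields $|Dv(x_0)|\leq C(\cdots)+\tfrac12\|Dv\|_{L^\infty(B_{2r}(x_0))}+\cdots$ with the sup norm on the \emph{larger} ball, an absorption over intermediate concentric radii $B_{\rho_1}(x_0)\subset B_{\rho_2}(x_0)$ via the interpolation Lemma \ref{iterate} (together with Lemma \ref{lem:bdd} to renormalize the tails at all scales). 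Without either the coupled induction or this absorption mechanism, your Cauchy-sequence/Lebesgue-point ending rests on the unproven uniform bound for $A(x,\rho_j)$, so the proof as proposed has a genuine gap.
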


\begin{proof}
Analogous to the standard approximation argument in the proof of \cite[Theorem 7.1]{KuMi}, we may assume a priori that $Dv\in C^{0}_{\rm loc}(\Omega)$ by applying \cite[Theorem 2.4]{BKL24}, \cite[Lemma 5.2]{BS23}, \cite[Lemma 4.1]{KuMinSi} and \cite[Theorem 5]{DM24}.
If $0<\rho<r$, then a combination of Lemma \ref{exdecay-Dw-Lem} and Lemma \ref{comparison:Dv-Dw} yields that
\begin{eqnarray}\label{v-Lip1}
% \nonumber to remove numbering (before each equation)
   &&\left( \fint_{B_{\rho}(x_0)}|Dv-(Dv)_{B_{\rho}(x_0)}|^{q_0}  \operatorname{d}\!x\right)^{\frac{1}{q_0}}\nonumber\\
    &\leq& \left(\frac{r}{\rho}\right)^{\frac{n}{q_0}}\left( \fint_{B_{r}(x_0)}|Dv-Dw|^{q_0}  \operatorname{d}\!x\right)^{\frac{1}{q_0}} +\left( \fint_{B_{\rho}(x_0)}|Dw-(Dw)_{B_{\rho}(x_0)}|^{q_0}  \operatorname{d}\!x\right)^{\frac{1}{q_0}}\nonumber\\
    &\leq& C_0\left(\frac{\rho}{r}\right)^{\alpha_M}\left(\fint_{B_{r}(x_0)}|Dv-(Dv)_{B_{r}(x_0)}|^{q_0}  \operatorname{d}\!x\right)^{\frac{1}{q_0}}+
    C\left(\frac{r}{\rho}\right)^{\frac{n}{q_0}}\left(\big[\omega(r)\big]^{\sigma_1'}+r^{\sigma_2}\right)\|Dv\|_{L^\infty(B_{2r}(x_0))}\nonumber\\
    &&+C\left(\frac{r}{\rho}\right)^{\frac{n}{q_0}}\frac{ r^{\sigma_3}}{r^{p'}}\operatorname{Tail}(v-(v)_{B_{2r}(x_0)};x_0,2r)\,,
\end{eqnarray}
where the positive constant $C_0$ depends only on $n,\, p$ and $\Lambda$.
In the case of $r\leq\rho<2r$, the above inequality is trivial, thus, \eqref{v-Lip1} holds whenever $0<\rho<2r$. Let $B_i:=B_{r_i}(x_0)$ be a sequence of shrinking balls with the radius $r_i:=\frac{2r}{L^i}$, where $L$ is a large positive constant to be determined later. We denote
\begin{equation*}
  A_i:=\left( \fint_{B_i}|Dv-(Dv)_{B_i}|^{q_0}  \operatorname{d}\!x\right)^{\frac{1}{q_0}}\,\, \mbox{and}\,\, K_i:=|(Dv)_{B_i}|,
\end{equation*}
and choose $L\equiv L(n,p,\Lambda)$ large enough such that $C_0L^{-\alpha_M}\leq\frac{1}{2}$, then \eqref{v-Lip1} implies that
\begin{equation}\label{v-Lip2}
 A_{i+1}\leq\frac{1}{2}A_i+C\left(\big[\omega(r_i)\big]^{\sigma_1'}+r_i^{\sigma_2}\right)\|Dv\|_{L^\infty(B_{2r}(x_0))}
    +C\frac{ r_i^{\sigma_3}}{r_i^{p'}}\operatorname{Tail}(v-(v)_{B_i};x_0,r_i)\,.
\end{equation}
With respect to the estimate of the last tail term for $i\geq 1$, we utilize the local boundedness Lemma \ref{lem:bdd} with $\kappa=(v)_{B_0}$ and Poincar\'{e}'s inequality to deduce that
\begin{eqnarray*}
% \nonumber to remove numbering (before each equation)
 &&\frac{1}{r_i^{p'}}\operatorname{Tail}(v-(v)_{B_i};x_0,r_i)\\
   &=& \left(\int_{\R^n\setminus B_i}\frac{|v(x)-(v)_{B_i}|^{p-1}}{|x-x_0|^{n+sp}} \operatorname{d}\!x\right)^{1/(p-1)}\\
   &\leq& \frac{1}{(2r)^{p'}}\operatorname{Tail}(v-(v)_{B_{2r}(x_0)};x_0,2r)+
   Cr^{-sp'}\sup_{B_{i}}|v(x)-(v)_{B_0}|+Cr^{1-sp'}\|Dv\|_{L^\infty(B_{2r}(x_0))}\\
   &\leq&\frac{C}{r^{p'}}\operatorname{Tail}(v-(v)_{B_{2r}(x_0)};x_0,2r)+Cr^{1-sp'}\|Dv\|_{L^\infty(B_{2r}(x_0))}\,.
\end{eqnarray*}
It is straightforward to show that $\sigma_3>\max\{sp'-1,0\}$ by virtue of $m\in(0,1-s)$ and $s\in(0,1)$. Let $\max\{sp'-1,0\}<a<\sigma_3$ and substitute the above estimate into \eqref{v-Lip2} to obtain
\begin{equation*}
  A_{i+1}\leq\frac{1}{2}A_i+C\left(\big[\omega(r_i)\big]^{\sigma_1'}+r_i^{\sigma_2}
  +r_i^{\sigma_3+1-sp'}\right)
  \|Dv\|_{L^\infty(B_{2r}(x_0))}
    +C r_i^{\sigma_3-a}\frac{ r^{a}}{r^{p'}}\operatorname{Tail}(v-(v)_{B_{2r}(x_0)};x_0,2r) \,.
\end{equation*}
Hence, it follows that
\begin{eqnarray}\label{v-Lip3}
% \nonumber to remove numbering (before each equation)
  \sum_{i=1}^{j}A_{i} &\leq& \frac{1}{2}\sum_{i=0}^{j-1}A_i+C\|Dv\|_{L^\infty(B_{2r}(x_0))}\sum_{i=0}^{j-1}
  \left(\big[\omega(r_i)\big]^{\sigma_1'}+r_i^{\sigma_2}+r_i^{\sigma_3+1-sp'}\right)\nonumber\\
&&
    +C\frac{ r^{a}}{r^{p'}}\operatorname{Tail}(v-(v)_{B_{2r}(x_0)};x_0,2r) \sum_{i=0}^{j-1} r_i^{\sigma_3-a}\nonumber \\
   &\leq& A_0 +C\|Dv\|_{L^\infty(B_{2r}(x_0))}\sum_{i=0}^{j-1}\left(\big[\omega(r_i)\big]^{\sigma_1'}
   +r_i^{\sigma_2}+r_i^{\sigma_3+1-sp'}\right)\nonumber\\
&&
    +C\frac{ r^{a}}{r^{p'}}\operatorname{Tail}(v-(v)_{B_{2r}(x_0)};x_0,2r) \sum_{i=0}^{j-1} r_i^{\sigma_3-a}\,.
\end{eqnarray}
Applying H\"{o}lder's inequality, Poincar\'{e}'s inequality and \eqref{v-Lip3}, we derive
\begin{eqnarray}\label{v-Lip4}
% \nonumber to remove numbering (before each equation)
  K_{j+1} &=& K_0+\sum_{i=0}^{j}(K_{i+1}-K_i)\leq K_0+CA_0+C\sum_{i=1}^{j}A_i\nonumber\\
   &\leq&C  \left(\fint_{B_{2r}(x_0)} |Dv|^{q_0} \operatorname{d}\! x\right)^{\frac{1}{q_0}}+C_1\|Dv\|_{L^\infty(B_{2r}(x_0))}\sum_{i=0}^{j-1}\left(\big[\omega(r_i)\big]^
   {\sigma_1'}+r_i^{\sigma_2}+r_i^{\sigma_3+1-sp'}\right)\nonumber\\
&&
    +C\frac{ r^{a}}{r^{p'}}\operatorname{Tail}(v-(v)_{B_{2r}(x_0)};x_0,2r) \sum_{i=0}^{j-1} r_i^{\sigma_3-a}\,.
\end{eqnarray}
Through a direct calculation, we have
\begin{equation*}
  \sum_{i=0}^{j-1}\left(\big[\omega(r_i)\big]^{\sigma_1'}+r_i^{\sigma_2}
  +r_i^{\sigma_3+1-sp'}\right)\leq C_2(n,p,\Lambda)\int_0^{4r}\left(\big[\omega(\rho)\big]^{\sigma_1'}+\rho^{\sigma_2}
  +\rho^{\sigma_3+1-sp'}\right)\frac{\operatorname{d}\! \rho}{\rho}.
\end{equation*}
Due to the Dini-$\operatorname{VMO}$ regularity of
$\big[\omega(r)\big]^{\sigma_1'}$ and $\sigma_2,\,\sigma_3+1-sp'>0$, we further restrict the radius
$\tilde{R}\equiv\tilde{R}(n,p,s,\Lambda,m,\omega(\cdot),\operatorname{diam}(\Omega))>0$
such that
\begin{equation}\label{v-Lip5}
  C_1C_2\int_0^{4r}\left(\big[\omega(\rho)\big]^{\sigma_1'}+\rho^{\sigma_2} +\rho^{\sigma_3+1-sp'}\right)\frac{\operatorname{d}\! \rho}{\rho}\leq  C_1C_2\int_0^{\tilde{R}}\left(\big[\omega(\rho)\big]^{\sigma_1'}+\rho^{\sigma_2} +\rho^{\sigma_3+1-sp'}\right)\frac{\operatorname{d}\! \rho}{\rho}\leq\frac{1}{2}
\end{equation}
for $r\leq\frac{\tilde{R}}{4}$. Similarly, $a<\sigma_3$ implies that $\displaystyle\sum_{i=0}^{j-1} r_i^{\sigma_3-a}\leq C$. Inserting \eqref{v-Lip5} into \eqref{v-Lip4}, we have
\begin{equation*}
% \nonumber to remove numbering (before each equation)
  K_{j+1}
   \leq C  \left(\fint_{B_{2r}(x_0)} |Dv|^{q_0} \operatorname{d}\! x\right)^{\frac{1}{q_0}}+\frac{1}{2}\|Dv\|_{L^\infty(B_{2r}(x_0))}
    +C\frac{ r^{a}}{r^{p'}}\operatorname{Tail}(v-(v)_{B_{2r}(x_0)};x_0,2r).
\end{equation*}
Therefore, letting $j\rightarrow +\infty$, we conclude that
\begin{equation}\label{v-Lip6}
 |Dv(x_0)| \leq C  \left(\fint_{B_{2r}(x_0)} |Dv|^{q_0} \operatorname{d}\! x\right)^{\frac{1}{q_0}}+\frac{1}{2}\|Dv\|_{L^\infty(B_{2r}(x_0))}
    +C\frac{ r^{a}}{r^{p'}}\operatorname{Tail}(v-(v)_{B_{2r}(x_0)};x_0,2r).
\end{equation}

In the sequel, we consider concentric balls $B_r(x_0)\subset B_{\rho_1}(x_0)\subset B_{\rho_2}(x_0)\subset B_{2r}(x_0)$, which implies that $\rho_2-\rho_1\in[0,r]$. For any $x\in B_{\rho_1}(x_0)$, applying \eqref{v-Lip6} to the ball $B_{\rho_2-\rho_1}(x)$, we have
\begin{eqnarray*}
 |Dv(x)| &\leq& C  \left(\fint_{B_{\rho_2-\rho_1}(x)} |Dv|^{q_0} \operatorname{d}\! x\right)^{\frac{1}{q_0}}+\frac{1}{2}\|Dv\|_{L^\infty(B_{\rho_2-\rho_1}(x))}\nonumber\\
 &&
    +C\frac{ r^{a}}{(\rho_2-\rho_1)^{p'}}\operatorname{Tail}(v-(v)_{B_{\rho_2-\rho_1}(x)};x,\rho_2-\rho_1).
\end{eqnarray*}
Note that $B_{\rho_2-\rho_1}(x)\subset B_{\rho_2}(x_0)$ and $|y-x|>\frac{\rho_2-\rho_1}{\rho_2}|y-x_0|$ for $x\in B_{\rho_1}(x_0)$ and $y\in \R^n\setminus B_{\rho_2-\rho_1}(x)$. Combining these facts with H\"{o}lder's inequality and Poincar\'{e}'s inequality leads to
\begin{eqnarray}\label{v-Lip7}
 |Dv(x)| &\leq& \frac{1}{2}\|Dv\|_{L^\infty(B_{\rho_2}(x_0))}+\frac{C}{(\rho_2-\rho_1)^{\frac{n}{q_0}}}  \left(\int_{B_{2r}(x_0)} |Dv|^{q_0} \operatorname{d}\! x\right)^{\frac{1}{q_0}}\nonumber\\
 &&+\frac{Cr^{a+1+np'-\frac{n}{q_0}}}{(\rho_2-\rho_1)^{(n+s)p'}}  \left( \int_{B_{2r}(x_0)} |Dv|^{q_0}\operatorname{d}\! x\right)^{\frac{1}{q_0}}\nonumber\\
 &&+\frac{C}{(\rho_2-\rho_1)^{\frac{n+sp}{p-1}}}\frac{ r^{a+\frac{n+sp}{p-1}}}{r^{p'}}\operatorname{Tail}(v-(v)_{B_{2r}(x_0)};x_0,2r)\nonumber\\
 &\leq& \frac{1}{2}\|Dv\|_{L^\infty(B_{\rho_2}(x_0))}+\frac{C\left(r^{a
 +1+np'}+r^{(n+s)p'}\right)}{(\rho_2-\rho_1)^{(n+s)p'+\frac{n}{q_0}}}  \left( \int_{B_{2r}(x_0)} |Dv|^{q_0}\operatorname{d}\! x\right)^{\frac{1}{q_0}}\nonumber\\
 &&+\frac{C}{(\rho_2-\rho_1)^{\frac{n+sp}{p-1}}}\frac{ r^{a+\frac{n+sp}{p-1}}}{r^{p'}}\operatorname{Tail}(v-(v)_{B_{2r}(x_0)};x_0,2r)
\end{eqnarray}
for any $x\in B_{\rho_1}(x_0)$.
Finally, taking the essential supremum over $B_{\rho_1}(x_0)$ in \eqref{v-Lip7}, and using the iteration Lemma \ref{iterate} and $a>\max\{sp'-1,0\}$, we deduce that \eqref{v-Lip} is valid. Thus, the proof of Lemma \ref{Dv-L^infty} is complete.
\end{proof}

Consequently, if the coefficient satisfies Dini-$\operatorname{VMO}$ regularity,
then the following  modified comparison estimate between $Dv$ and $Dw$ can be obtained by combining Lemmas \ref{comparison:Dv-Dw} and \ref{Dv-L^infty}.

\begin{lemma}\label{comparison:Dv-Dw-Dinivmo}
Let $v\in W^{1,p}_{\rm loc}(\Omega)\cap {\mathcal L}^{p-1}_{sp}(\mathbb{R}^n)$ and $w\in W^{1,p}(B_{r}(x_0))$ be weak solutions to \eqref{eq:homo-mixed} and \eqref{Commodel-wv} respectively, under assumptions \eqref{growth1}--\eqref{ellip} with $p > 1$, $s \in (0,1)$ and $\max\{sp'-1,0\}<a<\sigma_3$. Assume that $[\omega(r)]^{\sigma_1'}$ satisfies Dini-$\operatorname{VMO}$ regular condition \eqref{Dinivmo}, then there exist a positive constant $C$
depending only on $n$, $p$, $s$, $\Lambda$, $m$, $a$, $\operatorname{diam}(\Omega)$,
and a radius $\tilde{R}$ depending only on $n$, $p$, $s$, $\Lambda$, $m$, $\omega(\cdot)$, $\operatorname{diam}(\Omega)$, such that
\begin{eqnarray}\label{comest-Dv-Dw-Dinivmo}
\left(\fint_{B_{r}(x_0)} |Dv-Dw|^{q_0}\operatorname{d}\! x\right)^{\frac{1}{q_0}} &\leq& C\left(\big[\omega(r)\big]^{\sigma_1'}+r^{\sigma_2}\right)\left(\fint_{B_{2r}(x_0)} |Dv|^{q_0}\operatorname{d}\! x\right)^{\frac{1}{q_0}}\nonumber\\
&&+C\frac{\left([\omega(r)\big]^{\sigma_1'}r^{a} + r^{\sigma_3}\right)}{r^{p'}}\operatorname{Tail}(v-(v)_{B_{2r}(x_0)};x_0,2r),\nonumber\\
	\end{eqnarray}
holds for the ball
$B_{2r}(x_0) \Subset \Omega$ with $r\in(0,\frac{\tilde{R}}{4}]$ and $q_0=\max\{1,p-1\}$, where the exponents $\sigma_1',\,\sigma_2$ and $\sigma_3$ have been defined in \eqref{exponent2} and \eqref{exponent1}.
\end{lemma}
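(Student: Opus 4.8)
The plan is to re-run the proof of Lemma~\ref{comparison:Dv-Dw} almost verbatim, the only new ingredient being the a priori local Lipschitz bound \eqref{v-Lip} for $Dv$ supplied by Lemma~\ref{Dv-L^infty}; this bound lets one replace the higher‑integrability argument used there to control the coefficient term by a cheaper $L^\infty$‑estimate, thereby upgrading the exponent $\sigma_1$ to $\sigma_1'$. First I would fix $B_{2r}(x_0)\Subset\Omega$ with $r\in(0,\tilde R/4]$: since $[\omega(r)]^{\sigma_1'}$ is Dini‑$\operatorname{VMO}$ regular and $\max\{sp'-1,0\}<a<\sigma_3$, Lemma~\ref{Dv-L^infty} applies with this $x_0$ and $r$, so $Dv\in L^\infty(B_r(x_0))$ and \eqref{v-Lip} holds. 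As before, I would introduce $w\in W^{1,p}(B_r(x_0))$ solving \eqref{Commodel-wv} with datum $v$, put $h:=v-w$, and arrive at \eqref{com-vw1} with the splitting $I_1+I_2$.

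The estimates for $I_{2,1}$ and $I_{2,2}$ remain untouched, since they involve only the kernel $K$ and the homogeneous equation \eqref{eq:homo-mixed}, not the coefficient $\mathcal{A}$; after the Young‑inequality step they still contribute the term $r^{\sigma_2}\bigl(\fint_{B_{2r}(x_0)}|Dv|^{q_0}\operatorname{d}\!x\bigr)^{1/q_0}$ paired with the averaged gradient, together with the tail term carrying the exponent $\sigma_3$. The decisive change is in the bound for $I_1$: instead of estimating the quantity $\fint_{B_r(x_0)}[A(Dv,B_r(x_0))]^2(|Dv|^2+\chi_{\{p<2\}}|Dw|^2)^{p/2}\operatorname{d}\!x$ by Hölder's inequality and Lemma~\ref{Dv-highinteglem}, I would use Hölder's inequality, the uniform bound \eqref{A-bdd} for $A(Dv,B_r(x_0))$, and the Calderón–Zygmund estimate \eqref{CZ-Dw}, exactly as in the calculation displayed immediately after the proof of Lemma~\ref{comparison:Dv-Dw}, to get the bound $C[\omega(r)]^{\sigma_1'}\|Dv\|_{L^\infty(B_r(x_0))}^p$ with $\sigma_1'$ as in \eqref{exponent2}.

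Substituting this into \eqref{com-vw1} and carrying out the same two‑case Young‑inequality computation as in Lemma~\ref{comparison:Dv-Dw} — distinguishing $p\ge2$ and $1<p<2$ and taking $p$‑th roots at the end — I would reach
\[
\Bigl(\fint_{B_r(x_0)}|Dv-Dw|^{q_0}\operatorname{d}\!x\Bigr)^{1/q_0}
\le C[\omega(r)]^{\sigma_1'}\|Dv\|_{L^\infty(B_r(x_0))}
+ Cr^{\sigma_2}\Bigl(\fint_{B_{2r}(x_0)}|Dv|^{q_0}\operatorname{d}\!x\Bigr)^{1/q_0}
+ \frac{Cr^{\sigma_3}}{r^{p'}}\operatorname{Tail}\bigl(v-(v)_{B_{2r}(x_0)};x_0,2r\bigr),
\]
i.e.\ precisely \eqref{comest-Dv-Dw} with $\sigma_1$ replaced by $\sigma_1'$ and the $\sigma_1'$‑term carrying $\|Dv\|_{L^\infty(B_r(x_0))}$ in place of the $L^{q_0}$‑average. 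Inserting \eqref{v-Lip} into the coefficient term $C[\omega(r)]^{\sigma_1'}\|Dv\|_{L^\infty(B_r(x_0))}$ \emph{only}, and using $[\omega(r)]^{\sigma_1'}\le C$ (a consequence of \eqref{A-bdd}), its contribution splits as $C[\omega(r)]^{\sigma_1'}\bigl(\fint_{B_{2r}(x_0)}|Dv|^{q_0}\operatorname{d}\!x\bigr)^{1/q_0}+C[\omega(r)]^{\sigma_1'}r^{a}r^{-p'}\operatorname{Tail}(\cdots)$; absorbing the first into the $r^{\sigma_2}$‑term and the second into the $r^{\sigma_3}$‑term yields exactly the right‑hand side of \eqref{comest-Dv-Dw-Dinivmo}.

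The point requiring the most care — rather than a genuine obstacle — is the bookkeeping of exponents: one must keep the factor $r^{\sigma_2}$ attached to the $L^{q_0}$‑average of $Dv$ produced by the nonlocal term $I_2$ and substitute \eqref{v-Lip} only into the coefficient‑generated $[\omega(r)]^{\sigma_1'}$ term, for otherwise a spurious factor $r^{\sigma_2+a}$ would enter the tail coefficient and need not be dominated by $r^{\sigma_3}$ in the admissible range of $a$. One must also carry along the standard a priori regularization $Dv\in C^0_{\rm loc}(\Omega)$ underlying Lemma~\ref{Dv-L^infty} and the smallness restriction $r\le\tilde R/4$, both of which then appear in the present statement.
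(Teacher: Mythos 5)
Your proposal is correct and follows essentially the same route as the paper: the paper proves this lemma in one line by combining the comparison scheme of Lemma \ref{comparison:Dv-Dw}, the $L^\infty$-based re-estimate of the coefficient term displayed right after that proof (using \eqref{A-bdd}, H\"older's inequality and Lemma \ref{CZest-Dw}, which is what produces $\sigma_1'$), and the Lipschitz bound \eqref{v-Lip} of Lemma \ref{Dv-L^infty} to convert $\|Dv\|_{L^\infty(B_r(x_0))}$ into the $L^{q_0}$-average plus the tail with the factor $r^{a}r^{-p'}$. Your bookkeeping remark — substituting \eqref{v-Lip} only into the $[\omega(r)]^{\sigma_1'}$-term and keeping $r^{\sigma_2}$ attached to the average, together with carrying the restriction $r\le\tilde R/4$ — matches exactly how the stated right-hand side of \eqref{comest-Dv-Dw-Dinivmo} arises, so the argument is faithful to (indeed more detailed than) the paper's own justification.
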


\subsection{Comparison between $Du$ and $Dv$}

For a fixed ball $B_{2r}(x_0)\Subset\Omega$, we define $v\in W^{1,p}(B_{2r}(x_0))$ to be a weak solution of the homogeneous mixed local and nonlocal Dirichlet problem
\begin{equation}\label{Commodel-vu}
\left\{\begin{array}{r@{\ \ }c@{\ \ }ll}
-\operatorname{div}\left(\mathcal{A}(x, Dv)\right) +\mathcal{L}_\Phi v&=&0& \mbox{in}\ \ B_{2r}(x_0)\,, \\[0.05cm]
v&=& u & \mbox{on}\ \  \R^n\backslash B_{2r}(x_0)\,,
\end{array}\right.
\end{equation}
where the boundary datum $u\in W^{1,p}_{\rm loc}(\Omega)\cap {\mathcal L}^{p-1}_{sp}(\mathbb{R}^n)$ is a weak solution to the nonhomogeneous mixed equation \eqref{eq1}. We proceed by invoking a comparison estimate for $Du$ and $Dv$, as established in \cite[Lemma 4.2]{BS23}.

\begin{lemma}\label{comparison:Du-Dv}
Let $u\in W^{1,p}_{\rm loc}(\Omega)\cap {\mathcal L}^{p-1}_{sp}(\mathbb{R}^n)$ and $v\in W^{1,p}(B_{2r}(x_0))$ be weak solutions to \eqref{eq1} and \eqref{Commodel-vu} respectively, under assumptions \eqref{growth1}--\eqref{ellip} with $p > 2-\frac{1}{n}$, $s \in (0,1)$ and $q_0=\max\{1,p-1\}$.
Then there holds that
\begin{equation}\label{comest-Du-Dv}
	\left(\fint_{B_{2r}(x_0)} |Du-Dv|^{q_0}\operatorname{d}\! x\right)^{\frac{1}{q_0}} \leq C\left[\frac{|\mu|(B_{2r}(x_0))}{r^{n-1}}\right]^{\frac{1}{p-1}}+C\chi_{\{p<2\}}
\left[\frac{|\mu|(B_{2r}(x_0))}{r^{n-1}}\right]
\left(\fint_{B_{2r}(x_0)} |Du|\operatorname{d}\! x\right)^{2-p},
	\end{equation}
where the positive constant $C$ depends only on $n,\,p,\,s,\,\Lambda$ and $\operatorname{diam}(\Omega)$.
\end{lemma}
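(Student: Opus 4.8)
\emph{Proof strategy.} The plan is to run the classical comparison scheme for measure--data problems (Boccardo--Gallou\"et, Kilpel\"ainen--Mal\'y, Duzaar--Mingione), exploiting that here the nonlocal operator enters only with a favourable sign, so that $w:=u-v$ behaves like the solution--difference for a purely local $p$--Laplace type equation with datum $\mu$ restricted to $B_{2r}(x_0)$. Since $v\equiv u$ on $\R^n\setminus B_{2r}(x_0)$, the function $w$ belongs to $W^{1,p}_0(B_{2r}(x_0))$ after extension by zero; by density the admissible test functions for \eqref{eq1} and \eqref{Commodel-vu} can be taken in $W^{1,p}_0(B_{2r}(x_0))\cap L^\infty$ (with $\int\varphi\,\mathrm{d}\mu$ read through a quasicontinuous representative, legitimate after the SOLA approximation of \cite[Theorem 1.3]{BS23}). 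For $\lambda>0$ I would test both equations with the truncation $T_\lambda(w):=\max\{-\lambda,\min\{\lambda,w\}\}$ and subtract. The surviving nonlocal contribution is
$$\int_{\R^n}\!\int_{\R^n}\bigl[\Phi(u(x)-u(y))-\Phi(v(x)-v(y))\bigr]\bigl(T_\lambda(w)(x)-T_\lambda(w)(y)\bigr)K(x,y)\,\mathrm{d}x\,\mathrm{d}y\ \ge\ 0 ,$$
since $\Phi$ is nondecreasing (one may take $\Phi(t)=|t|^{p-2}t$), $T_\lambda$ is nondecreasing, and $(u(x)-u(y))-(v(x)-v(y))=w(x)-w(y)$, so the two factors in the integrand always share the same sign. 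Discarding this term and using the standard monotonicity inequality $\bigl(\mathcal{A}(x,\xi)-\mathcal{A}(x,\eta)\bigr)\cdot(\xi-\eta)\gtrsim(|\xi|^2+|\eta|^2)^{\frac{p-2}{2}}|\xi-\eta|^2$, which follows from \eqref{ellipticity} and the $C^1$--regularity of $\mathcal{A}$, yields the level--set bound
$$\int_{\{|w|<\lambda\}}\bigl(|Du|^2+|Dv|^2\bigr)^{\frac{p-2}{2}}|Du-Dv|^2\,\mathrm{d}x\ \le\ C\lambda\,|\mu|(B_{2r}(x_0))\qquad\text{for all }\lambda>0 .$$

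From here I would pass to the $L^{q_0}$--estimate by the usual dichotomy. If $p\ge2$, then on $\{|w|<\lambda\}$ one has $|Du-Dv|^p\le(|Du|^2+|Dv|^2)^{\frac{p-2}{2}}|Du-Dv|^2$; testing against the dyadic truncation differences $T_{2^{k+1}}(w)-T_{2^{k}}(w)$ gives $\int_{\{2^k<|w|\le2^{k+1}\}}|Dw|^p\lesssim2^k|\mu|(B_{2r}(x_0))$, and summing in $k$ together with the Sobolev inequality on the superlevel sets (the Boccardo--Gallou\"et/Kilpel\"ainen--Mal\'y iteration) produces $\fint_{B_{2r}(x_0)}|Du-Dv|^{p-1}\,\mathrm{d}x\lesssim|\mu|(B_{2r}(x_0))/r^{n-1}$, i.e. the case $q_0=p-1$ with no correction term. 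If $2-\frac1n<p<2$, the weight $(|Du|^2+|Dv|^2)^{\frac{p-2}{2}}$ degenerates, so I would write $|Du-Dv|=\bigl[(|Du|^2+|Dv|^2)^{\frac{p-2}{2}}|Du-Dv|^2\bigr]^{1/2}(|Du|^2+|Dv|^2)^{\frac{2-p}{4}}$ and apply Cauchy--Schwarz, combined with $\fint|Du|^{2-p}\le(\fint|Du|)^{2-p}$ (Jensen, $2-p\in(0,1)$) and the energy comparison $\int_{B_{2r}(x_0)}|Dv|^p\lesssim\int_{B_{2r}(x_0)}|Du|^p$ for the Dirichlet problem \eqref{Commodel-vu} --- derived by testing with $v-u$, once more discarding the favourable nonlocal term after completing it, and using $v\equiv u$ outside $B_{2r}(x_0)$ so that the nonlocal tail of $v$ reduces to that of $u$. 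Optimizing in $\lambda$ (this is where $p>2-\frac1n$ is used, so that the Sobolev exponents make $q_0=1$ admissible) then gives the stated estimate with the extra term $\bigl[|\mu|(B_{2r}(x_0))/r^{n-1}\bigr]\bigl(\fint_{B_{2r}(x_0)}|Du|\,\mathrm{d}x\bigr)^{2-p}$.

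The hard part is the subquadratic range $1<p<2$: because $(|Du|^2+|Dv|^2)^{\frac{p-2}{2}}$ blows up where the gradients vanish, one is forced to interpolate the level--set bound against an $L^p$/$L^{2-p}$ energy, and this requires a self--improvement that lowers the integrability exponent of $Du$ on the right--hand side, together with a careful check that the nonlocal tail appearing in the energy comparison for $v$ does not enter the final bound --- which is exactly what $v\equiv u$ on $\R^n\setminus B_{2r}(x_0)$ ensures. The other technical point, making the truncated test functions rigorous against the measure $\mu$, is dealt with by the aforementioned SOLA approximation; beyond these, by the sign of the nonlocal term the argument is precisely the local comparison estimate, as carried out in \cite[Lemma 4.2]{BS23}.
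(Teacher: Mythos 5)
Your overall scheme is the right one, and in fact it is the scheme of the result the paper itself invokes without proof: Lemma \ref{comparison:Du-Dv} is quoted directly from \cite[Lemma 4.2]{BS23}, whose proof tests the difference of the two weak formulations with truncations $T_\lambda(w)$, $w=u-v\in W^{1,p}_0(B_{2r}(x_0))$, discards the nonlocal term by the joint monotonicity of $\Phi(t)=|t|^{p-2}t$ and $T_\lambda$, and then runs the Boccardo--Gallou\"et/Duzaar--Mingione level-set iteration. Your sign argument, your treatment of the test functions via the SOLA approximation, and your superquadratic case ($q_0=p-1$, dyadic truncations plus Sobolev on level sets) are all consistent with that proof.

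The gap is in the subquadratic case. You propose to handle the degenerate weight through the energy comparison $\int_{B_{2r}}|Dv|^p\lesssim\int_{B_{2r}}|Du|^p$, obtained by testing \eqref{Commodel-vu} with $v-u$ and ``discarding the favourable nonlocal term after completing it''. Completing that term leaves the cross contribution $-\iint\Phi\bigl(u(x)-u(y)\bigr)\bigl[(v-u)(x)-(v-u)(y)\bigr]K(x,y)\,\mathrm{d}x\,\mathrm{d}y$, whose far-field part (one variable outside $B_{2r}(x_0)$) is controlled only by quantities of the type $\operatorname{Tail}(u-(u)_{B_{2r}(x_0)};x_0,2r)$; the identity $v\equiv u$ on $\R^n\setminus B_{2r}(x_0)$ does not make these vanish, so the clean tail-free $L^p$ comparison you invoke is not available. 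More decisively, even granting it, your route yields the correction term $\bigl(\fint_{B_{2r}(x_0)}|Du|^p\,\mathrm{d}x\bigr)^{\frac{2-p}{p}}$ rather than the asserted $\bigl(\fint_{B_{2r}(x_0)}|Du|\,\mathrm{d}x\bigr)^{2-p}$, because Jensen only helps for $|Du|^{2-p}$ while $|Dv|^{2-p}$ would be estimated through its $L^p$ norm; this weaker form would break the later applications, where only $L^{1}$ (i.e. $L^{q_0}$) information on $Du$ is propagated. The correct mechanism needs no separate energy estimate for $v$ at all: after Cauchy--Schwarz and Jensen one is left with a factor $\bigl(\fint_{B_{2r}(x_0)}(|Du|+|Dv|)\,\mathrm{d}x\bigr)^{\frac{2-p}{2}}$, one writes $\fint|Dv|\le\fint|Du|+\fint|Du-Dv|$, and since the resulting power of $\fint|Du-Dv|$ is strictly below one, it is absorbed into the left-hand side by Young's inequality after optimizing in $\lambda$ (this is where $p>2-\frac1n$ enters, making the exponent $1$ admissible). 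With that absorption step in place of your energy comparison, your argument coincides with the cited proof.
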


By incorporating  the comparison estimate \eqref{comest-Du-Dv} with the Caccioppoli type inequality for $v$ established in Lemma \ref{CCP-v}, we can derive the Caccioppoli type inequality for $u$.
\begin{lemma}\label{CCP-u}
Let $u\in W^{1,p}_{\rm loc}(\Omega)\cap {\mathcal L}^{p-1}_{sp}(\mathbb{R}^n)$ be a weak solution to \eqref{eq1} under assumptions \eqref{growth1}--\eqref{ellip} with $p > 2-\frac{1}{n}$, $s \in (0,1)$ and $q_0=\max\{1,p-1\}$.
Then there exists a positive constant $C = C(n,p,s,\Lambda,\operatorname{diam}(\Omega))$ such that
\begin{eqnarray}\label{CCPu}
\left(\fint_{B_{ \frac{r}{2}}(x_0)} |Du|^{q_0} \operatorname{d}\!x \right)^{\frac{1}{q_0}}
&\leq& \frac{C}{r}\left[\left(\fint_{B_{ r}(x_0)} |u-(u)_{B_r(x_0)}|^{q_0} \operatorname{d}\!x\right)^{\frac{1}{q_0}} +\operatorname{Tail}(u-(u)_{B_r(x_0)};x_0,r) \right]\nonumber\\
&&+C\left[\frac{|\mu|(B_{r}(x_0))}{r^{n-1}}\right]^{\frac{1}{p-1}},
\end{eqnarray}
holds whenever $B_r(x_0)\Subset\Omega$.
\end{lemma}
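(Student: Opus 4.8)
The plan is to compare $Du$ with the gradient of the homogeneous mixed solution on a ball, insert the Caccioppoli inequality for the latter (Lemma \ref{CCP-v}) together with the measure comparison estimate \eqref{comest-Du-Dv} (Lemma \ref{comparison:Du-Dv}), and then absorb the resulting subquadratic error by an iteration over concentric balls. Fix $B_r(x_0)\Subset\Omega$. For each pair of radii $\tfrac r2\le t_1<t_2\le r$ I would let $v\in W^{1,p}(B_{t_2}(x_0))\cap{\mathcal L}^{p-1}_{sp}(\R^n)$ be the weak solution of the homogeneous mixed Dirichlet problem \eqref{Commodel-vu} posed on $B_{t_2}(x_0)$ with $v=u$ on $\R^n\setminus B_{t_2}(x_0)$; note that $t_1\in[\tfrac{t_2}{2},t_2)$ since $t_1\ge\tfrac r2\ge\tfrac{t_2}{2}$. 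Writing $Du=(Du-Dv)+Dv$ and using $B_{t_1}\subset B_{t_2}\subset B_r$ with $\tfrac{t_2}{t_1}\le2$ and $q_0=\max\{1,p-1\}\ge1$, the triangle inequality in $L^{q_0}$ yields
\[
\Big(\fint_{B_{t_1}(x_0)}|Du|^{q_0}\Big)^{\frac1{q_0}}\le C\Big(\fint_{B_{t_1}(x_0)}|Dv|^{q_0}\Big)^{\frac1{q_0}}+C\Big(\fint_{B_{t_2}(x_0)}|Du-Dv|^{q_0}\Big)^{\frac1{q_0}},
\]
with $C=C(n,p,s,\Lambda,\operatorname{diam}(\Omega))$.

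For the first term, Jensen's inequality ($q_0\le p$) and Lemma \ref{CCP-v} (applied on $B_{t_2}(x_0)$ with inner radius $t_1$ and $\kappa=(u)_{B_{t_2}(x_0)}$) give, after replacing $t_2^{\theta-1}$ by $Cr^{\theta-1}$ via $\tfrac r2\le t_2\le r$,
\[
\Big(\fint_{B_{t_1}(x_0)}|Dv|^{q_0}\Big)^{\frac1{q_0}}\le\Big(\fint_{B_{t_1}(x_0)}|Dv|^{p}\Big)^{\frac1p}\le\frac{C\,r^{\theta-1}}{(t_2-t_1)^{\theta}}\Big[\Big(\fint_{B_{t_2}(x_0)}|v-(u)_{B_{t_2}(x_0)}|^{q_0}\Big)^{\frac1{q_0}}+\operatorname{Tail}(v-(u)_{B_{t_2}(x_0)};x_0,t_2)\Big].
\]
Since $v=u$ outside $B_{t_2}(x_0)$, the tail equals $\operatorname{Tail}(u-(u)_{B_{t_2}(x_0)};x_0,t_2)$; a standard localisation of the tail (splitting $\R^n\setminus B_{t_2}$ into $B_r\setminus B_{t_2}$ and $\R^n\setminus B_r$, using $t_2\ge\tfrac r2$, $t_2\le\operatorname{diam}(\Omega)$, $p(1-s)>0$, and $|(u)_{B_{t_2}(x_0)}-(u)_{B_r(x_0)}|\le C(\fint_{B_r}|u-(u)_{B_r}|^{q_0})^{1/q_0}$) bounds it, and likewise the average $(\fint_{B_{t_2}}|u-(u)_{B_{t_2}(x_0)}|^{q_0})^{1/q_0}$, by $C(\fint_{B_r}|u-(u)_{B_r(x_0)}|^{q_0})^{1/q_0}+C\operatorname{Tail}(u-(u)_{B_r(x_0)};x_0,r)$. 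Moreover $v-u\in W^{1,q_0}_0(B_{t_2}(x_0))$, so Poincaré's inequality gives $(\fint_{B_{t_2}}|v-u|^{q_0})^{1/q_0}\le C t_2(\fint_{B_{t_2}}|Du-Dv|^{q_0})^{1/q_0}$; splitting $|v-(u)_{B_{t_2}(x_0)}|\le|v-u|+|u-(u)_{B_{t_2}(x_0)}|$ and using $t_2\le r$, I would arrive at
\begin{align*}
\Big(\fint_{B_{t_1}(x_0)}|Du|^{q_0}\Big)^{\frac1{q_0}}\le{}&\frac{C r^{\theta-1}}{(t_2-t_1)^{\theta}}\Big[\Big(\fint_{B_{r}(x_0)}|u-(u)_{B_r(x_0)}|^{q_0}\Big)^{\frac1{q_0}}+\operatorname{Tail}(u-(u)_{B_r(x_0)};x_0,r)\Big]\\
&+\frac{C r^{\theta}}{(t_2-t_1)^{\theta}}\Big(\fint_{B_{t_2}(x_0)}|Du-Dv|^{q_0}\Big)^{\frac1{q_0}}.
\end{align*}

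It then remains to control the last term via Lemma \ref{comparison:Du-Dv} (on $B_{t_2}(x_0)$), which together with $|\mu|(B_{t_2})\le|\mu|(B_r)$ and $\tfrac r2\le t_2\le r$ gives
\[
\Big(\fint_{B_{t_2}(x_0)}|Du-Dv|^{q_0}\Big)^{\frac1{q_0}}\le C\Big[\frac{|\mu|(B_r(x_0))}{r^{n-1}}\Big]^{\frac1{p-1}}+C\chi_{\{p<2\}}\Big[\frac{|\mu|(B_r(x_0))}{r^{n-1}}\Big]\Big(\fint_{B_{t_2}(x_0)}|Du|\Big)^{2-p}.
\]
If $p\ge2$ the last summand vanishes and the choice $t_1=\tfrac r2$, $t_2=r$ gives \eqref{CCPu} at once. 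If $1<p<2$ then $q_0=1$, so $\fint_{B_{t_2}}|Du|=(\fint_{B_{t_2}}|Du|^{q_0})^{1/q_0}=:f(t_2)$, and Young's inequality with exponents $\tfrac1{2-p}$ and $\tfrac1{p-1}$ (and a $p$-dependent constant) splits
\[
\frac{C r^{\theta}}{(t_2-t_1)^{\theta}}\,\frac{|\mu|(B_r(x_0))}{r^{n-1}}\,f(t_2)^{2-p}\le\tfrac12\,f(t_2)+\frac{C\,r^{\theta/(p-1)}}{(t_2-t_1)^{\theta/(p-1)}}\Big[\frac{|\mu|(B_r(x_0))}{r^{n-1}}\Big]^{\frac1{p-1}}.
\]
Collecting everything, for all $\tfrac r2\le t_1<t_2\le r$ one obtains $f(t_1)\le\tfrac12 f(t_2)+c_1(t_2-t_1)^{-\theta}+c_2(t_2-t_1)^{-\theta/(p-1)}$ with $c_1=Cr^{\theta-1}[(\fint_{B_r}|u-(u)_{B_r}|^{q_0})^{1/q_0}+\operatorname{Tail}(u-(u)_{B_r};x_0,r)]+Cr^{\theta}[|\mu|(B_r)/r^{n-1}]^{1/(p-1)}$ and $c_2=Cr^{\theta/(p-1)}[|\mu|(B_r)/r^{n-1}]^{1/(p-1)}$. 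Since $\tfrac\theta{p-1}>\theta>0$ and $\tfrac12<1$, the iteration Lemma \ref{iterate} on $[\tfrac r2,r]$ yields $f(\tfrac r2)\le C[c_1 r^{-\theta}+c_2 r^{-\theta/(p-1)}]$, which is precisely \eqref{CCPu}. The main obstacle is exactly this subquadratic coupling: it forces the comparison to be run on the \emph{varying} ball $B_{t_2}(x_0)$ rather than on a fixed one, so that after Young's inequality the coefficient in front of $f(t_2)$ is a genuine constant strictly below $1$ and Lemma \ref{iterate} becomes applicable; the dimensional restriction $p>2-\tfrac1n$ enters only through Lemma \ref{comparison:Du-Dv}.
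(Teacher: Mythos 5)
Your proposal is correct and follows essentially the same route as the paper: compare $u$ with the homogeneous mixed solution $v$ posed on a varying ball, use the Caccioppoli inequality of Lemma \ref{CCP-v} together with Poincar\'{e}'s inequality, invoke the comparison estimate of Lemma \ref{comparison:Du-Dv}, absorb the subquadratic measure term via Young's inequality, and conclude with the iteration Lemma \ref{iterate} on $[\frac{r}{2},r]$. The paper's only (cosmetic) differences are that it normalizes $(u)_{B_r(x_0)}=0$ rather than carrying the mean and the tail localisation explicitly, and it parametrizes the radii as $\gamma r<\varrho r$ instead of $t_1<t_2$.
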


\begin{proof}
Without loss of generality, we may assume that $(u)_{B_r(x_0)}=0$, since $u-(u)_{B_r(x_0)}$ remains a weak solution to the equation \eqref{eq1}. Let
$\frac{1}{2}\leq \gamma <\varrho<1$ and $v$ be a weak solution to the same Dirichlet problem \eqref{Commodel-vu} in the ball $B_{\varrho r}(x_0)$ instead of $B_{2r}(x_0)$.
A combination of Lemma \ref{CCP-v} with $\kappa=0$, Lemma \ref{comparison:Du-Dv}, Poincar\'{e}'s inequality, and Young's inequality for $p<2$ yields that
 \begin{eqnarray*}
 % \nonumber to remove numbering (before each equation)
&&\left(\fint_{B_{ \gamma r}(x_0)} |Du|^{q_0} \operatorname{d}\!x \right)^{\frac{1}{q_0}}\\
   &\leq& C\left(\fint_{B_{ \varrho r}(x_0)} |Du-Dv|^{q_0} \operatorname{d}\!x \right)^{\frac{1}{q_0}} +\left(\fint_{B_{ \gamma r}(x_0)} |Dv|^{q_0} \operatorname{d}\!x \right)^{\frac{1}{q_0}},  \\
    &\leq&\frac{C}{(\varrho-\gamma)^\theta}\left(\fint_{B_{ \varrho r}(x_0)} |Du-Dv|^{q_0} \operatorname{d}\!x \right)^{\frac{1}{q_0}}+\frac{C}{(\varrho-\gamma)^\theta r} \left[\left(\fint_{B_{\varrho r}(x_0)} |u|^{q_0} \operatorname{d}\!x\right)^{\frac{1}{q_0}} +\operatorname{Tail}(u;x_0, \varrho r) \right], \\
    &\leq& \frac{C}{(\varrho-\gamma)^\theta} \left\{\left[\frac{|\mu|(B_{ r}(x_0))}{r^{n-1}}\right]^{\frac{1}{p-1}}+\chi_{\{p<2\}}
\left[\frac{|\mu|(B_{ r}(x_0))}{r^{n-1}}\right]
\left(\fint_{B_{\varrho r}(x_0)} |Du|\operatorname{d}\! x\right)^{2-p}\right\}\\
&&+\frac{C}{(\varrho-\gamma)^\theta}\frac{1}{r} \left[\left(\fint_{B_{ r}(x_0)} |u|^{q_0} \operatorname{d}\!x\right)^{\frac{1}{q_0}} +\operatorname{Tail}(u;x_0, r) \right], \\
&\leq&\frac{1}{2}\left(\fint_{B_{ \varrho r}(x_0)} |Du|^{q_0} \operatorname{d}\!x \right)^{\frac{1}{q_0}} + \frac{C}{(\varrho-\gamma)^{\theta\max\{1,\frac{1}{p-1}\}}}\left[\frac{|\mu|(B_{ r}(x_0))}{r^{n-1}}\right]^{\frac{1}{p-1}}\\
&&+\frac{C}{(\varrho-\gamma)^\theta}\frac{1}{r} \left[\left(\fint_{B_{ r}(x_0)} |u|^{q_0} \operatorname{d}\!x\right)^{\frac{1}{q_0}} +\operatorname{Tail}(u;x_0, r) \right]
 \end{eqnarray*}
for any $\frac{1}{2}\leq \gamma <\varrho<1$. Thus, it follows from the iteration Lemma \ref{iterate} that the Caccioppoli type inequality \eqref{CCPu} is valid.
\end{proof}

Next, we focus on establish the Morrey type estimates for $Du$ by using the comparison estimate \eqref{comest-Du-Dv} and the Morrey type growth Lemma \ref{Dv-morrey} for $Dv$.
We first consider the case in which the partial map $x\mapsto\mathcal{A}(x,\cdot)$ is merely measurable.

\begin{lemma}\label{Du-morrey}
Let $u\in W^{1,p}_{\rm loc}(\Omega)\cap {\mathcal L}^{p-1}_{sp}(\mathbb{R}^n)$ be a weak solution to \eqref{eq1} under assumptions \eqref{growth1}--\eqref{ellip} with $p > 2-\frac{1}{n}$, $s \in (0,1)$ and $q_0=\max\{1,p-1\}$.
Then there exists a positive constant $C$ depending only on $n,\,p,\,s,\,\Lambda$ and $\operatorname{diam}(\Omega)$ such that
\begin{eqnarray*}
&& \left(\fint_{B_{\rho}(x_0)} |Du|^{q_0} \operatorname{d}\! x\right)^{\frac{1}{q_0}}\\
&\leq& C\left( \frac{\rho}{r} \right)^{-1+\alpha_m} \left[\left(\fint_{B_{r}(x_0)} |Du|^{q_0} \operatorname{d}\! x\right)^{\frac{1}{q_0}}+\frac{1}{r}\operatorname{Tail}(u-(u)_{B_r(x_0)};x_0,r)\right]\\
&&   +C\left(\frac{r}{\rho}\right)^{\max\{\frac{n}{q_0},1-\alpha_m\}} \left\{
\left[\frac{|\mu|(B_{r}(x_0))}{r^{n-1}}\right]^{\frac{1}{p-1}}+\chi_{\{p<2\}}
\left[\frac{|\mu|(B_{r}(x_0))}{r^{n-1}}\right]
\left(\fint_{B_{r}(x_0)} |Du|\operatorname{d}\! x\right)^{2-p}\right\}
\end{eqnarray*}
holds whenever $B_{\rho}(x_0) \subset B_{r}(x_0) \Subset \Omega$, where $\alpha_m$ is the H\"{o}lder continuity exponent of $v$, as given in Lemma \ref{mixv-holder}.
\end{lemma}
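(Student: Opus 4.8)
The strategy is the standard comparison–interpolation scheme: compare $Du$ on $B_\rho(x_0)$ with $Dv$, where $v$ solves the homogeneous mixed Dirichlet problem \eqref{Commodel-vu} on $B_{2r'}(x_0)$ for an appropriate intermediate radius, then invoke the Morrey decay Lemma \ref{Dv-morrey} for $Dv$ and absorb the comparison error via Lemma \ref{comparison:Du-Dv}. Concretely, I would first dispose of the trivial case $\tfrac{r}{4}\le\rho<r$ (where the claimed inequality follows from enlarging constants), so it suffices to treat $0<\rho<\tfrac{r}{4}$. Fix $v\in W^{1,p}(B_{r}(x_0))$ solving \eqref{Commodel-vu} on $B_{r}(x_0)$ with boundary datum $u$ on $\R^n\setminus B_{r}(x_0)$; the comparison estimate \eqref{comest-Du-Dv} (applied with $r$ in place of $2r$, adjusting constants) controls $(\fint_{B_{r}(x_0)}|Du-Dv|^{q_0})^{1/q_0}$ by the measure term on the right-hand side.

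The core computation splits $\left(\fint_{B_\rho(x_0)}|Du|^{q_0}\right)^{1/q_0}$ via the triangle inequality into $\left(\fint_{B_\rho(x_0)}|Dv|^{q_0}\right)^{1/q_0}$ plus $\left(\fint_{B_\rho(x_0)}|Du-Dv|^{q_0}\right)^{1/q_0}$. For the first term, Lemma \ref{Dv-morrey} gives the decay factor $(\rho/r)^{-1+\alpha_m}$ together with the tail of $v$; since $v=u$ outside $B_{r}(x_0)$ the tail $\operatorname{Tail}(v-(v)_{B_r(x_0)};x_0,r)$ is comparable to $\operatorname{Tail}(u-(u)_{B_r(x_0)};x_0,r)$ up to an interior oscillation term, and $\left(\fint_{B_{r}(x_0)}|Dv|^{q_0}\right)^{1/q_0}$ is in turn bounded, via another use of \eqref{comest-Du-Dv} and Caccioppoli (Lemma \ref{CCP-u}), by the $|Du|^{q_0}$-average plus the tail plus the measure term. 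For the second term one passes from the average over $B_\rho(x_0)$ to the average over $B_{r}(x_0)$, paying a volume factor $(r/\rho)^{n/q_0}$, and then applies the comparison bound \eqref{comest-Du-Dv}; this is where the exponent $\max\{n/q_0,\,1-\alpha_m\}$ in the statement arises, as one must take the larger of the volume-change loss and the decay loss carried by the first term. In the subquadratic range $1<p<2$ one additionally needs Young's inequality to reabsorb the factor $\left(\fint|Du|\right)^{2-p}$ multiplying the measure, exactly as in the proof of Lemma \ref{CCP-u}; this produces the $\chi_{\{p<2\}}$ term displayed in the conclusion.

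The main obstacle is bookkeeping the tail terms through the comparison step: the function $v$ agrees with $u$ only outside $B_{r}(x_0)$, so $v-(v)_{B_r(x_0)}$ and $u-(u)_{B_r(x_0)}$ differ inside, and one must show the difference of their tails is controlled by interior quantities already present on the right-hand side. The clean way is to mimic the argument behind \eqref{tailesti1} (i.e.\ \cite[Lemma 3.4]{CSYZ24}): split the tail integral into the annulus $B_{r}(x_0)\setminus B_\rho(x_0)$, where $u$ and $v$ may differ but the region is bounded and Lemma \ref{lem:bdd} plus Poincaré apply, and the far region $\R^n\setminus B_{r}(x_0)$, where $u=v$. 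A secondary technical point is the choice of the two free radii (the comparison radius versus $\rho$) so that the decay factor and the volume factor combine to give precisely $(\rho/r)^{-1+\alpha_m}$ in the leading term and $(r/\rho)^{\max\{n/q_0,\,1-\alpha_m\}}$ in the measure term; this is handled by a routine optimization that I would carry out at the end.
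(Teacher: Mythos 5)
Your proposal is correct and follows essentially the same route as the paper's proof: compare with $v$ solving \eqref{Commodel-vu} on $B_r(x_0)$ itself, split by the triangle inequality, use the Morrey decay of Lemma \ref{Dv-morrey} for $Dv$ with the volume factor $(r/\rho)^{n/q_0}$ on the difference, and invoke Lemma \ref{comparison:Du-Dv} for the measure term, which yields the exponent $\max\{n/q_0,1-\alpha_m\}$ exactly as you describe. The only over-complication is the tail bookkeeping: $\operatorname{Tail}(v-(v)_{B_r(x_0)};x_0,r)$ integrates only over $\R^n\setminus B_r(x_0)$ where $v=u$, so the discrepancy reduces to $|(u)_{B_r(x_0)}-(v)_{B_r(x_0)}|$, handled by Poincar\'e and the comparison estimate (producing an $r^{(1-s)p'}$ factor) with no annulus split or use of Lemma \ref{lem:bdd}, and no Young's inequality is needed since the $\chi_{\{p<2\}}$ term from \eqref{comest-Du-Dv} is simply carried unchanged into the conclusion.
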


\begin{proof}
Let $v$ be a weak solution to the same Dirichlet problem \eqref{Commodel-vu} in the ball $B_{r}(x_0)$ instead of $B_{2r}(x_0)$. A combination of Lemma \ref{Dv-morrey} with Lemma \ref{comparison:Du-Dv} and Poincar\'{e}'s inequality yields that
\begin{eqnarray*}
% \nonumber to remove numbering (before each equation)
   &&  \left(\fint_{B_{\rho}(x_0)} |Du|^{q_0} \operatorname{d}\! x\right)^{\frac{1}{q_0}}\\
   &\leq &  \left(\fint_{B_{\rho}(x_0)} |Dv|^{q_0} \operatorname{d}\! x\right)^{\frac{1}{q_0}}+\left(\frac{r}{\rho}\right)^{\frac{n}{q_0}} \left(\fint_{B_{r}(x_0)} |Du-Dv|^{q_0} \operatorname{d}\! x\right)^{\frac{1}{q_0}} \\
   &\leq& C\left( \frac{\rho}{r} \right)^{-1+\alpha_m} \left(\fint_{B_{r}(x_0)} |Du|^{q_0} \operatorname{d}\! x\right)^{\frac{1}{q_0}}+C \left[\left( \frac{\rho}{r} \right)^{-1+\alpha_m} +\left(\frac{r}{\rho}\right)^{\frac{n}{q_0}}\right] \left(\fint_{B_{r}(x_0)} |Du-Dv|^{q_0} \operatorname{d}\! x\right)^{\frac{1}{q_0}}\\
   &&+C  \left( \frac{\rho}{r} \right)^{-1+\alpha_m} \frac{1}{r}\operatorname{Tail}(v-(v)_{B_r(x_0)};x_0,r)\\
    &\leq& C\left( \frac{\rho}{r} \right)^{-1+\alpha_m} \left(\fint_{B_{r}(x_0)} |Du|^{q_0} \operatorname{d}\! x\right)^{\frac{1}{q_0}} +C\left(\frac{r}{\rho}\right)^{\max\{\frac{n}{q_0},1-\alpha_m\}} \left(\fint_{B_{r}(x_0)} |Du-Dv|^{q_0} \operatorname{d}\! x\right)^{\frac{1}{q_0}}\\
&&    +C  \left( \frac{\rho}{r} \right)^{-1+\alpha_m} \frac{1}{r}\operatorname{Tail}(u-(u)_{B_r(x_0)};x_0,r)
 +C\left(\frac{r}{\rho}\right)^{1-\alpha_m}r^{(1-s)p'} \left(\fint_{B_{r}(x_0)} |Du-Dv|^{q_0} \operatorname{d}\! x\right)^{\frac{1}{q_0}}\\
&\leq& C\left( \frac{\rho}{r} \right)^{-1+\alpha_m} \left[\left(\fint_{B_{r}(x_0)} |Du|^{q_0} \operatorname{d}\! x\right)^{\frac{1}{q_0}}+\frac{1}{r}\operatorname{Tail}(u-(u)_{B_r(x_0)};x_0,r)\right]\\
&&   +C\left(\frac{r}{\rho}\right)^{\max\{\frac{n}{q_0},1-\alpha_m\}} \left\{
\left[\frac{|\mu|(B_{r}(x_0))}{r^{n-1}}\right]^{\frac{1}{p-1}}+\chi_{\{p<2\}}
\left[\frac{|\mu|(B_{r}(x_0))}{r^{n-1}}\right]
\left(\fint_{B_{r}(x_0)} |Du|\operatorname{d}\! x\right)^{2-p}\right\}.
\end{eqnarray*}
Thus, we complete the proof of Lemma \ref{Du-morrey}.
\end{proof}

If we employ the Morrey type growth estimate \eqref{decay-Dw} for $Dw$ instead of \eqref{morreyDv} for $Dv$, and combine the comparison estimates \eqref{comest-Dv-Dw} for $Dv$ and $Dw$ with \eqref{comest-Du-Dv} for $Du$ and $Dv$, then the following Morrey type estimate for $Du$ is treated in a completely similar manner to that in the proof of Lemma \ref{Du-morrey}.

\begin{lemma}\label{Du-morrey2-3}
Let $u\in W^{1,p}_{\rm loc}(\Omega)\cap {\mathcal L}^{p-1}_{sp}(\mathbb{R}^n)$ be a weak solution to \eqref{eq1} under assumptions \eqref{growth1}--\eqref{ellip} with $p > 2-\frac{1}{n}$, $s \in (0,1)$ and $q_0=\max\{1,p-1\}$.
Then there exists a positive constant $C$ depending only on $n,\,p,\,s,\,\Lambda$ and $\operatorname{diam}(\Omega)$ such that
\begin{eqnarray}\label{Du-morrey2}
&& \left(\fint_{B_{\rho}(x_0)} |Du|^{q_0} \operatorname{d}\! x\right)^{\frac{1}{q_0}}\nonumber\\
&\leq& C\left(\fint_{B_{2r}(x_0)} |Du|^{q_0} \operatorname{d}\! x\right)^{\frac{1}{q_0}}+C\left(\frac{r}{\rho}\right)^{\frac{n}{q_0}}
\left(\big[\omega(r)\big]^{\sigma_1}+r^{\sigma_2}\right)\left(\fint_{B_{2r}(x_0)} |Du|^{q_0} \operatorname{d}\! x\right)^{\frac{1}{q_0}}\nonumber\\
&&+C\left(\frac{r}{\rho}\right)^{\frac{n}{q_0}}\frac{r^{\sigma_3}}{r^{p'}}
\operatorname{Tail}(u-(u)_{B_{2r}(x_0)};x_0,2r)\nonumber\\
&&   +C\left(\frac{r}{\rho}\right)^{\frac{n}{q_0}} \left\{
\left[\frac{|\mu|(B_{2r}(x_0))}{r^{n-1}}\right]^{\frac{1}{p-1}}+\chi_{\{p<2\}}
\left[\frac{|\mu|(B_{2r}(x_0))}{r^{n-1}}\right]
\left(\fint_{B_{2r}(x_0)} |Du|\operatorname{d}\! x\right)^{2-p}\right\}
\end{eqnarray}
holds whenever $B_{\rho}(x_0)\subset B_r(x_0) \subset B_{2r}(x_0) \Subset \Omega$, where the exponents $\sigma_1,\,\sigma_2$ and $\sigma_3$ have been defined in \eqref{exponent1}.
%Moreover, if $\big[\omega(r)\big]^{\sigma_1'}$ is assumed to be
%Dini-$\operatorname{VMO}$ regular, then there exist a positive constant $C$
%depending only on $n$, $p$, $s$, $\Lambda$, $\operatorname{diam}(\Omega)$,
%and a radius $\tilde{R}$ depending only on $n$, $p$, $s$, $\Lambda$, $m$, $\omega(\cdot)$, $\operatorname{diam}(\Omega)$ for $p\geq2$ and additionally on $\varsigma$ for $p\in (2-\frac{1}{n},2)$, such that
%\begin{eqnarray}\label{Du-morrey3}
%&& \left(\fint_{B_{\rho}(x_0)} |Du|^{q_0} \operatorname{d}\! x\right)^{\frac{1}{q_0}}\nonumber\\
%&\leq& C\left(\fint_{B_{2r}(x_0)} |Du|^{q_0} \operatorname{d}\! x\right)^{\frac{1}{q_0}}+C\left(\frac{r}{\rho}\right)^{\frac{n}{q_0}}
%\left(\big[\omega(r)\big]^{\sigma_1'}+r^{\sigma_2}\right)\left(\fint_{B_{2r}(x_0)} |Du|^{q_0} \operatorname{d}\! x\right)^{\frac{1}{q_0}}\nonumber\\
%&&+C\left(\frac{r}{\rho}\right)^{\frac{n}{q_0}}\frac{\left([\omega(r)\big]^{\sigma_1'}r^{\max\{0,sp'-1\}} + r^{\sigma_3}\right)}{r^{p'}}
%\operatorname{Tail}(u-(u)_{B_{2r}(x_0)};x_0,2r)\nonumber\\
%&&   +C\left(\frac{r}{\rho}\right)^{\frac{n}{q_0}} \left\{
%\left[\frac{|\mu|(B_{2r}(x_0))}{r^{n-1}}\right]^{\frac{1}{p-1}}+\chi_{\{p<2\}}
%\left[\frac{|\mu|(B_{2r}(x_0))}{r^{n-1}}\right]
%\left(\fint_{B_{2r}(x_0)} |Du|\operatorname{d}\! x\right)^{2-p}\right\}
%\end{eqnarray}
%holds for the ball
%$B_{\rho}(x_0)\subset B_r(x_0)  \subset B_{2r}(x_0) \Subset \Omega$ with the radius $r\in(0,\frac{\tilde{R}}{4}]$, where the exponents $\sigma_1'$ is given by \eqref{exponent2}.
\end{lemma}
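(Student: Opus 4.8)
The plan is to run the proof of Lemma~\ref{Du-morrey} with one additional comparison layer: rather than comparing $Du$ with the homogeneous mixed solution $v$ and invoking the Morrey decay \eqref{morreyDv}, I interpose the solution $w$ of the frozen homogeneous \emph{local} problem and use its decay estimate \eqref{decay-Dw} instead. Because the factor $(r/\rho)^{n/q_0}$ in \eqref{Du-morrey2} already dominates when $\rho\simeq r$, I may assume $0<\rho<r/2$; the range $\rho\in[r/2,r)$ is disposed of by the trivial bound $\fint_{B_\rho}|Du|^{q_0}\le 2^n\fint_{B_{2r}}|Du|^{q_0}$.

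First I would fix the two comparison functions: let $v\in W^{1,p}(B_{2r}(x_0))$ solve \eqref{Commodel-vu} with $v=u$ on $\R^n\setminus B_{2r}(x_0)$, and let $w\in W^{1,p}(B_r(x_0))$ solve \eqref{Commodel-wv} on $B_r(x_0)$ with $w=v$ on $\partial B_r(x_0)$, the frozen field being $\tilde{\mathcal A}(z)=(\mathcal A(\cdot,z))_{B_r(x_0)}$. Since $q_0\ge 1$, the triangle inequality in $L^{q_0}$ together with $B_\rho\subset B_{r/2}\Subset B_r\subset B_{2r}$ gives
\[
\Big(\fint_{B_\rho}|Du|^{q_0}\Big)^{\frac1{q_0}}\le \Big(\fint_{B_\rho}|Dw|^{q_0}\Big)^{\frac1{q_0}}+C\Big(\tfrac r\rho\Big)^{\frac n{q_0}}\Big[\Big(\fint_{B_r}|Dv-Dw|^{q_0}\Big)^{\frac1{q_0}}+\Big(\fint_{B_{2r}}|Du-Dv|^{q_0}\Big)^{\frac1{q_0}}\Big].
\]
On the first term I would apply \eqref{decay-Dw} through the intermediate dilate $B_{r/2}$ (so that the strict inclusion required in Lemma~\ref{exdecay-Dw-Lem} is met), followed by the triangle inequality, to get $(\fint_{B_\rho}|Dw|^{q_0})^{1/q_0}\le C(\fint_{B_r}|Dv|^{q_0})^{1/q_0}+C(\fint_{B_r}|Dv-Dw|^{q_0})^{1/q_0}$. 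Then I would feed in the two comparison estimates: Lemma~\ref{comparison:Dv-Dw} (inequality \eqref{comest-Dv-Dw}) bounds every $(\fint_{B_r}|Dv-Dw|^{q_0})^{1/q_0}$ by $C([\omega(r)]^{\sigma_1}+r^{\sigma_2})(\fint_{B_{2r}}|Dv|^{q_0})^{1/q_0}+Cr^{\sigma_3-p'}\operatorname{Tail}(v-(v)_{B_{2r}(x_0)};x_0,2r)$, while Lemma~\ref{comparison:Du-Dv} (inequality \eqref{comest-Du-Dv}) bounds every $(\fint_{B_{2r}}|Du-Dv|^{q_0})^{1/q_0}$ by $C[|\mu|(B_{2r}(x_0))/r^{n-1}]^{1/(p-1)}+C\chi_{\{p<2\}}[|\mu|(B_{2r}(x_0))/r^{n-1}](\fint_{B_{2r}}|Du|)^{2-p}$.

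It then remains to turn $(\fint_{B_{2r}}|Dv|^{q_0})^{1/q_0}$ (and likewise $(\fint_{B_r}|Dv|^{q_0})^{1/q_0}$) into $(\fint_{B_{2r}}|Du|^{q_0})^{1/q_0}$ plus one more copy of $(\fint_{B_{2r}}|Du-Dv|^{q_0})^{1/q_0}$, absorbed again via \eqref{comest-Du-Dv}, and to replace $\operatorname{Tail}(v-(v)_{B_{2r}(x_0)};x_0,2r)$ by $\operatorname{Tail}(u-(u)_{B_{2r}(x_0)};x_0,2r)$. For the latter, since $v\equiv u$ on $\R^n\setminus B_{2r}(x_0)$, the quasi-triangle inequality for $\operatorname{Tail}(\cdot)$ and the identity $\operatorname{Tail}(\mathrm{const}\ c;x_0,2r)\simeq c\,r^{(1-s)p'}$ reduce the difference to $Cr^{(1-s)p'}|(u)_{B_{2r}(x_0)}-(v)_{B_{2r}(x_0)}|$, which by Poincar\'{e}'s inequality (legitimate because $u-v\in W^{1,q_0}_0(B_{2r}(x_0))$) is at most $Cr^{1+(1-s)p'}(\fint_{B_{2r}}|Du-Dv|^{q_0})^{1/q_0}$; multiplied by the prefactor $r^{\sigma_3-p'}$ this produces the power $r^{\sigma_3+1-sp'}$, which is bounded because $\sigma_3>\max\{sp'-1,0\}$ (as recorded after the proof of Lemma~\ref{Dv-L^infty}). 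Collecting all contributions, using $[\omega(r)]^{\sigma_1}+r^{\sigma_2}\le C$ and $(r/\rho)^{n/q_0}\ge 1$ so that the stray measure terms merge into the last group, reproduces \eqref{Du-morrey2}.

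There is no substantive obstacle here: the argument is a bookkeeping recombination of Lemmas~\ref{comparison:Dv-Dw}, \ref{comparison:Du-Dv} and \ref{exdecay-Dw-Lem} with Poincar\'{e}'s inequality, exactly as announced in the statement. The only points that demand attention are organizational — keeping track of the ball on which each comparison function actually solves its equation (so that \eqref{decay-Dw} is applied on a strictly interior dilate), and converting the nonlocal tail of $v$ into that of $u$ with the correct, non-negative power of $r$.
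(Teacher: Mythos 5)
Your proposal is correct and follows essentially the same route the paper intends: the paper itself only sketches this lemma as the proof of Lemma \ref{Du-morrey} with the decay estimate \eqref{decay-Dw} for $Dw$ replacing \eqref{morreyDv} for $Dv$, combined with the comparison estimates \eqref{comest-Dv-Dw} and \eqref{comest-Du-Dv}, which is precisely your decomposition. Your extra bookkeeping (converting $\operatorname{Tail}(v-(v)_{B_{2r}};x_0,2r)$ to the tail of $u$ via Poincar\'e and the positivity of $\sigma_3+1-sp'$, and absorbing the stray comparison terms using $(r/\rho)^{n/q_0}\geq 1$) fills in exactly the details the paper leaves implicit.
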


We conclude this section by presenting a Campanato type decay estimate for the gradient of solutions to the equation \eqref{eq1} with a Dini-$\operatorname{VMO}$ regular coefficient. Adopting the comparison scheme  \eqref{comest-Dv-Dw-Dinivmo} for $Dv$ and $Dw$, we replace the Morrey type estimate with the Campanato type decay estimate \eqref{exdecay-Dw} to handle $Dw$.

\begin{lemma}\label{Lem:Du-camp}
Let $u\in W^{1,p}_{\rm loc}(\Omega)\cap {\mathcal L}^{p-1}_{sp}(\mathbb{R}^n)$ be a weak solution to \eqref{eq1} under assumptions \eqref{growth1}--\eqref{ellip} with $p > 2-\frac{1}{n}$, $s \in (0,1)$, $\max\{sp'-1,0\}<a<\sigma_3$ and $q_0=\max\{1,p-1\}$.
Assume that $[\omega(r)]^{\sigma_1'}$ satisfies Dini-$\operatorname{VMO}$ regular condition \eqref{Dinivmo}, then there exist a positive constant $C$
depending only on $n$, $p$, $s$, $\Lambda$, $m$, $a$, $\operatorname{diam}(\Omega)$,
and a radius $\tilde{R}$ depending only on $n$, $p$, $s$, $\Lambda$, $m$, $\omega(\cdot)$, $\operatorname{diam}(\Omega)$, such that
\begin{eqnarray}\label{Du-camp}
&& \left(\fint_{B_{\rho}(x_0)} |Du-(Du)_{B_{\rho}(x_0)}|^{q_0} \operatorname{d}\! x\right)^{\frac{1}{q_0}}\nonumber\\
&\leq& C\left(\frac{\rho}{r}\right)^{\alpha_M}\left(\fint_{B_{2r}(x_0)} |Du-(Du)_{B_{2r}(x_0)}|^{q_0} \operatorname{d}\! x\right)^{\frac{1}{q_0}}+C\left(\frac{r}{\rho}\right)^{\frac{n}{q_0}}
\left(\big[\omega(r)\big]^{\sigma_1'}+r^{\sigma_2}\right)\left(\fint_{B_{2r}(x_0)} |Du|^{q_0} \operatorname{d}\! x\right)^{\frac{1}{q_0}}\nonumber\\
&&+C\left(\frac{r}{\rho}\right)^{\frac{n}{q_0}}\frac{\left([\omega(r)\big]^{\sigma_1'}r^{a} + r^{\sigma_3}\right)}{r^{p'}}
\operatorname{Tail}(u-(u)_{B_{2r}(x_0)};x_0,2r)\nonumber\\
&&   +C\left(\frac{r}{\rho}\right)^{\frac{n}{q_0}} \left\{
\left[\frac{|\mu|(B_{2r}(x_0))}{r^{n-1}}\right]^{\frac{1}{p-1}}+\chi_{\{p<2\}}
\left[\frac{|\mu|(B_{2r}(x_0))}{r^{n-1}}\right]
\left(\fint_{B_{2r}(x_0)} |Du|\operatorname{d}\! x\right)^{2-p}\right\}
\end{eqnarray}
holds for the ball
$B_{\rho}(x_0) \subset B_{2r}(x_0) \Subset \Omega$ with $r\in(0,\frac{\tilde{R}}{4}]$, where the exponents $\sigma_1',\,\sigma_2$ and $\sigma_3$ have been defined in \eqref{exponent2} and \eqref{exponent1}, respectively.
\end{lemma}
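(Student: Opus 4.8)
The plan is to follow the scheme of the proofs of Lemmas~\ref{Du-morrey} and~\ref{Du-morrey2-3}, but to replace the Morrey growth estimate \eqref{decay-Dw} for $Dw$ by the Campanato decay estimate \eqref{exdecay-Dw}, and to use the Dini-$\operatorname{VMO}$ comparison \eqref{comest-Dv-Dw-Dinivmo} in place of \eqref{comest-Dv-Dw}. Fix $B_{2r}(x_0)\Subset\Omega$ with $r\in(0,\tilde R/4]$, where $\tilde R$ is the radius from Lemma~\ref{comparison:Dv-Dw-Dinivmo}; let $v\in W^{1,p}(B_{2r}(x_0))$ solve \eqref{Commodel-vu} on $B_{2r}(x_0)$ and $w\in W^{1,p}(B_r(x_0))$ solve the frozen local problem \eqref{Commodel-wv} on $B_r(x_0)$ with boundary datum $v$. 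We may assume $0<\rho<r$: if $r\le\rho\le 2r$, then \eqref{Du-camp} is immediate from $B_\rho(x_0)\subset B_{2r}(x_0)$, the minimality of the mean, and $(\rho/r)^{\alpha_M}\ge1$.

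First I would isolate the comparison errors. By minimality of the average and $B_\rho(x_0)\subset B_r(x_0)$,
\[
\Big(\fint_{B_{\rho}(x_0)}|Du-(Du)_{B_{\rho}(x_0)}|^{q_0}\operatorname{d}\!x\Big)^{\frac1{q_0}}\le C\Big(\tfrac r\rho\Big)^{\frac n{q_0}}\Big(\fint_{B_r(x_0)}|Du-Dw|^{q_0}\operatorname{d}\!x\Big)^{\frac1{q_0}}+C\Big(\fint_{B_\rho(x_0)}|Dw-(Dw)_{B_\rho(x_0)}|^{q_0}\operatorname{d}\!x\Big)^{\frac1{q_0}}.
\]
To the last term I apply \eqref{exdecay-Dw} to gain the factor $(\rho/r)^{\alpha_M}$ times the oscillation of $Dw$ on $B_r(x_0)$; the latter is bounded, again by minimality of the mean, the $L^{q_0}$ triangle inequality and $B_r(x_0)\subset B_{2r}(x_0)$, by $C\big(\fint_{B_{2r}(x_0)}|Du-(Du)_{B_{2r}(x_0)}|^{q_0}\big)^{1/q_0}+C\big(\fint_{B_r(x_0)}|Du-Dw|^{q_0}\big)^{1/q_0}$, and $\big(\fint_{B_r(x_0)}|Du-Dw|^{q_0}\big)^{1/q_0}\le C\big(\fint_{B_{2r}(x_0)}|Du-Dv|^{q_0}\big)^{1/q_0}+\big(\fint_{B_r(x_0)}|Dv-Dw|^{q_0}\big)^{1/q_0}$. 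Since $(\rho/r)^{\alpha_M}\le(r/\rho)^{n/q_0}$, all the comparison contributions can be collected under the single weight $(r/\rho)^{n/q_0}$, so that at this stage the left side of \eqref{Du-camp} is controlled by $C(\rho/r)^{\alpha_M}\big(\fint_{B_{2r}(x_0)}|Du-(Du)_{B_{2r}(x_0)}|^{q_0}\big)^{1/q_0}+C(r/\rho)^{n/q_0}\big[\big(\fint_{B_{2r}(x_0)}|Du-Dv|^{q_0}\big)^{1/q_0}+\big(\fint_{B_r(x_0)}|Dv-Dw|^{q_0}\big)^{1/q_0}\big]$.

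It then remains to feed in the two comparison estimates. For $\big(\fint_{B_{2r}(x_0)}|Du-Dv|^{q_0}\big)^{1/q_0}$ I use Lemma~\ref{comparison:Du-Dv}, which produces precisely the $\mu$-terms of \eqref{Du-camp}; for $\big(\fint_{B_r(x_0)}|Dv-Dw|^{q_0}\big)^{1/q_0}$ I use Lemma~\ref{comparison:Dv-Dw-Dinivmo} (legitimate since $r\le\tilde R/4$), which gives $C\big([\omega(r)]^{\sigma_1'}+r^{\sigma_2}\big)\big(\fint_{B_{2r}(x_0)}|Dv|^{q_0}\big)^{1/q_0}+Cr^{-p'}\big([\omega(r)]^{\sigma_1'}r^{a}+r^{\sigma_3}\big)\operatorname{Tail}(v-(v)_{B_{2r}(x_0)};x_0,2r)$. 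The last step is the passage from $v$-quantities to $u$-quantities: one bounds $\big(\fint_{B_{2r}(x_0)}|Dv|^{q_0}\big)^{1/q_0}\le\big(\fint_{B_{2r}(x_0)}|Du|^{q_0}\big)^{1/q_0}+\big(\fint_{B_{2r}(x_0)}|Du-Dv|^{q_0}\big)^{1/q_0}$, and, using $v=u$ on $\R^n\setminus B_{2r}(x_0)$ together with $|(v)_{B_{2r}(x_0)}-(u)_{B_{2r}(x_0)}|\le Cr\big(\fint_{B_{2r}(x_0)}|Du-Dv|^{q_0}\big)^{1/q_0}$ (Poincar\'e), splits $\operatorname{Tail}(v-(v)_{B_{2r}(x_0)};x_0,2r)\le C\operatorname{Tail}(u-(u)_{B_{2r}(x_0)};x_0,2r)+Cr^{1+(1-s)p'}\big(\fint_{B_{2r}(x_0)}|Du-Dv|^{q_0}\big)^{1/q_0}$; multiplying by $r^{-p'}\big([\omega(r)]^{\sigma_1'}r^{a}+r^{\sigma_3}\big)$ leaves the positive powers $r^{a+1-sp'}$ and $r^{\sigma_3+1-sp'}$ (both positive because $a>sp'-1$ and $\sigma_3>sp'-1$), which are absorbed using $r\le\operatorname{diam}(\Omega)$, and the residual $|Du-Dv|$ terms are again estimated by Lemma~\ref{comparison:Du-Dv} and swallowed into the $\mu$-terms. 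Collecting everything yields \eqref{Du-camp}.

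The main obstacle is the bookkeeping: one must verify, while unwinding the chain of triangle inequalities, that every comparison error ends up multiplied by $(r/\rho)^{n/q_0}$ and that $(\rho/r)^{\alpha_M}$ multiplies only the genuine Campanato oscillation of $Du$ on $B_{2r}(x_0)$, and that converting the nonlocal tail of $v$ into that of $u$ introduces only nonnegative powers of $r$ — this is exactly where the hypotheses $\max\{sp'-1,0\}<a<\sigma_3$, the choice of the exponents $\sigma_1',\sigma_2,\sigma_3$, and the smallness restriction $r\le\tilde R/4$ inherited from Lemma~\ref{comparison:Dv-Dw-Dinivmo} come into play.
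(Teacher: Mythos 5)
Your proposal is correct and follows exactly the route the paper intends: the paper gives no detailed proof of this lemma, only the one-line indication that one repeats the scheme of Lemmas \ref{Du-morrey}--\ref{Du-morrey2-3} with the Campanato decay \eqref{exdecay-Dw} for $Dw$ and the Dini-VMO comparison \eqref{comest-Dv-Dw-Dinivmo} in place of \eqref{comest-Dv-Dw}, which is precisely your decomposition. Your bookkeeping — collecting all comparison errors under the weight $(r/\rho)^{n/q_0}$, converting the tail of $v$ to that of $u$ via $v=u$ outside $B_{2r}(x_0)$ and Poincar\'e, and using $a,\sigma_3>sp'-1$ so that the resulting powers $r^{a+1-sp'}$, $r^{\sigma_3+1-sp'}$ are nonnegative and the residual $|Du-Dv|$ terms are absorbed by Lemma \ref{comparison:Du-Dv} — faithfully fills in the details the paper omits.
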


\section{Pointwise estimates of nonlocal fractional maximal functions}\label{section4}

In this section, we complete the proof of pointwise estimates for truncated nonlocal fractional maximal functions of the solution and its gradient for the mixed local and nonlocal equation \eqref{eq1}, as established in Theorem \ref{Maxmeasure} - Theorem \ref{MaxDiniholder}.
We omit the proof of Theorem \ref{Maxmeasure} here, since it
follows the same lines as the proof of Theorem \ref{MaxsmallBMO}, except for the use of Lemma \ref{Du-morrey} instead of Lemma \ref{Du-morrey2-3}.

\begin{proof}[Proof of Theorem \ref{MaxsmallBMO}] In terms of \eqref{Sharpm-M}, it suffices to prove that
\begin{eqnarray}\label{Maxpoint-Measurablesimple}
  &&  \mathfrak{M}_{1-\alpha,q_0}(Du)(x,R)
   \leq  C\left[ {\bf M}_{p-\alpha(p-1)}(\mu)(x,R)\right]^{\frac{1}{p-1}} \nonumber\\
  &&\qquad\qquad\qquad+CR^{1-\alpha} \left[\left(\fint_{B_{R}(x)} |Du|^{q_0} \operatorname{d}\! \xi\right)^{\frac{1}{q_0}}+R^{-1-\sigma}\operatorname{Tail}(u-(u)_{B_R(x)};x,R)\right]\,.
  \end{eqnarray}
Let $B_{\rho}(x)\subset B_{\frac{r}{2}}(x) \subset B_{r}(x) \subset B_R(x)$ be a set of concentric balls. Multiplying both sides of \eqref{Du-morrey2} by $\rho^{1-\alpha}$, and invoking Lemma \ref{v-holder} and Lemma \ref{comparison:Du-Dv} to bound the tail term in a similar manner to \cite[Lemma 4.12]{CSYZ24}, we derive
\begin{eqnarray*}
&& \rho^{1-\alpha}\left(\fint_{B_{\rho}(x)} |Du|^{q_0} \operatorname{d}\! \xi\right)^{\frac{1}{q_0}}+\rho^{-\sigma-\alpha} \operatorname{Tail}(u-(u)_{B_{\rho}(x)};x,\rho) \nonumber\\
&\leq& c_1\left(\frac{\rho}{r}\right)^{1-\alpha}r^{1-\alpha}\left(\fint_{B_{r}(x)} |Du|^{q_0} \operatorname{d}\! \xi\right)^{\frac{1}{q_0}}
+c_2\left(\frac{\rho}{r}\right)^{p'-\alpha-\sigma}r^{-\sigma-\alpha}
\operatorname{Tail}(u-(u)_{B_{r}(x)};x,r)
\nonumber\\
&&+c_3\left[\left(\frac{r}{\rho}\right)^{\frac{n}{q_0}-1+\alpha}
\left(\big[\omega(r)\big]^{\sigma_1}+r^{\sigma_2}\right)
+\left(\frac{r}{\rho}\right)^{\frac{n}{p-1}+\alpha+\sigma}r^{(1-s)p'-\sigma}\right]r^{1-\alpha}\left(\fint_{B_{r}(x)} |Du|^{q_0} \operatorname{d}\! \xi\right)^{\frac{1}{q_0}}\nonumber\\
&&+c_4\left[\left(\frac{r}{\rho}\right)^{\frac{n}{q_0}-1+\alpha}r^{\sigma_3+\sigma-\frac{1}{p-1}}
+\left(\frac{r}{\rho}\right)^{\alpha+\sigma-\alpha_m}
r^{(1-s)p'}\right]r^{-\sigma-\alpha}\operatorname{Tail}(u-(u)_{B_{r}(x)};x,r)\nonumber\\
&&+C\left[\left(\frac{r}{\rho}\right)^{\frac{n}{q_0}-1+\alpha}
+\left(\frac{r}{\rho}\right)^{\frac{n}{p-1}+\alpha+\sigma}r^{(1-s)p'-\sigma}\right]
\left[\frac{|\mu|(B_{r}(x))}{r^{n-p+\alpha(p-1)}}\right]^{\frac{1}{p-1}}
\nonumber\\
&&  +C\chi_{\{p<2\}}\left(\frac{r}{\rho}\right)^{\frac{n}{q_0}-1+\alpha}
\left[\frac{|\mu|(B_{r}(x))}{r^{n-p+\alpha(p-1)}}\right]
\left(r^{1-\alpha}\fint_{B_{r}(x)} |Du|\operatorname{d}\! \xi\right)^{2-p}\,.
\end{eqnarray*}
Given that $\sigma<\frac{1}{p-1}$, we now set $\rho=\frac{r}{L}$, where $L\equiv L(n,p,s,\Lambda,\tilde{\alpha},\sigma,\operatorname{diam}(\Omega))\geq2$ is chosen to be sufficiently large such that
$$c_1\left(\frac{1}{L}\right)^{1-\alpha}\leq c_1\left(\frac{1}{L}\right)^{1-\tilde{\alpha}}\leq\frac{1}{8}$$
and
$$c_2\left(\frac{1}{L}\right)^{p'-\alpha-\sigma}\leq c_2\left(\frac{1}{L}\right)^{\frac{1}{p-1}-\sigma}\leq \frac{1}{8}.$$
Due to the fact that $\omega(r)$ is small $\operatorname{BMO}$ regular, the exponent $q$ can be chosen arbitrarily in the interval $(p,+\infty)$, as stated in Remark \ref{q-range}. Thus, we may take $q = 2p$.
Since the exponents $\sigma_1,\,\sigma_2>0$ and $\frac{1}{p-1}-\sigma_3<\sigma<(1-s)p'$,
we further select positive parameters
$\bar{R}\equiv\bar{R}(n,p,s,\Lambda,\tilde{\alpha},m,\sigma,\omega(\cdot),\operatorname{diam}(\Omega))$
and $\delta\equiv\delta(n,p,s,\Lambda,\tilde{\alpha},\sigma,\operatorname{diam}(\Omega))$
small enough to ensure that
\begin{eqnarray*}
% \nonumber to remove numbering (before each equation)
   && c_3\left[L^{\frac{n}{q_0}-1+\alpha}
\left(\big[\omega(r)\big]^{\sigma_1}+r^{\sigma_2}\right)+L^{\frac{n}{p-1}+\alpha+\sigma}r^{(1-s)p'-\sigma}\right] \\
   &\leq&  c_3\left[L^{\frac{n}{q_0}}
\left(\delta^{\sigma_1}+\bar{R}^{\sigma_2}\right)+L^{\frac{n}{p-1}+\tilde{\alpha}+\sigma}\bar{R}^{(1-s)p'-\sigma}\right]\leq \frac{1}{8},
\end{eqnarray*}
as well as
$$c_4\left[L^{\frac{n}{q_0}-1+\alpha}r^{\sigma_3+\sigma-\frac{1}{p-1}}
+L^{\alpha+\sigma-\alpha_m}
r^{(1-s)p'}\right]\leq c_4\left[L^{\frac{n}{q_0}-1+\tilde{\alpha}}\bar{R}^{\sigma_3+\sigma-\frac{1}{p-1}}
+L^{\tilde{\alpha}+\sigma-\alpha_m}
\bar{R}^{(1-s)p'}\right]\leq \frac{1}{8}.$$
Note that introducing the exponent $\sigma$ guarantees that the constant preceding the tail term can be sufficiently small.

Based on the above analysis and applying Young's inequality, we deduce that
\begin{eqnarray}\label{Du-maxBMO-1}
&& \rho^{1-\alpha}\left(\fint_{B_{\rho}(x)} |Du|^{q_0} \operatorname{d}\! \xi\right)^{\frac{1}{q_0}}+\rho^{-\sigma-\alpha} \operatorname{Tail}(u-(u)_{B_{\rho}(x)};x,\rho) \nonumber\\
&\leq&\frac{1}{2}\left[ r^{1-\alpha}\left(\fint_{B_{r}(x)} |Du|^{q_0} \operatorname{d}\! \xi\right)^{\frac{1}{q_0}}+r^{-\sigma-\alpha} \operatorname{Tail}(u-(u)_{B_{r}(x)};x,r)\right]+C
\left[\frac{|\mu|(B_{r}(x))}{r^{n-p+\alpha(p-1)}}\right]^{\frac{1}{p-1}}
\nonumber\\
&\leq& \frac{1}{2}\mathfrak{M}_{1-\alpha,q_0}(Du)(x,R)
+C\left[ {\bf M}_{p-\alpha(p-1)}(\mu)(x,R)\right]^{\frac{1}{p-1}},
\end{eqnarray}
where the positive constant $C\equiv C(n,p,s,\Lambda,\tilde{\alpha},m,\sigma,\omega(\cdot),\operatorname{diam}(\Omega))$.
Due to $\rho=\frac{r}{L}$ and the arbitrariness of $0<r\leq R$, it follows from
\eqref{Du-maxBMO-1} that
\begin{eqnarray}\label{Du-maxBMO-2}
 && \sup_{0<\rho\leq \frac{R}{L}}\rho^{1-\alpha}\left(\fint_{B_{\rho}(x)} |Du|^{q_0} \operatorname{d}\! \xi\right)^{\frac{1}{q_0}}+\rho^{-\sigma-\alpha} \operatorname{Tail}(u-(u)_{B_{\rho}(x)};x,\rho)\nonumber\\
 &\leq& \frac{1}{2}\mathfrak{M}_{1-\alpha,q_0}(Du)(x,R)
+C\left[ {\bf M}_{p-\alpha(p-1)}(\mu)(x,R)\right]^{\frac{1}{p-1}}.
\end{eqnarray}
On the other hand, applying Poincar\'{e}'s inequality, H\"{o}lder' inequality and $\sigma<(1-s)p'$, we proceed to calculate
\begin{eqnarray}\label{Du-maxBMO-3}
% \nonumber to remove numbering (before each equation)
    && \sup_{\frac{R}{L}<\rho\leq R}\rho^{1-\alpha}\left(\fint_{B_{\rho}(x)} |Du|^{q_0} \operatorname{d}\! \xi\right)^{\frac{1}{q_0}}+\rho^{-\sigma-\alpha} \operatorname{Tail}(u-(u)_{B_{\rho}(x)};x,\rho)\nonumber \\
   &\leq& C\left(R^{1-\alpha}+R^{(1-s)p'-\sigma+1-\alpha}\right)\left(\fint_{B_{R}(x)} |Du|^{q_0} \operatorname{d}\! \xi\right)^{\frac{1}{q_0}}+CR^{-\sigma-\alpha} \operatorname{Tail}(u-(u)_{B_{R}(x)};x,R)\nonumber \\
   &\leq& CR^{1-\alpha}\left(\fint_{B_{R}(x)} |Du|^{q_0} \operatorname{d}\! \xi\right)^{\frac{1}{q_0}}+CR^{-\sigma-\alpha} \operatorname{Tail}(u-(u)_{B_{R}(x)};x,R).
\end{eqnarray}
Thus, a combination of \eqref{Du-maxBMO-2} and \eqref{Du-maxBMO-3} yields that \eqref{Maxpoint-Measurablesimple} holds uniformly in $\alpha\in[0,\tilde{\alpha}]$ for a positive constant $C\equiv C(n,p,s,\Lambda,\tilde{\alpha},m,\sigma,\omega(\cdot),\operatorname{diam}(\Omega))$ and a radius $R\leq \bar{R}$ as determined above.

The last step is to remove the restriction on radius $R\leq \bar{R}\equiv\bar{R}(n,p,s,\Lambda,\tilde{\alpha},m,\sigma,\omega(\cdot),\operatorname{diam}(\Omega))$. In the case of $R>\bar{R}$, we apply \eqref{Maxpoint-Measurablesimple} with $R=\bar{R}$ and adopt the estimate strategy from \eqref{Du-maxBMO-3} to derive that
\begin{eqnarray*}
% \nonumber to remove numbering (before each equation)
     &&\mathfrak{M}_{1-\alpha,q_0}(Du)(x,R)\\
     &\leq&\mathfrak{M}_{1-\alpha,q_0}(Du)(x,\bar{R}) +\sup_{\bar{R}<r\leq R}r^{1-\alpha}\left(\fint_{B_{r}(x)} |Du|^{q_0} \operatorname{d}\! \xi\right)^{\frac{1}{q_0}}+r^{-\sigma-\alpha} \operatorname{Tail}(u-(u)_{B_{r}(x)};x,r)\\
   &\leq& C\left[ {\bf M}_{p-\alpha(p-1)}(\mu)(x,\bar{R})\right]^{\frac{1}{p-1}}
+C\left(\frac{R}{\bar{R}}\right)^{\tilde{\alpha}+\sigma}R^{-\sigma-\alpha}\operatorname{Tail}(u-(u)_{B_{R}(x)};x,R)\\
&&+C\left[\left(\frac{R}{\bar{R}}\right)^{\frac{n}{q_0}}R^{1-\alpha}+\left(\frac{R}{\bar{R}}\right)^{\frac{n+sp}{p-1}+\tilde{\alpha}+\sigma}R^{(1-s)p'-\sigma+1-\alpha}\right]\left(\fint_{B_{R}(x)} |Du|^{q_0} \operatorname{d}\! \xi\right)^{\frac{1}{q_0}}\\
&\leq&C\left[ {\bf M}_{p-\alpha(p-1)}(\mu)(x,R)\right]^{\frac{1}{p-1}}
 +CR^{1-\alpha} \left[\left(\fint_{B_{R}(x)} |Du|^{q_0} \operatorname{d}\! \xi\right)^{\frac{1}{q_0}}+R^{-1-\sigma}\operatorname{Tail}(u-(u)_{B_R(x)};x,R)\right]\,,
\end{eqnarray*}
where the positive constant $C$ depends only on $n,\,p,\,s,\,\Lambda,\,\tilde{\alpha},\,m,\,\sigma,\,\omega(\cdot),\,\operatorname{diam}(\Omega)$. Hence, the proof of \eqref{Maxpoint-SmBMO} is complete.
\end{proof}

The next objective is to demonstrate that the pointwise maximum estimate established in Theorem \ref{MaxDinivmo} holds when the coefficient satisfies Dini-$\operatorname{VMO}$ regularity.

\begin{proof}[Proof of Theorem \ref{MaxDinivmo}]
Let  $q_0=\max\{1,p-1\}$ and $\mathcal{B}_i:=B_{R_i}(x)$ be a sequence of shrinking balls with the radius $R_i:=\frac{R}{(2L)^{i}}$ for $i\in \mathbb{N}$ and $R\leq \frac{\tilde{R}}{2}$, where $L\geq 1$ is a large constant to be determined later and $\tilde{R}$ is given by Lemma \ref{Dv-L^infty}.
We denote $G$ as a fixed vector in $\R^n$, and
define
\begin{equation*}
  \tilde{A}_i:=\left( \fint_{\mathcal{B}_i}|Du-(Du)_{\mathcal{B}_i}|^{q_0}  \operatorname{d}\!\xi\right)^{\frac{1}{q_0}}\,\, \mbox{and}\,\, \tilde{K}_i:=|(Du)_{\mathcal{B}_i}-G|.
\end{equation*}
By virtue of Lemma \ref{Lem:Du-camp} with $\mathcal{B}_{i+1}\subset B_{\frac{R_i}{2}}(x)\subset \mathcal{B}_{i}$ for any $i\in\mathbb{N}$, we have
\begin{eqnarray}\label{max-diniVMO-A_i}
\tilde{A}_{i+1}
&\leq& c_5\left(\frac{1}{2L}\right)^{\alpha_M}\tilde{A}_i+c_6(2L)^{\frac{n}{q_0}}
\left(\big[\omega(R_i)\big]^{\sigma_1'}+R_i^{\sigma_2}\right)\left(\tilde{A}_i+\tilde{K}_i+|G|\right)\nonumber\\
&&+CL^{\frac{n}{q_0}}\frac{\left([\omega(R_i)\big]^{\sigma_1'}R_i^{a} + R_i^{\sigma_3}\right)}{R_i^{p'}}
\operatorname{Tail}(u-(u)_{\mathcal{B}_{i}};x,R_i)\nonumber\\
&&   +CL^{\frac{n}{q_0}} \left\{
\left[\frac{|\mu|(\mathcal{B}_{i})}{R_i^{n-1}}\right]^{\frac{1}{p-1}}+\chi_{\{p<2\}}
\left[\frac{|\mu|(\mathcal{B}_{i})}{R_i^{n-1}}\right]
\left(\fint_{\mathcal{B}_{i}} |Du|\operatorname{d}\! \xi\right)^{2-p}\right\}.
\end{eqnarray}
We choose $L_1\equiv L_1(n,p,s,\Lambda,m,a,\operatorname{diam}(\Omega))$ sufficiently large such that
$c_5\left(\frac{1}{2L}\right)^{\alpha_M}\leq\frac{1}{16}$ for $L\geq L_1$, and then select a positive radius $\bar{R}_1\equiv\bar{R}_1(n,p,s,\Lambda,\omega(\cdot),m,a,\operatorname{diam}(\Omega))$ small enough to ensure
$$c_6(2L)^{\frac{n}{q_0}}
\left(\big[\omega(R_i)\big]^{\sigma_1'}+R_i^{\sigma_2}\right)\leq c_6(2L)^{\frac{n}{q_0}}
\left(\big[\omega(\bar{R}_1)\big]^{\sigma_1'}+\bar{R}_1^{\sigma_2}\right)\leq \frac{1}{16}$$
for $R \leq \min\{\bar{R}_1,\frac{\tilde{R}}{2}\}$. From these selections, we sum up \eqref{max-diniVMO-A_i} over $i$ to derive
\begin{eqnarray}\label{max-diniVMO-1}
\sum_{i=1}^j\tilde{A}_{i}
&\leq& \tilde{A}_0+C\sum_{i=0}^{j-1}
\left(\big[\omega(R_i)\big]^{\sigma_1'}+R_i^{\sigma_2}\right)\left(\tilde{K}_i+|G|\right)\nonumber\\
&&+C\sum_{i=0}^{j-1}\frac{\left([\omega(R_i)\big]^{\sigma_1'}R_i^{a} + R_i^{\sigma_3}\right)}{R_i^{p'}}
\operatorname{Tail}(u-(u)_{\mathcal{B}_{i}};x,R_i)\nonumber\\
&&   +C\sum_{i=0}^{j-1} \left\{
\left[\frac{|\mu|(\mathcal{B}_{i})}{R_i^{n-1}}\right]^{\frac{1}{p-1}}+\chi_{\{p<2\}}
\left[\frac{|\mu|(\mathcal{B}_{i})}{R_i^{n-1}}\right]
\left(\fint_{\mathcal{B}_{i}} |Du|\operatorname{d}\! \xi\right)^{2-p}\right\}
\end{eqnarray}
for any $j\geq 1$. Applying H\"{o}lder's inequality, we compute
\begin{equation}\label{max-diniVMO-2}
\tilde{K}_{0}+\tilde{K}_1\leq \left[1+(2L)^{\frac{n}{q_0}}\right]\left(\fint_{B_{R}(x)} |Du-G|^{q_0}\operatorname{d}\! \xi\right)^{\frac{1}{q_0}}
\end{equation}
Then a combination of \eqref{max-diniVMO-1} and \eqref{max-diniVMO-2} yields that
\begin{eqnarray}\label{max-diniVMO-2-1}
% \nonumber to remove numbering (before each equation)
  \tilde{K}_{j+1} &\leq&(2L)^{\frac{n}{q_0}}\sum_{i=0}^j\tilde{A}_{i}+\tilde{K}_0  \nonumber \\
   &\leq&C\left( \fint_{B_R(x)}|Du-(Du)_{B_R(x)}|^{q_0}+|Du-G|^{q_0}  \operatorname{d}\!\xi\right)^{\frac{1}{q_0}}+C\sum_{i=0}^{j}
\left(\big[\omega(R_i)\big]^{\sigma_1'}+R_i^{\sigma_2}\right)\left(\tilde{K}_i+|G|\right)\nonumber\\
&&+C\sum_{i=0}^{j}\frac{\left([\omega(R_i)\big]^{\sigma_1'}R_i^{a} + R_i^{\sigma_3}\right)}{R_i^{p'}}
\operatorname{Tail}(u-(u)_{\mathcal{B}_{i}};x,R_i)\nonumber\\
&&   +C\sum_{i=0}^{j} \left\{
\left[\frac{|\mu|(\mathcal{B}_{i})}{R_i^{n-1}}\right]^{\frac{1}{p-1}}+\chi_{\{p<2\}}
\left[\frac{|\mu|(\mathcal{B}_{i})}{R_i^{n-1}}\right]
\left(\fint_{\mathcal{B}_{i}} |Du|\operatorname{d}\! \xi\right)^{2-p}\right\}
\end{eqnarray}
whenever $j\in\mathbb{N}$. We now specify $G=\bf{0}$ and multiply both sides of the previous inequality by $R_j^{1-\alpha}$ to derive
\begin{eqnarray}\label{max-diniVMO-3}
% \nonumber to remove numbering (before each equation)
  R_{j+1}^{1-\alpha}\tilde{K}_{j+1}
   &\leq&CR^{1-\alpha}\left( \fint_{B_R(x)}|Du|^{q_0}\operatorname{d}\!\xi\right)^{\frac{1}{q_0}}+C\sum_{i=0}^{j}
\left(\big[\omega(R_i)\big]^{\sigma_1'}+R_i^{\sigma_2}\right)R_i^{1-\alpha}\tilde{K}_i\nonumber\\
&&+C\sum_{i=0}^{j}\left([\omega(R_i)\big]^{\sigma_1'}R_i^{a+1+\sigma-p'} + R_i^{\sigma_3+1+\sigma-p'}\right)R_i^{-\sigma-\alpha}
\operatorname{Tail}(u-(u)_{\mathcal{B}_{i}};x,R_i)\nonumber\\
&& + \frac{\varepsilon}{16}
\mathfrak{M}_{1-\alpha,q_0}(Du)(x,R) +C\sum_{i=0}^{j}
\left[\frac{|\mu|(\mathcal{B}_{i})}{R_i^{n-p+\alpha(p-1)}}\right]^{\frac{1}{p-1}},
\end{eqnarray}
by using the fact that $R_{j+1}\leq R_i$ for every $i=0,...,j$ and Young's inequality with the parameter $\varepsilon\in (0,1)$ to be determined later. We divide the estimate of the last term in \eqref{max-diniVMO-3} into the superquadratic and subquadratic cases. If $p\leq 2$, then $\frac{1}{p-1}\geq 1$ and a direct calculation shows that
\begin{equation}\label{max-diniVMO-potentialp<=2}
  \sum_{i=0}^{j}
\left[\frac{|\mu|(\mathcal{B}_{i})}{R_i^{n-p+\alpha(p-1)}}\right]^{\frac{1}{p-1}}\leq
\left[\sum_{i=0}^{\infty}\frac{|\mu|(\mathcal{B}_{i})}{R_i^{n-p+\alpha(p-1)}}\right]^{\frac{1}{p-1}}
\leq C\left[{\bf{I}}^{\mu}_{p-\alpha(p-1)}(x,2R)\right]^{\frac{1}{p-1}}
\end{equation}
is valid for any $j\in\mathbb{N}$.
While for $p> 2$, we can compute that
\begin{equation}\label{max-diniVMO-potentialp>2}
  \sum_{i=0}^{\infty}
\left[\frac{|\mu|(\mathcal{B}_{i})}{R_i^{n-p+\alpha(p-1)}}\right]^{\frac{1}{p-1}}
\leq C{\bf{W}}^{\mu}_{1-\frac{\alpha (p-1)}{p},p}(x,2R),
\end{equation}
where the positive constant $C\equiv C(n,p,s,\Lambda,m,a,\operatorname{diam}(\Omega))$.

We first pay attention to the subquadratic case $2-\frac{1}{n}<p\leq 2$,
and claim that the following uniform estimate
\begin{equation}\label{max-diniVMO-4}
% \nonumber to remove numbering (before each equation)
    R_{j+1}^{1-\alpha}\tilde{K}_{j+1}\leq CR^{1-\alpha} \left(\fint_{B_{R}(x)} |Du|^{q_0} \operatorname{d}\! \xi\right)^{\frac{1}{q_0}}
   +\frac{\varepsilon}{8}
\mathfrak{M}_{1-\alpha,q_0}(Du)(x,R)+ C\left[{\bf{I}}^{\mu}_{p-\alpha(p-1)}(x,2R)\right]^{\frac{1}{p-1}}=:M
\end{equation}
holds whenever $j\in\mathbb{N}$, where the positive constant $C\equiv C(n,p,s,\Lambda,m,a,\operatorname{diam}(\Omega))$ and $q_0=1$.
The proof proceeds by induction, starting with the case $j=0$, for which \eqref{max-diniVMO-4} is immediately established by using \eqref{max-diniVMO-2} with $G=0$. Assuming that $R_{i}^{1-\alpha}\tilde{K}_{i}\leq M$ is valid for every $i=1,...,j$, then we apply \eqref{max-diniVMO-3} and \eqref{max-diniVMO-potentialp<=2} to deduce that
\begin{eqnarray}\label{max-diniVMO-5}
% \nonumber to remove numbering (before each equation)
  R_{j+1}^{1-\alpha}\tilde{K}_{j+1}
   &\leq&CR^{1-\alpha}\left( \fint_{B_R(x)}|Du|^{q_0}\operatorname{d}\!\xi\right)^{\frac{1}{q_0}}+c_7M\sum_{i=0}^{j}
\left(\omega(R_i)+R_i^{\sigma_2}\right)\nonumber\\
&&+c_8\mathfrak{M}_{1-\alpha,q_0}(Du)(x,R)\sum_{i=0}^{j}\left(\omega(R_i)R_i^{a+1+\sigma-p'} + R_i^{\sigma_3+1+\sigma-p'}\right)\nonumber\\
&& + \frac{\varepsilon}{16}
\mathfrak{M}_{1-\alpha,q_0}(Du)(x,R) +C\left[{\bf{I}}^{\mu}_{p-\alpha(p-1)}(x,2R)\right]^{\frac{1}{p-1}}.
\end{eqnarray}
Since $\big[\omega(r)\big]^{\varsigma}$ is Dini-$\operatorname{VMO}$ regular,
and in conjunction with the conditions $\alpha\leq 1$, $\sigma_2,\,a+\sigma-\frac{1}{p-1}>0$ and $a<\sigma_3$, we further restrict the radius $\bar{R}_2\equiv\bar{R}_2(n,p,s,\Lambda,m,a,\sigma,\omega(\cdot),\operatorname{diam}(\Omega),\varepsilon)>0$ small enough such that
\begin{equation*}
  c_7\sum_{i=0}^{j}
\left(\omega(R_i)+R_i^{\sigma_2}\right)\leq C\int_0^{2R}\left(\omega(\rho)+\rho^{\sigma_2}
  \right)\frac{\operatorname{d}\! \rho}{\rho}\leq C\int_0^{2\bar{R}_2}\left(\omega(\rho)+\rho^{\sigma_2}
  \right)\frac{\operatorname{d}\! \rho}{\rho}\leq\frac{\varepsilon}{32}
\end{equation*}
and
\begin{eqnarray*}
 c_8\sum_{i=0}^{j}\left(\omega(R_i)R_i^{a+1+\sigma-p'} + R_i^{\sigma_3+1+\sigma-p'}\right)
 \leq C\int_0^{2\bar{R}_2}\left(\omega(\rho) +\rho^{\sigma_3+\sigma-\frac{1}{p-1}}\right)\frac{\operatorname{d}\! \rho}{\rho}\leq\frac{\varepsilon}{16}
\end{eqnarray*}
hold whenever $R\leq \min\{\bar{R}_1,\bar{R}_2,\frac{\tilde{R}}{2}\}$. Substituting the above estimates into \eqref{max-diniVMO-5}, we prove the validity of the assertion \eqref{max-diniVMO-4}.

Furthermore, combining \eqref{max-diniVMO-A_i} with \eqref{max-diniVMO-4}, and using an analogous argument to that in the proof of \eqref{max-diniVMO-4}, we obtain
\begin{eqnarray*}
% \nonumber to remove numbering (before each equation)
   R_{j+1}^{1-\alpha} \left(\fint_{\mathcal{B}_{j+1}} |Du|^{q_0} \operatorname{d}\! \xi\right)^{\frac{1}{q_0}}
   & \leq &R_{j+1}^{1-\alpha} \tilde{A}_{j+1} +R_{j+1}^{1-\alpha}\tilde{K}_{j+1}\nonumber\\
&\leq& \frac{1}{8}R_{j}^{1-\alpha} \tilde{A}_{j} +  M\nonumber\\
&\leq& \frac{1}{8}R_{j}^{1-\alpha} \left(\fint_{\mathcal{B}_{j}} |Du|^{q_0} \operatorname{d}\! \xi\right)^{\frac{1}{q_0}}+M.
\end{eqnarray*}
Thus, employing induction once again, we readily verify that
\begin{equation}\label{max-diniVMO-6}
  R_{j}^{1-\alpha} \left(\fint_{\mathcal{B}_{j}} |Du|^{q_0} \operatorname{d}\! \xi\right)^{\frac{1}{q_0}}\leq 2M
\end{equation}
is true for every $j\in \mathbb{N}$.
The next task is to validate the following uniform estimate for the nonlocal tail term
\begin{equation}\label{max-diniVMO-7}
  R_{j}^{-\sigma-\alpha}
\operatorname{Tail}(u-(u)_{\mathcal{B}_{j}};x,R_{j})\leq C  R^{-\sigma-\alpha}
\operatorname{Tail}(u-(u)_{B_{R}(x)};x,R)+M,
\end{equation}
whenever $j\in \mathbb{N}$. We proceed by utilizing \cite[Lemma 4.12]{CSYZ24} and \eqref{max-diniVMO-potentialp<=2} to derive
\begin{eqnarray*}
% \nonumber to remove numbering (before each equation)
   &&  R_{j+1}^{-\sigma-\alpha}
\operatorname{Tail}(u-(u)_{\mathcal{B}_{j+1}};x,R_{j+1}) \\
   &\leq& c_9\left[\left(\frac{1}{2L}\right)^{p'-\alpha-\sigma}+(2L)^{1-\alpha_m+\sigma}R_j^{p'(1-s)}\right] R_{j}^{-\sigma-\alpha}
\operatorname{Tail}(u-(u)_{\mathcal{B}_{j}};x,R_{j})\\
&&+c_{10} R_j^{(1-s)p'-\sigma} \left[ R_{j}^{1-\alpha} \left(\fint_{\mathcal{B}_{j}} |Du|^{q_0} \operatorname{d}\! \xi\right)^{\frac{1}{q_0}}+\left[{\bf{I}}^{\mu}_{p-\alpha(p-1)}(x,2R)\right]^{\frac{1}{p-1}}\right].
\end{eqnarray*}
Since $p'-1-\sigma>0$ and $(1-s)p'-\sigma>0$, we further
choose $L_2\equiv L_2(n,p,s,\Lambda,\sigma)$ sufficiently large such that
$c_9\left(\frac{1}{2L}\right)^{p'-\alpha-\sigma}\leq c_9\left(\frac{1}{2L}\right)^{p'-1-\sigma}\leq\frac{1}{16}$ for $L\geq \max\{L_1, L_2\}$, and ultimately specify a positive radius $\bar{R}_3\equiv\bar{R}_3(n,p,s,\Lambda,\sigma,\operatorname{diam}(\Omega))$ small enough so that
$$c_9 (2L)^{1-\alpha_m+\sigma}R_j^{p'(1-s)}+c_{10} R_j^{(1-s)p'-\sigma}\leq c_9 (2L)^{1-\alpha_m+\sigma}\bar{R}_3^{p'(1-s)}+c_{10}\bar{R}_3^{(1-s)p'-\sigma}\leq\frac{1}{16} $$
for $R\leq \min\{\bar{R}_1,\bar{R}_2,\bar{R}_3,\frac{\tilde{R}}{2}\}$.
Based on the above selections and utilizing \eqref{max-diniVMO-6}, we have
\begin{equation*}
  R_{j+1}^{-\sigma-\alpha}
\operatorname{Tail}(u-(u)_{\mathcal{B}_{j+1}};x,R_{j+1})\leq \frac{1}{8}R_{j}^{-\sigma-\alpha}
\operatorname{Tail}(u-(u)_{\mathcal{B}_{j}};x,R_{j})+\frac{M}{4}.
\end{equation*}
Then we conclude that the uniform estimate \eqref{max-diniVMO-7} holds by induction.

We observe that there exists an integer $j\in \mathbb{N}$ such that $R_{j+1}<r\leq R_j$ for every $r\in(0, R]$. Therefore, by applying Poincar\'{e}'s inequality and H\"{o}lder's inequality together with the uniform estimates \eqref{max-diniVMO-6} and \eqref{max-diniVMO-7}, we arrive at
\begin{eqnarray*}
% \nonumber to remove numbering (before each equation)
   && r^{1-\alpha}\left(\fint_{B_{r}(x)} |Du|^{q_0} \operatorname{d}\! \xi\right)^{\frac{1}{q_0}}+r^{-\sigma-\alpha} \operatorname{Tail}(u-(u)_{B_{r}(x)};x,r) \nonumber\\
   &\leq& c_{11} R_j^{1-\alpha}\left(\fint_{\mathcal{B}_j} |Du|^{q_0} \operatorname{d}\! \xi\right)^{\frac{1}{q_0}} + c_{11} R_{j}^{-\sigma-\alpha}
\operatorname{Tail}(u-(u)_{\mathcal{B}_{j}};x,R_{j}) \nonumber\\
&\leq&CR^{1-\alpha} \left(\fint_{B_{R}(x)} |Du|^{q_0} \operatorname{d}\! \xi\right)^{\frac{1}{q_0}}+CR^{-\sigma-\alpha}
\operatorname{Tail}(u-(u)_{B_{R}(x)};x,R) \nonumber\\
&& + C\left[{\bf{I}}^{\mu}_{p-\alpha(p-1)}(x,2R)\right]^{\frac{1}{p-1}}
+\frac{c_{11}\varepsilon}{2}
\mathfrak{M}_{1-\alpha,q_0}(Du)(x,R),
\end{eqnarray*}
where the positive constant $c_{11}$ depends only on $n,\,p,\,s,\,\Lambda,\,m,\,a,\,\sigma$ and $\operatorname{diam}(\Omega)$. We now choose $\varepsilon\equiv\varepsilon(n,p,s,\Lambda,m,a,\operatorname{diam}(\Omega)) \in (0,1)$ sufficiently small such that $\frac{c_{11}\varepsilon}{2}\leq \frac{1}{2}$, and apply the arbitrariness of $r\in(0, R]$ to obtain
\begin{eqnarray}\label{max-diniVMO-8}
 \mathfrak{M}_{1-\alpha,q_0}(Du)(x,R)&\leq&CR^{1-\alpha} \left(\fint_{B_{R}(x)} |Du|^{q_0} \operatorname{d}\! \xi\right)^{\frac{1}{q_0}}+CR^{-\sigma-\alpha}
\operatorname{Tail}(u-(u)_{B_{R}(x)};x,R) \nonumber\\
&& + C\left[{\bf{I}}^{\mu}_{p-\alpha(p-1)}(x,2R)\right]^{\frac{1}{p-1}}
+\frac{1}{2}
\mathfrak{M}_{1-\alpha,q_0}(Du)(x,R),
\end{eqnarray}
where the positive constant $C\equiv C(n,p,s,\Lambda,m,a,\sigma,\operatorname{diam}(\Omega))$.
The last term on the right side of \eqref{max-diniVMO-8} can be absorbed into the left side, and combined with the fact that
\begin{eqnarray*}
% \nonumber to remove numbering (before each equation)
    && \sup_{R<r\leq 2R}r^{1-\alpha}\left(\fint_{B_{r}(x)} |Du|^{q_0} \operatorname{d}\! \xi\right)^{\frac{1}{q_0}}+r^{-\sigma-\alpha} \operatorname{Tail}(u-(u)_{B_{r}(x)};x,r)\nonumber \\
   &\leq& CR^{1-\alpha}\left(\fint_{B_{2R}(x)} |Du|^{q_0} \operatorname{d}\! \xi\right)^{\frac{1}{q_0}}+CR^{-\sigma-\alpha} \operatorname{Tail}(u-(u)_{B_{2R}(x)};x,2R),
\end{eqnarray*}
we conclude that \eqref{Maxpoint-Dinivmosub} holds whenever $R\leq \min\{\bar{R}_1,\bar{R}_2,\bar{R}_3,\frac{\tilde{R}}{2}\}$.
Following the last step of the proof of Theorem \ref{MaxsmallBMO} exactly, we can remove this restriction on $R$. Consequently, the pointwise maximal function estimate \eqref{Maxpoint-Dinivmosub} holds uniformly for all $\alpha\in[0,1]$, with a positive constant $C$ depending only on $n,\,p,\,s,\,\Lambda,\,m,\,a,\,\sigma,\,\omega(\cdot)$ and $\operatorname{diam}(\Omega)$.

Finally, the superquadratic case $p > 2$ remains to be proven.
We adopt a similar approach to that used for $2-\frac{1}{n}<p\leq 2$, but now with
$q_0=p-1$, and apply \eqref{max-diniVMO-potentialp>2} instead of \eqref{max-diniVMO-potentialp<=2}, which leads to the validity of \eqref{Maxpoint-Dinivmosup} for a positive constant $C\equiv C(n,p,s,\Lambda,m,a,\sigma,\omega(\cdot),\operatorname{diam}(\Omega))$. Hence, the proof of Theorem \ref{MaxDinivmo} is complete.
\end{proof}

This section concludes with a proof of pointwise estimates for the nonlocal fractional sharp maximal function of the gradient under the weaker assumption \eqref{Dinisup} on the partial map $x\mapsto\mathcal{A}(x,\cdot)$ than the Dini-H\"{o}lder regular condition given in \eqref{Diniholder}.

\begin{proof}[Proof of Theorem \ref{MaxDiniholder}]
We maintain the notation introduced in the proof of Theorem \ref{MaxDinivmo}, and define
$$H_i:=R_{i}^{-\alpha}\left( \fint_{\mathcal{B}_i}|Du-(Du)_{\mathcal{B}_i}|^{q_0}  \operatorname{d}\!\xi\right)^{\frac{1}{q_0}}+R_{i}^{-\alpha-\sigma-1}
\operatorname{Tail}(u-(u)_{\mathcal{B}_{i}};x,R_{i}).$$
Let $2-\frac{1}{n}<p\leq2$, then $\sigma_1'=1$. It is easy to verify that the condition \eqref{Dinisup} implies that
$\omega(r)$ is Dini-$\operatorname{VMO}$ regular. In this subquadratic case, Theorem \ref{MaxDinivmo} and its proof process apply,
then multiplying both sides of \eqref{max-diniVMO-A_i} by $R_{i+1}^{-\alpha}$ yields that
\begin{eqnarray}\label{max-diniVMO-A_i2}
R_{i+1}^{-\alpha}\tilde{A}_{i+1}
&\leq& c_5\left(\frac{1}{2L}\right)^{\alpha_M-\alpha}R_{i}^{-\alpha}\tilde{A}_i+c_6(2L)^{\frac{n}{q_0}+\alpha}
\left(\frac{\omega(R_i)}{R_i^{\alpha}}+R_i^{\sigma_2-\alpha}\right)\left( \fint_{\mathcal{B}_i}|Du|^{q_0}  \operatorname{d}\!\xi\right)^{\frac{1}{q_0}}\nonumber\\
&&+CL^{\frac{n}{q_0}+\alpha}\left[\frac{\omega(R_i)}{R_i^\alpha}R_i^{a-\frac{1}{p-1}+\sigma} + R_i^{\sigma_3+\sigma-\frac{1}{p-1}-\alpha}\right]
R_i^{-1-\sigma}
\operatorname{Tail}(u-(u)_{\mathcal{B}_{i}};x,R_i)\nonumber\\
&&   +CL^{\frac{n}{q_0}+\alpha} \left[
\left[R_i^{\alpha(2-p)}\frac{|\mu|(\mathcal{B}_{i})}{R_i^{n-1+\alpha}}\right]^{\frac{1}{p-1}}+\chi_{\{p<2\}}
\left[\frac{|\mu|(\mathcal{B}_{i})}{R_i^{n-1+\alpha}}\right]
\left(\fint_{\mathcal{B}_{i}} |Du|\operatorname{d}\! \xi\right)^{2-p}\right].
\end{eqnarray}
We choose $L\equiv L(n,p,s,\Lambda,m,a,\tilde{\alpha},\operatorname{diam}(\Omega))$ sufficiently large such that
$ c_5\left(\frac{1}{2L}\right)^{\alpha_M-\tilde{\alpha}}\leq\frac{1}{8}$, and restrict the positive radius $R\leq1$. Then applying \eqref{Maxpoint-Dinivmosub} with $\alpha=1$, together with Young's inequality and under the conditions imposed by Theorem \ref{MaxDiniholder}, we deduce that
\begin{eqnarray}\label{max-diniholder-1}
% \nonumber to remove numbering (before each equation)
R_{i+1}^{-\alpha}\tilde{A}_{i+1}   &\leq& \frac{1}{8}R_{i}^{-\alpha}\tilde{A}_{i} +CR^{-\alpha} \left[\left(\fint_{B_{R}(x)} |Du|^{q_0} \operatorname{d}\! \xi\right)^{\frac{1}{q_0}}+R^{-1-\sigma}\operatorname{Tail}(u-(u)_{B_R(x)};x,R)\right]
\nonumber\\
&&+ C\left[{\bf{I}}^{\mu}_{1}(x,R)\right]^{\frac{1}{p-1}}+C\left[ {\bf M}_{1-\alpha}(\mu)(x, R)\right]^{\frac{1}{p-1}},
\end{eqnarray}
where the positive constant $C$ depending only on $n$, $p$, $s$, $\Lambda$, $m$, $a$, $\sigma$, $\tilde{\alpha}$, $\omega(\cdot)$, $\operatorname{diam}(\Omega)$ and $C_\omega$.
With respect to the estimate of the tail term, we employ \cite[Lemma 4.12]{CSYZ24} to derive
\begin{eqnarray*}
% \nonumber to remove numbering (before each equation)
   &&  R_{i+1}^{-\alpha-\sigma-1}
\operatorname{Tail}(u-(u)_{\mathcal{B}_{i+1}};x,R_{i+1}) \\
   &\leq& c_9\left[\left(\frac{1}{2L}\right)^{\frac{1}{p-1}-\alpha-\sigma}+(2L)^{1-\alpha_m+\sigma+\alpha}R_i^{p'(1-s)}\right] R_{i}^{-\alpha-\sigma-1}
\operatorname{Tail}(u-(u)_{\mathcal{B}_{i}};x,R_{i})\\
&&+C R_i^{(1-s)p'-\sigma-\alpha} \left(\fint_{\mathcal{B}_{i}} |Du|^{q_0} \operatorname{d}\! \xi\right)^{\frac{1}{q_0}}+C R_i^{(1-s)p'-\sigma} \left[\frac{|\mu|(\mathcal{B}_{i})}{R_i^{n-1+\alpha}}\right]^{\frac{1}{p-1}}
\end{eqnarray*}
Due to the fact that $\alpha\leq\tilde{\alpha} <\frac{1}{p-1}-\sigma$, we further
choose $L\equiv L(n,p,s,\Lambda,\sigma,\tilde{\alpha})$ sufficiently large such that
$c_9\left(\frac{1}{2L}\right)^{\frac{1}{p-1}-\tilde{\alpha}-\sigma}\leq\frac{1}{16}$, and select a positive radius $\bar{R}\equiv\bar{R}(n,p,s,\Lambda,\sigma,\tilde{\alpha})\leq 1$ small enough to ensure that
$$c_9(2L)^{1-\alpha_m+\sigma+\alpha}R_i^{p'(1-s)}\leq c_9 (2L)^{1-\alpha_m+\sigma+\tilde{\alpha}}\bar{R}^{p'(1-s)}\leq\frac{1}{16}\,\,\,\mbox{for}\,\, R_i\leq\bar R.$$
Subsequently, using \eqref{Maxpoint-Dinivmosub} with $\alpha=1$ once again, we  calculate the following directly
\begin{eqnarray}\label{max-diniholder-2}
% \nonumber to remove numbering (before each equation)
&&R_{i+1}^{-\alpha-\sigma-1}
\operatorname{Tail}(u-(u)_{\mathcal{B}_{i+1}};x,R_{i+1})\nonumber\\
&\leq& CR^{-\alpha} \left[\left(\fint_{B_{R}(x)} |Du|^{q_0} \operatorname{d}\! \xi\right)^{\frac{1}{q_0}}+R^{-1-\sigma}\operatorname{Tail}(u-(u)_{B_R(x)};x,R)\right]
\nonumber\\
&&+\frac{1}{8}R_{i}^{-\alpha-\sigma-1}
\operatorname{Tail}(u-(u)_{\mathcal{B}_{i}};x,R_{i})+ C\left[{\bf{I}}^{\mu}_{1}(x,R)\right]^{\frac{1}{p-1}}+C\left[ {\bf M}_{1-\alpha}(\mu)(x, R)\right]^{\frac{1}{p-1}},
\end{eqnarray}
where the positive constant $C$ depending only on $n$, $p$, $s$, $\Lambda$, $m$, $a$, $\sigma$, $\tilde{\alpha}$, $\omega(\cdot)$ and $\operatorname{diam}(\Omega)$.
Thus, a combination of \eqref{max-diniholder-1} and \eqref{max-diniholder-2} yields that
\begin{eqnarray*}
  H_{i+1}&\leq& \frac{1}{8}H_i+CR^{-\alpha} \left[\left(\fint_{B_{R}(x)} |Du|^{q_0} \operatorname{d}\! \xi\right)^{\frac{1}{q_0}}+R^{-1-\sigma}\operatorname{Tail}(u-(u)_{B_R(x)};x,R)\right]\\
  &&+ C\left[{\bf{I}}^{\mu}_{1}(x,R)\right]^{\frac{1}{p-1}}+C\left[ {\bf M}_{1-\alpha}(\mu)(x, R)\right]^{\frac{1}{p-1}}.
\end{eqnarray*}
We proceed by induction to obtain
\begin{eqnarray*}
  H_{i}&\leq& CR^{-\alpha} \left[\left(\fint_{B_{R}(x)} |Du|^{q_0} \operatorname{d}\! \xi\right)^{\frac{1}{q_0}}+R^{-1-\sigma}\operatorname{Tail}(u-(u)_{B_R(x)};x,R)\right]\\
  &&+ C\left[{\bf{I}}^{\mu}_{1}(x,R)\right]^{\frac{1}{p-1}}+C\left[ {\bf M}_{1-\alpha}(\mu)(x, R)\right]^{\frac{1}{p-1}}
\end{eqnarray*}
holds whenever $i\in \mathbb{N}$.
Finally, repeating the last part of the proof of Theorem \ref{MaxDinivmo}, we conclude that \eqref{MaxDiniholdersub} is valid.

For the superquadratic case $p>2$, we need to apply \eqref{Maxpoint-Dinivmosup} instead of \eqref{Maxpoint-Dinivmosub} and observe that
$$\left[\frac{|\mu|(\mathcal{B}_{i})}{R_i^{n-1+\alpha(p-1)}}\right]^{\frac{1}{p-1}}\leq C(n,p)\left[ {\bf M}_{1-\alpha(p-1)}(\mu)(x, R)\right]^{\frac{1}{p-1}}.$$
Thus, we can demonstrate that the estimate \eqref{MaxDiniholdersup} is correct by treating this case in a virtually similar manner as above.
This completes the proof of Theorem \ref{MaxDiniholder}.
\end{proof}

\section{Universal potential estimates}\label{section5}

We are now in a position to prove our main results regarding the universal potential estimates for the solution and its gradient of the mixed local and nonlocal nonlinear elliptic equation \eqref{eq1}.

\subsection{Oscillation estimates for solutions}

In this subsection, we present the proofs for Theorem \ref{OscPotenSolMea}, \ref{OscPotenSolBMO} and \ref{OscPotenSolDiniVMO}, which deal with oscillation estimates for solutions and establish a connection between the classical De Giorgi–Nash–Moser theory and nonlinear potential estimates.

\begin{proof}[Proof of Theorem \ref{OscPotenSolMea}]
Let $x,\,y\in B_{\frac{R}{8}}(x_0)$ and $r\leq \frac{R}{2}$. We define two sequences of shrinking balls, denoted by $\mathcal{B}_i:=B_{r_i}(x)$ and $\tilde{\mathcal{B}}_i:=B_{\frac{r_i}{2}}(x)$,
and set
\begin{equation*}
  E_i:=\left( \fint_{\mathcal{B}_i}|u-(u)_{\mathcal{B}_i}|^{q_0}  \operatorname{d}\!\xi\right)^{\frac{1}{q_0}}+\operatorname{Tail}(u-(u)_{\mathcal{B}_i};x,r_i)\,\, \mbox{and}\,\, \mathcal{K}_i:=|(u)_{\mathcal{B}_i}|,
\end{equation*}
where the radius $r_i:=\frac{r}{(4L)^{i}}$ and the constant $L\geq 1$ will be determined later.
According to the selection of the ball sequences,
it is obvious that $\mathcal{B}_{i+1}\subset\tilde{\mathcal{B}}_i\subset\mathcal{B}_i$ for all $i\in \mathbb{N}$.
Applying Poincar\'{e}'s inequality and combining Lemma \ref{Du-morrey} with \cite[Lemma 4.12]{CSYZ24}, Young's inequality and the Caccioppoli type inequality \eqref{CCPu} for $u$, we obtain
\begin{eqnarray}\label{OscSol-1-1}
% \nonumber to remove numbering (before each equation)
  E_{i+1} &\leq& C\left( \frac{r_{i+1}}{r_i} \right)^{\alpha_m}r_i \left[\left(\fint_{\tilde{\mathcal{B}}_i} |Du|^{q_0} \operatorname{d}\! x\right)^{\frac{1}{q_0}}+\frac{1}{r_i}\operatorname{Tail}(u-(u)_{\tilde{\mathcal{B}}_i};x,\frac{r_i}{2})\right]\nonumber\\
  &&+
  C\left( \frac{r_{i}}{r_{i+1}} \right)^{\frac{n}{p-1}}r_i^{(1-s)p'+1} \left(\fint_{\tilde{\mathcal{B}}_i} |Du|^{q_0} \operatorname{d}\! x\right)^{\frac{1}{q_0}}\nonumber\\
&&   +C\left(\frac{r_i}{r_{i+1}}\right)^{\max\{\frac{n}{p-1},1-\alpha_m\}} \left\{
\left[\frac{|\mu|(\mathcal{B}_i)}{r_i^{n-p}}\right]^{\frac{1}{p-1}}+\chi_{\{p<2\}}
\left[\frac{|\mu|(\mathcal{B}_i)}{r_i^{n-p}}\right]
\left(r_i\fint_{\tilde{\mathcal{B}}_i} |Du|\operatorname{d}\! x\right)^{2-p}\right\}\nonumber\\
&\leq& C_0\left[\left( \frac{1}{4L} \right)^{\alpha_m}+\epsilon+\left( 4L \right)^{\frac{n}{p-1}}r^{(1-s)p'} \right] E_i
  +C_{L,\epsilon}
\left[\frac{|\mu|(\mathcal{B}_i)}{r_i^{n-p}}\right]^{\frac{1}{p-1}}.
\end{eqnarray}
We now choose $L\equiv L(n,p,s,\Lambda,\operatorname{diam}(\Omega))$ large enough
and $\epsilon\equiv \epsilon(n,p,s,\Lambda,\operatorname{diam}(\Omega))$ sufficiently small such that $C_0\left[\left( \frac{1}{4L} \right)^{\alpha_m}+\epsilon\right] \leq\frac{1}{4}$. And then selecting a radius $R_0\equiv R_0(n,p,s,\Lambda,\operatorname{diam}(\Omega))$
small enough to guarantee that
$ C_0\left( 4L \right)^{\frac{n}{p-1}}r^{(1-s)p'}\leq C_0\left( 4L \right)^{\frac{n}{p-1}}(\frac{R}{2})^{(1-s)p'} \leq \frac{1}{4}$
for $R\leq R_0$.
Next, we proceed exactly as in \eqref{max-diniVMO-1}-\eqref{max-diniVMO-2-1} to derive
\begin{eqnarray}\label{OscSol-1-2}
% \nonumber to remove numbering (before each equation)
  \mathcal{K}_{j+1} &\leq& C\left( \fint_{B_r(x)}|u|^{q_0}  \operatorname{d}\!\xi\right)^{\frac{1}{q_0}}+C\operatorname{Tail}(u-(u)_{B_r(x)};x,r) +C r^{\alpha} \sum_{i=0}^{j-1} \left[\frac{|\mu|(\mathcal{B}_i)}{r_i^{n-p+\alpha(p-1)}}\right]^{\frac{1}{p-1}} \nonumber \\
   &\leq& C\left( \fint_{B_r(x)}|u|^{q_0}  \operatorname{d}\!\xi\right)^{\frac{1}{q_0}}+C\operatorname{Tail}(u-(u)_{B_r(x)};x,r) +Cr^\alpha {\bf{W}}^{\mu}_{1-\frac{\alpha(p-1)}{p},p}(x,r)
\end{eqnarray}
for all $j\in \mathbb{N}$, where the positive constant $C\equiv C(n,p,s,\Lambda,\operatorname{diam}(\Omega))$.
Let $j\rightarrow +\infty$ and note that if $u$ solves \eqref{eq1}, then $u-g$ remains a solution to the same equation for any constant $g$, thus we have
\begin{equation}\label{OscSol-1}
  |u(x)-g|\leq C \left( \fint_{B_r(x)}|u-g|^{q_0}  \operatorname{d}\!\xi\right)^{\frac{1}{q_0}}+C\operatorname{Tail}(u-(u)_{B_r(x)};x,r) +Cr^\alpha {\bf{W}}^{\mu}_{1-\frac{\alpha(p-1)}{p},p}(x,r)
\end{equation}
for almost everywhere $x\in B_{\frac{R}{8}}(x_0)$.
Similarly, replacing point $x$ with point $y$ leads to
\begin{equation}\label{OscSol-2}
  |u(y)-g|\leq C \left( \fint_{B_r(y)}|u-g|^{q_0}  \operatorname{d}\!\xi\right)^{\frac{1}{q_0}}+C\operatorname{Tail}(u-(u)_{B_r(y)};y,r) +Cr^\alpha {\bf{W}}^{\mu}_{1-\frac{\alpha(p-1)}{p},p}(y,r)
\end{equation}
for almost everywhere $y\in B_{\frac{R}{8}}(x_0)$.

Let $r:=\frac{|x-y|}{2}$, it is straightforward to verify that $B_r(y)\subset B_{3r}(x)\subset B_{\frac{3R}{8}}(x)\subset B_{\frac{3R}{4}}(x)\subset B_{R}(x_0)$ and $B_{\frac{3R}{4}}(y)\subset B_{R}(x_0)$.
Fixing $g:=(u)_{B_{3r}(x)}$, incorporating \eqref{OscSol-1} and \eqref{OscSol-2}, and utilizing Theorem \ref{Maxmeasure} along with the Caccioppoli type inequality \eqref{CCPu}, we deduce that
\begin{eqnarray}\label{OscSol-3}
% \nonumber to remove numbering (before each equation)
&& |u(x)-u(y)|\nonumber\\
 &\leq & Cr^\alpha\mathfrak{M}^{\sharp}_{\alpha,q_0}(u)(x,\frac{3R}{8})+
 +Cr^\alpha\left[{\bf{W}}^{\mu}_{1-\frac{\alpha(p-1)}{p},p}(x,R)
 +{\bf{W}}^{\mu}_{1-\frac{\alpha(p-1)}{p},p}(y,R)\right]\nonumber\\
&\leq& Cr^\alpha\left[ {\bf M}_{p-\alpha(p-1)}(\mu)(x,\frac{3R}{8})\right]^{\frac{1}{p-1}}+Cr^\alpha\left[{\bf{W}}^{\mu}_{1-\frac{\alpha(p-1)}{p},p}(x,R)
 +{\bf{W}}^{\mu}_{1-\frac{\alpha(p-1)}{p},p}(y,R)\right]\nonumber\\
&&+C\left(\frac{r}{R}\right)^{\alpha}\left[R \left(\fint_{B_{\frac{3R}{8}}(x)} |Du|^{q_0} \operatorname{d}\! \xi\right)^{\frac{1}{q_0}}+\operatorname{Tail}(u-(u)_{B_\frac{3R}{8}(x)};x,R)\right]\nonumber\\
&\leq& Cr^\alpha\left[ {\bf M}_{p-\alpha(p-1)}(\mu)(x,\frac{3R}{4})\right]^{\frac{1}{p-1}}+Cr^\alpha\left[{\bf{W}}^{\mu}_{1-\frac{\alpha(p-1)}{p},p}(x,R)
 +{\bf{W}}^{\mu}_{1-\frac{\alpha(p-1)}{p},p}(y,R)\right]\nonumber\\
&&+C\left(\frac{r}{R}\right)^{\alpha}\left[ \left(\fint_{B_{R}(x_0)} |u|^{q_0} \operatorname{d}\! x\right)^{\frac{1}{q_0}}+\operatorname{Tail}(u-(u)_{B_R(x_0)};x_0,R)\right]
\end{eqnarray}
is valid for almost everywhere $x,\,y\in B_{\frac{R}{8}}(x_0)$, where the positive constant $C\equiv C(n,p,s,\Lambda, \tilde{\alpha}, \operatorname{diam}(\Omega))$.
As established in Lemma \ref{maxf-potential}, we have
\begin{equation}\label{OscSol-4}
  \left[ {\bf M}_{p-\alpha(p-1)}(\mu)(x,\frac{3R}{4})\right]^{\frac{1}{p-1}}\leq C(n,p,\tilde{\alpha}){\bf{W}}^{\mu}_{1-\frac{\alpha(p-1)}{p},p}(x,R).
\end{equation}
Finally, we insert \eqref{OscSol-4} into \eqref{OscSol-3}, and therefore
complete the proof of Theorem \ref{OscPotenSolMea}.
\end{proof}

With Theorem \ref{OscPotenSolMea} at our disposal, we turn to proving Corollary \ref{PotenmaxMea}, which concerns a pointwise estimate for the truncated Hardy–Littlewood maximal function of solutions.

\begin{proof}[Proof of Corollary \ref{PotenmaxMea}]
A direct calculation combined with H\"{o}lder's inequality shows that
\begin{eqnarray*}
% \nonumber to remove numbering (before each equation)
  {\bf M}(u)(x,R) \leq {\bf M}(u)(x,\frac{R}{2})+C(n)\left(\fint_{B_{R}(x)} |u|^{q_0} \operatorname{d}\! \xi\right)^{\frac{1}{q_0}},
\end{eqnarray*}
then it suffices to prove that
\begin{equation}\label{PotenmaxMea-est}
  {\bf M}(u)(x,\frac{R}{2})
   \leq C \left[\left(\fint_{B_{R}(x)} |u|^{q_0} \operatorname{d}\! \xi\right)^{\frac{1}{q_0}}+\operatorname{Tail}(u-(u)_{B_R(x)};x,R)\right]
   +C{\bf{W}}^{\mu}_{1,p}(x,R).
\end{equation}
For any $0<\rho\leq\frac{R}{2}$, with the notation used in the proof of Theorem \ref{OscPotenSolMea}, there exists an $i\in\mathbb{N}$ such that $r_{i+1}<\rho\leq r_i$, and then
\begin{equation*}
  \fint_{B_{\rho}(x)} |u| \operatorname{d}\! \xi\leq (4L)^n\left(E_i+\mathcal{K}_i\right).
\end{equation*}
Hence, applying \eqref{OscSol-1-1} and \eqref{OscSol-1-2}, and taking into account the arbitrariness of $\rho$, we conclude that \eqref{PotenmaxMea-est} is valid,  thereby completing the proof of Corollary \ref{PotenmaxMea}.
\end{proof}

In order to increase the size of the exponent $\tilde{\alpha}$, we further assume that the coefficient is Dini-$\operatorname{VMO}$ regular, thereby providing a proof of Theorem \ref{OscPotenSolBMO}.

\begin{proof}[Proof of Theorem \ref{OscPotenSolBMO}]
Without loss of generality, we may assume that $\alpha\in [\frac{\alpha_m}{2},\tilde{\alpha}]$. In fact, when $\alpha\in[0,\frac{\alpha_m}{2}]$ is restricted, \eqref{OscPotenSolBMO-est} is a slight modification of Theorem \ref{OscPotenSolMea}. At this point, we replace the definition of $E_i$ by
$$E_i:=\left( \fint_{\mathcal{B}_i}|u-(u)_{\mathcal{B}_i}|^{q_0}  \operatorname{d}\!\xi\right)^{\frac{1}{q_0}}.$$
Then similar to the argument in \eqref{OscSol-1-1}, without using \cite[Lemma 4.12]{CSYZ24}, we obtain
\begin{eqnarray*}
% \nonumber to remove numbering (before each equation)
  E_{i+1}
&\leq& C_0\left[\left( \frac{1}{4L} \right)^{\alpha_m}+\epsilon \right] E_i
  +C_{L,\epsilon}
\left[\frac{|\mu|(\mathcal{B}_i)}{r_i^{n-p}}\right]^{\frac{1}{p-1}}+C_L\operatorname{Tail}(u-(u)_{\mathcal{B}_i};x,r_i).
\end{eqnarray*}
Selecting $L\equiv L(n,p,s,\Lambda,\operatorname{diam}(\Omega))$ large enough
and $\epsilon\equiv\epsilon(n,p,s,\Lambda,\operatorname{diam}(\Omega))$ small enough such that $C_0\left[\left( \frac{1}{4L} \right)^{\alpha_m}+\epsilon\right] \leq\frac{1}{2}$. Here, we do not need to impose any constraints on the radius $R$, and proceed exactly as in \eqref{max-diniVMO-1}-\eqref{max-diniVMO-2-1}, together with Theorem \ref{MaxsmallBMO} and \eqref{CCPu} to derive
\begin{eqnarray*}
% \nonumber to remove numbering (before each equation)
  \mathcal{K}_{j+1} &\leq& C\left( \fint_{B_r(x)}|u|^{q_0}  \operatorname{d}\!\xi\right)^{\frac{1}{q_0}}+Cr^\alpha \sum_{i=0}^{j-1} r_i^\sigma r_i^{-\alpha-\sigma}\operatorname{Tail}(u-(u)_{B_r(x)};x,r) +C r^{\alpha} \sum_{i=0}^{j-1} \left[\frac{|\mu|(\mathcal{B}_i)}{r_i^{n-p+\alpha(p-1)}}\right]^{\frac{1}{p-1}} \nonumber \\
   &\leq& C\left( \fint_{B_r(x)}|u|^{q_0}  \operatorname{d}\!\xi\right)^{\frac{1}{q_0}}+Cr^{-\sigma}\operatorname{Tail}(u-(u)_{B_r(x)};x,r) +Cr^\alpha\left[ {\bf M}_{p-\alpha(p-1)}(\mu)(x,R)\right]^{\frac{1}{p-1}}\\
   &&+Cr^\alpha {\bf{W}}^{\mu}_{1-\frac{\alpha(p-1)}{p},p}(x,r)
\end{eqnarray*}
for all $j\in \mathbb{N}$. Subsequently,  in analogy to the proof of Theorem \ref{OscPotenSolMea}, it can be shown that \eqref{OscPotenSolBMO-est} holds uniformly in $\alpha\in[0,\frac{\alpha_m}{2}]$.

Alternatively, if $\alpha\in [\frac{\alpha_m}{2},\tilde{\alpha}]$, then a combination of \cite[Theorem 2.5]{DS84} and H\"{o}lder's inequality with Theorem \ref{MaxsmallBMO} yields that
\begin{eqnarray*}
% \nonumber to remove numbering (before each equation)
  |u(x)-u(y)|
   &\leq& \frac{C}{\alpha_m} \left[\mathfrak{M}^{\sharp}_{\alpha,q_0}(u)(x,\frac{3R}{8})+\mathfrak{M}^{\sharp}_{\alpha,q_0}(u)(y,\frac{3R}{8})\right] |x-y|^\alpha\\
  &\leq& \frac{C}{\alpha_m}\left[ {\bf M}_{p-\alpha(p-1)}(\mu)(x,\frac{3R}{8})+{\bf M}_{p-\alpha(p-1)}(\mu)(y,\frac{3R}{8})\right]^{\frac{1}{p-1}}|x-y|^\alpha\nonumber\\
   &&+\frac{C}{\alpha_m}\left(\frac{|x-y|}{R}\right)^\alpha \left[R\left(\fint_{B_{\frac{3R}{8}}(x)} |Du|^{q_0} \operatorname{d}\! \xi\right)^{\frac{1}{q_0}}+R\left(\fint_{B_{\frac{3R}{8}}(y)} |Du|^{q_0} \operatorname{d}\! \xi\right)^{\frac{1}{q_0}}\right.\\
&&\left. +R^{-\sigma}\operatorname{Tail}(u-(u)_{B_\frac{3R}{8}(x)};x,\frac{3R}{8})  +R^{-\sigma}\operatorname{Tail}(u-(u)_{B_\frac{3R}{8}(y)};y,\frac{3R}{8})\right]
\end{eqnarray*}
for almost everywhere $x,\,y \in B_{\frac{R}{8}}(x_0)$.
Finally, we conclude that Theorem \ref{OscPotenSolBMO} holds by employing the Caccioppoli-type inequalities \eqref{CCPu} and \eqref{OscSol-4} in an analysis analogous to that presented in \eqref{OscSol-3}.
\end{proof}

Ultimately, we impose the Dini-$\operatorname{VMO}$ regularity condition on the partial map $x\mapsto\mathcal{A}(x,\cdot)$ to prove the oscillation estimates for solutions where the exponent $\tilde{\alpha}$ can equal $1$.

\begin{proof}[Proof of Theorem \ref{OscPotenSolDiniVMO}]
The proof proceeds similarly to that of Theorem \ref{OscPotenSolBMO}, but here we apply Theorem \ref{MaxDinivmo} instead of Theorem \ref{MaxsmallBMO}.
In particular, for the case of $2-\frac{1}{n}<p<2$, we further utilize the following fact that
\begin{eqnarray*}
\left[ {\bf M}_{p-\alpha(p-1)}(\mu)(x,\frac{3R}{8})\right]^{\frac{1}{p-1}}\leq C(n,p){\bf{W}}^{\mu}_{1-\frac{\alpha(p-1)}{p},p}(x,\frac{R}{2})\leq C(n,p) \left[{\bf{I}}^{\mu}_{p-\alpha(p-1)}(x,R)\right]^{\frac{1}{p-1}}.
\end{eqnarray*}
Therefore, we can complete the proof of Theorem \ref{OscPotenSolDiniVMO}.
\end{proof}

\subsection{Oscillation estimates for gradient}

In the final subsection, we focus on proving the oscillation estimate for the gradient, as stated in Theorem \ref{OscPotenGrad}.

\begin{proof}[Proof of Theorem \ref{OscPotenGrad}]
We define a sequence of shrinking balls, denoted by $\mathcal{B}_i:=B_{r_i}(x)$, with radius $r_i:=\frac{r}{(2L)^{i}}$ for $i\in \mathbb{N}$ and $r\leq \frac{R}{2}$, such that $B_r(x)\subset B_R(x_0)$ for all $x\in B_{\frac{R}{4}}(x_0)$.
In the case of $2-\frac{1}{n}<p\leq2$, we apply \eqref{Maxpoint-Dinivmosub} with $\alpha=1$, as established in Theorem \ref{MaxDinivmo}, to derive
\begin{eqnarray}\label{Oscgrad-1}
% \nonumber to remove numbering (before each equation)
  &&\left(\fint_{\mathcal{B}_i}|Du|^{q_0}  \operatorname{d}\!\xi\right)^{\frac{1}{q_0}}
+r_i^{-1-\sigma}
\operatorname{Tail}(u-(u)_{\mathcal{B}_{i}};x,r_i)\nonumber\\
&\leq&C\left[\left(\fint_{B_{r}(x)} |Du|^{q_0} \operatorname{d}\! \xi\right)^{\frac{1}{q_0}}+r^{-1-\sigma}\operatorname{Tail}(u-(u)_{B_r(x)};x,r)
+ \left[{\bf{I}}^{\mu}_{1}(x,r)\right]^{\frac{1}{p-1}}\right]:=M(x,r).
\end{eqnarray}
With the notation used in the proof of Theorem \ref{MaxDinivmo}, it follows from \eqref{max-diniVMO-2-1} and \eqref{Oscgrad-1} that
\begin{eqnarray}\label{Oscgrad-1-1}
% \nonumber to remove numbering (before each equation)
  \tilde{K}_{j+1}
   &\leq&C\left( \fint_{B_r(x)}|Du-(Du)_{B_r(x)}|^{q_0}+|Du-G|^{q_0}  \operatorname{d}\!\xi\right)^{\frac{1}{q_0}}\nonumber\\
   &&+Cr^\alpha\sum_{i=0}^{j}
\left(\frac{\omega(r_i)}{r_i^\alpha}+r_i^{\sigma_2-\alpha}\right)\left(M(x,r)+|G|\right)
+Cr^\alpha\sum_{i=0}^{j}\left(\frac{\omega(r_i)}{r_i^{\alpha}}
+r_i^{\sigma_3+\sigma-\frac{1}{p-1}-\alpha}\right)M(x,r)\nonumber\\
&&   +Cr^\alpha\sum_{i=0}^{j} \left\{
\left[\frac{|\mu|(\mathcal{B}_{i})}{r_i^{n-1+\alpha(p-1)}}\right]^{\frac{1}{p-1}}+\chi_{\{p<2\}}
\left[\frac{|\mu|(\mathcal{B}_{i})}{r_i^{n-1+\alpha}}\right]
\left[M(x,r)\right]^{2-p}\right\}
\end{eqnarray}
for any $j\in\mathbb{N}$. According to the conditions assumed in Theorem \ref{OscPotenGrad}, a direct calculation shows that
\begin{equation*}
  \sum_{i=0}^{j}
\left(\frac{\omega(r_i)}{r_i^\alpha}+r_i^{\sigma_2-\alpha}+r_i^{\sigma_3+\sigma-\frac{1}{p-1}-\alpha}\right)
\leq C\int_0^{2r}\left(\frac{\omega(\rho)}{\rho^{\tilde{\alpha}}}
+\rho^{\sigma_2-\tilde{\alpha}}+\rho^{\sigma_3+\sigma-\frac{1}{p-1}-\tilde{\alpha}}
  \right)\frac{\operatorname{d}\! \rho}{\rho}\leq C
\end{equation*}
and
\begin{equation*}
  \sum_{i=0}^{j}
\left[\frac{|\mu|(\mathcal{B}_{i})}{r_i^{n-1+\alpha(p-1)}}\right]^{\frac{1}{p-1}}\leq C\sum_{i=0}^{j}\left[
\frac{|\mu|(\mathcal{B}_{i})}{r_i^{n-1+\alpha}}\right]^{\frac{1}{p-1}} \leq C\left[\sum_{i=0}^{j}
\frac{|\mu|(\mathcal{B}_{i})}{r_i^{n-1+\alpha}}\right]^{\frac{1}{p-1}} \leq C\left[{\bf{I}}^{\mu}_{1-\alpha}(x,2r)\right]^{\frac{1}{p-1}}
\end{equation*}
hold for a positive constant $C$ depending only on $n$, $p$, $s$, $\Lambda$, $m$, $a$, $\sigma$, $\tilde{\alpha}$ and $\operatorname{diam}(\Omega)$.
Substituting the last two estimates into \eqref{Oscgrad-1-1} and
letting $j\rightarrow +\infty$, we conclude from Young's inequality and the Lebesgue differentiation theorem that
\begin{eqnarray}\label{Oscgrad-2}
% \nonumber to remove numbering (before each equation)
 |Du(x)-G|&=& \lim_{j\rightarrow +\infty} \tilde{K}_{j+1}\nonumber\\
 &\leq & C\left( \fint_{B_r(x)}|Du-(Du)_{B_r(x)}|^{q_0}+|Du-G|^{q_0}  \operatorname{d}\!\xi\right)^{\frac{1}{q_0}}\nonumber\\
 &&+Cr^\alpha\left[{\bf{I}}^{\mu}_{1-\alpha}(x,R)\right]^{\frac{1}{p-1}}+Cr^\alpha
  \left(M(x,r)+|G|\right)
\end{eqnarray}
for almost everywhere $x\in B_{\frac{R}{4}}(x_0)$, where the positive constant $C\equiv C(n,p,s,\Lambda, m, a, \sigma, \tilde{\alpha},\omega(\cdot),\operatorname{diam}(\Omega))$.
Repeating the same steps as above for the point $y\in B_{\frac{R}{4}}(x_0)$ to obtain
\begin{eqnarray}\label{Oscgrad-3}
% \nonumber to remove numbering (before each equation)
 |Du(y)-G|
 &\leq & C\left( \fint_{B_r(y)}|Du-(Du)_{B_r(y)}|^{q_0}+|Du-G|^{q_0}  \operatorname{d}\!\xi\right)^{\frac{1}{q_0}}\nonumber\\
 &&+Cr^\alpha\left[{\bf{I}}^{\mu}_{1-\alpha}(y,R)\right]^{\frac{1}{p-1}}+Cr^\alpha
  \left(M(y,r)+|G|\right).
\end{eqnarray}
Let $r:=\frac{|x-y|}{2}$, it is straightforward to verify that $B_r(y)\subset B_{3r}(x)\subset B_{\frac{3R}{4}}(x)\subset B_{R}(x_0)$ and $B_{\frac{3R}{4}}(y)\subset B_{R}(x_0)$.
We now fix $G:=(Du)_{B_{3r}(x)}$, and apply the condition $\sigma<p'(1-s)$ and the estimate \eqref{Maxpoint-Dinivmosub} with $\alpha=1$ again to deduce that
\begin{eqnarray}\label{Oscgrad-4}
% \nonumber to remove numbering (before each equation)
  &&M(x,r)+M(y,r)+|(Du)_{B_{3r}(x)}|\nonumber \\
   &\leq& C \left(\fint_{B_{\frac{3R}{4}}(x)} |Du|^{q_0} \operatorname{d}\! \xi\right)^{\frac{1}{q_0}}+CR^{-1-\sigma}\operatorname{Tail}(u-(u)_{B_\frac{3R}{4}(x)};x,\frac{3R}{4})
+C \left[{\bf{I}}^{\mu}_{1}(x,\frac{3R}{4})\right]^{\frac{1}{p-1}}\nonumber\\
&& +C \left(\fint_{B_{\frac{3R}{4}}(y)} |Du|^{q_0} \operatorname{d}\! \xi\right)^{\frac{1}{q_0}}+CR^{-1-\sigma}\operatorname{Tail}(u-(u)_{B_\frac{3R}{4}(y)};y,\frac{3R}{4})
+C \left[{\bf{I}}^{\mu}_{1}(y,\frac{3R}{4})\right]^{\frac{1}{p-1}} \nonumber\\
&\leq& CR^{-\alpha}\left[ \left(\fint_{B_{R}(x_0)} |Du|^{q_0} \operatorname{d}\! x\right)^{\frac{1}{q_0}}+R^{-1-\sigma}\operatorname{Tail}(u-(u)_{B_R(x_0)};x_0,R)\right]\nonumber\\
&&+C\left[{\bf{I}}^{\mu}_{1-\alpha}(x,R)+{\bf{I}}^{\mu}_{1-\alpha}(y,R)\right]^{\frac{1}{p-1}}.
\end{eqnarray}
Combining \eqref{Oscgrad-2}-\eqref{Oscgrad-4}, and employing
the pointwise estimate \eqref{MaxDiniholdersub} for the nonlocal fractional sharp
maximal function of the gradient, we arrive at
\begin{eqnarray}\label{Oscgrad-5}
% \nonumber to remove numbering (before each equation)
&& |Du(x)-Du(y)|\nonumber\\
 &\leq & Cr^\alpha\mathfrak{M}^{\sharp}_{\alpha,q_0}(Du)(x,\frac{3R}{4})+
 +Cr^\alpha\left[{\bf{I}}^{\mu}_{1-\alpha}(x,R)+{\bf{I}}^{\mu}_{1-\alpha}(y,R)\right]^{\frac{1}{p-1}}\nonumber\\
 &&+C\left(\frac{r}{R}\right)^{\alpha}\left[ \left(\fint_{B_{R}(x_0)} |Du|^{q_0} \operatorname{d}\! x\right)^{\frac{1}{q_0}}+R^{-1-\sigma}\operatorname{Tail}(u-(u)_{B_R(x_0)};x_0,R)\right]\nonumber\\
&\leq& Cr^\alpha\left[ {\bf M}_{1-\alpha}(\mu)(x,\frac{3R}{4})\right]^{\frac{1}{p-1}}+Cr^\alpha\left[{\bf{I}}^{\mu}_{1-\alpha}(x,R)
+{\bf{I}}^{\mu}_{1-\alpha}(y,R)\right]^{\frac{1}{p-1}}\nonumber\\
&&+C\left(\frac{r}{R}\right)^{\alpha}\left[ \left(\fint_{B_{R}(x_0)} |Du|^{q_0} \operatorname{d}\! x\right)^{\frac{1}{q_0}}+R^{-1-\sigma}\operatorname{Tail}(u-(u)_{B_R(x_0)};x_0,R)\right]
\end{eqnarray}
for almost all $x,\,y\in B_{\frac{R}{4}}(x_0)$, where the positive constant $C\equiv C(n,p,s,\Lambda, m, a, \sigma, \tilde{\alpha},\omega(\cdot),\operatorname{diam}(\Omega))$.
Finally, in terms of Lemma \ref{maxf-potential}, we utilize the fact that
$${\bf M}_{1-\alpha}(\mu)(x,\frac{3R}{4})\leq C(n,\tilde{\alpha})\,{\bf{I}}^{\mu}_{1-\alpha}(x,R)$$
to bound the first term on the right side of \eqref{Oscgrad-5}.
Therefore, we conclude that the result \eqref{OscPotenGradsub} is valid.

In the sequel, we consider the case where $p> 2$.
Treating it in exactly the same manner as $p\leq2$,
but this time $\sigma_1'=\frac{2}{p}$ and $q_0=p-1$, and using \eqref{Maxpoint-Dinivmosup} instead of
\eqref{Maxpoint-Dinivmosub} and \eqref{MaxDiniholdersup} instead of \eqref{MaxDiniholdersub}.
Thus, we redefine $M(x,r)$ as follows
\begin{equation*}
   M(x,r):=C\left[\left(\fint_{B_{r}(x)} |Du|^{q_0} \operatorname{d}\! \xi\right)^{\frac{1}{q_0}}+r^{-1-\sigma}\operatorname{Tail}(u-(u)_{B_r(x)};x,r)
+ {\bf{W}}^{\mu}_{\frac{1}{p},p}(x,r)\right],
\end{equation*}
and recalculate
\begin{equation*}
  \sum_{i=0}^{j}
\left[\frac{|\mu|(\mathcal{B}_{i})}{r_i^{n-1+\alpha(p-1)}}\right]^{\frac{1}{p-1}}\leq C{\bf{W}}^{\mu}_{1-\frac{(1+\alpha)(p-1)}{p},p}(x,R).
\end{equation*}
Furthermore, combining the fact ${\bf{W}}^{\mu}_{\frac{1}{p},p}(x,\frac{3R}{4})\leq C{\bf{W}}^{\mu}_{1-\frac{(1+\alpha)(p-1)}{p},p}(x,R)$ with the inequality
$$\left[{\bf M}_{1-\alpha(p-1)}(\mu)(x,\frac{3R}{4})\right]^{\frac{1}{p-1}}\leq C(n,p,\tilde{\alpha})\,{\bf{W}}^{\mu}_{1-\frac{(1+\alpha)(p-1)}{p},p}(x,R)$$
established in Lemma \ref{maxf-potential}, we deduce that the estimate \eqref{OscPotenGradsup} holds uniformly in $\alpha\in[0,\tilde{\alpha}]$ for a positive constant $C\equiv C(n,p,s,\Lambda, m, a, \sigma, \tilde{\alpha},\omega(\cdot),\operatorname{diam}(\Omega))$. Therefore, the proof of Theorem \ref{OscPotenGrad} is complete.
\end{proof}

\section*{Acknowledgments} This work is supported by the National Natural Science Foundation of China (NSFC Grant No.12571103 and No.12401122), Young Scientific and Technological Talents (Level Three) in Tianjin, the Fundamental Research Funds for the Central Universities (Grant No. 2682024CX028), and Sichuan Natural Science Foundation Youth Fund Project (Grant No.2025ZNSFSC0799).

\section*{Data availability} Data sharing is not applicable to this article as obviously no datasets were generated or analyzed during the current study.

\section*{Conflict of interest} Author states no conflict of interest.

\bibliography{bibliography}

\end{document}